\definecolor{webgreen}{rgb}{0,.5,0}
\definecolor{webbrown}{rgb}{.6,0,0}
\begin{document}

\theoremstyle{plain}
\newtheorem{theorem}{Theorem}
\newtheorem{corollary}[theorem]{Corollary}
\newtheorem{lemma}{Lemma}
\newtheorem{proposition}{Proposition} 
\newtheorem{example}{Examples}
\newtheorem*{remark}{Remark}

\begin{center}
\vskip 1cm{\LARGE\bf 
Binomial sum relations involving Fibonacci and Lucas numbers \\
}
\vskip 1cm
{\large
Kunle Adegoke \\
Department of Physics and Engineering Physics, \\ Obafemi Awolowo University, Ile-Ife, Nigeria \\
\href{mailto:adegoke00@gmail.com}{\tt adegoke00@gmail.com}

\vskip 0.25 in
 
Robert Frontczak \\
Independent Researcher \\
Reutlingen,  Germany \\
\href{mailto:robert.frontczak@web.de}{\tt robert.frontczak@web.de}

\vskip 0.25 in

Taras Goy  \\
Faculty of Mathematics and Computer Science\\
Vasyl Stefanyk Precarpathian National University, Ivano-Frankivsk, Ukraine\\
\href{mailto:taras.goy@pnu.edu.ua}{\tt taras.goy@pnu.edu.ua}}
\end{center}

\vskip .2 in

\begin{abstract} 
In this paper, we introduce relations between binomial sums involving (generalized) Fibonacci and Lucas numbers, 
and different kinds of binomial coefficients. We also present some relations between sums with two and three binomial coefficients.
In the course of exploration we rediscover a few relations presented as problem proposals.
\vskip 6pt
\noindent\textit{2020 Mathematics Subject Classification}: 11B37, 11B39.
\vskip 3 pt
\noindent\textit{Keywords}: Binomial coefficient; central binomial coefficient; Fibonacci number; Lucas number; Horadam sequence; recurrence relation. 
\end{abstract}

\vskip 0.2cm

	\section{Introduction and motivation}
	
	The literature on Fibonacci numbers is immensely rich. There exist dozens of articles and problem proposals dealing with binomial sums involving these sequences as (weighted) summands. We attempt to give a short survey, not claiming completeness. 
	The following binomial sums have been studied ($X_n$ stands for a (weighted) Fibonacci or Lucas number, alternating or non-alternating, or a product of them):
	\begin{itemize}
		\item Standard form and variants of it \cite{Adegoke1,Adegoke2,Carlitz,CarFer,Hoggatt,Kilic2,Layman,Long}
	\begin{equation*}
			\sum_{k=0}^n \binom{n}{k} X_{k};
			\end{equation*}
		\item Forms coming from the Waring formula and studied by Gould \cite{Gould1}, for instance,
			\begin{equation*}
			\sum_{k=0}^{n-1}\frac{n}{n-k} \binom{n-k}{k} X_{k};
			\end{equation*}
		\item Forms introduced by Filipponi \cite{Fil}
			\begin{equation*}
			\sum_{k=0}^n \binom{2n-k-1}{k}X_{k};
			\end{equation*}
		\item Forms introduced by Jennings \cite{Jen}
			\begin{equation*}
			\sum_{k=0}^n \binom{n+k}{2k} X_{k};
			\end{equation*}
		\item Forms introduced by Kilic and Ionascu \cite{Kilic1}
			\begin{equation*}
			\sum_{k=0}^n \binom{2n}{n+k} X_{k};
			\end{equation*}
		\item Forms studied recently by Bai, Chu and Guo \cite{Bai}
			\begin{equation*}
			\sum_{k=0}^{\lfloor n/2 \rfloor} \binom{2n}{n-2k} X_{k} \quad \mbox{and}\quad \sum_{k=0}^{\lfloor n/2 \rfloor} \binom{2n+2}{n-2k} X_{k};
			\end{equation*}
		\item Forms studied by the authors in the recent paper \cite{Adegoke3}
			\begin{equation*}
			\sum_{k=0}^{\lfloor n/2 \rfloor} \binom{n}{2k} X_{k};
			\end{equation*}
		\item Forms studied by the authors in the recent paper \cite{Adegoke4}
			\begin{equation*}
			\sum_{k=0}^{n} \frac{n}{n+k} \binom{n+k}{n-k} X_{k} \quad \mbox{and}\quad \sum_{k=0}^{n} \frac{k}{n+k} \binom{n+k}{n-k} X_{k}.
			\end{equation*}
		We note that 
			\begin{equation*}
			\sum_{k=0}^{n} \binom{n+k}{2k} X_{k} = \sum_{k=0}^{n} \frac{n}{n+k} \binom{n+k}{n-k} X_{k}+ \sum_{k=0}^{n} \frac{k}{n+k} \binom{n+k}{n-k} X_{k}.
			\end{equation*}
		\end{itemize}
	
	Let $(W_j(a,b;p,q))_{j\geq0}$ be the Horadam sequence 
	\cite{horadam65} defined for all non-negative integers $j$ by the recurrence
		\begin{equation}\label{eq.vhrb5b3}
		W_0  = a,\,\,\,W_1  = b;\quad W_j  = pW_{j - 1}  - qW_{j - 2},\quad j \ge 2,
		\end{equation}
	where $a$, $b$, $p$ and $q$ are arbitrary complex numbers, with $p\ne 0$ and $q\ne 0$. Extension of the definition of $(W_j)$ to negative subscripts is provided by writing the recurrence relation as 
	$$W_{-j}=\frac{1}{q}(pW_{-j+1}-W_{-j+2}).$$
	
	Two important cases of  $(W_j)$ are the Lucas sequences of the first kind, $(U_j(p,q))=(W_j(0,1;p,q))$, and of the second kind, $(V_j(p,q))=(W_j(2,p;p,q))$, so that 
		\begin{equation*}
		U_0=0,\,\,\, U_1=1,\quad U_j=pU_{j-1}-qU_{j-2},\quad j\ge 2;
		\end{equation*}
	and
		\begin{equation*}
		V_0=2,\,\,\, V_1=p,\quad V_j=pV_{j-1}-qV_{j-2}, \quad j\ge 2.
		\end{equation*}
	
	The most well-known Lucas sequences are the Fibonacci sequence $(F_j)=(U_j(1,-1))$ and the sequence of Lucas numbers  $(L_j)=(V_j(1,-1))$.
	
	The Binet formulas for sequences $(U_j)$, $(V_j)$ and $(W_j)$ in the non-degenerate case, $\Delta= p^2 - 4q > 0$, are
		\begin{equation}
		\label{bine.uvw}
		U_j=\frac{\tau^j-\sigma^j}{\sqrt{\Delta}},\qquad V_j=\tau^j+\sigma^j, \qquad
		W_j = A\tau ^j  +  B\sigma ^j\,,
		\end{equation}
	with
	$\displaystyle A=\frac{b - a\sigma}{\sqrt{\Delta}}$ and $B\displaystyle=\frac{a\tau  - b}{\sqrt{\Delta}}$,
	where 
	$$\tau=\tau(p,q)=
	\frac{p+\sqrt{\Delta}}2,\quad \sigma=
	\sigma(p,q)=
	\frac{p-\sqrt{\Delta}}2
	$$
	are the distinct zeros of the characteristic polynomial $x^2-px+q$ of the Horadam sequence \eqref{eq.vhrb5b3}.
	
	The Binet formulas for the Fibonacci and Lucas numbers are
		\begin{equation}
		\label{bine.fl}
		F_j  =  
		\frac{{\alpha ^j  - \beta ^j }}{{\sqrt 5 }},\qquad L_j  = \alpha ^j  + \beta ^j,
		\end{equation}
		where $\alpha=\tau(1,-1)=(1 + \sqrt 5)/2$ is the golden ratio and $\beta=\sigma(1,-1)=-1/\alpha$.
	
	The sequences $(F_n)_{n\geq 0}$ and $(L_n)_{n\geq 0}$ are indexed in the On-Line Encyclopedia of Integer Sequences 
	\cite{OEIS} as entries A000045 and A000032, respectively. For more information on them we recommend the books by Koshy \cite{Koshy} and Vajda \cite{Vajda}, among others. 
	
	In this paper, we introduce relations between binomial sums involving (generalized) Fibonacci and Lucas numbers, and different kinds of binomial coefficients. We also present some relations between sums with two and three binomial coefficients. In the course of exploration we rediscover a few relations presented as problem proposals.
	
	We will make use of the following known results.
	\begin{lemma}\label{lem.jv8c0fd}
		If $a$, $b$, $c$ and $d$ are rational numbers and $\lambda$ is an irrational number, then
			\begin{equation*}
			a + b\,\lambda=c + d\,\lambda\quad \iff\quad a=c,\,\,\, b=d.    
			\end{equation*}
			\end{lemma}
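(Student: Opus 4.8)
The plan is to prove the two implications separately, with essentially all the content residing in the forward direction. The reverse implication is immediate: if $a=c$ and $b=d$, then substituting termwise gives $a+b\lambda = c+d\lambda$, so no argument beyond substitution is needed.

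For the forward implication I would start from the hypothesis $a+b\lambda = c+d\lambda$ and rearrange it into the form
\begin{equation*}
a-c = (d-b)\,\lambda.
\end{equation*}
The strategy is then a short argument by contradiction aimed at forcing $d=b$. Suppose, toward a contradiction, that $b\neq d$, so that $d-b\neq 0$. Then I can divide to obtain
\begin{equation*}
\lambda = \frac{a-c}{d-b}.
\end{equation*}
Since $a,b,c,d$ are rational and $\mathbb{Q}$ is closed under subtraction and under division by a nonzero element, the right-hand side is a rational number. This contradicts the hypothesis that $\lambda$ is irrational. Hence $b=d$ must hold. Feeding $b=d$ back into the displayed identity $a-c=(d-b)\lambda=0$ immediately yields $a=c$, which completes the forward direction.

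The only point requiring any care is the division step: it is legitimate precisely because we have first isolated the case $d-b\neq 0$, and it is this step that genuinely invokes the irrationality hypothesis on $\lambda$ together with the rationality (and field structure) of the coefficients. I do not expect a real obstacle here, since the statement is elementary; the sole thing to state cleanly is that the two closure properties of $\mathbb{Q}$ are what make $\tfrac{a-c}{d-b}$ rational and hence produce the contradiction.
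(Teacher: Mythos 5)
Your proof is correct and complete: the reverse direction is indeed trivial substitution, and the forward direction correctly isolates the case $d-b\neq 0$, derives $\lambda=\frac{a-c}{d-b}\in\mathbb{Q}$ from the closure of $\mathbb{Q}$ under subtraction and division by a nonzero element, and so contradicts irrationality. The paper states this lemma without any proof, listing it among known results, so there is no in-paper argument to compare against; yours is exactly the standard argument one would supply.
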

	\begin{lemma}
		For any integer $s$,
			\begin{align}
			&q^s  + \tau ^{2s}  = \tau ^s V_s, \qquad q^s  - \tau ^{2s}  = - \Delta\tau ^s U_s, \label{eq.u9uuagc} \\
			&q^s  + \sigma ^{2s}  = \sigma ^s V_s, \qquad q^s  - \sigma ^{2s}  = \Delta\sigma ^s U_s. \label{eq.dkvfyiz}
			\end{align} 
			In particular,
			\begin{align}
			&( - 1)^s  + \alpha ^{2s}  = \alpha ^s L_s,\qquad  ( - 1)^s  - \alpha ^{2s}  = - \sqrt 5\alpha ^s F_s, \label{eq.ozz3zp6} \\
			&( - 1)^s  + \beta ^{2s}  = \beta ^s L_s,\qquad ( - 1)^s  - \beta ^{2s}  = \sqrt 5\beta ^s F_s \label{eq.syrjcay}.
			\end{align}
			\end{lemma}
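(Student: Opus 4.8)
The plan is to reduce all four displayed identities to two elementary ingredients: the Vieta relation $\tau\sigma = q$ (coming from the characteristic polynomial $x^2 - px + q$, whose roots are $\tau$ and $\sigma$) and the Binet formulas \eqref{bine.uvw}. The single observation that powers everything is that, since $q \neq 0$ forces $\tau,\sigma \neq 0$, the equality
\[
q^s = (\tau\sigma)^s = \tau^s\sigma^s
\]
holds for \emph{every} integer $s$, including the negative ones covered by the extension of $(W_j)$ to negative subscripts.

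First I would establish \eqref{eq.u9uuagc}. Substituting $q^s = \tau^s\sigma^s$ and factoring out $\tau^s$ gives
\[
q^s + \tau^{2s} = \tau^s(\sigma^s + \tau^s) = \tau^s V_s, \qquad q^s - \tau^{2s} = -\tau^s(\tau^s - \sigma^s) = -\sqrt{\Delta}\,\tau^s U_s,
\]
where I have used $V_s = \tau^s + \sigma^s$ and $\tau^s - \sigma^s = \sqrt{\Delta}\,U_s$ from the Binet formulas. The companion pair \eqref{eq.dkvfyiz} is identical after interchanging $\tau$ and $\sigma$; the only change is that $\sigma^s - \tau^s = -\sqrt{\Delta}\,U_s$ flips the sign, producing $+\sqrt{\Delta}\,\sigma^s U_s$ on the right. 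Finally, the special cases \eqref{eq.ozz3zp6} and \eqref{eq.syrjcay} drop out immediately on setting $p=1$, $q=-1$, so that $\tau=\alpha$, $\sigma=\beta$, $\Delta=5$, $U_s=F_s$, $V_s=L_s$, and $\sqrt{\Delta}=\sqrt5$.

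I do not expect a genuine obstacle here: once $\tau\sigma = q$ is in hand the lemma is pure bookkeeping. The only points that warrant a moment's care are verifying that the argument survives for negative $s$ (it does, because $\tau,\sigma\neq0$) and keeping track of the sign in the two subtraction formulas. It is worth flagging that this computation yields the factor $\sqrt{\Delta}$, in agreement with the $\sqrt5$ appearing in the specializations \eqref{eq.ozz3zp6}--\eqref{eq.syrjcay}; accordingly the right-hand sides of the second identities in \eqref{eq.u9uuagc} and \eqref{eq.dkvfyiz} should read $-\sqrt{\Delta}\,\tau^s U_s$ and $+\sqrt{\Delta}\,\sigma^s U_s$ rather than carrying a factor of $\Delta$.
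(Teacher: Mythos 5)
Your proof is correct, and there is nothing in the paper to diverge from: the lemma is stated among the known preliminaries without proof, and your reduction to the Vieta relation $q^s=\tau^s\sigma^s$ (valid for all integers $s$ since $\tau\sigma=q\neq0$) followed by factoring and applying the Binet formulas \eqref{bine.uvw} is exactly the standard argument. Your flag about the discriminant factor is also justified: under the paper's stated conventions $\Delta=p^2-4q$ and $U_j=(\tau^j-\sigma^j)/\sqrt{\Delta}$, the subtraction identities must read $q^s-\tau^{2s}=-\sqrt{\Delta}\,\tau^sU_s$ and $q^s-\sigma^{2s}=\sqrt{\Delta}\,\sigma^sU_s$, and the specializations \eqref{eq.ozz3zp6}--\eqref{eq.syrjcay}, which carry $\sqrt5$ rather than $5$, confirm this reading. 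It is worth noting that the slip is systematic rather than isolated: the paper repeatedly writes $\Delta$ where its own definitions demand $\sqrt{\Delta}$ (in effect treating $\Delta$ as $\tau-\sigma$), for instance in \eqref{eq.j428hfx}--\eqref{eq.cqli6xc}, in \eqref{eq.p951mmh} (where the computation gives $A\tau^j-B\sigma^j=(W_{j+1}-qW_{j-1})/\sqrt{\Delta}$), and in \eqref{eq.zqa5sbx} (where $V_{t+1}-qV_{t-1}=\Delta U_t$ under the stated definition, not $\Delta^2U_t$); so your computation is right and the printed general statements carry a consistent notational inconsistency that the Fibonacci--Lucas special cases silently correct.
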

	\begin{lemma}\label{lem.ydalnfx}
		Let $r$ and $d$ be any integers. Then 
			\begin{align}
			&V_{r + s}  - \tau ^r V_s  =  - \Delta\sigma ^s  U_r \label{eq.j428hfx},\\ 
			&V_{r + s}  - \sigma ^r V_s  = \Delta \tau ^s U_r \label{eq.cqli6xc},\\
			&U_{r + s}  - \tau ^r U_s  = \sigma ^s U_r\label{eq.iamiky1},\\ 
			&U_{r + s}  - \sigma ^r U_s  = \tau ^s U_r\label{eq.zy0gfyn}. 
			\end{align}
			In particular, \cite{Hoggatt},
			\begin{align}
			&L_{r + s} - L_r \alpha ^s = - \sqrt 5 \beta ^r F_s,\qquad  L_{r + s} - L_r \beta ^s = \sqrt 5 \alpha ^r F_s, \label{es.benssj4} \\ 
			&F_{r + s} - F_r \alpha ^s = \beta ^r F_s,\qquad\qquad\,\, F_{r + s} - F_r \beta ^s = \alpha ^r F_s. \label{eq.pvdw5ja} 
			\end{align}
		\end{lemma}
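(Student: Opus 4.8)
The plan is to prove all four identities by direct substitution of the Binet formulas \eqref{bine.uvw}, because each has the same algebraic shape: a closed-form value $V_{r+s}$ or $U_{r+s}$ minus a product in which one root is raised to the power $r$ while the sequence value carries the index $s$. Since $\tau$ and $\sigma$ are the two characteristic roots, multiplying $V_s$ or $U_s$ by $\tau^r$ reproduces exactly one of the two monomials occurring in $V_{r+s}$ or $U_{r+s}$; hence after subtraction that monomial cancels and a single factored expression survives. I would organize the proof around this one cancellation.

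Concretely, for \eqref{eq.j428hfx} I would write $V_{r+s}=\tau^{r+s}+\sigma^{r+s}$ and $\tau^r V_s=\tau^{r+s}+\tau^r\sigma^s$; subtracting cancels $\tau^{r+s}$ and leaves $\sigma^{r+s}-\tau^r\sigma^s=\sigma^s(\sigma^r-\tau^r)$. The key sub-identity, read off from the Binet formula for $U_r$, is $\tau^r-\sigma^r=\sqrt{\Delta}\,U_r$, so the surviving factor $\sigma^r-\tau^r$ becomes $-\sqrt{\Delta}\,U_r$ and \eqref{eq.j428hfx} follows, with right-hand coefficient $\sqrt{\Delta}$ — consistent with the $\sqrt{5}$ appearing in the Fibonacci/Lucas case \eqref{es.benssj4}. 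The identity \eqref{eq.iamiky1} is handled identically: here $U_{r+s}-\tau^r U_s=\frac{1}{\sqrt{\Delta}}\big(\tau^r\sigma^s-\sigma^{r+s}\big)=\frac{1}{\sqrt{\Delta}}\,\sigma^s(\tau^r-\sigma^r)=\sigma^s U_r$.

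Identities \eqref{eq.cqli6xc} and \eqref{eq.zy0gfyn} are then obtained by the same computation after interchanging the roles of $\tau$ and $\sigma$; now the $\sigma^{r+s}$ monomial cancels and the surviving factor is $\tau^r-\sigma^r=\sqrt{\Delta}\,U_r$, which accounts for the sign change between the two columns of the lemma. Finally, the specializations \eqref{es.benssj4} and \eqref{eq.pvdw5ja} follow by setting $(p,q)=(1,-1)$, so that $\tau=\alpha$, $\sigma=\beta$, $\sqrt{\Delta}=\sqrt{5}$, $U_j=F_j$ and $V_j=L_j$, together with the relabeling $r\leftrightarrow s$, which is legitimate precisely because the identities are asserted for all integers $r,s$. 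I do not expect a genuine obstacle: the argument is pure bookkeeping with the Binet formulas, and the only point deserving a word of care is the validity for negative indices, which holds because $q=\tau\sigma\ne0$ forces $\tau,\sigma\ne0$, so the Binet formulas — and hence every step above — remain valid under the stated extension of $(U_j)$ and $(V_j)$ to negative subscripts.
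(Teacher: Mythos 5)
Your proof is correct and is exactly the standard Binet-formula verification that the paper implicitly relies on: the paper states this lemma as a known result (citing Hoggatt for the Fibonacci/Lucas special cases) and supplies no proof of its own, so your one-cancellation computation, the $\tau\leftrightarrow\sigma$ swap for the second column, and the $r\leftrightarrow s$ relabeling for the special cases fill in precisely the intended bookkeeping, including the valid remark about negative indices via $\tau\sigma=q\ne 0$. Your observation that the surviving coefficient is $\sqrt{\Delta}$ rather than the printed $\Delta$ in \eqref{eq.j428hfx} and \eqref{eq.cqli6xc} is also sound: under the introduction's definition $\Delta=p^2-4q$ the printed lemma carries a typo, although elsewhere in the body (e.g.\ $V_{t+1}-qV_{t-1}=\Delta^2 U_t$ in \eqref{eq.zqa5sbx} and the denominator in \eqref{eq.p951mmh}) the authors evidently use $\Delta$ for $\tau-\sigma=\sqrt{p^2-4q}$, under which convention your result and the printed statement coincide, consistent with the unambiguous $\sqrt 5$ in \eqref{es.benssj4}.
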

	\begin{lemma}\label{lem.w65xm59}
		For any integer $j$,
				\begin{align}
			&A\tau ^j  - B\sigma ^j  = \frac{{W_{j + 1}  - qW_{j - 1} }}{\Delta }\label{eq.p951mmh},\\
			&A\sigma ^j  +  B\tau ^j  = q^jW_{-j}\label{eq.trp91mj}.
			\end{align}
		\end{lemma}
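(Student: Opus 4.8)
The plan is to prove both identities directly from the Binet representation $W_j = A\tau^j + B\sigma^j$, which holds for every integer $j$ (the extension of the recurrence to negative subscripts is exactly what makes this valid for $j<0$), so no induction is needed. Throughout I would use the two defining facts about the characteristic roots, namely $\tau\sigma = q$ and $\tau - \sigma = \sqrt{\Delta}$, together with their immediate consequences $\tau^2 - q = \tau(\tau-\sigma) = \tau\sqrt{\Delta}$ and $\sigma^2 - q = \sigma(\sigma - \tau) = -\sigma\sqrt{\Delta}$.

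I would dispatch \eqref{eq.trp91mj} first, since it is the cleaner of the two. Writing $W_{-j} = A\tau^{-j} + B\sigma^{-j}$ and multiplying through by $q^j = (\tau\sigma)^j$, the cancellations $q^j\tau^{-j} = \sigma^j$ and $q^j\sigma^{-j} = \tau^j$ turn the right-hand side into $A\sigma^j + B\tau^j$, which is precisely the left-hand side. No sign bookkeeping arises here.

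For \eqref{eq.p951mmh} I would substitute the Binet formula into the numerator and factor out the lowest powers of $\tau$ and $\sigma$:
\begin{equation*}
W_{j+1} - qW_{j-1} = A\tau^{j-1}(\tau^2 - q) + B\sigma^{j-1}(\sigma^2 - q).
\end{equation*}
Applying $\tau^2 - q = \tau\sqrt{\Delta}$ and $\sigma^2 - q = -\sigma\sqrt{\Delta}$ collapses this to $\sqrt{\Delta}\,(A\tau^j - B\sigma^j)$, so that $A\tau^j - B\sigma^j = (W_{j+1} - qW_{j-1})/\sqrt{\Delta}$. I note that the derivation produces $\sqrt{\Delta}$ rather than $\Delta$ in the denominator (a quick check against the Fibonacci case $W=F$, where the right-hand side equals $L_j/\sqrt{5}$, confirms this), so I would read the stated $\Delta$ as $\sqrt{\Delta}$.

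The computations are short, so there is no serious obstacle. The only point demanding care is the sign asymmetry between $\tau^2 - q = +\tau\sqrt{\Delta}$ and $\sigma^2 - q = -\sigma\sqrt{\Delta}$, which is exactly what forces the $-B\sigma^j$ (rather than $+B\sigma^j$) on the left of \eqref{eq.p951mmh}; getting this sign right is essentially the whole content of the first identity. Since the Binet formula is available for all integer indices, both identities hold simultaneously for positive, zero, and negative $j$ without a separate argument.
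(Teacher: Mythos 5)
Your proof is correct, and it is essentially the paper's own argument: for \eqref{eq.trp91mj} the paper likewise just observes that multiplying the Binet expansion of $W_{-j}$ by $q^j=(\tau\sigma)^j$ gives the claim, and your computation for \eqref{eq.p951mmh} is precisely the verification that the paper outsources to the citation \cite{adegoke21} instead of writing out. Your sign bookkeeping via $\tau^2-q=\tau(\tau-\sigma)$ and $\sigma^2-q=-\sigma(\tau-\sigma)$ is exactly the content of that cited lemma.

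Concerning the discrepancy you flagged: you are right that, with this paper's stated definition $\Delta=p^2-4q$, the denominator in \eqref{eq.p951mmh} must be $\sqrt{\Delta}=\tau-\sigma$, as your Fibonacci check $A\alpha^j-B\beta^j=L_j/\sqrt{5}$ confirms. This is a convention clash in the paper rather than a defect in your argument: the lemma is imported from \cite{adegoke21}, where $\Delta$ denotes $\sqrt{p^2-4q}$, and the same convention silently governs neighboring statements here --- the general identity $q^s-\tau^{2s}=-\Delta\tau^s U_s$ in \eqref{eq.u9uuagc} specializes to $(-1)^s-\alpha^{2s}=-\sqrt{5}\,\alpha^s F_s$ in \eqref{eq.ozz3zp6}, and \eqref{eq.zqa5sbx} asserts $V_{t+1}-qV_{t-1}=\Delta^2 U_t$, which in the Fibonacci--Lucas case equals $5F_t$, again forcing $\Delta=\sqrt{5}$. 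So your derivation proves the intended statement, and reading $\Delta$ as $\tau-\sigma$ throughout the paper's lemmas reconciles the notation; your choice to record the corrected denominator explicitly is the right call.
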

	\begin{proof}
		See \cite[Lemma 1]{adegoke21} for a proof of \eqref{eq.p951mmh}. Identity \eqref{eq.trp91mj} is a consequence of the Binet formula.
	\end{proof}

	\section{Relations from a classical polynomial identity}
	
		The first binomial sum relations follow from the next classical polynomial identity which we state in the next lemma.
	\begin{lemma}[{\cite[Identity 6.21]{quaintance}}]\label{lem.v28pabt}
		If $x$ is a complex variable and $m$ and $n$ are non-negative integers, then
		\begin{equation}
			\sum_{k = 0}^n \binom {m - n + k}{k} (1 + x)^{n - k} x^k = \sum_{k = 0}^n \binom{m + 1}{k} x^k \label{eq.wl5p3v8},\quad x\ne -1.
			\end{equation}
	\end{lemma}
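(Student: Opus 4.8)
The plan is to establish \eqref{eq.wl5p3v8} as an identity of polynomials in $x$, by expanding the factor $(1+x)^{n-k}$ and comparing the coefficient of each power of $x$ on the two sides. Since $0\le k\le n$ forces $n-k\ge0$, both sides are genuine polynomials, so the argument is valid for every $x$ and the stated restriction $x\ne-1$ is in fact immaterial here.

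First I would write $(1+x)^{n-k}=\sum_{j=0}^{n-k}\binom{n-k}{j}x^{j}$, insert this into the left-hand side of \eqref{eq.wl5p3v8}, and put $i=k+j$ to read off the coefficient of $x^{i}$. After interchanging the two finite sums, the whole lemma reduces to the single convolution identity
\begin{equation*}
\sum_{k=0}^{i}\binom{m-n+k}{k}\binom{n-k}{i-k}=\binom{m+1}{i},\qquad 0\le i\le n,
\end{equation*}
the right-hand coefficient being read off directly from $\sum_{k}\binom{m+1}{k}x^{k}$.

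The substantive step is to prove this convolution, which I would do by reducing it to the Vandermonde convolution $\sum_{k}\binom{A}{k}\binom{B}{c-k}=\binom{A+B}{c}$ via the upper-negation rule $\binom{r}{k}=(-1)^{k}\binom{k-r-1}{k}$. Writing $\binom{m-n+k}{k}=(-1)^{k}\binom{n-m-1}{k}$ and $\binom{n-k}{i-k}=(-1)^{i-k}\binom{i-n-1}{i-k}$ turns the sum into $(-1)^{i}\sum_{k}\binom{n-m-1}{k}\binom{i-n-1}{i-k}$; Vandermonde collapses this to $(-1)^{i}\binom{i-m-2}{i}$, and one last upper negation rewrites it as $\binom{m+1}{i}$, as required.

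The part requiring the most care is the bookkeeping in this last step: correctly matching the indices $A,B,c$ when Vandermonde is applied and tracking the signs through the two upper-negation steps. If that proves error-prone, a clean fallback is to treat the convolution with $M:=m-n$ as a free integer parameter, namely $\sum_{k}\binom{M+k}{k}\binom{n-k}{i-k}=\binom{M+n+1}{i}$, and to induct on $n$: Pascal's rule applied to the only $n$-dependent factor $\binom{n-k}{i-k}$, together with the same rule on the right-hand side $\binom{M+n+1}{i}$, closes the induction immediately.
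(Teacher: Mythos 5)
Your proof is correct, and it is necessarily a different route from the paper's, because the paper offers no proof at all: the lemma is quoted verbatim from Gould's tables (Quaintance, Identity 6.21) and attributed to Laplace. Your argument --- both sides are polynomials of degree at most $n$, so it suffices to match the coefficient of $x^i$ for $0\le i\le n$, which reduces the lemma to the convolution $\sum_{k=0}^{i}\binom{m-n+k}{k}\binom{n-k}{i-k}=\binom{m+1}{i}$ --- is sound, and the sign bookkeeping checks out: $\binom{m-n+k}{k}=(-1)^k\binom{n-m-1}{k}$ and $\binom{n-k}{i-k}=(-1)^{i-k}\binom{i-n-1}{i-k}$ turn the sum into $(-1)^i\sum_{k=0}^{i}\binom{n-m-1}{k}\binom{i-n-1}{i-k}=(-1)^i\binom{i-m-2}{i}=\binom{m+1}{i}$. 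The one point to make explicit is that you are invoking Chu--Vandermonde in its generalized form, valid for arbitrary (here negative) upper arguments with the sum over the full support $0\le k\le i$; that is standard and exactly what the two upper negations require. Your side remark that the restriction $x\ne-1$ is immaterial is also right and worth keeping: at $x=-1$ the left side collapses to the single term $(-1)^n\binom{m}{n}$, and the right side is the alternating partial row sum $\sum_{k=0}^{n}(-1)^k\binom{m+1}{k}=(-1)^n\binom{m}{n}$, so the identity holds there too. In the induction fallback, note that for the boundary value $i=n$ the Pascal split $\binom{n-k}{n-k}=\binom{n-1-k}{n-k}+\binom{n-1-k}{n-k-1}$ leaves a stray $k=n$ contribution $\binom{M+n}{n}$ (from $\binom{-1}{0}=1$) that is not an instance of the inductive hypothesis; it combines with the $(n-1,\,i-1)$ instance $\binom{M+n}{n-1}$ to give $\binom{M+n+1}{n}$, so the step still closes, but this deserves a sentence. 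What your argument buys over the paper's bare citation is a self-contained elementary verification (and the observation that the hypothesis $x\ne-1$ can be dropped); what the citation buys is brevity and attribution.
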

	
	According to Gould \cite{quaintance}, identity \eqref{eq.wl5p3v8} 
	is due to Laplace. In addition, we note that the binomial theorem is a special case of \eqref{eq.wl5p3v8} which occurs at $m=n - 1$.
	
	Using
		\begin{equation*}
		\binom {-n}{k} = (-1)^k\binom {n+k-1}{k}
		\end{equation*}
	and replacing $m$ by $m-1$ we have the equivalent and useful form of Lemma \ref{lem.v28pabt}:
		\begin{equation*}
		\sum_{k = 0}^n (-1)^k \binom {n -m}{k} (1 + x)^{n - k} x^k = \sum_{k = 0}^n \binom {m }{k} x^k,\quad x\ne -1.
		\end{equation*}
	\begin{theorem}\label{thm.x9ibwgc}
		If $r$, $s$ and $t$ are any integers and $m$ and $n$ are non-negative integers, then
		\begin{equation*}
			\sum_{k = 0}^n \binom{m - n + k}kU_{r + s}^k U_s^{n - k} W_{t + r(n - k)}  = \sum_{k = 0}^n ( - q^s)^{n - k} \binom{m + 1}{k} U_{r + s}^k U_r^{n - k} W_{t - s(n - k)}.
			\end{equation*}
	\end{theorem}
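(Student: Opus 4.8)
\section*{Proof plan}

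The plan is to pass through the Binet representation $W_j=A\tau^j+B\sigma^j$ and split the claimed identity into a $\tau$-weighted part and a $\sigma$-weighted part, each of which is a scalar specialization of the Laplace identity in Lemma~\ref{lem.v28pabt}. First I would recast \eqref{eq.wl5p3v8} in a homogeneous form that dispenses with the restriction $x\neq-1$. Multiplying \eqref{eq.wl5p3v8} by $\lambda^n$ and writing $u=\lambda x$ and $v=\lambda(1+x)$, so that $v-u=\lambda$, turns it into
$$\sum_{k=0}^n \binom{m-n+k}{k} v^{n-k} u^k = \sum_{k=0}^n \binom{m+1}{k} (v-u)^{n-k} u^k.$$
Both sides are polynomials in $u$ and $v$ that agree on the set $v\neq0$, hence they agree for all complex $u,v$. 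This reformulation is the key step, because it lets me substitute Lucas-sequence values without tracking exceptional parameters.

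Next I would specialize $u=U_{r+s}$ and $v=U_s\tau^r$. By \eqref{eq.iamiky1} we have $v-u=U_s\tau^r-U_{r+s}=-\sigma^s U_r$, so the homogeneous identity becomes
$$\sum_{k=0}^n \binom{m-n+k}{k} U_s^{n-k}\tau^{r(n-k)} U_{r+s}^k = \sum_{k=0}^n \binom{m+1}{k} (-\sigma^s U_r)^{n-k} U_{r+s}^k.$$
Multiplying through by $A\tau^t$, the left weight $\tau^{r(n-k)}\cdot A\tau^t$ becomes $A\tau^{t+r(n-k)}$, while on the right I would use $\tau\sigma=q$ to rewrite $(-\sigma^s)^{n-k}\tau^t=(-1)^{n-k}(\sigma\tau)^{s(n-k)}\tau^{t-s(n-k)}=(-q^s)^{n-k}\tau^{t-s(n-k)}$. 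This yields the $\tau$-weighted version of the theorem, carrying $A\tau^{t+r(n-k)}$ on the left and $(-q^s)^{n-k}A\tau^{t-s(n-k)}$ on the right.

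Then I would repeat the argument with $\tau$ and $\sigma$ interchanged: take $u=U_{r+s}$ and $v=U_s\sigma^r$, use \eqref{eq.zy0gfyn} in the form $U_s\sigma^r-U_{r+s}=-\tau^s U_r$, multiply by $B\sigma^t$, and again invoke $\tau\sigma=q$ to obtain the $\sigma$-weighted analogue. Adding the two weighted identities and recognizing $A\tau^j+B\sigma^j=W_j$ in every summand (with $j=t+r(n-k)$ on the left and $j=t-s(n-k)$ on the right) produces exactly the stated relation. The power bookkeeping is routine; the only genuine decision is the substitution $u=U_{r+s}$, $v=U_s\tau^r$, which is forced by the desired shape $U_{r+s}^k U_s^{n-k}$ on the left and $U_{r+s}^k U_r^{n-k}$ on the right, together with the observation that $v-u$ collapses to the single monomial $-\sigma^s U_r$ precisely because of \eqref{eq.iamiky1}. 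The only delicate point—degenerate parameters with $U_r=0$ or $U_s=0$ (equivalently $x=-1$)—is exactly what the homogeneous reformulation sidesteps, so no separate limiting argument is required.
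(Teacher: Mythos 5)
Your proposal is correct and takes essentially the same route as the paper: both specialize the Laplace identity \eqref{eq.wl5p3v8} at the two conjugate values dictated by \eqref{eq.iamiky1} and \eqref{eq.zy0gfyn}, weight by $\tau^t$ and $\sigma^t$ respectively, and add according to the Binet representation $W_j=A\tau^j+B\sigma^j$. Your two refinements are sound but cosmetic rather than a different method: the homogeneous form in $u,v$ removes the tacit requirement $U_r\neq 0$ hidden in the paper's substitution $x=-U_{r+s}/(U_r\sigma^s)$, and pushing $\tau\sigma=q$ through early keeps each weighted identity in powers of a single root, so you need only the plain Binet formula where the paper additionally invokes \eqref{eq.trp91mj} of Lemma \ref{lem.w65xm59} to recombine its mixed $\tau$/$\sigma$ right-hand sides.
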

	\begin{proof}
		Set $x=-U_{r + s}/(U_r\sigma^s)$ in \eqref{eq.wl5p3v8}, use \eqref{eq.zy0gfyn} and multiply through by $\tau^t$, obtaining
			\begin{equation*}
			\sum_{k = 0}^n \binom{m - n + k}{k} U_{r + s}^k U_s^{n - k} \tau^{r(n - k) + t} 
			= (- 1)^t \sum_{k = 0}^n (- 1)^{n-k} \binom{m + 1}{k} U_{r + s}^k U_r^{n - k} \sigma^{s(n - k) - t}.
			\end{equation*}
	
		Similarly, setting $x=-U_{r + s}/(U_r\tau^s)$ in \eqref{eq.wl5p3v8}, using \eqref{eq.iamiky1} and multiplying through by $\sigma^t$, yields
			\begin{equation*}
			\sum_{k = 0}^n \binom{m - n + k}{k} U_{r + s}^k U_s^{n - k} \sigma^{r(n - k) + t}  
			= (- 1)^t \sum_{k = 0}^n (- 1)^{n-k} \binom{m + 1}{k} U_{r + s}^k U_r^{n - k} \tau^{s(n - k) - t}.
			\end{equation*}
		
		The results follow by combining these identities according to the Binet formulas \eqref{bine.uvw} and Lemma \ref{lem.w65xm59}.
	\end{proof}
		In particular,
		\begin{align*}
		&\sum_{k = 0}^n \binom{m - n + k}kU_{r + s}^k U_s^{n - k} V_{r(n - k) + t} = q^t\sum_{k = 0}^n ( - 1)^{n - k}\binom{m + 1}k U_{r + s}^k U_r^{n - k} V_{s(n - k) - t},
		\end{align*}
	and
		\begin{align*}
		&\sum_{k = 0}^n \binom{m - n + k}{k} U_{r + s}^k U_s^{n - k} U_{r(n - k) + t}  = q^{t} \sum_{k = 0}^n ( - 1)^{n - k+1} \binom{m + 1}{k} U_{r + s}^k U_r^{n - k} U_{s(n - k) - t};
		\end{align*}
		with the special cases
			\begin{align}
		&\sum_{k = 0}^n \binom{m - n + k}kF_{r + s}^k F_s^{n - k} L_{r(n - k) + t}  = \sum_{k = 0}^n ( - 1)^{n-k-t} \binom{m + 1}k F_{r + s}^k F_r^{n - k} L_{s(n - k) - t},\label{eq.pj6jm1d}\\
		&\sum_{k = 0}^n \binom{m - n + k}k F_{r + s}^k F_s^{n - k} F_{r(n - k) + t}  =- \sum_{k = 0}^n ( - 1)^{n-k-t}\binom{m + 1}{k} F_{r + s}^k F_r^{n - k} F_{s(n - k) - t}.\label{eq.xgtk3bf}
		\end{align}
		\begin{corollary}
		If $r$, $s$ and $t$ are any integers and $n$ is a non-negative integer, then
		\begin{equation*}
			\sum_{k = 0}^n ( - q^s)^{n - k} \binom{n + 1}{k} U_{r + s}^k U_r^{n - k} W_{t - s(n - k)}  = \sum_{k = 0}^n U_{r + s}^k U_s^{n - k} W_{t + r(n - k)} .
			\end{equation*}
	\end{corollary}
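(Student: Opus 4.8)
The plan is to obtain this corollary as a direct specialization of Theorem \ref{thm.x9ibwgc}, namely by setting $m=n$. The theorem holds for every pair of non-negative integers $m$ and $n$, so in particular it is valid at $m=n$, and the only work is to track how the two binomial coefficients collapse under this choice.

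First I would substitute $m=n$ into the identity
\[
\sum_{k = 0}^n \binom{m - n + k}kU_{r + s}^k U_s^{n - k} W_{t + r(n - k)}  = \sum_{k = 0}^n ( - q^s)^{n - k} \binom{m + 1}{k} U_{r + s}^k U_r^{n - k} W_{t - s(n - k)}.
\]
On the left, the binomial coefficient becomes $\binom{m-n+k}{k}=\binom{k}{k}=1$ for every $k$ in the range $0\le k\le n$, so the left-hand sum reduces to the unweighted sum $\sum_{k=0}^n U_{r+s}^k U_s^{n-k}W_{t+r(n-k)}$. On the right, we have $\binom{m+1}{k}=\binom{n+1}{k}$. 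Reading the resulting equality from right to left then reproduces exactly the claimed statement.

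Since the result is a clean evaluation of the theorem at a single admissible value of the parameter $m$, there is essentially no analytic obstacle; the one point requiring a moment's care is the verification that $\binom{m-n+k}{k}=1$ holds uniformly across the summation index, which is immediate from $m-n=0$. No new appeal to the Binet formulas or to Lemmas \ref{lem.jv8c0fd}--\ref{lem.w65xm59} is needed, because all of that machinery was already expended in establishing Theorem \ref{thm.x9ibwgc} itself. I would therefore present the argument in a single short paragraph, emphasizing that the corollary records the $m=n$ instance of the theorem and serves to highlight the particularly transparent form the identity takes when the first binomial coefficient trivializes.
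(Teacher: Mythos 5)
Your proof is correct and coincides with the paper's own argument, which likewise derives the corollary by setting $m=n$ in Theorem \ref{thm.x9ibwgc}; your observation that $\binom{m-n+k}{k}=\binom{k}{k}=1$ at $m=n$ simply makes explicit the simplification the paper leaves tacit. No further comment is needed.
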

	\begin{proof}
		Set $m=n$ in Theorem \ref{thm.x9ibwgc}.
	\end{proof}
		In particular,
			\begin{align*}
		&\sum_{k = 0}^n (- 1)^{n - k} \binom {n+1}{k}  U_{r + s}^k U_r^{n - k} V_{s(n - k) - t} = \frac{1}{q^t} \sum_{k = 0}^n U_{r + s}^k U_s^{n - k} V_{r(n - k) + t},\\
		&\sum_{k = 0}^n (- 1)^{n - k} \binom {n+1}{k}  U_{r + s}^k U_r^{n - k} U_{s(n - k) - t} = -  \frac{1}{q^t} \sum_{k = 0}^n U_{r + s}^k U_s^{n - k} U_{r(n - k) + t};
		\end{align*}
		with
			\begin{align*}
		&\sum_{k = 0}^n ( - 1)^{n - k} \binom{n + 1}{k}  F_{r + s}^k F_r^{n - k} L_{s(n - k) - t} 
		= (-1)^t \sum_{k = 0}^n F_{r + s}^k F_s^{n - k} L_{r(n - k) + t},\\
		&\sum_{k = 0}^n ( - 1)^{n - k} \binom{n + 1}{k}  F_{r + s}^k F_r^{n - k} F_{s(n - k) - t} 
		= (-1)^{t-1} \sum_{k = 0}^n F_{r + s}^k F_s^{n - k} F_{r(n - k) + t}.
		\end{align*}
		\begin{corollary}\label{corxxx1}
		If $r$, $s$ and $t$ are any integers and $n$ is a non-negative integer, then
				\begin{equation*}
			\sum_{k = 0}^n ( - q^s)^{n - k} \binom{n}{k} U_{r + s}^k U_r^{n - k} W_{t - s(n - k)} 
			= U_s^n W_{t + rn}.
			\end{equation*}
		\end{corollary}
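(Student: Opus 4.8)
The plan is to specialize the already-proved Theorem~\ref{thm.x9ibwgc} to the value $m=n-1$. This is the natural choice because, as observed in the text following Lemma~\ref{lem.v28pabt}, the ordinary binomial theorem is exactly the $m=n-1$ instance of the Laplace identity~\eqref{eq.wl5p3v8}: substituting $m=n-1$ there collapses the left-hand sum to the single term $(1+x)^n$. I expect precisely the same collapse to drive the proof here, turning one side of Theorem~\ref{thm.x9ibwgc} into a single surviving summand.

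First I would set $m=n-1$ in the identity of Theorem~\ref{thm.x9ibwgc}. On the right-hand side this is immediate: the binomial coefficient $\binom{m+1}{k}$ becomes $\binom{n}{k}$, so that side reads
$$\sum_{k=0}^n(-q^s)^{n-k}\binom{n}{k}U_{r+s}^k U_r^{n-k}W_{t-s(n-k)},$$
which is exactly the left-hand side of the claimed corollary.

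The substance of the argument lies in the other side. With $m=n-1$, the coefficient $\binom{m-n+k}{k}$ becomes $\binom{k-1}{k}$. Using the standard convention for the generalized binomial coefficient, $\binom{k-1}{k}=\frac{(k-1)(k-2)\cdots 0}{k!}=0$ for every $k\ge 1$, since the falling product in the numerator contains the factor $0$, whereas $\binom{-1}{0}=1$ as an empty product. Hence every term with $k\ge 1$ vanishes, only the $k=0$ term survives, and that side reduces to $U_{r+s}^{0}U_s^{n}W_{t+rn}=U_s^n W_{t+rn}$. Equating the two sides yields the corollary. The closest thing to an obstacle is purely notational: one must fix the convention that $\binom{k-1}{k}=0$ for $k\ge 1$ and $\binom{-1}{0}=1$; once this is granted, no computation remains.
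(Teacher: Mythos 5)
Your proposal is correct and coincides with the paper's own proof, which likewise obtains the corollary by setting $m=n-1$ in Theorem~\ref{thm.x9ibwgc}. Your explicit verification that $\binom{k-1}{k}=0$ for $k\ge 1$ while $\binom{-1}{0}=1$, so that only the $k=0$ term survives on the left-hand side, is exactly the (unstated) computation behind the paper's one-line proof.
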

	\begin{proof}
		Set $m=n - 1$ in Theorem \ref{thm.x9ibwgc}.
	\end{proof}
	In particular,
		\begin{align*}
		\sum_{k = 0}^n (- 1)^{n - k} \binom {n}{k} U_{r + s}^k U_r^{n - k} V_{s(n - k) - t} &= \frac{U_s^n V_{rn + t}}{q^t},\\
		\sum_{k = 0}^n  (- 1)^{n - k} \binom {n}{k} U_{r + s}^k U_r^{n - k} U_{s(n - k) - t} &= -  \frac{U_s^n U_{rn + t}}{q^t};
		\end{align*}
	with the special cases
		\begin{align}\label{eq.fuprl19}
		\sum_{k = 0}^n \binom {n}{k} (- 1)^{n - k} F_{r + s}^k F_r^{n - k} L_{s(n - k) - t} &= (-1)^t F_s^n L_{rn + t},\\
		\label{eq.gefkf2e}
		\sum_{k = 0}^n \binom{n}{k} (- 1)^{n - k} F_{r + s}^k F_r^{n - k} F_{s(n - k) - t} &= (-1)^{t + 1}F_s^n F_{rn + t}.
		\end{align}
	
	We mention that identities \eqref{eq.fuprl19} and \eqref{eq.gefkf2e} exhibit strong similarities to those derived by Hoggatt, Phillips and Leonard in \cite{Hoggatt2}.
	\begin{corollary}
		If $m$ and $n$ are non-negative integers and $r$ is any integer, then
				\begin{equation*}
			\sum_{k = 0}^n (-q^r)^k \binom{m - n + k}k W_{r(n - 2k)} = \sum_{k = 0}^n (- q^r)^k \binom{m + 1}{k} V_r^{n - k} W_{-rk}.
			\end{equation*}
		\end{corollary}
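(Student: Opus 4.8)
The plan is to apply the Laplace identity \eqref{eq.wl5p3v8} of Lemma \ref{lem.v28pabt} twice, with the paired substitutions $x=-\sigma^r/V_r$ and $x=-\tau^r/V_r$, and then to recombine the two resulting identities through the Binet formula \eqref{bine.uvw}. The choice of $x$ is dictated by the requirement that $x/(1+x)=-(\sigma/\tau)^r$; solving this and using $V_r=\tau^r+\sigma^r$ yields the clean values $1+x=\tau^r/V_r$ for the first substitution and $1+x=\sigma^r/V_r$ for the second (the two being related by interchanging the roles of $\tau$ and $\sigma$). Neither choice hits the forbidden value $x=-1$, since that would force $\tau^r=0$ or $\sigma^r=0$.

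First I would substitute $x=-\sigma^r/V_r$ into \eqref{eq.wl5p3v8}. On the left the summand becomes $V_r^{-n}\binom{m-n+k}{k}(-1)^k\tau^{r(n-k)}\sigma^{rk}$, and the key simplification is the factorization $\tau^{r(n-k)}\sigma^{rk}=(\tau\sigma)^{rk}\tau^{r(n-2k)}=q^{rk}\tau^{r(n-2k)}$ (using $\tau\sigma=q$), which produces exactly the weight $(-q^r)^k$ together with the exponent $\tau^{r(n-2k)}$. On the right the summand becomes $\binom{m+1}{k}(-1)^kV_r^{-k}\sigma^{rk}$. The companion substitution $x=-\tau^r/V_r$ gives the same two identities with $\tau$ and $\sigma$ interchanged, i.e.\ with $\tau^{r(n-2k)}$ replaced by $\sigma^{r(n-2k)}$ on the left and $\sigma^{rk}$ by $\tau^{rk}$ on the right.

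Next I would take the combination $A\cdot(\text{first identity})+B\cdot(\text{second identity})$, with $A,B$ the Binet coefficients of \eqref{bine.uvw}. On the left this assembles $A\tau^{r(n-2k)}+B\sigma^{r(n-2k)}=W_{r(n-2k)}$ directly. On the right, however, the combination produces $A\sigma^{rk}+B\tau^{rk}$, in which $\tau$ and $\sigma$ appear swapped relative to the ordinary Binet form; this is precisely the quantity handled by identity \eqref{eq.trp91mj} of Lemma \ref{lem.w65xm59}, which gives $A\sigma^{rk}+B\tau^{rk}=q^{rk}W_{-rk}$. Recognizing that this swapped combination must be rewritten through \eqref{eq.trp91mj}, rather than through the standard Binet formula, is the one genuinely non-routine point of the argument; everything else is bookkeeping. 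After this step the right-hand summand carries the factor $(-1)^kq^{rk}V_r^{-k}=(-q^r)^kV_r^{-k}$ multiplying $\binom{m+1}{k}W_{-rk}$.

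Finally, the left-hand side carries an overall factor $V_r^{-n}$ while the right-hand side carries only $V_r^{-k}=V_r^{n-k}\cdot V_r^{-n}$, so clearing the common $V_r^{-n}$ by multiplying through by $V_r^{n}$ converts $V_r^{-k}$ into $V_r^{n-k}$ on the right and delivers the stated identity. The only standing hypotheses needed are $V_r\neq0$ (so that the substitutions and the final clearing are legitimate) and the non-degeneracy guaranteeing that \eqref{bine.uvw} holds.
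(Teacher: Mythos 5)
Your argument is correct, but it is not the route the paper takes. The paper proves this corollary in one line by specializing Theorem \ref{thm.x9ibwgc}: substitute $r\mapsto 2r$, $s\mapsto -r$, $t\mapsto -rn$, then simplify using $U_{-r}=-q^{-r}U_r$ and $U_{2r}=U_rV_r$, after which both sides carry the common factor $(-1)^{n-k}q^{-r(n-k)}U_r^{n}$ in each summand; multiplying through by $(-q^r)^n/U_r^n$ turns it into $(-q^r)^k$ and yields the stated identity. You instead go back to the Laplace identity \eqref{eq.wl5p3v8} and rerun the two-substitution Binet argument from scratch, with $x=-\sigma^r/V_r$ and $x=-\tau^r/V_r$, correctly using the factorization $\tau^{r(n-k)}\sigma^{rk}=q^{rk}\tau^{r(n-2k)}$ on the left and, crucially, identity \eqref{eq.trp91mj}, $A\sigma^{rk}+B\tau^{rk}=q^{rk}W_{-rk}$, for the swapped combination on the right. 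In effect you have unwound the paper's two-step derivation into a single step: under the paper's substitutions, the value $x=-U_{r+s}/(U_r\sigma^s)$ used in the proof of Theorem \ref{thm.x9ibwgc} becomes $-U_r\sigma^r/U_{2r}=-\sigma^r/V_r$, which is exactly your choice, so the two computations coincide at bottom. What your version buys is self-containedness and explicit hygiene about hypotheses ($x\neq -1$ and $V_r\neq 0$, both of which you verify and which the paper's ``and simplify'' leaves implicit, along with the tacit $U_r\neq 0$ needed to clear $U_r^n$ in the paper's route); what the paper's version buys is brevity and reuse of an already-established general theorem. Both proofs are sound.
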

	\begin{proof}
		Make the substitutions $r\mapsto 2r$, $s\mapsto -r$ and $t\mapsto -rn$ in Theorem \ref{thm.x9ibwgc} and simplify.
	\end{proof}
	In particular,
		\begin{align*}
		\sum_{k = 0}^n (-q^r)^k \binom{m - n + k}{k}  V_{r(n - 2k)} &= \sum_{k = 0}^n (- 1)^k \binom{m + 1}{k} V_r^{n - k} V_{rk},\\
		\sum_{k = 0}^n (-q^r)^{k } \binom{m - n + k}{k} U_{r(n - 2k)} &= \sum_{k = 0}^n (- 1)^{k + 1} \binom{m + 1}{k} V_r^{n - k} U_{rk};
		\end{align*}
	with the special cases
		\begin{align*}
		&\sum_{k = 0}^n (- 1)^{k(r + 1)} \binom{m - n + k}{k} L_{r(n - 2k)} = \sum_{k = 0}^n (- 1)^k \binom{m + 1}{k} L_r^{n - k} L_{rk},\\
		&\sum_{k = 0}^n (- 1)^{k(r + 1)} \binom{m - n + k}{k} F_{r(n - 2k)} = \sum_{k = 0}^n (- 1)^{k + 1} \binom{m + 1}{k} L_r^{n - k} F_{rk}.
		\end{align*}
		
	By making appropriate substitutions in Theorem \ref{thm.x9ibwgc} many new sum relations can be established. 
	For example, setting $r=1$, $s=-2$, and $t=1$ (or $r=-1$, $s=2$ and $t=-1$) in \eqref{eq.pj6jm1d} gives
		\begin{equation*}
		\sum_{k = 0}^n ( - 1)^k \binom{m - n + k}{k}  L_{n - k + 1} = \sum_{k = 0}^n ( - 1)^k \binom{m + 1}{k} L_{2(n - k) + 1} 
		\end{equation*}
	which at $m=2n$ gives
		\begin{equation}\label{eq.rqep30p}
		\sum_{k = 0}^n ( - 1)^k \binom{n + k}{k} L_{n - k + 1} = \sum_{k = 0}^n ( - 1)^k \binom{2n + 1}{k} L_{2(n - k) + 1}.
		\end{equation}
		
	The corresponding Fibonacci sums from \eqref{eq.xgtk3bf} are of exactly the same structure
		\begin{equation*}
		\sum_{k = 0}^n ( - 1)^k \binom{m - n + k}{k} F_{n - k + 1} = \sum_{k = 0}^n ( - 1)^k \binom{m + 1}{k} F_{2(n - k) + 1} 
		\end{equation*}
		with the special case 
		\begin{equation}\label{eq.rqep30q}
		\sum_{k = 0}^n ( - 1)^k \binom{n + k}{k} F_{n - k + 1} = \sum_{k = 0}^n ( - 1)^k \binom{2n + 1}{k} F_{2(n - k) + 1}.
		\end{equation}
		
	Another example is the relation
		\begin{equation*}
		\sum_{k = 0}^n ( - 1)^k \binom{m - n + k}{k} L_{n - 3k} = \sum_{k = 0}^n ( - 1)^k \binom{m + 1}{k} 2^{n-k} L_{2k} 
		\end{equation*}
		which at $m=2n$ gives
		\begin{equation}
		\sum_{k = 0}^n (-1)^k \binom{n + k}{k}  L_{n - 3k} = \sum_{k = 0}^n (-1)^k \binom{2n + 1}{k} 2^{n-k} L_{2k},
		\end{equation}
		and its Fibonacci counterparts:
		\begin{equation*}
		\sum_{k = 0}^n ( - 1)^k \binom{m - n + k}{k} F_{n - 3k} = \sum_{k = 1}^n ( - 1)^{k-1} \binom{m + 1}{k} 2^{n-k} F_{2k},
		\end{equation*}
		\begin{equation*}
		\sum_{k = 0}^n ( - 1)^k \binom{n + k}{k} F_{n - 3k} = \sum_{k = 1}^n (- 1)^{k-1} \binom{2n + 1}{k}  2^{n-k} F_{2k}.
		\end{equation*}
		\begin{theorem}\label{thm.x9ibwgd}
		If $m$ and $n$ are non-negative integers and $r, s$ are any integers, then
		\begin{equation*}
			\sum_{k = 0}^n ( - q^s)^k \binom{m - n + k}{k} U_r^k U_s^{n - k} W_{r(n - k) - sk + t}   = \sum_{k = 0}^n  ( - q^s)^k \binom{m + 1}{k}  U_r^k U_{r + s}^{n - k} W_{t - sk}.
		\end{equation*}
		\end{theorem}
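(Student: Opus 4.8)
The plan is to follow the template of the proof of Theorem~\ref{thm.x9ibwgc}: specialise Laplace's identity \eqref{eq.wl5p3v8} at a carefully chosen value of $x$, simplify the factor $1+x$ by means of the index-shift relations of Lemma~\ref{lem.ydalnfx}, clear denominators, and finally recombine two conjugate specialisations through the Binet formula \eqref{bine.uvw}.

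Concretely, I would set
\[
x = -\frac{q^s U_r}{U_{r+s}\,\tau^s}
\]
in \eqref{eq.wl5p3v8}. Writing $q^s=\tau^s\sigma^s$ gives $U_{r+s}\tau^s-q^sU_r=\tau^s(U_{r+s}-\sigma^sU_r)$, and the identity \eqref{eq.zy0gfyn} read with $r$ and $s$ interchanged, namely $U_{r+s}-\sigma^sU_r=\tau^rU_s$, collapses this to $U_{r+s}\tau^s-q^sU_r=\tau^{r+s}U_s$. Hence $1+x=\tau^rU_s/U_{r+s}$, so that
\[
(1+x)^{n-k}x^k=(-q^s)^k\,\tau^{\,r(n-k)-sk}\,\frac{U_r^kU_s^{\,n-k}}{U_{r+s}^{\,n}}.
\]
Multiplying the resulting instance of \eqref{eq.wl5p3v8} through by $U_{r+s}^{\,n}\tau^t$ clears the denominator and, crucially, turns the bare $x^k$ on the right-hand side into $U_r^kU_{r+s}^{\,n-k}\tau^{t-sk}$, which is the source of the exponent $n-k$ on $U_{r+s}$ in the statement. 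This yields
\[
\sum_{k=0}^n(-q^s)^k\binom{m-n+k}{k}U_r^kU_s^{\,n-k}\tau^{\,r(n-k)-sk+t}=\sum_{k=0}^n(-q^s)^k\binom{m+1}{k}U_r^kU_{r+s}^{\,n-k}\tau^{t-sk}.
\]
Repeating the computation with $\tau$ replaced by $\sigma$ throughout---that is, with $x=-q^sU_r/(U_{r+s}\sigma^s)$ and the $r\leftrightarrow s$ form of \eqref{eq.iamiky1}, $U_{r+s}-\tau^sU_r=\sigma^rU_s$---produces the conjugate identity with every $\tau$ turned into $\sigma$.

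To finish, I would multiply the $\tau$-identity by $A$, the $\sigma$-identity by $B$, and add. The point to emphasise is that the scalar weight $(-q^s)^k$ carries no dependence on the root, so on each side the root-dependent factors recombine directly as $A\tau^j+B\sigma^j=W_j$ via \eqref{bine.uvw}, with $j=r(n-k)-sk+t$ on the left and $j=t-sk$ on the right; this delivers precisely the claimed relation. I expect the only genuine obstacle to be the discovery of the correct substitution: one must choose $x$ so that $1+x$ is proportional to $U_s$ (which forces the use of the index-swapped relations of Lemma~\ref{lem.ydalnfx}) while simultaneously the denominator is a power of $U_{r+s}$, so that clearing it manufactures the factor $U_{r+s}^{\,n-k}$. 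A secondary point worth noting is that, in contrast to Theorem~\ref{thm.x9ibwgc}, the roles of $\tau$ and $\sigma$ do not become crossed here, so the final combination requires only the Binet formula and not the auxiliary relation \eqref{eq.trp91mj} of Lemma~\ref{lem.w65xm59}.
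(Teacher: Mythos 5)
Your proposal is correct and is essentially the paper's own proof: your substitution $x=-q^sU_r/(U_{r+s}\tau^s)$ is literally the paper's $x=-U_r\sigma^s/U_{r+s}$ (since $q^s/\tau^s=\sigma^s$), your second specialisation matches its $x=-U_r\tau^s/U_{r+s}$, and the same swapped instances of \eqref{eq.zy0gfyn} and \eqref{eq.iamiky1} plus the Binet recombination complete the argument identically. The only difference is presentational: by absorbing $\sigma^{sk}=q^{sk}\tau^{-sk}$ into the root-independent weight $(-q^s)^k$ you keep each specialisation in powers of a single root, whereas the paper's displayed intermediate identities retain the crossed factor $q^t\sigma^{sk-t}$ (which recombines via \eqref{eq.trp91mj}); the two formulations are term-by-term equal.
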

	\begin{proof}
		Set $x=-U_r\sigma^s/U_{r + s}$ in \eqref{eq.wl5p3v8}, use \eqref{eq.zy0gfyn} and multiply through by $\tau^t$, obtaining
			\begin{equation*}
			\sum_{k = 0}^n (- 1)^k \binom{m - n + k}kq^{sk} U_r^k U_s^{n - k} \tau^{rn - (r + s)k + t} 
			= q^t \sum_{k = 0}^n (- 1)^k \binom{m + 1}{k} U_r^k U_{r + s}^{n - k} \sigma^{sk - t}.
			\end{equation*}
			Similarly, setting $x=-U_r\tau^s/U_{r + s}$ in \eqref{eq.wl5p3v8}, using \eqref{eq.pvdw5ja} and multiplying through by $\sigma^t$, yields
			\begin{equation*}
			\sum_{k = 0}^n (- 1)^k \binom{m - n + k}kq^{sk} U_r^k U_s^{n - k} \sigma^{rn - (r + s)k + t} 
			= q^t \sum_{k = 0}^n (- 1)^k \binom{m + 1}{k} U_r^k U_{r + s}^{n - k} \tau^{sk - t}.
			\end{equation*}
		
			Now, the result follows immediately upon combining according to the Binet formulas~\eqref{bine.uvw}.
	\end{proof}
	
		In particular,
		\begin{align}
		&\sum_{k = 0}^n {( -q^s )^{k} \binom{m - n + k}{k}  U_r^k U_s^{n - k} V_{rn - (r + s)k + t} } = q^{t}\sum_{k = 0}^n ( - 1)^{k} \binom{m + 1}{k} U_r^k U_{r + s}^{n - k} V_{sk - t} ,\label{eq.fijjvrb}\\
		&\sum_{k = 0}^n {( - q^s)^{k} \binom{m - n + k}{k}  U_r^k U_s^{n - k} U_{rn - (r + s)k + t} }  =- q^{t}\sum_{k = 0}^n ( - 1)^{k} \binom{m + 1}{k} U_r^k U_{r + s}^{n - k} U_{sk - t}\label{eq.o39prie};
		\end{align}
		with the special cases 
		\begin{align*}
		\sum_{k = 0}^n {( - 1)^{k(s - 1)} \binom{m - n + k}kF_r^k F_s^{n - k} L_{rn - (r + s)k + t} } &= \sum_{k = 0}^n ( - 1)^{k(s-1)} \binom{m + 1}kF_r^k F_{r + s}^{n - k} L_{t-sk} ,\\
		\sum_{k = 0}^n {( - 1)^{k(s - 1)} \binom{m - n + k}k F_r^k F_s^{n - k} F_{rn - (r + s)k + t} } &= \sum_{k = 0}^n {( - 1)^{k(s-1)} \binom{m + 1}kF_r^k F_{r + s}^{n - k} F_{t-sk} }.
		\end{align*}
		\begin{corollary}
		If $n$ is a non-negative integer and $r$ and $s$ are any integers, then
				\begin{align*}
			&\sum_{k = 0}^n (- 1)^k \binom{n + 1}{k} U_r^k U_{r + s}^{n - k} V_{sk - t}
			= \frac{1}{q^t}\sum_{k = 0}^n (- q^s)^{k} U_r^k U_s^{n - k} V_{rn - (r + s)k + t}, \\
			&\sum_{k = 0}^n (- 1)^k \binom{n + 1}{k} U_r^k U_{r + s}^{n - k} U_{sk - t}
			= -\frac{1}{q^t}\sum_{k = 0}^n (- q^s)^{k} U_r^k U_s^{n - k} U_{rn - (r + s)k + t}.
			\end{align*}
		\end{corollary}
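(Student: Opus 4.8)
The plan is to specialize Theorem~\ref{thm.x9ibwgd} at $m=n$, exactly as in the proofs of the three preceding corollaries. The key mechanism is the collapse of the left-hand binomial coefficient: at $m=n$ one has $\binom{m-n+k}{k}=\binom{k}{k}=1$, so the left-hand sum loses its binomial weight and reduces to the unweighted form that appears on the right of each identity in the statement. The cleanest route is to work not from the general Horadam version but from its two displayed specializations \eqref{eq.fijjvrb} and \eqref{eq.o39prie}, in which $W$ has already been replaced by $V$ and $U$.

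Concretely, I would first put $m=n$ in \eqref{eq.fijjvrb}. The left side becomes $\sum_{k=0}^{n}(-q^s)^{k}U_r^{k}U_s^{n-k}V_{rn-(r+s)k+t}$, while the right side is $q^{t}\sum_{k=0}^{n}(-1)^{k}\binom{n+1}{k}U_r^{k}U_{r+s}^{n-k}V_{sk-t}$. Dividing through by $q^{t}$ and reading the resulting equation from right to left gives precisely the first claimed identity. The same substitution in \eqref{eq.o39prie}, followed by division by $-q^{t}$, produces the second identity, the extra factor $-1$ being exactly the leading minus sign recorded in the statement.

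If instead one prefers to start from Theorem~\ref{thm.x9ibwgd} itself, setting $m=n$ yields $\sum_{k=0}^{n}(-q^s)^{k}U_r^{k}U_s^{n-k}W_{rn-(r+s)k+t}=\sum_{k=0}^{n}(-q^s)^{k}\binom{n+1}{k}U_r^{k}U_{r+s}^{n-k}W_{t-sk}$, after which one specializes $W$ to $V$ and to $U$. The passage from $W_{t-sk}$ to $V_{sk-t}$ and $U_{sk-t}$ is handled by the evenness/oddness relations $V_{-j}=q^{-j}V_j$ and $U_{-j}=-q^{-j}U_j$ (both immediate from the Binet formulas \eqref{bine.uvw} and $\tau\sigma=q$): writing $t-sk=-(sk-t)$ converts $V_{t-sk}=q^{t-sk}V_{sk-t}$ and $U_{t-sk}=-q^{t-sk}U_{sk-t}$, and the factor $q^{t-sk}$ combines with $(-q^s)^{k}$ to give $(-1)^{k}q^{t}$ in the $V$ case and $-(-1)^{k}q^{t}$ in the $U$ case. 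I expect no genuine obstacle: the argument is pure bookkeeping, and the only point deserving a moment's care is tracking this power of $q$ and the accompanying sign, since these are exactly what yield the $1/q^{t}$ prefactor and the leading minus sign in the two formulas.
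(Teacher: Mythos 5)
Your proposal is correct and takes essentially the same route as the paper, whose entire proof reads ``Set $m=n$ in \eqref{eq.fijjvrb} and \eqref{eq.o39prie}''; at $m=n$ the left-hand binomial coefficient collapses to $\binom{k}{k}=1$ exactly as you describe, and dividing by $q^t$ (respectively $-q^t$) yields the two stated identities. Your alternative derivation directly from Theorem~\ref{thm.x9ibwgd} via $V_{-j}=q^{-j}V_j$ and $U_{-j}=-q^{-j}U_j$ is also sound and correctly reproduces the sign and $q$-power bookkeeping.
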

	\begin{proof}
		Set $m=n$ in \eqref{eq.fijjvrb} and \eqref{eq.o39prie}.
	\end{proof}
	In particular,
		\begin{align*}
		&\sum_{k = 0}^n (- 1)^{k(s-1)} \binom{n + 1}{k} F_r^k F_{r + s}^{n - k} L_{t-sk}
		=  \sum_{k = 0}^n (- 1)^{k(s- 1)} F_r^k F_s^{n - k} L_{rn - (r + s)k + t}, \\
		&\sum_{k = 0}^n (- 1)^{k(s-1)} \binom{n + 1}{k} F_r^k F_{r + s}^{n - k} F_{t-sk}
		= \sum_{k = 0}^n (- 1)^{k(s - 1)} F_r^k F_s^{n - k} F_{rn - (r + s)k + t}.
		\end{align*}
		
	We mention that setting $m=n-1$ in Theorem \ref{thm.x9ibwgd} gives again Corollary \ref{corxxx1}.
	\begin{corollary} If $m$ and $n$ are non-negative integers and $r$ is any integer, then
				\begin{align*}
			&\sum_{k = 0}^n \binom{m - n + k}{k} V_r^k V_{r(n - k)} = \sum_{k = 0}^n (- 1)^{n-k} \binom{m + 1}{k} V_r^{k} V_{r(n-k)}, \\
			&\sum_{k = 0}^n \binom{m - n + k}{k} V_r^k U_{r(n - k)} = \sum_{k = 0}^n (- 1)^{n-k+1} \binom{m + 1}{k} V_r^{k} U_{r(n-k)}.
			\end{align*}
		\end{corollary}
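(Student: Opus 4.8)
The plan is to obtain both identities directly from the Laplace identity \eqref{eq.wl5p3v8}, in the same spirit as the proofs of Theorems \ref{thm.x9ibwgc} and \ref{thm.x9ibwgd}, but choosing the substitution for $x$ so that the factor $V_r = \tau^r + \sigma^r$ is produced rather than a single power of $U$. Concretely, I would first set $x = -V_r/\sigma^r$ in \eqref{eq.wl5p3v8}. Since $V_r = \tau^r + \sigma^r$ by \eqref{bine.uvw}, one has $1 + x = -\tau^r/\sigma^r$, so that $x \ne -1$ (equality would force $\tau^r = 0$) and the identity applies. A short computation gives $(1+x)^{n-k}x^k = (-1)^n\sigma^{-rn}V_r^k\tau^{r(n-k)}$, so after multiplying the whole identity through by $(-1)^n\sigma^{rn}$ the left side collapses to $\sum_{k}\binom{m-n+k}{k}V_r^k\tau^{r(n-k)}$ and the right side to $\sum_k(-1)^{n-k}\binom{m+1}{k}V_r^k\sigma^{r(n-k)}$, which I record as relation (I).

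Interchanging the roles of $\tau$ and $\sigma$, that is, taking $x = -V_r/\tau^r$ (so that $1+x=-\sigma^r/\tau^r$) and multiplying by $(-1)^n\tau^{rn}$, yields the companion relation (II), which is simply (I) with $\tau$ and $\sigma$ swapped on each side.

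To finish, I would combine (I) and (II) through the Binet formulas \eqref{bine.uvw}. Adding them and using $\tau^{r(n-k)}+\sigma^{r(n-k)} = V_{r(n-k)}$ on both sides produces the first ($V$) identity. Subtracting (II) from (I), dividing by $\sqrt\Delta$, and using $\tau^{r(n-k)}-\sigma^{r(n-k)} = \sqrt\Delta\,U_{r(n-k)}$ produces the second ($U$) identity; the extra minus sign generated when the order of $\sigma^{r(n-k)}$ and $\tau^{r(n-k)}$ is reversed on the right is exactly what turns the factor $(-1)^{n-k}$ into $(-1)^{n-k+1}$.

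The only delicate point is the sign bookkeeping: tracking the global $(-1)^n$ coming from $(1+x)^{n-k}x^k$, using $(-1)^{n+k}=(-1)^{n-k}$ to tidy the right-hand sides, and keeping the sign reversal straight in the subtraction step. Everything is derived under the non-degenerate hypothesis $\Delta>0$ required for \eqref{bine.uvw}, but since both sides are polynomials in $p$ and $q$, the identities persist for all admissible $p,q$; the Fibonacci–Lucas specializations then follow on putting $p=1$, $q=-1$.
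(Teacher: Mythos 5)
Your proof is correct, but it follows a different path from the paper's. The paper obtains this corollary in one line by specializing the already-proved relations \eqref{eq.fijjvrb} and \eqref{eq.o39prie} (the $V$- and $U$-instances of Theorem \ref{thm.x9ibwgd}) via the substitutions $r\mapsto 2r$, $s\mapsto -r$, $t\mapsto -rn$, and then simplifying with $U_{2r}=U_rV_r$ and $U_{-r}=-q^{-r}U_r$, which produces a common factor $q^{-rn}U_r^n$ on both sides that is cancelled. You instead return directly to the Laplace identity \eqref{eq.wl5p3v8} with the fresh substitutions $x=-V_r/\sigma^r$ and $x=-V_r/\tau^r$; your computation $(1+x)^{n-k}x^k=(-1)^n\sigma^{-rn}V_r^k\tau^{r(n-k)}$ is right, your relations (I) and (II) check out, and the addition/subtraction step via the Binet formulas, including the sign reversal that converts $(-1)^{n-k}$ into $(-1)^{n-k+1}$ in the $U$-identity, is handled correctly. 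Both arguments ultimately rest on the same Lemma \ref{lem.v28pabt}, so the difference is in the derivation path rather than the key tool, but your path buys something concrete: the paper's simplification tacitly divides by $U_r^n$, which requires $U_r\neq 0$ (so, strictly, $r\neq 0$ in the nondegenerate setting, with $r=0$ recovered by a separate check or continuity), whereas your substitution never introduces $U_r$ at all and covers every integer $r$ uniformly; your closing observation that the identities extend beyond $\Delta>0$ by polynomiality in $p,q$ is also sound and addresses a point the paper leaves implicit. The trade-off is length: given the machinery already established, the paper's specialization is a one-line proof, while yours is a self-contained derivation from the base lemma.
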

	\begin{proof}
		Make the substitutions $r\mapsto 2r$, $s\mapsto -r$, $t\mapsto -rn$ in \eqref{eq.fijjvrb}, \eqref{eq.o39prie} and simplify.
	\end{proof}
	
	In particular,
		\begin{align*}
		&\sum_{k = 0}^n \binom{m - n + k}{k} L_r^k L_{r(n - k)} = \sum_{k = 0}^n (- 1)^{n-k} \binom{m + 1}{k} L_r^{k} L_{r(n-k)}, \\
		&\sum_{k = 0}^n \binom{m - n + k}{k} L_r^k F_{r(n - k)} = \sum_{k = 0}^n (- 1)^{n-k+1} \binom{m + 1}{k} L_r^{k} F_{r(n-k)}.
		\end{align*}
		\begin{theorem}
		If $m$ and $n$ are non-negative integers and $s, t$ are integers, then
		\begin{align*}
			&\sum_{k = 0}^{2n} \binom{m - 2n + k}k2^{2n - k} V_s^k W_{s(2n - k) + t} = W_t \sum_{k = 0}^n \binom{m + 1}{2k} \Delta ^{2(n-k)} U_s^{2(n - k)} V_s^{2k}  \\
			&\qquad\qquad\qquad + \big( {W_{t + 1}  - qW_{t - 1} } \big)\sum_{k = 1}^n \binom{m + 1}{2k - 1} \Delta ^{2 (n - k) } U_s^{2(n - k) + 1} V_s^{2k - 1},\\		
			&\sum_{k = 0}^{2n - 1} \binom{m - 2n + k + 1}k2^{2n - k - 1} V_s^k W_{s(2n - k - 1) + t} = 
			W_t \sum_{k = 1}^n \binom{m + 1}{2k - 1} \Delta ^{2(n - k)} U_s^{2(n - k)} V_s^{2k - 1}  \\
			&\qquad\qquad\qquad +\big( W_{t + 1}  - qW_{t - 1} \big)\sum_{k = 0}^{n - 1} \binom{m + 1}{2k} \Delta ^{2(n-k-1)} U_s^{2(n - k) - 1} V_s^{2k} .
			\end{align*}
	\end{theorem}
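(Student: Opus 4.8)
The plan is to specialize the Laplace--Gould identity \eqref{eq.wl5p3v8} at two conjugate values of $x$ and then recombine the two instances through the Binet representation $W_j = A\tau^j + B\sigma^j$ from \eqref{bine.uvw}. Set $N = 2n$ for the first relation and $N = 2n-1$ for the second, so that in either case \eqref{eq.wl5p3v8}, with $n$ replaced by $N$, reads
$$\sum_{k=0}^N \binom{m-N+k}{k}(1+x)^{N-k}x^k = \sum_{k=0}^N \binom{m+1}{k}x^k.$$
The first thing I would record is that subtracting the two identities in \eqref{eq.u9uuagc}, and separately the two in \eqref{eq.dkvfyiz}, gives $2\tau^s = V_s + \Delta U_s$ and $2\sigma^s = V_s - \Delta U_s$. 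Hence the choice $x = V_s/(\Delta U_s)$ produces $1+x = 2\tau^s/(\Delta U_s)$, while $x = -V_s/(\Delta U_s)$ produces $1+x = -2\sigma^s/(\Delta U_s)$. We may assume $U_s \neq 0$ here; the final relations are polynomial in the sequence entries, so this restriction is removable at the end.

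I would then take the first specialization, multiply it through by $A\tau^t(\Delta U_s)^N$, take the second, and multiply it through by $(-1)^N B\sigma^t(\Delta U_s)^N$. On the left-hand sides every power of $\Delta U_s$ cancels: the first contributes $A\tau^t(2\tau^s)^{N-k}V_s^k$, and the second contributes $B\sigma^t(2\sigma^s)^{N-k}V_s^k$, where the prefactor $(-1)^N$ absorbs the sign coming from $(1+x)^{N-k}x^k = (-1)^N(2\sigma^s)^{N-k}V_s^k/(\Delta U_s)^N$. Adding the two resulting equations and reading $A\tau^{s(N-k)+t}+B\sigma^{s(N-k)+t}$ as $W_{s(N-k)+t}$ collapses the left-hand side to exactly $\sum_{k=0}^N \binom{m-N+k}{k}2^{N-k}V_s^k W_{s(N-k)+t}$, which is the left member of the relevant identity.

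It remains to recognize the right-hand sides. After the same two multiplications the right members of \eqref{eq.wl5p3v8} combine into
$$\sum_{j=0}^N \binom{m+1}{j}(\Delta U_s)^{N-j}V_s^j\big(A\tau^t + (-1)^{N+j}B\sigma^t\big).$$
I would split this according to the parity of $j$, using $A\tau^t + B\sigma^t = W_t$ for the symmetric part and $A\tau^t - B\sigma^t = (W_{t+1}-qW_{t-1})/\Delta$, which is \eqref{eq.p951mmh}, for the antisymmetric part. When $N = 2n$ the sign $(-1)^{N+j}$ equals $(-1)^j$, so even $j$ yields $W_t$ and odd $j$ yields $W_{t+1}-qW_{t-1}$; when $N = 2n-1$ the sign is reversed, which is exactly why the roles of $W_t$ and $W_{t+1}-qW_{t-1}$ are interchanged in the second identity. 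Writing $j=2k$ in the symmetric block and $j=2k-1$ in the antisymmetric block, and letting the single factor $1/\Delta$ furnished by \eqref{eq.p951mmh} lower the power of $\Delta$ by one in the antisymmetric terms, reproduces the displayed exponents $\Delta^{2(n-k)}U_s^{2(n-k)}V_s^{2k}$ and $\Delta^{2(n-k)}U_s^{2(n-k)+1}V_s^{2k-1}$, together with their analogues for $N=2n-1$, verbatim.

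The one place demanding care is the sign and exponent bookkeeping. The factor $(-1)^N$ hidden inside $(1+x)^{N-k}$ for the conjugate substitution is what forces the $W_t \leftrightarrow (W_{t+1}-qW_{t-1})$ swap between the two relations, so the parities must be tracked attentively; and one must follow how the lone $\Delta$ introduced by \eqref{eq.p951mmh} shifts the power of $\Delta$ so that the even and odd blocks end up with precisely the stated exponents. Once these are handled, the remaining manipulations are routine reindexing.
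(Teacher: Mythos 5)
Your proposal is correct and follows essentially the same route as the paper: the authors likewise substitute $x=\pm V_s/(\Delta U_s)$ into \eqref{eq.wl5p3v8}, multiply through by $\tau^t$ and $\sigma^t$ respectively, combine the two instances according to the Binet representation $W_j=A\tau^j+B\sigma^j$ together with \eqref{eq.p951mmh}, and then treat the cases $n\mapsto 2n$ and $n\mapsto 2n-1$ in turn. Your explicit clearing of the factor $(\Delta U_s)^N$, the sign bookkeeping via $(-1)^{N+j}$, and the parity split merely spell out what the paper compresses into ``combine according to the Binet formula,'' so nothing is missing.
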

	\begin{proof}
		Set $x=V_s/(\Delta U_s)$ in \eqref{eq.wl5p3v8} and multiply through by $\tau^t$ to obtain
			\begin{equation}\label{eq.tbwsjya}
			\begin{split}
			\sum_{k = 0}^n & \binom{m - n + k}k2^{n - k} \tau ^{s(n - k) + t} V_s^k\\ & = \tau ^t \sum_{k = 0}^{\left\lfloor {n/2} \right\rfloor } \binom{m + 1}{2k} \Delta ^{n - 2k} 
			U_s^{n - 2k} V_s^{2k}  + \tau ^t \sum_{k = 1}^{\left\lceil {n/2} \right\rceil } \binom{m + 1}{2k - 1} \Delta ^{n - 2k + 1} U_s^{n - 2k + 1}  V_s^{2k - 1}  .
			\end{split}
			\end{equation}
		
		Similarly, set $x=-V_s/(\Delta U_s)$ in \eqref{eq.wl5p3v8} and multiply through by $\sigma^t$ to obtain
			\begin{equation}\label{eq.n0yvnnm}
			\begin{split}
			&(-1)^n\sum_{k = 0}^n  \binom{m - n + k}k2^{n - k} \sigma ^{s(n - k) + t} V_s^k \\  
			&\qquad = \sigma ^t \sum_{k = 0}^{\left\lfloor {n/2} \right\rfloor } \binom{m + 1}{2k} \Delta ^{n - 2k} U_s^{n - 2k}  V_s^{2k} - \sigma ^t \sum_{k = 1}^{\left\lceil {n/2} \right\rceil } \binom{m + 1}{2k - 1} \Delta ^{n - 2k + 1} U_s^{n - 2k + 1}  V_s^{2k - 1}.
			\end{split}
			\end{equation}
Combine \eqref{eq.tbwsjya} and \eqref{eq.n0yvnnm} according to the Binet formula while making use also of \eqref{eq.p951mmh}. Consider the cases $n\mapsto 2n$ and $n\mapsto 2n - 1$, in turn.
	\end{proof}
	
	In particular,
		\begin{equation}\label{eq.mkt8u24}
		\begin{split}
		&\sum_{k = 0}^{2n} \binom{m - 2n + k}k 2^{2n - k} V_s^k V_{s(2n - k) + t} \\
		&\quad= V_t \sum_{k = 0}^n \binom{m + 1}{2k} \Delta^{2(n-k)} U_s^{2(n-k)} V_s^{2k}  + U_t\sum_{k = 1}^n \binom{m + 1}{2k - 1} \Delta^{2(n-k+1)} U_s^{2(n - k) + 1} V_s^{2k - 1},\\
		\end{split}
		\end{equation}
		\begin{equation}
		\begin{split}
		&\sum_{k = 0}^{2n} {\binom{m - 2n + k}k2^{2n - k} V_s^k U_{s(2n - k) + t} }\\
		&\quad= U_t \sum_{k = 0}^n \binom{m + 1}{2k} \Delta ^{2(n-k)} U_s^{2(n-k)} V_s^{2k}   + V_t \sum_{k = 1}^n \binom{m + 1}{2k - 1} \Delta^{2(n-k)} U_s^{2(n - k) + 1} V_s^{2k - 1} ,
		\end{split}
		\end{equation}
		\begin{equation}
		\begin{split}
		&\sum_{k = 0}^{2n - 1} {\binom{m - 2n + k + 1}k2^{2n - k - 1} V_s^k V_{s(2n - k - 1) + t} }\\
		&\quad  = U_t\sum_{k = 0}^{n - 1} \binom{m + 1}{2k} \Delta ^{2(n - k)} U_s^{2(n - k) - 1} V_s^{2k}   + V_t \sum_{k = 1}^n \binom{m + 1}{2k - 1} \Delta ^{2(n - k)} U_s^{2(n - k)} V_s^{2k - 1} 
		\end{split}
		\end{equation}
		and
		\begin{equation}\label{eq.agyow83}
		\begin{split}
		&\sum_{k = 0}^{2n - 1} {\binom{m - 2n + k + 1}k2^{2n - k - 1} V_s^k U_{s(2n - k - 1) + t} }\\
		&\,\,\,\, = V_t\sum_{k = 0}^{n - 1} \binom{m + 1}{2k} \Delta^{2(n - k - 1)} U_s^{2(n - k) - 1} V_s^{2k}   + U_t \sum_{k = 1}^n \binom{m + 1}{2k - 1} \Delta ^{2(n - k)} U_s^{2(n - k)} V_s^{2k - 1} .
		\end{split}
		\end{equation}
	with the special cases
		\begin{align*}
		&\sum_{k = 0}^{2n} {\binom{m - 2n + k}k2^{2n - k} L_s^k L_{s(2n - k) + t} }  \\ 
		&\qquad= L_t \sum_{k = 0}^n \binom{m + 1}{2k} 5^{n - k} L_s^{2k} F_s^{2(n - k)}   + F_t \sum_{k = 1}^n \binom{m + 1}{2k - 1} 5^{n - k + 1} L_s^{2k - 1} F_s^{2(n - k) + 1},\\
		&\sum_{k = 0}^{2n} {\binom{m - 2n + k}k 2^{2n - k} L_s^k F_{s(2n - k) + t} }\\
		&\qquad= F_t \sum_{k = 0}^n \binom{m + 1}{2k} 5^{n - k} L_s^{2k} F_s^{2(n - k)}  + L_t \sum_{k = 1}^n {\binom{m + 1}{2k - 1} 5^{n - k} L_s^{2k - 1} F_s^{2(n - k) + 1}  },\\
		&\sum_{k = 0}^{2n - 1} {\binom{m - 2n + k+1}{k} 2^{2n - k -1} L_s^k L_{s(2n - k - 1) + t} }\\
		&\qquad = F_t \sum_{k = 0}^{n - 1} \binom{m + 1}{2k} 5^{n - k} L_s^{2k} F_s^{2(n -k)- 1}  
		+ L_t \sum_{k = 1}^n \binom{m + 1}{2k - 1} 5^{n - k} L_s^{2k - 1} F_s^{2(n - k)},\\
		&\sum_{k = 0}^{2n - 1} {\binom{m - 2n + k + 1}k2^{2n - k - 1 } L_s^k F_{s(2n -k - 1) + t} }\\
		&\qquad = L_t \sum_{k = 0}^{n - 1} \binom{m + 1}{2k} 5^{n - k - 1} L_s^{2k} F_s^{2(n - k)-1 }   + F_t \sum_{k = 1}^n \binom{m + 1}{2k - 1} 5^{n - k} 
		L_s^{2k - 1} F_s^{2(n - k)}.
		\end{align*}
	
Note that in \eqref{eq.mkt8u24}--\eqref{eq.agyow83}, we used (see, for example, \cite[Identities (1.16), (1.17)]{Adegoke0})
		\begin{equation}\label{eq.zqa5sbx}
		U_{t + 1} - qU_{t - 1}=V_t,\qquad V_{t + 1} - qV_{t - 1}=\Delta^2U_t.
		\end{equation}
	\begin{lemma}
		If $x$ is a complex variable and $m, n$ are non-negative integers, then
		\begin{equation}\label{eq.g85la9i}
			\sum_{k = 0}^n {\binom{m - n + k}kx^k }  = \sum_{k = 0}^n {\binom{m + 1}kx^k (1 - x)^{n - k}}
			\end{equation}
\end{lemma}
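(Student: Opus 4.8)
The plan is to derive \eqref{eq.g85la9i} directly from the classical Laplace identity \eqref{eq.wl5p3v8} of Lemma \ref{lem.v28pabt} by a single rational change of variable, rather than by manipulating binomial coefficients. To keep the bookkeeping clean I would first rewrite \eqref{eq.wl5p3v8} with a dummy variable $z$ in place of $x$, so that it reads $\sum_{k=0}^n \binom{m-n+k}{k}(1+z)^{n-k} z^k = \sum_{k=0}^n \binom{m+1}{k} z^k$, valid for all $z \ne -1$.

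The key step is the substitution $z = x/(1-x)$, legitimate for $x \ne 1$. Under this substitution one computes $1 + z = 1/(1-x)$, so that on the left-hand side the factor $(1+z)^{n-k} z^k$ becomes $(1-x)^{-(n-k)} x^k (1-x)^{-k} = x^k (1-x)^{-n}$, while on the right-hand side $z^k$ becomes $x^k (1-x)^{-k}$. Crucially, the exponent $-n$ produced on the left is independent of $k$, so the whole factor $(1-x)^{-n}$ pulls outside the first sum. I would then multiply the resulting identity through by $(1-x)^n$: the left-hand sum collapses to $\sum_{k=0}^n \binom{m-n+k}{k} x^k$, and the right-hand sum becomes $\sum_{k=0}^n \binom{m+1}{k} x^k (1-x)^{n-k}$, which is exactly \eqref{eq.g85la9i}.

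Two small points then finish the argument. First, the excluded value $z = -1$ of \eqref{eq.wl5p3v8} never arises: the equation $x/(1-x) = -1$ forces $0 = -1$, so the substitution is compatible with the hypothesis of Lemma \ref{lem.v28pabt} for every $x \ne 1$. Second, the derivation so far establishes \eqref{eq.g85la9i} only for $x \ne 1$, whereas the statement allows $x$ to be any complex number. Since both sides of \eqref{eq.g85la9i} are polynomials in $x$ of degree at most $n$, and they agree on the infinite set $\mathbb{C}\setminus\{1\}$, they coincide as polynomials and hence agree also at $x = 1$.

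I do not expect any genuine obstacle here; the only place demanding care is the exponent arithmetic in the change of variable, where the cancellation $(1-x)^{-(n-k)}(1-x)^{-k} = (1-x)^{-n}$ is precisely what makes the common factor separate from the summation index. Everything else is routine, and the polynomial-identity argument removes the spurious restriction $x \ne 1$ inherited from the substitution.
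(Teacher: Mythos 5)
Your proof is correct and is essentially the paper's own argument: the paper proves \eqref{eq.g85la9i} by the transformation $\frac{x}{1+x}\mapsto x$ in \eqref{eq.wl5p3v8}, which is exactly your substitution $z = x/(1-x)$. Your added care about the excluded value $z=-1$ and the extension to $x=1$ via the polynomial-identity argument merely fills in details the paper leaves implicit.
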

	\begin{proof}
		Use the transformation $\frac{x}{1 + x}\mapsto x$ in \eqref{eq.wl5p3v8}.
	\end{proof}
	\begin{theorem}
		If $m$ and $n$ are non-negative integers and $s, t$ are integers, then
		\begin{equation}\label{eq.nu2jmg5}
			\begin{split}
			&L_t \sum_{k = 0}^n \binom{m - 2n + 2k}{2k}\frac{F_s^{2(n - k)} L_s^{2k}}{5^{k}}   - F_t \sum_{k = 1}^n \binom{m - 2n + 2k - 1}{2k - 1} \frac{F_s^{2(n - k) + 1} L_s^{2k - 1}}{5^{k - 1}} \\
			&\qquad\qquad \qquad\qquad =\Big(\frac{4}{5}\Big)^{n}\sum_{k = 0}^{2n} (-1)^k \binom{m + 1}{k} \frac{L_s^k L_{s(2n - k) + t}}{2^{k}} , 
			\end{split}
			\end{equation}
			\begin{equation}\label{eq.mr53ikq}
			\begin{split}
			&F_t \sum_{k = 0}^n \binom{m - 2n + 2k}{2k} \frac{F_s^{2(n -k)}L_s^{2k}}{5^k}  - L_t \sum_{k = 1}^n \binom{m - 2n + 2k - 1}{2k - 1}\frac{F_s^{2(n - k) + 1} L_s^{2k - 1}}{5^k}  \\
			&\qquad\qquad\qquad\qquad = \Big(\frac{4}{5}\Big)^n\sum_{k = 0}^{2n} ( - 1)^k \binom{m + 1}{k}  \frac{L_s^k F_{s(2n - k) + t}}{2^k}, 
			\end{split}
			\end{equation}
			\begin{equation}\label{eq.zqqebhk}
			\begin{split}
			&L_t \sum_{k = 1}^n \binom{m - 2n + 2k}{2k -1}\frac{F_s^{2(n - k)} L_s^{2k - 1}}{ 5^k}  - F_t \sum_{k = 0}^{n - 1} \binom{m - 2n + 2k + 1}{2k}\frac{F_s^{2(n - k) - 1} L_s^{2k}}{5^k} \\
			&\qquad\qquad \qquad\qquad=\Big(\frac{4}{5}\Big)^n \sum_{k = 0}^{2n - 1} ( - 1)^{k - 1} \binom{m + 1}{k}\frac{L_s^k L_{s(2n - k - 1) + t}}{2^{k+1}}, 
			\end{split}
			\end{equation}
			\begin{equation}\label{eq.n6d13a3}
			\begin{split}
			&F_t \sum_{k = 1}^n \binom{m - 2n + 2k}{2k - 1}\frac{F_s^{2(n - k)} L_s^{2k - 1}} {5^k}  - L_t \sum_{k = 0}^{n - 1} \binom{m - 2n + 2k + 1}{2k}\frac{F_s^{2(n - k) - 1} L_s^{2k}}{5^{k + 1}} \\
			&\qquad\qquad\qquad\qquad = \Big(\frac{4}{5}\Big)^n\sum_{k = 0}^{2n - 1} ( - 1)^{k - 1} \binom{m + 1}{k} \frac{L_s^k F_{s(2n - k - 1) + t}}{2^{k + 1}} . 
			\end{split}
			\end{equation}
	\end{theorem}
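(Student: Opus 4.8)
The plan is to specialize the polynomial identity \eqref{eq.g85la9i} to the golden ratio $\alpha$ and its conjugate $\beta$, in the same spirit as the proof of the preceding theorem, but now exploiting the factor $(1-x)^{n-k}$ on the right-hand side of \eqref{eq.g85la9i} in place of the $(1+x)^{n-k}$ of \eqref{eq.wl5p3v8}. Throughout I write $N$ for the upper summation index of \eqref{eq.g85la9i} and will eventually take $N=2n$ (yielding \eqref{eq.nu2jmg5} and \eqref{eq.mr53ikq}) and $N=2n-1$ (yielding \eqref{eq.zqqebhk} and \eqref{eq.n6d13a3}).

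First I would choose $x$ so that $1-x$ becomes a power of $\alpha$. Setting $x=-L_s/(\sqrt5\,F_s)$ in \eqref{eq.g85la9i} and using the Binet formulas \eqref{bine.fl} in the form $\sqrt5\,F_s+L_s=2\alpha^s$, one obtains $1-x=2\alpha^s/(\sqrt5\,F_s)$; multiplying through by $\alpha^t$ turns the right-hand side into a sum whose general term carries $\alpha^{s(N-j)+t}$. Replacing $x$ by $L_s/(\sqrt5\,F_s)$ gives, via $\sqrt5\,F_s-L_s=-2\beta^s$, the companion relation $1-x=-2\beta^s/(\sqrt5\,F_s)$; multiplying by $\beta^t$ produces the analogous identity with $\beta^{s(N-j)+t}$. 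I would then add and subtract these two identities. On the left the summand acquires the factor $(-1)^j\alpha^t\pm\beta^t$; splitting the index $j$ into its even part $j=2k$ and odd part $j=2k-1$ collapses these, by \eqref{bine.fl}, into the coefficients $L_t$ and $\pm\sqrt5\,F_t$ according to the parity of $j$, which is exactly the mechanism attaching $L_t$ to one sub-sum and $F_t$ to the other. On the right, $\alpha^{s(N-j)+t}\pm\beta^{s(N-j)+t}$ assembles into $L_{s(N-j)+t}$ or $\sqrt5\,F_{s(N-j)+t}$.

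Finally I would clear the denominators $(\sqrt5\,F_s)^j$ by multiplying throughout by a suitable power of $F_s$ (essentially $F_s^{N}$, up to a factor $\sqrt5$), which converts $1/(\sqrt5\,F_s)^{2k}=5^{-k}F_s^{-2k}$ and $1/(\sqrt5\,F_s)^{2k-1}=\sqrt5\,5^{-k}F_s^{-(2k-1)}$ into the stated powers of $5$ and $F_s$, while the residual powers of $2$ and $5$ assemble into the overall factor $(4/5)^n$ together with the per-term denominators $2^k$ (resp.\ $2^{k+1}$) and $5^k$ (resp.\ $5^{k-1}$) recorded in the statements. Taking $N=2n$ then gives \eqref{eq.nu2jmg5} from the $\alpha+\beta$ combination and \eqref{eq.mr53ikq} from $\alpha-\beta$, and taking $N=2n-1$ gives \eqref{eq.zqqebhk} and \eqref{eq.n6d13a3}; the parity of $N$ interchanges which combination produces Lucas numbers and which produces Fibonacci numbers on the right, and likewise which of the even/odd sub-sums carries $L_t$ and which carries $F_t$.

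The only genuinely substantive step is the choice of $x$; everything afterward is bookkeeping, and I expect that bookkeeping to be the main obstacle. Specifically, one must keep the half-integer powers of $5$ arising from the odd-index terms $(\sqrt5\,F_s)^{2k-1}$ consistent, track the sign $(-1)^{N-j}=\pm(-1)^j$ which flips as $N$ passes from even to odd, and match the shifted binomial arguments $\binom{m-2n+2k}{2k}$, $\binom{m-2n+2k-1}{2k-1}$, $\binom{m-2n+2k+1}{2k}$, and $\binom{m-2n+2k}{2k-1}$ to the correct summation ranges. These must be reconciled carefully so that the four identities emerge with precisely the coefficients and limits stated.
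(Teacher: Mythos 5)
Your proposal is correct and is essentially the paper's own argument: the paper makes the single substitution $x=L_s/(\sqrt 5 F_s)$ in \eqref{eq.g85la9i}, multiplies by $\beta^t$, writes $2n$ and $2n-1$ for $n$, and separates rational parts from coefficients of $\sqrt 5$ via Lemma \ref{lem.jv8c0fd} --- which is exactly what your adding and subtracting of the two specializations $x=\mp L_s/(\sqrt 5 F_s)$ accomplishes, since these are algebraic conjugates of one another (conjugation sends the $\beta$-identity to your $\alpha$-identity). Your parity split of the summation index, the clearing of powers of $\sqrt 5 F_s$, and the case distinction $N=2n$ versus $N=2n-1$ (with the attendant sign flip $(-1)^{N-j}$ interchanging the Fibonacci/Lucas roles) all reproduce the paper's bookkeeping faithfully.
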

	\begin{proof}
		Set $x=L_s/(\sqrt 5 F_s)$ in \eqref{eq.g85la9i} to obtain
		\begin{equation*}
			\sum_{k = 0}^n {\binom{m - n + k}{k}(\sqrt 5 )^{n - k} F_s^{n - k}  L_s^k }  = \sum_{k = 0}^n ( - 1)^{n - k} \binom{m + 1}{k} 2^{n - k} L_s^k  \beta ^{s(n - k)};
			\end{equation*}
		so that
		\begin{equation}\label{eq.hrpmvck}
			\begin{split}
			\sum_{k = 0}^{\left\lfloor {n/2} \right\rfloor } \binom{m - n + 2k}{2k} & \frac{F_s^{n - 2k}  L_s^{2k}}{(\sqrt 5 )^{2k}}  + \sum_{k = 1}^{\left\lceil {n/2} \right\rceil } \binom{m - n + 2k - 1}{2k - 1} \frac{F_s^{n - 2k + 1} L_s^{2k - 1} }{(\sqrt 5 )^{2k - 1} }\\ 
			& = \Big(\frac{2}{\sqrt5}\Big)^n \sum_{k = 0}^n ( - 1)^{n - k} \binom{m + 1}{k} \frac{L_s^k \beta ^{s(n - k)}}{2^k}. 
			\end{split}
			\end{equation}
		
Writing $2n$ for $n$ in \eqref{eq.hrpmvck} (after multiplying through by $\beta^t$) and comparing the coefficients of $\sqrt 5$ produces \eqref{eq.nu2jmg5} and \eqref{eq.mr53ikq}. Writing $2n - 1$ for $n$ gives \eqref{eq.zqqebhk} and \eqref{eq.n6d13a3}.
	\end{proof}
	\begin{corollary}
		If $m$ and $n$ are non-negative integers and $s$ is an integer, then
			\begin{align*}
			&\sum_{k = 0}^n \binom{m - 2n + 2k}{2k} 5^{n - k} F_s^{2(n - k)} L_s^{2k}  = \sum_{k = 0}^{2n} {( - 1)^k \binom{m + 1}k 2^{2n - k - 1} L_s^k L_{s(2n - k) } }, \\
			&\sum_{k = 1}^n \binom{m - 2n + 2k - 1}{2k - 1} 5^{n - k} F_s^{2(n - k) + 1} L_s^{2k - 1} = \sum_{k = 0}^{2n} {( - 1)^{k + 1} \binom{m + 1}k 2^{2n - k - 1} L_s^k F_{s(2n - k) } },\\ 
			&\sum_{k = 1}^n \binom{m - 2n + 2k}{2k - 1} 5^{n - k} F_s^{2(n - k)} L_s^{2k - 1}  = \sum_{k = 0}^{2n - 1} {( - 1)^{k + 1} \binom{m + 1}{k}2^{2n - k - 2} L_s^k L_{s(2n - k - 1)} },\\
			&\sum_{k = 0}^{n - 1} \binom{m - 2n + 2k + 1}{2k} 5^{n - k - 1} F_s^{2(n - k) - 1} L_s^{2k} = \sum_{k = 0}^{2n - 1} {( - 1)^k \binom{m + 1}k2^{2n - k - 2} L_s^k F_{s(2n - k - 1) } }. 
			\end{align*}
	\end{corollary}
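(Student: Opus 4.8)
The plan is to derive all four identities simultaneously as the specialization $t = 0$ of the four relations \eqref{eq.nu2jmg5}--\eqref{eq.n6d13a3} established in the immediately preceding theorem. The key observation is that $F_0 = 0$ and $L_0 = 2$. Consequently, when $t = 0$ is substituted, exactly one of the two sums on the left-hand side of each relation survives: in \eqref{eq.nu2jmg5} and \eqref{eq.zqqebhk} the factor $F_t = F_0 = 0$ annihilates the second left sum, leaving the one weighted by $L_0 = 2$; in \eqref{eq.mr53ikq} and \eqref{eq.n6d13a3} the roles are reversed, with $F_0 = 0$ killing the first left sum and $-L_0 = -2$ weighting the surviving one. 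Each relation thus collapses to a clean proportionality between a single left-hand sum and the right-hand binomial sum.

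Next I would clear the powers of $5$ and $2$. After setting $t = 0$, the surviving left sum carries an internal factor $5^{-k}$ (respectively $5^{-(k+1)}$ in the last relation), while the right-hand sum carries the prefactor $(4/5)^n = 2^{2n} 5^{-n}$ together with an internal $2^{-k}$ (respectively $2^{-(k+1)}$). Multiplying through by the fixed constant $5^n/2$ for the first and third relations, and by $-5^n/2$ for the second and fourth (the minus absorbing the leading sign produced by $-L_0 = -2$), converts $5^{-k}$ into $5^{n-k}$ on the left and turns the right-hand prefactor into $2^{2n-k-1}$ (first two) or $2^{2n-k-2}$ (last two), matching exactly the exponents displayed in the corollary. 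The identity $(-1)^{k-1} = (-1)^{k+1}$ reconciles the sign conventions in the third and fourth cases.

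The remaining work is bookkeeping: confirm term-by-term that the surviving binomial coefficients and summation ranges agree with those stated, and that the sign $(-1)^k$ versus $(-1)^{k+1}$ on each right-hand side is precisely the one produced by the chosen rescaling constant. I expect the only delicate point to be the faithful tracking of the powers of $2$ and $5$ and the propagation of signs; there is no genuine analytic obstacle here, since every ingredient already appears in the theorem together with the elementary values $F_0 = 0$ and $L_0 = 2$.
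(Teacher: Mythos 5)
Your proposal is correct and is precisely the argument the paper intends (the corollary is stated without proof immediately after the theorem, being its $t=0$ specialization): setting $t=0$ so that $F_0=0$ kills one left-hand sum and $L_0=2$ weights the survivor, then rescaling by $\pm 5^n/2$ reproduces all four identities, with the factors $5^{n-k}$, $2^{2n-k-1}$, $2^{2n-k-2}$ and the signs $(-1)^k$, $(-1)^{k+1}$ coming out exactly as you describe. I verified the bookkeeping in all four cases, including the $5^{-(k+1)}$ in \eqref{eq.n6d13a3} and the $(-1)^{k-1}=(-1)^{k+1}$ adjustment; no gaps.
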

	\begin{corollary}
		If $n$ is a non-negative integer and $s$ is any integer, then
		\begin{align*}
			&\sum_{k = 0}^{2n} ( - 1)^k \binom{2n}{k} 2^{2n - 1 - k} L_s^k L_{s(2n - k)}  = 5^n F_s^{2n},\\
			&\sum_{k = 0}^{2n} ( - 1)^{k + 1} \binom{2n + 1}{k} 2^{2n - k - 1} L_s^k F_{s(2n - k) } =\sum_{k = 1}^n 5^{n - k} F_s^{2(n - k) + 1} L_s^{2k - 1},\\
			&\sum_{k = 0}^{2n - 1} {( - 1)^{k + 1} \binom{2n}k2^{2n - k - 2} L_s^k L_{s(2n - k - 1)} }=\sum_{k = 1}^{n} 5^{n - k} F_s^{2(n - k)} L_s^{2k - 1}  ,\\
			&\sum_{k = 0}^{2n - 1} ( - 1)^k \binom{2n}{k} 2^{2n - k - 2} L_s^k F_{s(2n - k - 1) } 
			=\sum_{k = 0}^{n - 1} 5^{n - k - 1} F_s^{2(n - k) - 1} L_s^{2k}.
			\end{align*}
		\end{corollary}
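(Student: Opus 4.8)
The plan is to obtain all four identities as specializations of the preceding corollary, choosing $m$ so that the binomial coefficients appearing on its left-hand side degenerate. Concretely, I would set $m = 2n-1$ in the first, third and fourth identities of that corollary, and $m = 2n$ in the second one. No new machinery is needed; everything reduces to substitution plus the elementary evaluation of a few binomial coefficients.

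First I would treat the opening identity. Substituting $m = 2n-1$ turns the left-hand binomial $\binom{m-2n+2k}{2k}$ into $\binom{2k-1}{2k}$. For $k\geq 1$ this vanishes, since the upper index is a nonnegative integer strictly smaller than the lower one, while for $k=0$ it equals $\binom{-1}{0}=1$. Hence the entire left sum collapses to its $k=0$ term, namely $5^{n}F_s^{2n}$, and the right-hand side becomes $\sum_{k=0}^{2n}(-1)^k\binom{2n}{k}2^{2n-1-k}L_s^{k}L_{s(2n-k)}$, which is exactly the first claimed relation.

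For the remaining three, the same kind of substitution makes every left-hand binomial equal to $1$ instead of forcing a collapse: with $m=2n$ the coefficient $\binom{m-2n+2k-1}{2k-1}$ becomes $\binom{2k-1}{2k-1}=1$; with $m=2n-1$ the coefficients $\binom{m-2n+2k}{2k-1}=\binom{2k-1}{2k-1}=1$ and $\binom{m-2n+2k+1}{2k}=\binom{2k}{2k}=1$. In each case the weighted $F_s,L_s$ sum on the left simply loses its binomial factor and reduces directly to the single sums recorded in identities two, three and four, while the right-hand sides carry the binomial coefficients $\binom{2n+1}{k}$ or $\binom{2n}{k}$ together with the powers of $2$ exactly as written.

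I expect no serious obstacle: the argument is pure substitution together with the evaluations $\binom{-1}{0}=1$, $\binom{2k-1}{2k}=0$ for $k\geq 1$, and $\binom{j}{j}=1$. The only point requiring care is that the parameter $m=2n-1$ is admissible (nonnegative) in the preceding corollary only for $n\geq 1$, so the degenerate case $n=0$ of identities one, three and four must be checked directly. This is immediate: for $n=0$ the first identity reads $\tfrac{1}{2}L_0 = 1 = 5^{0}F_s^{0}$, and identities three and four reduce to the empty-sum equality $0=0$; identity two already uses the admissible value $m=2n=0$ and needs no separate treatment.
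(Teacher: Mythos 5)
Your proposal is correct and matches the paper's (implicitly intended) derivation: the corollary is exactly the preceding corollary specialized at $m=2n-1$ (first, third, and fourth identities) and $m=2n$ (second identity), using $\binom{-1}{0}=1$, $\binom{2k-1}{2k}=0$ for $k\ge 1$, and $\binom{j}{j}=1$. Your extra check of the degenerate case $n=0$, where $m=2n-1$ falls outside the stated hypothesis of the preceding corollary, is a careful touch the paper does not spell out.
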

	
	\section{Relations from a recent identity by Alzer}
	
	In 2015 Alzer \cite{Alzer}, building on the work of Aharonov and Elias \cite{Aha}, studied the polynomial 
		\begin{equation}\label{alzer1}
		P_n(x) = (1-x)^{n+1} \sum_{k=0}^n \binom {n+k}{k} x^k, \qquad x\in\mathbb{C}.
		\end{equation}
	Among other things he showed that
	\begin{equation}\label{alzer2}
		P_n(x) = 1 - x + (1-2x) \sum_{k=0}^{n-1} \binom {2k+1}{k} x^{k+1}(1-x)^{k+1}.
		\end{equation}
	
	Such a polynomial identity immediately offers many appealing Fibonacci and Lucas sum relations
	as can been seen from the next series of theorems. 
	\begin{theorem}\label{prop1}
		For each non-negative integer $n$ we have the relations
		\begin{align}\label{eq.qjec5u7}
			&\sum_{k=0}^n (-1)^{k} \binom {n+k}{k} F_{n+1-k} = 1 - 2\sum_{k=0}^{n-1}  (-1)^{k} \binom {2k+1}{k},\\
			\label{eq.qjec5u8}
			&\sum_{k=0}^n (-1)^{k} \binom {n+k}{k} L_{n+1-k} = 1.
			\end{align}
	\end{theorem}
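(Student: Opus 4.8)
The plan is to feed the golden-ratio values $x=\alpha$ and $x=\beta$ into Alzer's identity \eqref{alzer2} and read off both relations simultaneously. First I would record the elementary facts $1-\alpha=\beta$ and $1-\beta=\alpha$ (equivalently $\alpha+\beta=1$), together with $\alpha\beta=-1$, $\alpha-\beta=\sqrt5$, $1-2\alpha=-\sqrt5$ and $1-2\beta=\sqrt5$, all immediate from the Binet data \eqref{bine.fl}.

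The decisive step is to rewrite the target sums in terms of the polynomial
$$Q_n(x):=\sum_{k=0}^n\binom{n+k}{k}x^k=\frac{P_n(x)}{(1-x)^{n+1}},$$
the second equality being the definition \eqref{alzer1}. Using $(-1)^k=(\alpha\beta)^k$ and the Binet formulas \eqref{bine.fl}, I would split
$$(-1)^kF_{n+1-k}=\frac{\alpha^{n+1}\beta^k-\beta^{n+1}\alpha^k}{\sqrt5},\qquad (-1)^kL_{n+1-k}=\alpha^{n+1}\beta^k+\beta^{n+1}\alpha^k,$$
so that summing over $k$ turns the two sums into $\frac{1}{\sqrt5}\bigl(\alpha^{n+1}Q_n(\beta)-\beta^{n+1}Q_n(\alpha)\bigr)$ and $\alpha^{n+1}Q_n(\beta)+\beta^{n+1}Q_n(\alpha)$, respectively. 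The point is that $(1-\beta)^{n+1}=\alpha^{n+1}$ and $(1-\alpha)^{n+1}=\beta^{n+1}$ cancel the denominators of $Q_n$ exactly, collapsing everything to $P_n(\beta)-P_n(\alpha)$ in the Fibonacci case and $P_n(\beta)+P_n(\alpha)$ in the Lucas case.

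It then remains to evaluate $P_n(\alpha)$ and $P_n(\beta)$ from \eqref{alzer2}. Since $\alpha(1-\alpha)=\alpha\beta=-1$ and likewise $\beta(1-\beta)=-1$, each factor $x^{k+1}(1-x)^{k+1}$ equals $(-1)^{k+1}$, so with $S:=\sum_{k=0}^{n-1}(-1)^k\binom{2k+1}{k}$ and the values $1-2\alpha=-\sqrt5$, $1-2\beta=\sqrt5$ I obtain $P_n(\alpha)=\beta+\sqrt5\,S$ and $P_n(\beta)=\alpha-\sqrt5\,S$. Adding gives $P_n(\beta)+P_n(\alpha)=\alpha+\beta=1$, which is \eqref{eq.qjec5u8}; subtracting gives
$$\frac{P_n(\beta)-P_n(\alpha)}{\sqrt5}=\frac{(\alpha-\beta)-2\sqrt5\,S}{\sqrt5}=1-2S,$$
which is \eqref{eq.qjec5u7}.

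There is essentially no hard obstacle beyond bookkeeping: the whole argument hinges on the cancellation $(1-\alpha)^{n+1}=\beta^{n+1}$ and $(1-\beta)^{n+1}=\alpha^{n+1}$, which is precisely what singles out $x=\alpha,\beta$ as the right substitutions. The only place demanding care is tracking the signs arising from $(1-2x)$ and from $(-1)^{k+1}$, since a single sign slip would interchange the roles of the two output identities.
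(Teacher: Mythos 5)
Your proposal is correct and follows exactly the paper's route: substituting $x=\alpha$ and $x=\beta$ into \eqref{alzer1} and \eqref{alzer2} and combining via the Binet formulas \eqref{bine.fl}, with the key cancellations $1-\alpha=\beta$, $1-\beta=\alpha$ and $\alpha(1-\alpha)=\beta(1-\beta)=-1$ handled correctly. You have merely made explicit the bookkeeping that the paper's one-line proof leaves to the reader.
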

	\begin{proof}
		Set $x=\alpha$ and $x=\beta$ in \eqref{alzer1} and \eqref{alzer2}, respectively, and combine according to the Binet formulas \eqref{bine.fl}.
	\end{proof}
	
	Comparing \eqref{eq.rqep30q} with \eqref{eq.qjec5u7}, and \eqref{eq.rqep30p} with \eqref{eq.qjec5u8}, we find
	\begin{equation*}
		\sum_{k = 0}^n (- 1)^k \binom{2n + 1}{k}  F_{2(n - k) + 1} = 1 - 2\sum_{k=0}^{n-1} (- 1)^k \binom {2k+1}{k} \end{equation*}
and
	\begin{equation*}
		\sum_{k = 0}^n (- 1)^k \binom{2n + 1}{k}  L_{2(n - k) + 1} = 1.
		\end{equation*}
	\begin{theorem}\label{prop2}
		For each non-negative integer $n$ we have the relations
		\begin{align*}
			& \sum_{k=0}^n \binom {n+k}{k} F_{n+2k+1} = (-1)^{n} - \sum_{k=0}^{n-1} (-1)^{n-k} \binom {2k+1}{k}  F_{3(k+2)},\\
			&\sum_{k=0}^n \binom {n+k}{k} L_{n+2k+1} = (-1)^{n} - \sum_{k=0}^{n-1} (-1)^{n-k} \binom {2k+1}{k}  L_{3(k+2)}.
			\end{align*}
	\end{theorem}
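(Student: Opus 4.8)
The plan is to mimic the proof of Theorem \ref{prop1}, but using the substitution $x = \alpha^2$ (and then $x = \beta^2$) in Alzer's two expressions \eqref{alzer1} and \eqref{alzer2} for $P_n(x)$, rather than $x=\alpha$ and $x=\beta$. The whole argument rests on the fact that the golden ratio satisfies $\alpha^2 = \alpha + 1$ and hence $\alpha^3 = 2\alpha + 1$, which give
\begin{equation*}
1 - \alpha^2 = -\alpha, \qquad 1 - 2\alpha^2 = -\alpha^3,
\end{equation*}
and the identical relations hold with $\alpha$ replaced by $\beta$, since $\beta$ is the other root of $x^2 - x - 1$.

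First I would set $x = \alpha^2$ in \eqref{alzer1}. Using $1 - \alpha^2 = -\alpha$, the left-hand side collapses to
\begin{equation*}
P_n(\alpha^2) = (-1)^{n+1}\alpha^{n+1}\sum_{k=0}^n \binom{n+k}{k}\alpha^{2k} = (-1)^{n+1}\sum_{k=0}^n \binom{n+k}{k}\alpha^{n+2k+1},
\end{equation*}
which already produces the index $n+2k+1$ appearing on the left-hand side of the theorem. Next I would set $x = \alpha^2$ in \eqref{alzer2}: the factor $\alpha^{2(k+1)}(1-\alpha^2)^{k+1}$ becomes $(-1)^{k+1}\alpha^{3k+3}$, and multiplying by $1 - 2\alpha^2 = -\alpha^3$ turns the summand into $(-1)^k \alpha^{3(k+2)}$, so that
\begin{equation*}
P_n(\alpha^2) = -\alpha + \sum_{k=0}^{n-1}(-1)^k \binom{2k+1}{k}\alpha^{3(k+2)}.
\end{equation*}
Equating the two expressions for $P_n(\alpha^2)$ and multiplying through by $(-1)^{n+1}$ yields the pure power-of-$\alpha$ identity
\begin{equation*}
\sum_{k=0}^n \binom{n+k}{k}\alpha^{n+2k+1} = (-1)^n\alpha - \sum_{k=0}^{n-1}(-1)^{n-k}\binom{2k+1}{k}\alpha^{3(k+2)}.
\end{equation*}

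The same computation, performed verbatim with $\beta$ in place of $\alpha$, gives the companion relation with every $\alpha$ replaced by $\beta$. Finally I would combine the two: subtracting the $\beta$-identity from the $\alpha$-identity and dividing by $\sqrt 5$ produces the Fibonacci statement through the Binet formula \eqref{bine.fl}, since $(\alpha - \beta)/\sqrt 5 = F_1 = 1$ and $(\alpha^{3(k+2)} - \beta^{3(k+2)})/\sqrt 5 = F_{3(k+2)}$; adding the two identities produces the Lucas statement, using $\alpha + \beta = 1$ and $\alpha^{3(k+2)} + \beta^{3(k+2)} = L_{3(k+2)}$. There is no genuine obstacle here: the only real work is the bookkeeping of the signs—in particular the passage from $(-1)^{n+1+k}$ to $-(-1)^{n-k}$—and the one insight worth isolating is that $x=\alpha^2$ is precisely the substitution that simultaneously delivers $F_{n+2k+1}$ on the left and $F_{3(k+2)}$ on the right.
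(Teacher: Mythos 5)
Your proposal is correct and follows exactly the paper's own proof, which consists of setting $x=\alpha^2$ and $x=\beta^2$ in \eqref{alzer1} and \eqref{alzer2} and combining via the Binet formulas \eqref{bine.fl}; your computations ($1-\alpha^2=-\alpha$, $1-2\alpha^2=-\alpha^3$, the resulting sign bookkeeping, and the use of $\alpha-\beta=\sqrt 5$ and $\alpha+\beta=1$) all check out. You have merely written out in full the details that the paper leaves implicit.
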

	\begin{proof}
		Set $x=\alpha^2$ and $x=\beta^2$ in \eqref{alzer1} and \eqref{alzer2}, respectively, and combine according to the Binet formulas \eqref{bine.fl}.
	\end{proof}
	
	The next theorem generalizes Theorem \ref{prop1}.
	\begin{theorem}\label{prop3}
		For non-negative integers $n$ and $m$ we have the relations
	\begin{align}
			\sum_{k=0}^n (-1)^{mk} &\binom {n+k}{k} \frac{F_{m(n+1-k)}}{L_m^k} 
			= F_m L_m^{n} \Bigg ( 1 + 2\sum_{k=0}^{n-1} \frac{(-1)^{m(k+1)}}{L_m^{2(k+1)}} \binom {2k+1}{k}  \Bigg ),\notag\\
			\label{eq2_prop3}
			&\qquad\sum_{k=0}^n (-1)^{mk} \binom {n+k}{k} \frac{L_{m(n+1-k)}}{L_m^k} = L_m^{n+1}.
			\end{align}
	\end{theorem}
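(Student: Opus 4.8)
The plan is to specialize Alzer's polynomial identity, obtained by equating \eqref{alzer1} and \eqref{alzer2}, at the two values $x=\alpha^m/L_m$ and $x=\beta^m/L_m$, and then to combine the resulting scalar identities through the Binet formulas \eqref{bine.fl}. This is exactly the strategy already used for Theorem~\ref{prop1} (the case $m=1$, where $L_1=1$ and the substitutions reduce to $x=\alpha$ and $x=\beta$), now carried out for general $m$.

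First I would record the elementary consequences of $L_m=\alpha^m+\beta^m$ and $\alpha\beta=-1$. For $x=\alpha^m/L_m$ one has $1-x=\beta^m/L_m$, $1-2x=(\beta^m-\alpha^m)/L_m=-\sqrt5\,F_m/L_m$, and $x(1-x)=(\alpha\beta)^m/L_m^2=(-1)^m/L_m^2$; for $x=\beta^m/L_m$ the roles of $\alpha$ and $\beta$ are interchanged, so $1-x=\alpha^m/L_m$, $1-2x=\sqrt5\,F_m/L_m$, and again $x(1-x)=(-1)^m/L_m^2$. In particular $x^{k+1}(1-x)^{k+1}=(-1)^{m(k+1)}/L_m^{2(k+1)}$ in both cases, which is precisely the term appearing on the right-hand sides of the claimed relations.

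Next I would rewrite the two left-hand sums using Binet. Writing $F_{m(n+1-k)}=(\alpha^{m(n+1-k)}-\beta^{m(n+1-k)})/\sqrt5$ and $L_{m(n+1-k)}=\alpha^{m(n+1-k)}+\beta^{m(n+1-k)}$, the factor $(-1)^{mk}$ is exactly what converts the negative powers into clean ones: since $\alpha^{-1}=-\beta$ and $\beta^{-1}=-\alpha$, we have $(-1)^{mk}\alpha^{-mk}=\beta^{mk}$ and $(-1)^{mk}\beta^{-mk}=\alpha^{mk}$. Hence the $\alpha$-part of each sum is $\alpha^{m(n+1)}\sum_{k}\binom{n+k}{k}(\beta^m/L_m)^k$ and the $\beta$-part is $\beta^{m(n+1)}\sum_{k}\binom{n+k}{k}(\alpha^m/L_m)^k$. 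Applying Alzer's identity (the equality of \eqref{alzer1} and \eqref{alzer2}) with $x=\beta^m/L_m$ and with $x=\alpha^m/L_m$ respectively, and using $(1-x)^{n+1}=(\alpha^m/L_m)^{n+1}$, resp. $(\beta^m/L_m)^{n+1}$, each of these two pieces becomes $L_m^{n+1}$ times the right-hand side \eqref{alzer2} evaluated at the corresponding $x$.

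Finally, subtracting the two evaluated right-hand sides makes the constant terms combine to $(\alpha^m-\beta^m)/L_m=\sqrt5\,F_m/L_m$ and doubles the sum over $\binom{2k+1}{k}$, which after dividing by $\sqrt5$ yields the first (Fibonacci) relation; adding them makes the summation terms cancel while the constant terms combine to $(\alpha^m+\beta^m)/L_m=1$, giving $L_m^{n+1}$ and hence \eqref{eq2_prop3}. The one real obstacle is the sign bookkeeping: one must track the parities $(-1)^{mk}$ and $(-1)^{m(k+1)}$ together with $\alpha^{-1}=-\beta$ carefully, so that the two specializations assemble into the stated closed forms; once the substitutions $x=\alpha^m/L_m$ and $x=\beta^m/L_m$ are in place the remaining computation is routine. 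I would also note $x\neq1$ in each case, since $\alpha^m=L_m$ would force $\beta^m=0$, so the representation \eqref{alzer1} is legitimately applicable.
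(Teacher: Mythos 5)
Your proposal is correct and follows exactly the paper's own proof: substitute $x=\alpha^m/L_m$ and $x=\beta^m/L_m$ into Alzer's identities \eqref{alzer1} and \eqref{alzer2} and combine via the Binet formulas \eqref{bine.fl}. Your sign bookkeeping (in particular $(-1)^{mk}\alpha^{-mk}=\beta^{mk}$, $1-2x=\mp\sqrt5\,F_m/L_m$, and $x(1-x)=(-1)^m/L_m^2$) checks out, so you have simply made explicit the details the paper leaves to the reader.
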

	\begin{proof}
		Set $x=\alpha^m/L_m$ and $x=\beta^m/L_m$ in \eqref{alzer1} and \eqref{alzer2}, respectively, and combine according to the Binet formulas.
	\end{proof}
	
	When $m=1$ then Theorem \ref{prop3} reduces to Theorem \ref{prop1}. As additional examples we state the next relations:
	\begin{equation*}
		\sum_{k=0}^n \binom {n+k}{k} 2^{-k} = 2^{n},
		\end{equation*}
which also appears in Alzer's paper \cite{Alzer} as Eq. (1.4), and
	\begin{gather*}
		\sum_{k=0}^n \binom {n+k}{k} \frac{F_{2(n+1-k)}}{3^k} = 3^n \Bigg ( 1 + 2\sum_{k=0}^{n-1} \binom {2k+1}{k} \frac{1}{9^{k+1}} \Bigg ),\\
		\sum_{k=0}^n \binom {n+k}{k} \frac{L_{2(n+1-k)}}{3^k} = 3^{n+1}.
		\end{gather*}
\begin{theorem}
		For non-negative integer $n$ and any integers $m$ and $t$, we have the relations
			\begin{align*}
			&q^{mn}\sum_{k = 0}^n \binom {n + k}k\frac{W_{mk + t}}{V_m^k}\\
			&\qquad = V_m^n W_{mn + t} - V_m^nU_m\big( W_{m(n + 1) + t + 1}  - qW_{m(n + 1) + t - 1}  \big)\sum_{k = 0}^{n - 1} {\binom {2k + 1}k\frac{{q^{mk} }}{{V_m^{2(k + 1)} }}} .
			\end{align*}
	\end{theorem}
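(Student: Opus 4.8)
The plan is to proceed exactly as in the proofs of Theorems~\ref{prop1}--\ref{prop3}, feeding Horadam-adapted arguments into the combination of Alzer's two expressions \eqref{alzer1} and \eqref{alzer2} for $P_n(x)$, namely
\begin{equation*}
(1-x)^{n+1}\sum_{k=0}^n\binom{n+k}{k}x^k = 1-x + (1-2x)\sum_{k=0}^{n-1}\binom{2k+1}{k}x^{k+1}(1-x)^{k+1}.
\end{equation*}
Where Theorem~\ref{prop3} used $x=\alpha^m/L_m$ and $x=\beta^m/L_m$, I would use the natural generalizations $x=\tau^m/V_m$ and $x=\sigma^m/V_m$, and then pass from the two resulting power sums to the Horadam sequence through the Binet formula $W_j=A\tau^j+B\sigma^j$.

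First I would record the simplifications produced by $x=\tau^m/V_m$. Since $V_m=\tau^m+\sigma^m$ and $\tau\sigma=q$, we have $1-x=\sigma^m/V_m$ and $x^{k+1}(1-x)^{k+1}=q^{m(k+1)}/V_m^{2(k+1)}$, while $1-2x=(\sigma^m-\tau^m)/V_m$. Substituting these into the combined identity and dividing by $(\sigma^m/V_m)^{n+1}$ yields, after setting $S:=\sum_{k=0}^{n-1}\binom{2k+1}{k}q^{m(k+1)}/V_m^{2(k+1)}$,
\begin{equation*}
\sum_{k=0}^n\binom{n+k}{k}\frac{\tau^{mk}}{V_m^k} = \frac{V_m^n}{\sigma^{mn}} - \frac{V_m^n(\tau^m-\sigma^m)}{\sigma^{m(n+1)}}\,S.
\end{equation*}
The choice $x=\sigma^m/V_m$ is handled identically; here $1-2x=(\tau^m-\sigma^m)/V_m$ carries the opposite sign, giving the companion identity with $\tau$ and $\sigma$ interchanged throughout and the $S$-term added rather than subtracted.

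Next I would assemble $W$. Multiplying the first identity by $A\tau^t$ and the second by $B\sigma^t$ and adding, the left-hand sides combine into $\sum_{k=0}^n\binom{n+k}{k}W_{mk+t}/V_m^k$ because $A\tau^{mk+t}+B\sigma^{mk+t}=W_{mk+t}$. On the right, $\tau\sigma=q$ lets me write $\sigma^{-mn}=\tau^{mn}/q^{mn}$ and $\tau^{-mn}=\sigma^{mn}/q^{mn}$, so the two leading terms collapse to $V_m^n(A\tau^{mn+t}+B\sigma^{mn+t})/q^{mn}=V_m^nW_{mn+t}/q^{mn}$, while the two $S$-terms collapse to
\begin{equation*}
-\frac{V_m^n\,S}{q^{m(n+1)}}\,(\tau^m-\sigma^m)\big(A\tau^{m(n+1)+t}-B\sigma^{m(n+1)+t}\big).
\end{equation*}
Here the Binet formula \eqref{bine.uvw} gives $\tau^m-\sigma^m=\sqrt{\Delta}\,U_m$ and \eqref{eq.p951mmh} gives $A\tau^{m(n+1)+t}-B\sigma^{m(n+1)+t}=(W_{m(n+1)+t+1}-qW_{m(n+1)+t-1})/\sqrt{\Delta}$, so the two discriminant factors cancel and the product reduces cleanly to $U_m(W_{m(n+1)+t+1}-qW_{m(n+1)+t-1})$. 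Multiplying the whole relation by $q^{mn}$ and absorbing one factor $q^m$ into $S$ (which turns $q^{m(k+1)}$ into $q^{mk}$) produces the claimed identity.

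The main obstacle is organizational rather than conceptual: one must keep the numerous powers of $\tau$, $\sigma$, $q$, and $V_m$ straight across the substitution, and, above all, notice that the two groups of terms require different combinations of the Binet exponentials --- the symmetric combination $A\tau^j+B\sigma^j=W_j$ for the leading terms, but the antisymmetric combination $A\tau^j-B\sigma^j$, evaluated through \eqref{eq.p951mmh}, for the $S$-terms. Getting the sign of $1-2x$ right in each of the two substitutions is precisely what makes the discriminant cancel (rather than reinforce) when the antisymmetric combination meets the factor $\tau^m-\sigma^m$.
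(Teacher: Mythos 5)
Your proof is correct and follows essentially the same route as the paper, which likewise substitutes $x=\tau^m/V_m$ and $x=\sigma^m/V_m$ into \eqref{alzer1} and \eqref{alzer2} and combines via the Binet formulas together with Lemma \ref{lem.w65xm59}; you have simply written out the details the paper leaves implicit, including the key cancellation of $\sqrt{\Delta}$ between $\tau^m-\sigma^m=\sqrt{\Delta}\,U_m$ and the antisymmetric combination \eqref{eq.p951mmh} (note your normalization $A\tau^j-B\sigma^j=(W_{j+1}-qW_{j-1})/\sqrt{\Delta}$ is the correct one for $\Delta=p^2-4q$, consistent with the paper's usage in \eqref{eq.zqa5sbx} despite the ambiguous statement of the lemma).
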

	\begin{proof}
		Set $x=\tau^m/V_m$ and $x=\sigma^m/V_m$ in \eqref{alzer1} and \eqref{alzer2}, respectively, and combine according to the Binet formulas, while making use of Lemma \ref{lem.w65xm59}.
	\end{proof}
	
	In particular,
		\begin{align*}
		&q^{mn}\sum_{k = 0}^n {\binom {n + k}k\frac{{U_{mk + t} }}{V_m^k}}\\
		&\qquad = V_m^n U_{mn + t} - V_m^n U_m \Big( {U_{m(n + 1) + t + 1}  - qU_{m(n + 1) + t - 1} } \Big)\sum_{k = 0}^{n - 1} \binom {2k +1}k \frac{q^{mk}}{{V_m^{2(k + 1)} }},\\
		&q^{mn} \sum_{k = 0}^n {\binom {n + k}k\frac{{V_{mk + t} }}{{V_m^k }}}\\
		&\qquad = V_m^n V_{mn + t} - V_m^n U_m \big( {V_{m(n + 1) + t + 1}  - qV_{m(n + 1) + t - 1} }\big)\sum_{k = 0}^{n - 1} \binom {2k + 1}k\frac{{q^{mk} }}{{V_m^{2(k + 1)} }} ;
		\end{align*}
	with the special cases
		\begin{align*}
		&\sum_{k = 0}^n \binom {n + k}k\frac{F_{mk + t}}{L_m^k }  =  (-1)^{mn}L_m^n F_{mk + t}  - L_m^n F_mL_{m(n + 1) + t} \sum_{k = 0}^{n - 1} {\binom {2k + 1}k\frac{{( - 1)^{m(n-k)} }}{{L_m^{2(k + 1)} }}},\\
		&\sum_{k = 0}^n \binom {n + k}{k}\frac{L_{mk + t} }{{L_m^k }}  = (-1)^{mn} L_m^n L_{mk + t}  - 5L_m^n F_mF_{m(n + 1) + t} \sum_{k = 0}^{n - 1} \binom {2k + 1}k\frac{( - 1)^{m(n-k)} }{L_m^{2(k + 1)}}.
		\end{align*}
	\begin{theorem}\label{prop4}
		For each non-negative integer $n$ we have the relations
			\begin{align*}
			&\sum_{k=0}^n (-1)^{k} \binom {n+k}{k} \frac{F_{2(n+1)-k}}{2^k} 
			= 2^n - \sum_{k=0}^{n-1}  (-1)^{k}\binom {2k+1}{k} 2^{n-2k-1} F_{k+2},\\
			&\sum_{k=0}^n (-1)^{k}\binom {n+k}{k} \frac{L_{2(n+1)-k}}{2^k} 
			= 3\cdot 2^n - \sum_{k=0}^{n-1} (-1)^{k}\binom {2k+1}{k}  2^{n-2k-1} L_{k+2}.
			\end{align*}
			\end{theorem}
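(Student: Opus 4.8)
The plan is to follow the same template as Theorems \ref{prop1}--\ref{prop3}: evaluate Alzer's two expressions \eqref{alzer1} and \eqref{alzer2} for $P_n(x)$ at two conjugate values of $x$ and then combine through the Binet formulas \eqref{bine.fl}. The denominators $2^k$ and the index $2(n+1)-k$ in the statement point unmistakably to the choice $x=\alpha/2$ and $x=\beta/2$. Before substituting, I would record the elementary facts that make everything collapse: $\alpha\beta=-1$, $1-\alpha=\beta$, $1-\beta=\alpha$, and, crucially, $2-\alpha=\beta^2$ and $2-\beta=\alpha^2$, so that $1-\alpha/2=\beta^2/2$ and $1-\beta/2=\alpha^2/2$.

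Substituting $x=\alpha/2$ into \eqref{alzer1} gives $P_n(\alpha/2)=(\beta^2/2)^{n+1}\sum_{k=0}^n\binom{n+k}{k}\alpha^k/2^k$, and the conjugate substitution gives the corresponding expression with $\alpha,\beta$ swapped. Solving each for its binomial sum produces $\beta^{2(n+1)}\sum_{k=0}^n\binom{n+k}{k}\alpha^k/2^k=2^{n+1}P_n(\alpha/2)$ and $\alpha^{2(n+1)}\sum_{k=0}^n\binom{n+k}{k}\beta^k/2^k=2^{n+1}P_n(\beta/2)$. Separately, I would rewrite the theorem's left-hand sides using $\alpha^{-k}=(-1)^k\beta^k$ and $\beta^{-k}=(-1)^k\alpha^k$ inside \eqref{bine.fl}, which yields the clean forms $(-1)^kF_{2(n+1)-k}=(\alpha^{2(n+1)}\beta^k-\beta^{2(n+1)}\alpha^k)/\sqrt5$ and $(-1)^kL_{2(n+1)-k}=\alpha^{2(n+1)}\beta^k+\beta^{2(n+1)}\alpha^k$. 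Multiplying by $\binom{n+k}{k}/2^k$, summing, and inserting the two evaluations above collapses the Fibonacci sum to $\tfrac{2^{n+1}}{\sqrt5}\bigl(P_n(\beta/2)-P_n(\alpha/2)\bigr)$ and the Lucas sum to $2^{n+1}\bigl(P_n(\alpha/2)+P_n(\beta/2)\bigr)$.

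It remains to evaluate $P_n(\alpha/2)$ and $P_n(\beta/2)$ via \eqref{alzer2}. The relevant simplifications are $1-2(\alpha/2)=\beta$, $1-2(\beta/2)=\alpha$, and the product collapse $(\alpha/2)(1-\alpha/2)=\alpha\beta^2/4=-\beta/4$ (respectively $-\alpha/4$), so that the factors $x^{k+1}(1-x)^{k+1}$ become $(-\beta/4)^{k+1}$ and $(-\alpha/4)^{k+1}$. Forming the difference $P_n(\beta/2)-P_n(\alpha/2)$ and the sum $P_n(\alpha/2)+P_n(\beta/2)$, the constant terms contribute $\alpha^2-\beta^2=\sqrt5$ and $\alpha^2+\beta^2=3$, while the series contribute $\alpha^{k+2}-\beta^{k+2}=\sqrt5\,F_{k+2}$ and $\alpha^{k+2}+\beta^{k+2}=L_{k+2}$. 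Combining these with the prefactors and the reduction $2^{n+1}/4^{k+1}=2^{n-2k-1}$ should deliver exactly the two claimed identities, the sign $(-1)^{k+1}$ flipping into the $-\sum$ on the right.

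I do not expect a genuinely hard step here; the work is entirely mechanical once the substitution is fixed. The only real obstacle is bookkeeping: keeping the powers of $2$ straight through the factors $2^{-k}$, $2^{n+1}$, and $4^{-(k+1)}$, tracking the signs introduced by $\alpha\beta=-1$ at each stage, and correctly aligning the index shift from $2(n+1)-k$ on the left to $k+2$ on the right. Organizing the computation as "express left side via Binet, substitute the two sum-evaluations, then substitute \eqref{alzer2}" keeps these three sources of error separated and should make the verification routine.
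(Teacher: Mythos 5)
Your proposal is correct and follows exactly the paper's proof: the authors likewise set $x=\alpha/2$ and $x=\beta/2$ in \eqref{alzer1} and \eqref{alzer2} and combine via the Binet formulas \eqref{bine.fl}. Your worked-out details (in particular $1-\alpha/2=\beta^2/2$, $x(1-x)=-\beta/4$, $\alpha^2-\beta^2=\sqrt5$, $\alpha^2+\beta^2=3$, and $2^{n+1}/4^{k+1}=2^{n-2k-1}$) all check out and reproduce both identities exactly.
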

	\begin{proof}
		Set $x=\alpha/2$ and $x=\beta/2$ in \eqref{alzer1} and \eqref{alzer2}, respectively, and combine according to the Binet formulas.
	\end{proof}
	\begin{theorem}\label{prop5}
		For each non-negative integer $n$ we have the relations
			\begin{align*}
			&\sum_{k=0}^n (-1)^{k} \binom {n+k}{k}  F_{2(n+1)+k} = 1 - \sum_{k=0}^{n-1} (-1)^{k} \binom {2k+1}{k} F_{3(k+2)},\\
			&\sum_{k=0}^n  (-1)^{k} \binom {n+k}{k} L_{2(n+1)+k} = 3 - \sum_{k=0}^{n-1} (-1)^{k}  \binom {2k+1}{k} L_{3(k+2)}.
			\end{align*}
		\end{theorem}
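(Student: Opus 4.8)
The plan is to follow the template used throughout this section: evaluate Alzer's two expressions for $P_n(x)$, namely \eqref{alzer1} and \eqref{alzer2}, at $x=-\alpha$ and at $x=-\beta$, and then combine the resulting scalar identities via the Binet formulas \eqref{bine.fl}. The choice $x=-\alpha,\,-\beta$ is dictated by the target left-hand sides: since $(-\alpha)^k=(-1)^k\alpha^k$, the factor $(-1)^k$ appears automatically, and the index shift $2(n+1)+k$ will be produced by the prefactor $(1-x)^{n+1}$ in \eqref{alzer1}.

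First I would substitute $x=-\alpha$ into \eqref{alzer1}. The crucial simplifications come from the golden-ratio relations $1+\alpha=\alpha^2$ and (consequently) $1+2\alpha=\alpha^3$, both following from $\alpha^2=\alpha+1$. These give $(1-x)^{n+1}=(1+\alpha)^{n+1}=\alpha^{2(n+1)}$, so that
\begin{equation*}
P_n(-\alpha)=\alpha^{2(n+1)}\sum_{k=0}^n(-1)^k\binom{n+k}{k}\alpha^k=\sum_{k=0}^n(-1)^k\binom{n+k}{k}\alpha^{2(n+1)+k}.
\end{equation*}
The same substitution $x=-\beta$ (using $1+\beta=\beta^2$) yields the analogous expression with $\alpha$ replaced by $\beta$.

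Next I would substitute $x=-\alpha$ into \eqref{alzer2}. Here the summand collapses cleanly: $x^{k+1}(1-x)^{k+1}=(-\alpha)^{k+1}(\alpha^2)^{k+1}=(-1)^{k+1}\alpha^{3(k+1)}$, while the prefactor $(1-2x)=1+2\alpha=\alpha^3$ supplies the extra cube, giving $(1-2x)\,x^{k+1}(1-x)^{k+1}=(-1)^{k+1}\alpha^{3(k+2)}$. Together with $1-x=1+\alpha=\alpha^2$, this produces
\begin{equation*}
P_n(-\alpha)=\alpha^2-\sum_{k=0}^{n-1}(-1)^k\binom{2k+1}{k}\alpha^{3(k+2)},
\end{equation*}
and likewise for $\beta$.

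Finally, equating the two expressions for $P_n(-\alpha)$ and those for $P_n(-\beta)$, then forming $\tfrac{1}{\sqrt5}\bigl(P_n(-\alpha)-P_n(-\beta)\bigr)$ and $P_n(-\alpha)+P_n(-\beta)$, produces exactly the Fibonacci and Lucas identities, with the constants $F_2=1$ and $L_2=3$ emerging from $\alpha^2-\beta^2$ and $\alpha^2+\beta^2$. The only real obstacle is spotting that the prefactor $1-2x$ and the power $x^{k+1}(1-x)^{k+1}$ conspire, through $\alpha^3=1+2\alpha$, to shift the index from $3(k+1)$ to the stated $3(k+2)$; once these golden-ratio reductions are in hand, the remainder is routine bookkeeping.
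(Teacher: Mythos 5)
Your proof is correct and is essentially the paper's own argument: since $\alpha\beta=-1$, your substitutions $x=-\alpha$ and $x=-\beta$ are exactly the paper's choices $x=1/\beta$ and $x=1/\alpha$ in \eqref{alzer1} and \eqref{alzer2}. The golden-ratio reductions $1+\alpha=\alpha^2$, $1+2\alpha=\alpha^3$ (and their $\beta$-counterparts) and the final Binet combination with $F_2=1$, $L_2=3$ all check out.
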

	\begin{proof}
		Set $x=1/\alpha$ and $x=1/\beta$ in \eqref{alzer1} and \eqref{alzer2}, respectively, and combine according to the Binet formulas.
	\end{proof}
	\begin{remark}
		Combining Theorem \ref{prop2} with Theorem \ref{prop5} gives the relations
					\begin{align*}
			&\sum_{k=0}^n \binom {n+k}{k} F_{n+1+2k} = \sum_{k=0}^{n} (-1)^{n-k}\binom {n+k}{k}  F_{2(n+1)+k},\\
			&\sum_{k=0}^n \binom {n+k}{k} L_{n+1+2k} = 2(-1)^n + \sum_{k=0}^{n}(-1)^{n-k} \binom {n+k}{k} L_{2(n+1)+k}.
			\end{align*}
			\end{remark}
	\begin{theorem}\label{prop6}
	For each non-negative integer $n$ we have the relations
			\begin{align*}
			&3^n\sum_{k=0}^n (-1)^{k} \binom {n+k}{k}  F_{2(n+1+2k)}
			= 1 - \sum_{k=0}^{n-1} (-3)^{k} \binom {2k+1}{k}   \big(3 F_{6k+8} + F_{6k+10}\big),\\
			& 3^n\sum_{k=0}^n (-1)^{k}\binom {n+k}{k}  L_{2(n+1+2k)}
			= 3 - \sum_{k=0}^{n-1} (-3)^{k}\binom {2k+1}{k} \big(3 L_{6k+8} + L_{6k+10}\big).
			\end{align*}
		\end{theorem}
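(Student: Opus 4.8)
The plan is to follow exactly the route used for Theorems~\ref{prop1}--\ref{prop5}: evaluate Alzer's two expressions \eqref{alzer1} and \eqref{alzer2} for $P_n$ at the conjugate pair $x=-\alpha^4$ and $x=-\beta^4$, and then combine the resulting scalar identities through the Binet formulas \eqref{bine.fl}. The substitution is dictated by the summand. Since $F_{2(n+1+2k)}=\bigl(\alpha^{2n+2}(\alpha^4)^k-\beta^{2n+2}(\beta^4)^k\bigr)/\sqrt5$ and $L_{2(n+1+2k)}=\alpha^{2n+2}(\alpha^4)^k+\beta^{2n+2}(\beta^4)^k$, the factor $(-1)^k$ folds into $x^k$ to give precisely $(-\alpha^4)^k$ and $(-\beta^4)^k$, while the prefactor $3^n$ is forced by the $(1-x)^{n+1}$ occurring in \eqref{alzer1}.

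First I would record the two algebraic facts needed. Taking $s=2$ in \eqref{eq.ozz3zp6} and \eqref{eq.syrjcay} and using $L_2=3$ gives $1+\alpha^4=3\alpha^2$ and $1+\beta^4=3\beta^2$, hence $1-(-\alpha^4)=3\alpha^2$ and $1-(-\beta^4)=3\beta^2$. Feeding $x=-\alpha^4$ into \eqref{alzer1} and solving for the binomial sum gives $3^n\alpha^{2n+2}\sum_{k=0}^n\binom{n+k}{k}(-\alpha^4)^k=\tfrac13 P_n(-\alpha^4)$, and likewise at $\beta$. Thus the left-hand side of the Fibonacci relation equals $\bigl(P_n(-\alpha^4)-P_n(-\beta^4)\bigr)/(3\sqrt5)$ and that of the Lucas relation equals $\bigl(P_n(-\alpha^4)+P_n(-\beta^4)\bigr)/3$. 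Expanding each $P_n$ by \eqref{alzer2}, the constant terms $1-x$ contribute $3(\alpha^2-\beta^2)/(3\sqrt5)=F_2=1$ and $3(\alpha^2+\beta^2)/3=L_2=3$, which are exactly the leading constants on the right.

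The step I expect to demand the most care is recasting the series part of \eqref{alzer2} into the stated shape. With $x=-\alpha^4$ one has $1-2x=1+2\alpha^4$ and $x^{k+1}(1-x)^{k+1}=(-3)^{k+1}\alpha^{6(k+1)}$, so the Binet combination produces the kernel $(1+2\alpha^4)\alpha^{6(k+1)}\pm(1+2\beta^4)\beta^{6(k+1)}$, which expands to $\sqrt5\,(F_{6k+6}+2F_{6k+10})$ for the difference and to $L_{6k+6}+2L_{6k+10}$ for the sum. These are not yet in the claimed form, and the decisive move is the three-term relation $G_{m-2}+G_{m+2}=3G_m$, valid for both $G=F$ and $G=L$, which at $m=6k+8$ rewrites $G_{6k+6}+2G_{6k+10}=3G_{6k+8}+G_{6k+10}$. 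Collecting the remaining scalar $(-3)^{k+1}/3=-(-3)^k$ then converts both combinations into $1-\sum_{k=0}^{n-1}(-3)^k\binom{2k+1}{k}(3F_{6k+8}+F_{6k+10})$ and $3-\sum_{k=0}^{n-1}(-3)^k\binom{2k+1}{k}(3L_{6k+8}+L_{6k+10})$, as required.
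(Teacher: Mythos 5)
Your proposal is correct and takes exactly the paper's route: the paper's proof also consists of setting $x=-\alpha^4$ and $x=-\beta^4$ in \eqref{alzer1} and \eqref{alzer2} and combining via the Binet formulas \eqref{bine.fl}. The details you supply beyond the paper's one-line proof --- namely $1+\alpha^4=3\alpha^2$, $1+\beta^4=3\beta^2$, the kernel $G_{6k+6}+2G_{6k+10}$, and its reduction to $3G_{6k+8}+G_{6k+10}$ via the three-term relation $G_{m-2}+G_{m+2}=3G_m$ --- are all accurate and account for the stated form of the right-hand sides.
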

	\begin{proof}
	Set $x=-\alpha^4$ and $x=-\beta^4$ in \eqref{alzer1} and \eqref{alzer2}, respectively, and combine according to the Binet formulas.
	\end{proof}
		The last Theorem in this set involves mixed identities.
	\begin{theorem}\label{prop7}
	For each non-negative integer $n$ we have the relations
			\begin{align*}
			&2^n\sum_{k=0}^n (-1)^{k} \binom {n+k}{k}  F_{2(n+1)+3k}
			= 1 - \sum_{k=0}^{n-1} (-2)^{k} \binom {2k+1}{k} L_{5k+8},\\
			& 2^n \sum_{k=0}^n (-1)^{k} \binom {n+k}{k} L_{2(n+1)+3k}
			= 3 - 5 \sum_{k=0}^{n-1} (-2)^{k} \binom {2k+1}{k} F_{5k+8}.
			\end{align*}
	\end{theorem}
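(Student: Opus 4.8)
The plan is to follow the template already established for Theorems \ref{prop1}--\ref{prop6}: substitute a suitable power of the golden ratio into Alzer's pair \eqref{alzer1}--\eqref{alzer2} and separate the rational and irrational parts. Reading off the left-hand sides, the summand $(-1)^k\binom{n+k}{k}$ multiplying $F_{2(n+1)+3k}$ and $L_{2(n+1)+3k}$ tells me to take $x=-\alpha^3$ (and, in parallel, $x=-\beta^3$), since then $\binom{n+k}{k}x^k=(-1)^k\binom{n+k}{k}\alpha^{3k}$. The whole computation hinges on the elementary simplification $1-x=1+\alpha^3=2\alpha^2$, which comes from $\alpha^2=\alpha+1$, so that $(1-x)^{n+1}=2^{\,n+1}\alpha^{2(n+1)}$.

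With this in hand, \eqref{alzer1} gives $\sum_{k=0}^n\binom{n+k}{k}(-\alpha^3)^k=P_n(-\alpha^3)\big/\big(2^{\,n+1}\alpha^{2(n+1)}\big)$; multiplying through by $2^n\alpha^{2(n+1)}$ produces exactly $2^n\sum_{k=0}^n(-1)^k\binom{n+k}{k}\alpha^{2(n+1)+3k}=\tfrac12P_n(-\alpha^3)$, and likewise with $\beta$. Hence, by the Binet formulas \eqref{bine.fl}, the two left-hand sides of the theorem equal $\big(P_n(-\alpha^3)-P_n(-\beta^3)\big)\big/(2\sqrt5)$ and $\big(P_n(-\alpha^3)+P_n(-\beta^3)\big)\big/2$, respectively. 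It then remains only to evaluate $P_n(-\alpha^3)$ and $P_n(-\beta^3)$ through the second representation \eqref{alzer2}.

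For that I would substitute $x=-\alpha^3$ into \eqref{alzer2}, using $x(1-x)=-\alpha^3\cdot 2\alpha^2=-2\alpha^5$, so that $x^{k+1}(1-x)^{k+1}=(-2)^{k+1}\alpha^{5k+5}$. The step I expect to be decisive---and which creates the ``mixed'' Fibonacci/Lucas crossover noted just before the statement---is the pair of collapsing identities $1+2\alpha^3=\sqrt5\,\alpha^3$ and $1+2\beta^3=-\sqrt5\,\beta^3$. These follow at once from $\alpha^3=2+\sqrt5$ and $\beta^3=2-\sqrt5$ (and can equally be read off from \eqref{eq.ozz3zp6} and \eqref{eq.syrjcay}), and they convert the factor $(1-2x)\,\alpha^{5k+5}=(1+2\alpha^3)\alpha^{5k+5}$ into $\sqrt5\,\alpha^{5k+8}$, with the opposite sign for $\beta$. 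Carrying these signs through, the difference $P_n(-\alpha^3)-P_n(-\beta^3)$ contributes $\alpha^{5k+8}+\beta^{5k+8}=L_{5k+8}$ inside the sum (together with the constant $\alpha^3-\beta^3=2\sqrt5$), while the sum $P_n(-\alpha^3)+P_n(-\beta^3)$ contributes $\sqrt5\,(\alpha^{5k+8}-\beta^{5k+8})=5F_{5k+8}$ (with constant $2+L_3=6$). Dividing by $2\sqrt5$ and by $2$ respectively, and invoking Lemma \ref{lem.jv8c0fd} to separate rational from irrational parts, yields the two asserted relations; beyond the identity $1+2\alpha^3=\sqrt5\alpha^3$, the only remaining work is routine bookkeeping of the powers of $-2$ and the binomial coefficients.
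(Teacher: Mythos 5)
Your proposal is correct and takes essentially the same route as the paper, whose entire proof is the instruction to set $x=-\alpha^3$ and $x=-\beta^3$ in \eqref{alzer1} and \eqref{alzer2} and combine according to the Binet formulas \eqref{bine.fl}. The details you supply---$1+\alpha^3=2\alpha^2$, $x(1-x)=-2\alpha^5$, and the collapsing identities $1+2\alpha^3=\sqrt5\,\alpha^3$, $1+2\beta^3=-\sqrt5\,\beta^3$, with constants $\alpha^3-\beta^3=2\sqrt5$ and $2+L_3=6$---are all accurate and reproduce exactly the bookkeeping the paper leaves implicit.
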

	\begin{proof}
		Set $x=-\alpha^3$ and $x=-\beta^3$ in \eqref{alzer1} and \eqref{alzer2}, respectively, and combine according to the Binet formulas.
	\end{proof}
	
	As a final remark in this section we note that some of the identities presented in this section follow also from the following lemma.
	\begin{lemma}[{\cite[Identities 6.22,\;6.23]{quaintance}}]
		If $x$ is a complex variable and $m$, $n$ are non-negative integers, then
	\begin{align}
			&\sum_{k = 0}^n \binom{n + k}{k} \big( {(1 - x)^{n + 1} x^k + x^{n + 1} (1 - x)^k } \big) = 1, \qquad x\ne 0, \label{eq.eu32lpb} \\
			&\sum_{k = 0}^n \binom{n + k}{k} \big ( (1 - x)^{n + 1} + x^{n + 1 - k} (1 - x)^k \big) = x^{n + 1}, \qquad x\ne 0,\,\, x\ne1 \label{eq.kn9hc8z}.
			\end{align}
	\end{lemma}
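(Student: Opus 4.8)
The plan is to obtain both \eqref{eq.eu32lpb} and \eqref{eq.kn9hc8z} from the identity \eqref{eq.g85la9i} (equivalently \eqref{eq.wl5p3v8}) already established above, together with the ordinary binomial theorem, rather than appealing directly to \cite{quaintance}. The key observation is that setting $m=2n$ in \eqref{eq.g85la9i} gives
\[
\sum_{k=0}^n \binom{n+k}{k} x^k = \sum_{k=0}^n \binom{2n+1}{k} x^k (1-x)^{n-k},
\]
so that the truncated sum $\sum_{k=0}^n\binom{n+k}{k}x^k$, after multiplication by $(1-x)^{n+1}$, becomes the \emph{lower tail} $\sum_{k=0}^n\binom{2n+1}{k}x^k(1-x)^{2n+1-k}$ of the binomial expansion of $\big(x+(1-x)\big)^{2n+1}$.

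To prove \eqref{eq.eu32lpb} I would treat its two summands separately. The first, $(1-x)^{n+1}\sum_{k=0}^n\binom{n+k}{k}x^k$, is exactly the lower tail just described. For the second, $x^{n+1}\sum_{k=0}^n\binom{n+k}{k}(1-x)^k$, I would apply the same $m=2n$ specialization of \eqref{eq.g85la9i} with $x$ replaced by $1-x$, giving $\sum_{k=0}^n\binom{n+k}{k}(1-x)^k=\sum_{k=0}^n\binom{2n+1}{k}(1-x)^k x^{n-k}$; multiplying by $x^{n+1}$ and reindexing $k\mapsto 2n+1-k$ (using $\binom{2n+1}{k}=\binom{2n+1}{2n+1-k}$) turns it into the \emph{upper tail} $\sum_{k=n+1}^{2n+1}\binom{2n+1}{k}x^k(1-x)^{2n+1-k}$. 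Adding the two tails reassembles the complete sum $\sum_{k=0}^{2n+1}\binom{2n+1}{k}x^k(1-x)^{2n+1-k}=\big(x+(1-x)\big)^{2n+1}=1$, which is \eqref{eq.eu32lpb}.

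Identity \eqref{eq.kn9hc8z} I would establish by the same mechanism, again rewriting each truncated sum through the $m=2n$ case of \eqref{eq.g85la9i} and isolating the factor $x^{n+1}$ via the reindexing $k\mapsto 2n+1-k$ before invoking the binomial theorem. A purely probabilistic check is also available and makes the content transparent: $(1-x)^{n+1}\sum_{k=0}^n\binom{n+k}{k}x^k$ is the probability of obtaining at most $n$ successes before the $(n+1)$-th failure in independent trials with success probability $x$, i.e.\ of seeing at least $n+1$ failures among the first $2n+1$ trials; since $2n+1$ is odd, exactly one of the events ``at least $n+1$ failures'' and ``at least $n+1$ successes'' occurs, which yields \eqref{eq.eu32lpb} at once. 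The main obstacle is purely bookkeeping: one must track the index shift $k\mapsto 2n+1-k$ carefully and verify that the lower range $0\le k\le n$ and the upper range $n+1\le k\le 2n+1$ tile $\{0,1,\dots,2n+1\}$ without overlap, which is precisely where the parity of $2n+1$ enters.
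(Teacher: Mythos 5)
Your derivation of \eqref{eq.eu32lpb} is correct, and it is worth noting that it is genuinely different from what the paper does: the paper offers no proof at all for this lemma, only the citation to the Gould--Quaintance tables. Your argument is self-contained within the paper's own toolkit: the $m=2n$ case of \eqref{eq.g85la9i} converts $(1-x)^{n+1}\sum_{k=0}^n\binom{n+k}{k}x^k$ into the lower tail $\sum_{k=0}^n\binom{2n+1}{k}x^k(1-x)^{2n+1-k}$, the same specialization at $1-x$ together with the reflection $k\mapsto 2n+1-k$ (using $\binom{2n+1}{k}=\binom{2n+1}{2n+1-k}$) converts $x^{n+1}\sum_{k=0}^n\binom{n+k}{k}(1-x)^k$ into the complementary upper tail, and the two ranges $0\le k\le n$ and $n+1\le k\le 2n+1$ tile $\{0,\dots,2n+1\}$, so the binomial theorem gives $1$. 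Incidentally, your proof shows the hypothesis $x\ne 0$ in \eqref{eq.eu32lpb} is superfluous, since both sides are polynomials; the negative-binomial interpretation you add is a valid check on $0<x<1$ and extends by polynomiality.

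The second half of your proposal, however, contains a genuine gap: \eqref{eq.kn9hc8z} as printed is \emph{false}, so no mechanism can establish it, and your claim that ``the same mechanism'' works glosses over the fact that neither summand has the tail structure. At $n=0$ the left side is $(1-x)+x=1\ne x$; at $n=1$ it is $3-4x+2x^2\ne x^2$. Structurally, the term $(1-x)^{n+1}$ carries no factor $x^k$, so its sum is merely $\binom{2n+1}{n}(1-x)^{n+1}$ (by $\sum_{k=0}^n\binom{n+k}{k}=\binom{2n+1}{n}$), while in $x^{n+1-k}(1-x)^k$ the exponent of $x$ depends on $k$: writing it as $x^{n+1}\sum_{k=0}^n\binom{n+k}{k}\big((1-x)/x\big)^k$ shows the multiplier your scheme would require is $\big(1-(1-x)/x\big)^{n+1}=\big((2x-1)/x\big)^{n+1}$, which is nowhere in sight. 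A sanity check at $n=0$, of the kind you performed probabilistically for \eqref{eq.eu32lpb}, would have caught this. The statement is evidently a misprint of Gould's identity; a correct version, which your method \emph{does} prove --- apply \eqref{eq.eu32lpb} with $x\mapsto 1/x$ and multiply through by $x^{2n+1}$ --- is
\begin{equation*}
\sum_{k = 0}^n \binom{n + k}{k}\big( (x - 1)^{n + 1} x^{n - k} + x^{n - k} (x - 1)^k \big) = x^{2n + 1},
\qquad\text{equivalently}\qquad
\sum_{k = 0}^n \binom{n + k}{k}\, x^{-k}\big( (x - 1)^{n + 1} + (x - 1)^k \big) = x^{n + 1}
\end{equation*}
for $x\ne 0$ (one checks $n=0,1$ directly). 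So you should have detected and repaired the second identity rather than asserting a proof of it; fortunately the paper itself only ever invokes \eqref{eq.eu32lpb} (for \eqref{eq.qjec5u8} and \eqref{eq2_prop3}), never \eqref{eq.kn9hc8z}.
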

	
	For instance, identity \eqref{eq.qjec5u8} is an immediate consequence of \eqref{eq.eu32lpb} at $x=\alpha$. 
	Also, \eqref{eq2_prop3} follows easily from \eqref{eq.eu32lpb}.

	\section{Relations involving two central binomial coefficients}
	
	\begin{lemma}\label{central_bin}
		Let $x$ be a complex variable. Then
	\begin{equation}\label{eq.central_bin}
			\sum_{k = 0}^n \binom {2k}{k} \binom {2(n - k)}{n-k} x^{2k} = \sum_{k = 0}^n \binom {n}{k}^2 (1+x)^{2k} (1-x)^{2(n-k)}.
			\end{equation}
	\end{lemma}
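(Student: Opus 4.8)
The plan is to prove the identity by passing to ordinary generating functions in an auxiliary variable $z$: I would multiply both sides of \eqref{eq.central_bin} by $z^n$ and sum over all $n\ge 0$, treating $z$ as a formal indeterminate (legitimate since, for each fixed $n$, both sides are polynomials in $x$, so no analytic convergence is needed). The goal is to show that the two resulting power series in $z$ coincide, after which equating coefficients of $z^n$ yields the claim for every $n$.

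For the left-hand side, the summand factors as $\binom{2k}{k}(x^2)^k\cdot\binom{2(n-k)}{n-k}$, so the double sum is a Cauchy product. Using the classical central binomial generating function $\sum_{k\ge 0}\binom{2k}{k}y^k=(1-4y)^{-1/2}$ with $y=x^2 z$ in one factor and $y=z$ in the other, I obtain
\[
\sum_{n\ge0} z^n\sum_{k=0}^n\binom{2k}{k}\binom{2(n-k)}{n-k}x^{2k}
=\frac{1}{\sqrt{1-4x^2 z}}\cdot\frac{1}{\sqrt{1-4z}}
=\frac{1}{\sqrt{1-4(1+x^2)z+16x^2z^2}}.
\]

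For the right-hand side I would invoke the generating function for sums of squared binomial coefficients,
\[
\sum_{n\ge0} z^n\sum_{k=0}^n\binom{n}{k}^2 a^k b^{n-k}
=\frac{1}{\sqrt{1-2(a+b)z+(a-b)^2z^2}},
\]
which can be established by writing $\sum_{k}\binom{n}{k}^2 a^k b^{n-k}=[u^n]\big((1+au)(1+bu)\big)^n$ and evaluating the resulting diagonal (Lagrange--B\"urmann) series for the quadratic $\phi(u)=1+(a+b)u+ab\,u^2$. Setting $a=(1+x)^2$ and $b=(1-x)^2$ gives $a+b=2(1+x^2)$ and $(a-b)^2=(4x)^2=16x^2$, so the denominator becomes exactly $\sqrt{1-4(1+x^2)z+16x^2z^2}$, matching the left-hand side. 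Comparing coefficients of $z^n$ then finishes the proof.

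The main obstacle is the squared-binomial generating function: everything else is routine algebra and a single algebraic simplification. If a self-contained derivation is wanted rather than a citation, the cleanest route is the coefficient identity $\sum_k\binom{n}{k}^2 a^k b^{n-k}=[u^n]\big((1+au)(1+bu)\big)^n$ together with the standard fact that $\sum_n z^n[u^n]\phi(u)^n=(1-2(a+b)z+(a-b)^2z^2)^{-1/2}$ for this quadratic $\phi$; alternatively one may recognise the right-hand side of \eqref{eq.central_bin} as a Legendre polynomial evaluation. A minor point worth recording is that both sides of \eqref{eq.central_bin} are even polynomials in $x$ of degree $2n$ (the right side because replacing $x$ by $-x$ merely reindexes $k\mapsto n-k$ via $\binom{n}{k}=\binom{n}{n-k}$), which confirms that the coefficient comparison in $z$ compares genuine polynomial identities in $x$.
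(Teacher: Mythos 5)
Your proposal is correct, but it takes a genuinely different route from the paper's proof, which is a two-line citation: the paper quotes from Riordan's book \cite{Riordan} the relation $A_n\bigl((2t-1)^2\bigr)=4^n\sum_{k=0}^n\binom{n}{k}^2 t^{2k}(1-t)^{2(n-k)}$ for $A_n(t)=\sum_{k=0}^n\binom{2k}{k}\binom{2(n-k)}{n-k}t^k$ and simply substitutes $x=2t-1$. You instead pass to generating functions in an auxiliary variable $z$, and every step checks out: the left side is a Cauchy product giving $(1-4x^2z)^{-1/2}(1-4z)^{-1/2}=\bigl(1-4(1+x^2)z+16x^2z^2\bigr)^{-1/2}$ (the product of the two square roots equals the square root of the product since both series have constant term $1$), and the right side matches via $\sum_{n\ge0}z^n\sum_{k=0}^n\binom{n}{k}^2a^kb^{n-k}=\bigl(1-2(a+b)z+(a-b)^2z^2\bigr)^{-1/2}$ with $a=(1+x)^2$, $b=(1-x)^2$, where indeed $a+b=2(1+x^2)$ and $(a-b)^2=16x^2$. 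Your Lagrange--B\"urmann sketch for the squared-binomial generating function is also sound: with $\phi(u)=1+(a+b)u+ab\,u^2$ and $w=z\phi(w)$ one computes $1-z\phi'(w)=\sqrt{(1-(a+b)z)^2-4ab\,z^2}$, which is exactly the claimed radicand; equivalently one can use $\sum_k\binom{n}{k}^2a^kb^{n-k}=(a-b)^nP_n\bigl(\tfrac{a+b}{a-b}\bigr)$ and the Legendre generating function, as you remark. As for what each approach buys: the paper's argument is shorter but rests entirely on an identity imported from \cite{Riordan}, whereas yours is self-contained modulo standard facts, is legitimately formal (for each $n$ both coefficients of $z^n$ are polynomials in $x$, so no convergence is needed, as you correctly note), and in fact re-proves Riordan's relation as a byproduct, since your generating-function identity is equivalent to it under the substitution $x=2t-1$.
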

	\begin{proof} From Riordan's book \cite{Riordan} it is known that for the polynomial
	\begin{equation*}
			A_n(t) = \sum_{k = 0}^n \binom {2k}{k} \binom {2(n - k)}{n-k} t^{k}
			\end{equation*}
	we have the relation
	\begin{equation*}
			A_n\big((2t-1)^2\big) = 4^n \sum_{k = 0}^n \binom {n}{k}^2 t^{2k} (1-t)^{2(n-k)}.
			\end{equation*}
	Set $x=2t-1$ and simplify.
	\end{proof}
	\begin{theorem}\label{cbc_prop1}
		For each integer $r$ and each non-negative integer $n$ we have the relations
	\begin{align*}
			&\sum_{k = 0}^n \binom {2k}{k} \binom {2(n - k)}{n-k} F_{2k+r} = \sum_{k = 0}^n \binom {n}{k}^2 F_{6k-2n+r},\\
			&\sum_{k = 0}^n \binom {2k}{k} \binom {2(n - k)}{n-k} L_{2k+r} = \sum_{k = 0}^n \binom {n}{k}^2 L_{6k-2n+r}.
			\end{align*}
	\end{theorem}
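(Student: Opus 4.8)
The plan is to specialize the polynomial identity of Lemma~\ref{central_bin} at $x=\alpha$ and $x=\beta$ and then combine the two resulting scalar equations through the Binet formulas~\eqref{bine.fl}, exactly in the spirit of the proofs in the preceding sections. The workhorse is that the golden ratio satisfies $\alpha^2=\alpha+1$, so that $1+\alpha=\alpha^2$ and $1-\alpha=(1-\sqrt5)/2=\beta$; by symmetry $1+\beta=\beta^2$ and $1-\beta=\alpha$. Together with $\alpha\beta=-1$ (equivalently $\beta^2=\alpha^{-2}$ and $\alpha^2=\beta^{-2}$), these relations collapse the right-hand side of~\eqref{eq.central_bin} into a single power of $\alpha$ (resp.\ $\beta$), which is precisely what is needed to recover a Fibonacci or Lucas index.

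Concretely, I would first set $x=\alpha$ in~\eqref{eq.central_bin}. Then $x^{2k}=\alpha^{2k}$, while $(1+\alpha)^{2k}(1-\alpha)^{2(n-k)}=\alpha^{4k}\beta^{2(n-k)}$. Since the exponent $2(n-k)$ is even, $\beta^{2(n-k)}=\alpha^{-2(n-k)}$, whence $\alpha^{4k}\beta^{2(n-k)}=\alpha^{6k-2n}$. Multiplying through by $\alpha^r$ therefore yields
\begin{equation*}
\sum_{k=0}^n \binom{2k}{k}\binom{2(n-k)}{n-k}\alpha^{2k+r}=\sum_{k=0}^n \binom{n}{k}^2\alpha^{6k-2n+r}.
\end{equation*}
Repeating the computation with $x=\beta$ (using $1+\beta=\beta^2$, $1-\beta=\alpha$, and $\alpha^{2(n-k)}=\beta^{-2(n-k)}$) and multiplying by $\beta^r$ gives the companion identity with $\alpha$ replaced by $\beta$ throughout.

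Finally, subtracting the $\beta$-identity from the $\alpha$-identity and dividing by $\sqrt5$ produces the Fibonacci relation, while adding the two identities produces the Lucas relation; in each case the terms $F_{2k+r}$, $F_{6k-2n+r}$, $L_{2k+r}$, $L_{6k-2n+r}$ are read off directly from~\eqref{bine.fl}. The argument is essentially mechanical, and the only step that demands care is the exponent bookkeeping on the right-hand side: one must confirm that $\alpha^{4k}\beta^{2(n-k)}$ (and its $\beta$-analogue) genuinely simplifies to the single exponent $6k-2n$, so that the final index reads $6k-2n+r$ as claimed, rather than a split power of $\alpha$ and $\beta$ that would fail to recombine into a Fibonacci or Lucas number.
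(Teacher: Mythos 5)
Your proof is correct and follows exactly the paper's route: set $x=\alpha$ and $x=\beta$ in Lemma~\ref{central_bin}, multiply by $\alpha^r$ and $\beta^r$, and combine via the Binet formulas \eqref{bine.fl}. Your exponent bookkeeping ($1+\alpha=\alpha^2$, $1-\alpha=\beta$, $\beta^{2(n-k)}=\alpha^{-2(n-k)}$, hence $\alpha^{4k}\beta^{2(n-k)}=\alpha^{6k-2n}$, and symmetrically for $\beta$) is accurate and simply makes explicit the details the paper leaves to the reader.
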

	\begin{proof}
		Set $x=\alpha$ and $x=\beta$ in Lemma \ref{central_bin}, multiply through by $\alpha^r$ and $\beta^r$, respectively, 
		and combine according to the Binet formulas \eqref{bine.fl}.
	\end{proof}
	\begin{theorem}\label{cbc_prop2}
		For each integer $r$ and each non-negative integer $n$ we have the relations
	\begin{align*}
			&\sum_{k = 0}^n \binom {2k}{k} \binom {2(n - k)}{n-k} F_{4k+r} = F_{2n+r} \sum_{k = 0}^n \binom {n}{k}^2 5^k,\\
			&\sum_{k = 0}^n \binom {2k}{k} \binom {2(n - k)}{n-k} L_{4k+r} = L_{2n+r} \sum_{k = 0}^n \binom {n}{k}^2 5^k.
			\end{align*}
\end{theorem}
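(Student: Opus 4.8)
The plan is to follow the same recipe used to prove Theorem \ref{cbc_prop1}, but with the substitutions $x=\alpha^2$ and $x=\beta^2$ in Lemma \ref{central_bin} instead of $x=\alpha$ and $x=\beta$. The point of squaring is that the left-hand factor $x^{2k}$ in \eqref{eq.central_bin} becomes $\alpha^{4k}$, which after multiplication by $\alpha^r$ carries the exponent $4k+r$ needed to produce $F_{4k+r}$ (respectively $L_{4k+r}$) once the $\beta$-version is combined through the Binet formulas \eqref{bine.fl}.

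The crux of the argument is the simplification of the right-hand side of \eqref{eq.central_bin}. Using $\alpha^2=\alpha+1$ together with $\alpha=(1+\sqrt5)/2$, I would first record the elementary identities $1-\alpha^2=-\alpha$ and $1+\alpha^2=\sqrt5\,\alpha$, and their conjugates $1-\beta^2=-\beta$ and $1+\beta^2=-\sqrt5\,\beta$. Substituting $x=\alpha^2$ then gives
\[
(1+\alpha^2)^{2k}(1-\alpha^2)^{2(n-k)} = (\sqrt5\,\alpha)^{2k}(-\alpha)^{2(n-k)} = 5^{k}\alpha^{2k}\alpha^{2(n-k)} = 5^{k}\alpha^{2n},
\]
so the summation index $k$ decouples from the power of $\alpha$. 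This is exactly the mechanism behind the factored shape of the claimed right-hand sides: the right side of \eqref{eq.central_bin} collapses to $\alpha^{2n}\sum_{k=0}^n\binom{n}{k}^2 5^k$, and multiplying through by $\alpha^r$ yields $\alpha^{2n+r}\sum_{k=0}^n\binom{n}{k}^2 5^k$.

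Repeating the identical computation with $x=\beta^2$ and multiplying by $\beta^r$ produces $\beta^{2n+r}\sum_{k=0}^n\binom{n}{k}^2 5^k$ on the right. I would then have two equations whose left-hand sides are $\sum_k \binom{2k}{k}\binom{2(n-k)}{n-k}\alpha^{4k+r}$ and the corresponding $\beta$-sum. Subtracting the $\beta$-equation from the $\alpha$-equation and dividing by $\sqrt5$ delivers the first (Fibonacci) identity, while adding the two equations delivers the second (Lucas) identity; in each case the common real factor $\sum_k\binom{n}{k}^2 5^k$ pulls out of the Binet combination, leaving $F_{2n+r}$ or $L_{2n+r}$.

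There is no serious obstacle: the whole derivation is mechanical once the factorization $1+\alpha^2=\sqrt5\,\alpha$ is observed. The only point requiring a little care is the sign and parity bookkeeping, namely that $(-\alpha)^{2(n-k)}=\alpha^{2(n-k)}$ and $(\pm\sqrt5\,\alpha)^{2k}=5^k\alpha^{2k}$, since this is precisely what ensures the $\pm\sqrt5$ in $1\pm\beta^2$ contributes only the real factor $5^k$ and does not interfere with the final separation into Fibonacci and Lucas parts.
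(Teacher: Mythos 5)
Your proposal is correct and is precisely the paper's own proof: substitute $x=\alpha^2$ and $x=\beta^2$ in Lemma \ref{central_bin}, multiply by $\alpha^r$ and $\beta^r$, and combine via the Binet formulas \eqref{bine.fl}. You have merely filled in the details the paper leaves implicit, and your key simplifications $1-\alpha^2=-\alpha$, $1+\alpha^2=\sqrt5\,\alpha$ (and their conjugates) are exactly right, correctly explaining why the right-hand side collapses to $\alpha^{2n+r}\sum_{k=0}^n\binom{n}{k}^2 5^k$.
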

	\begin{proof}
		Set $x=\alpha^2$ and $x=\beta^2$ in Lemma \ref{central_bin}, multiply through by $\alpha^r$ and $\beta^r$, respectively, 
		and combine according to the Binet formulas.
	\end{proof}
	
	We note the following particular results:
	\begin{align*}
		&\sum_{k = 0}^n \binom {2k}{k} \binom {2(n - k)}{n-k} F_{2(2k-n)} = 0,\\
		&\sum_{k = 0}^n \binom {2k}{k} \binom {2(n - k)}{n-k} L_{2(2k-n)} = 2\sum_{k = 0}^n \binom {n}{k}^2 5^k.
		\end{align*}
\begin{theorem}\label{cbc_prop3}
		For each integer $r$ and each non-negative integer $n$ we have the relations
	\begin{align*}
			&\sum_{k = 0}^n \binom {2k}{k} \binom {2(n - k)}{n-k} 4^{n-k} F_{2k+r} =  \sum_{k = 0}^n \binom {n}{k}^2 5^k F_{6k-4n+r},\\
			&\sum_{k = 0}^n \binom {2k}{k} \binom {2(n - k)}{n-k} 4^{n-k} L_{2k+r} =  \sum_{k = 0}^n \binom {n}{k}^2 5^k L_{6k-4n+r}.
			\end{align*}
\end{theorem}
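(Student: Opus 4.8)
The plan is to apply Lemma~\ref{central_bin} with the substitution $x=\alpha/2$, after first multiplying \eqref{eq.central_bin} through by $4^n$, then to repeat the computation with $x=\beta/2$ and combine the two results via the Binet formulas \eqref{bine.fl}, exactly as in the proofs of Theorems~\ref{cbc_prop1} and \ref{cbc_prop2}. The scaling by $4^n$ is what produces the $4^{n-k}$ weight absent from the bare lemma: on the left, $4^n x^{2k}$ at $x=\alpha/2$ becomes $4^{n-k}\alpha^{2k}$, so after a further multiplication by $\alpha^r$ the left-hand summand is $\binom{2k}{k}\binom{2(n-k)}{n-k}4^{n-k}\alpha^{2k+r}$, which already matches the claimed left side (up to the later Binet combination).

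The crux is the simplification of the right-hand side. Using $\alpha^2=\alpha+1$ together with $\beta=1-\alpha$, I would first record the two elementary identities $1+\alpha/2=\sqrt5\,\alpha/2$ (equivalently $2+\alpha=\sqrt5\,\alpha$) and $1-\alpha/2=\beta^2/2$ (equivalently $2-\alpha=1+\beta=\beta^2$). These give
\[
4^n\,(1+\alpha/2)^{2k}(1-\alpha/2)^{2(n-k)}=5^k\,\alpha^{2k}\,\beta^{4(n-k)},
\]
and since $\beta^{4(n-k)}=\alpha^{-4(n-k)}$, the right-hand summand collapses to $5^k\alpha^{6k-4n}$. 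Multiplying by $\alpha^r$ then turns the scaled right side of the lemma into $\sum_k\binom{n}{k}^2 5^k\alpha^{6k-4n+r}$, precisely the $\alpha$-part of the target.

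Finally, I would run the identical computation with $x=\beta/2$, using the mirror identities $1+\beta/2=-\sqrt5\,\beta/2$ and $1-\beta/2=\alpha^2/2$, and multiply by $\beta^r$; this yields the same two sums with every $\alpha$ replaced by $\beta$. Subtracting the $\beta$-equation from the $\alpha$-equation and dividing by $\sqrt5$ produces the Fibonacci relation, while adding them produces the Lucas relation, via $F_j=(\alpha^j-\beta^j)/\sqrt5$ and $L_j=\alpha^j+\beta^j$.

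The only real obstacle is spotting the correct substitution $x=\alpha/2$ paired with the $4^n$ scaling, and verifying the two linear-in-$\alpha$ reductions $1+\alpha/2=\sqrt5\,\alpha/2$ and $1-\alpha/2=\beta^2/2$ (and their $\beta$-mirrors); once these are in hand, the combination step is routine and structurally identical to the earlier central-binomial theorems.
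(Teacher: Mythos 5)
Your proof is correct and follows the same route as the paper, which simply sets $x=\alpha/2$ and $x=\beta/2$ in Lemma~\ref{central_bin}, multiplies by $\alpha^r$ and $\beta^r$, and combines via the Binet formulas; your $4^n$ scaling and the reductions $2+\alpha=\sqrt5\,\alpha$, $2-\alpha=\beta^2$ (with their $\beta$-mirrors) are exactly the details the paper leaves implicit. All the algebra checks out, including the even-exponent step $\beta^{4(n-k)}=\alpha^{-4(n-k)}$ that collapses the right side to $5^k\alpha^{6k-4n+r}$.
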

	\begin{proof}
		Set $x=\alpha/2$ and $x=\beta/2$ in Lemma \ref{central_bin}, multiply through by $\alpha^r$ and $\beta^r$, respectively, 
		and combine according to the Binet formulas.
	\end{proof}
	\begin{theorem}\label{cbc_prop4}
		For each integer $r$ and each non-negative integer $n$ we have the relations
		\begin{align*}
			&F_r \sum_{k = 0}^n \binom {2k}{k} \binom {2(n - k)}{n-k} 5^k 4^{n-k}  =  \sum_{k = 0}^n \binom {n}{k}^2 F_{6(2k-n)+r},\\
			&L_r \sum_{k = 0}^n \binom {2k}{k} \binom {2(n - k)}{n-k} 5^k 4^{n-k} = \sum_{k = 0}^n \binom {n}{k}^2 L_{6(2k-n)+r}.
		\end{align*}
\end{theorem}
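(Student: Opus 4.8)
The plan is to specialize Lemma~\ref{central_bin} at the two numerical values $x = \sqrt5/2$ and $x = -\sqrt5/2$. These are chosen precisely so that the factor $x^{2k} = (5/4)^k$ on the left-hand side of \eqref{eq.central_bin} becomes a pure constant, stripping all Fibonacci and Lucas content off the left sum; this is exactly why $F_r$ and $L_r$ will emerge as prefactors rather than living inside the summation. The only genuinely creative step is to notice that these values interact perfectly with the golden ratio: since $\alpha = (1+\sqrt5)/2$ and $\beta = (1-\sqrt5)/2$ give $\alpha^3 = 2+\sqrt5$ and $\beta^3 = 2-\sqrt5$, one has the key identities $1 + \sqrt5/2 = \alpha^3/2$ and $1 - \sqrt5/2 = \beta^3/2$. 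I would establish these first.

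Concretely, substituting $x = \sqrt5/2$ into \eqref{eq.central_bin} converts the right-hand summand $(1+x)^{2k}(1-x)^{2(n-k)}$ into $(\alpha^3/2)^{2k}(\beta^3/2)^{2(n-k)} = \alpha^{6k}\beta^{6(n-k)}/4^{n}$. Invoking $\alpha\beta = -1$, and using that the exponent $6(n-k)$ is even so that $\beta^{6(n-k)} = \alpha^{-6(n-k)}$, this collapses to $\alpha^{6(2k-n)}/4^{n}$. Multiplying the resulting equation through by $4^{n}$ and noting $(5/4)^k 4^{n} = 5^k 4^{n-k}$ yields
\[
\sum_{k=0}^n \binom{2k}{k}\binom{2(n-k)}{n-k} 5^k 4^{n-k} = \sum_{k=0}^n \binom{n}{k}^2 \alpha^{6(2k-n)}.
\]
The substitution $x = -\sqrt5/2$ runs identically with the roles of $\alpha$ and $\beta$ interchanged (now $1-x = \alpha^3/2$ and $1+x = \beta^3/2$), producing the same left-hand side equal to $\sum_{k=0}^n \binom{n}{k}^2 \beta^{6(2k-n)}$.

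To finish, I would multiply the first relation by $\alpha^r$ and the second by $\beta^r$, then combine them. Subtracting and dividing by $\sqrt5$ produces the Fibonacci identity through $F_{6(2k-n)+r} = \big(\alpha^{6(2k-n)+r} - \beta^{6(2k-n)+r}\big)/\sqrt5$, while adding produces the Lucas identity through $L_{6(2k-n)+r} = \alpha^{6(2k-n)+r} + \beta^{6(2k-n)+r}$, exactly as in the Binet formulas \eqref{bine.fl}. I expect no serious obstacle here: once the specialization $x = \pm\sqrt5/2$ and the relations $1 \pm \sqrt5/2 = \alpha^3/2,\,\beta^3/2$ are in hand, the remainder is routine Binet bookkeeping, the sole subtlety being the parity observation that $6(2k-n)$ is even, which is what makes $\alpha^{6k}\beta^{6(n-k)}$ collapse cleanly to the single power $\alpha^{6(2k-n)}$.
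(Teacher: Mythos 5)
Your proposal is correct and is precisely the paper's own argument: the authors' proof of Theorem~\ref{cbc_prop4} likewise sets $x=\sqrt5/2$ and $x=-\sqrt5/2$ in Lemma~\ref{central_bin}, multiplies through by $\alpha^r$ and $\beta^r$, and combines via the Binet formulas \eqref{bine.fl}. Your write-up simply makes explicit the details the paper leaves tacit, namely $1\pm\sqrt5/2=\alpha^3/2,\,\beta^3/2$ and the even-exponent collapse $\alpha^{6k}\beta^{6(n-k)}=\alpha^{6(2k-n)}$, all of which check out.
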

	\begin{proof}
		Set $x=\sqrt{5}/2$ and $x=-\sqrt{5}/2$ in Lemma \ref{central_bin}, multiply through by $\alpha^r$ and $\beta^r$, respectively, 
		and combine according to the Binet formulas.
	\end{proof}
	\begin{theorem}\label{cbc_prop5}
		For each integer $r$ and each non-negative integer $n$ we have the relations
	\begin{align*}
			&\sum_{k = 0}^n \binom {2k}{k} \binom {2(n - k)}{n-k} F_{6k+r}  =  4^{n} \sum_{k = 0}^n \binom {n}{k}^2 F_{2(n+k)+r},\\
			&\sum_{k = 0}^n \binom {2k}{k} \binom {2(n - k)}{n-k} L_{6k+r}  =  4^{n} \sum_{k = 0}^n \binom {n}{k}^2 L_{2(n+k)+r}.
			\end{align*}
\end{theorem}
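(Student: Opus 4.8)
The plan is to apply Lemma~\ref{central_bin} with a judiciously chosen value of $x$, exactly as in the proofs of Theorems~\ref{cbc_prop1}--\ref{cbc_prop4}. To produce the argument $6k+r$ on the Fibonacci and Lucas terms on the left-hand side, I want $x^{2k}$ to become $\alpha^{6k}$ after multiplying through by $\alpha^r$. This forces the substitution $x=\alpha^3$ (and, by symmetry, $x=\beta^3$ for the conjugate relation).

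The crux is to verify that this substitution collapses the right-hand factor $(1+x)^{2k}(1-x)^{2(n-k)}$ into the clean shape $4^n\alpha^{2(n+k)}$ demanded by the statement. Using $\alpha^2=\alpha+1$ one finds $\alpha^3=2\alpha+1$, whence
\begin{equation*}
1+\alpha^3 = 2(1+\alpha) = 2\alpha^2, \qquad 1-\alpha^3 = -2\alpha.
\end{equation*}
Consequently $(1+\alpha^3)^{2k}(1-\alpha^3)^{2(n-k)} = (2\alpha^2)^{2k}(-2\alpha)^{2(n-k)} = 4^k\alpha^{4k}\cdot 4^{n-k}\alpha^{2(n-k)} = 4^n\alpha^{2(n+k)}$, and the identical computation $1+\beta^3=2\beta^2$, $1-\beta^3=-2\beta$ yields the corresponding factor $4^n\beta^{2(n+k)}$.

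With these simplifications in hand, setting $x=\alpha^3$ in \eqref{eq.central_bin} and multiplying through by $\alpha^r$ gives
\begin{equation*}
\sum_{k=0}^n \binom{2k}{k}\binom{2(n-k)}{n-k}\alpha^{6k+r} = 4^n\sum_{k=0}^n \binom{n}{k}^2 \alpha^{2(n+k)+r},
\end{equation*}
and the analogous relation holds with $\alpha$ replaced by $\beta$ throughout. Combining the two relations according to the Binet formulas~\eqref{bine.fl}---subtracting and dividing by $\sqrt5$ for the Fibonacci identity, adding for the Lucas identity---produces both claimed equalities.

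The only genuine obstacle is spotting the correct exponent $x=\alpha^3$ and confirming the factorization of $1\pm\alpha^3$; once these are rewritten through $\alpha$ and $\alpha^2$, everything reduces to the routine Binet combination used uniformly throughout this section.
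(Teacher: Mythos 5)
Your proof is correct and follows exactly the paper's route: the authors likewise set $x=\alpha^3$ and $x=\beta^3$ in Lemma~\ref{central_bin}, multiply by $\alpha^r$ and $\beta^r$, and combine via the Binet formulas~\eqref{bine.fl}. Your explicit verification that $1+\alpha^3=2\alpha^2$ and $1-\alpha^3=-2\alpha$ (and the conjugate identities) is the key simplification the paper leaves implicit, and you carried it out correctly.
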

	\begin{proof}
		Set $x=\alpha^3$ and $x=\beta^3$ in Lemma \ref{central_bin}, multiply through by $\alpha^r$ and $\beta^r$, respectively, 
		and combine according to the Binet formulas.
	\end{proof}
	\begin{theorem}\label{cbc_prop6}
		For each integer $r$ and each non-negative integer 
		$n$ we have the relations
	\begin{align*}
			&\sum_{k = 0}^n \binom {2k}{k} \binom {2(n - k)}{n-k} F_{8k+r} = F_{4n+r} \sum_{k = 0}^n \binom {n}{k}^2 9^k 5^{n-k},\\
			&\sum_{k = 0}^n \binom {2k}{k} \binom {2(n - k)}{n-k} L_{8k+r} = L_{4n+r} \sum_{k = 0}^n \binom {n}{k}^2 9^k 5^{n-k}.
			\end{align*}
\end{theorem}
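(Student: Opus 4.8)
The plan is to follow the uniform template established throughout this section: substitute a suitable power of $\alpha$ and $\beta$ into Lemma~\ref{central_bin}, rescale by $\alpha^r$ and $\beta^r$, and then separate the Fibonacci and Lucas statements via the Binet formulas \eqref{bine.fl}. Since the left-hand sides carry $F_{8k+r}$ and $L_{8k+r}$, I read off that the exponent $8k$ must arise from the factor $x^{2k}$ in \eqref{eq.central_bin}; hence I would set $x=\alpha^4$ (and, in a parallel computation, $x=\beta^4$), so that $x^{2k}=\alpha^{8k}$.

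The substantive step is simplifying the right-hand side of \eqref{eq.central_bin}, which requires closed forms for $1+\alpha^4$ and $1-\alpha^4$. Here I would invoke \eqref{eq.ozz3zp6} with $s=2$: since $(-1)^2+\alpha^4=\alpha^2 L_2$ and $(-1)^2-\alpha^4=-\sqrt5\,\alpha^2 F_2$, and $L_2=3$, $F_2=1$, I get $1+\alpha^4=3\alpha^2$ and $1-\alpha^4=-\sqrt5\,\alpha^2$. Consequently $(1+\alpha^4)^{2k}=9^k\alpha^{4k}$ and $(1-\alpha^4)^{2(n-k)}=5^{n-k}\alpha^{4(n-k)}$, so the product collapses to $9^k 5^{n-k}\alpha^{4n}$, independent of $k$ in its $\alpha$-exponent. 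After multiplying through by $\alpha^r$, the right-hand side becomes $\alpha^{4n+r}\sum_{k=0}^n\binom{n}{k}^2 9^k 5^{n-k}$, giving a clean relation in powers of $\alpha$. The analogous substitution $x=\beta^4$, using \eqref{eq.syrjcay} (where the sign flips to $1-\beta^4=\sqrt5\,\beta^2$, but the even powers absorb this), yields the same expression with $\alpha$ replaced by $\beta$.

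Finally, I would combine the two resulting identities: subtracting and dividing by $\sqrt5$ produces the Fibonacci relation, and adding produces the Lucas relation, via $F_j=(\alpha^j-\beta^j)/\sqrt5$ and $L_j=\alpha^j+\beta^j$. I do not anticipate a genuine obstacle here; the only point demanding care is the bookkeeping of the even exponents $2k$ and $2(n-k)$, which guarantees that the irrational factor $\sqrt5$ from $1\mp\alpha^4$ enters only through $5^{n-k}$ and never obstructs the final separation into rational-coefficient Fibonacci and Lucas sums.
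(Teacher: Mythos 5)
Your proposal is correct and is exactly the paper's own proof: set $x=\alpha^4$ and $x=\beta^4$ in Lemma~\ref{central_bin}, multiply through by $\alpha^r$ and $\beta^r$, and combine via the Binet formulas~\eqref{bine.fl}. Your simplifications $1+\alpha^4=3\alpha^2$ and $1-\alpha^4=-\sqrt5\,\alpha^2$ (and their $\beta$-analogues) are precisely the details the paper leaves implicit, and they check out.
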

	\begin{proof}
		Set $x=\alpha^4$ and $x=\beta^4$ in Lemma \ref{central_bin}, multiply through by $\alpha^r$ and $\beta^r$, respectively, 
		and combine according to the Binet formulas.
	\end{proof}
	
	In particular,
	\begin{equation*}
		\sum_{k = 0}^n \binom {2k}{k} \binom {2(n - k)}{n-k} F_{4(2k-n)} = 0
		\end{equation*}
and 
		\begin{equation*}
		\sum_{k = 0}^n \binom {2k}{k} \binom {2(n - k)}{n-k} L_{4(2k-n)} =  2 \sum_{k = 0}^n \binom {n}{k}^2 9^k 5^{n-k}.
		\end{equation*}
	
	We proceed with some identities involving an additional parameter.
	\begin{theorem}
		If $n$ is a non-negative integer and $r, s$ are any integers, then
			\begin{equation*}
			\sum_{k = 0}^n \binom{2k}k\binom{2(n - k)}{n - k}q^{2s(n - k)} W_{4ks + r}  = W_{2ns + r} \sum_{k = 0}^n \binom nk^2 \Delta ^{2k} V_s^{2(n - k)} U_s^{2k} .
			\end{equation*}
	\end{theorem}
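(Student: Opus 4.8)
The plan is to specialize the central-binomial identity \eqref{eq.central_bin} to the two characteristic roots and then recombine through the Binet formula \eqref{bine.uvw}, exactly as in the proofs of Theorems \ref{cbc_prop1}--\ref{cbc_prop6}. The only genuinely new feature is the weight $q^{2s(n-k)}$, which dictates the choice $x=-\tau^{2s}/q^s$ (rather than a pure power of $\tau$) so that this weight is produced automatically.

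First I would set $x=-\tau^{2s}/q^s$ in \eqref{eq.central_bin}. Since all exponents are even the sign is immaterial on the left, where $x^{2k}=\tau^{4ks}/q^{2ks}$; multiplying the whole identity through by $q^{2sn}$ turns the left-hand summand into $\binom{2k}{k}\binom{2(n-k)}{n-k}q^{2s(n-k)}\tau^{4ks}$, which is the $\tau$-part of the target left-hand side. On the right I would invoke \eqref{eq.u9uuagc} in the form $1+x=(q^s-\tau^{2s})/q^s=-\Delta\tau^sU_s/q^s$ and $1-x=(q^s+\tau^{2s})/q^s=\tau^sV_s/q^s$; again the even powers erase the sign, so
\[
(1+x)^{2k}(1-x)^{2(n-k)}=\frac{\tau^{2sn}}{q^{2sn}}\,\Delta^{2k}U_s^{2k}V_s^{2(n-k)},
\]
because the $\tau$-exponents collapse to $2sk+2s(n-k)=2sn$. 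After multiplying by $q^{2sn}$ the right-hand side becomes $\tau^{2sn}\,S$, where $S=\sum_{k=0}^n\binom{n}{k}^2\Delta^{2k}V_s^{2(n-k)}U_s^{2k}$ is precisely the $k$-sum appearing in the statement. Finally I would multiply the resulting relation through by $\tau^r$.

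Repeating the computation with $x=-\sigma^{2s}/q^s$ and using \eqref{eq.dkvfyiz} in place of \eqref{eq.u9uuagc} yields the companion identity with $\tau$ replaced by $\sigma$ throughout; the same constant $S$ reappears, since $\Delta$, $U_s$ and $V_s$ are symmetric in the two roots, and I would multiply this one by $\sigma^r$. Forming the combination $A\cdot(\text{$\tau$-identity})+B\cdot(\text{$\sigma$-identity})$ and reading off $W_j=A\tau^j+B\sigma^j$ then converts $A\tau^{4ks+r}+B\sigma^{4ks+r}=W_{4ks+r}$ on the left and $A\tau^{2ns+r}+B\sigma^{2ns+r}=W_{2ns+r}$ on the right, which is exactly the claimed equality.

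The argument is entirely mechanical once the substitution has been chosen correctly, and the only point needing care is the bookkeeping of the powers of $q$. The substitution $x=-\tau^{2s}/q^s$ is engineered precisely so that the single overall factor $q^{2sn}$ clears all denominators simultaneously on both sides and so that $1\pm x$ reduce through \eqref{eq.u9uuagc} to the clean products above; this is the crux of the proof, and everything else is the standard Binet recombination already used throughout this section.
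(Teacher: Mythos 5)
Your proof is correct and is essentially the paper's own argument: since $q^s=\tau^s\sigma^s$, your substitutions $x=-\tau^{2s}/q^s=-\tau^s/\sigma^s$ and $x=-\sigma^{2s}/q^s=-\sigma^s/\tau^s$ coincide, up to a sign that the even exponents in Lemma \ref{central_bin} render irrelevant, with the paper's choices $x=\tau^s/\sigma^s$ and $x=\sigma^s/\tau^s$, followed by the same multiplication by $\tau^r$, $\sigma^r$ and Binet recombination. Your version of the bookkeeping (clearing denominators with the single factor $q^{2sn}$ via \eqref{eq.u9uuagc} and \eqref{eq.dkvfyiz}) lands directly on the stated form, which is if anything slightly tidier than the paper's one-line sketch.
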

	\begin{proof}
		Set $x=\sigma^s/\tau^s$ and $x=\tau^s/\sigma^s$, in turn, in Lemma \ref{central_bin}, multiply through by $\tau^r$ and $\sigma^r$, respectively, and combine according to the Binet formulas.
	\end{proof}
	
	In particular,
		\begin{align*}
		&\sum_{k = 0}^n {\binom{2k}k\binom{2(n - k)}{n - k}q^{2s(n - k)} U_{4ks + r} }  = U_{2ns + r} \sum_{k = 0}^n \binom nk^2 \Delta ^{2k} V_s^{2(n - k)} U_s^{2k},\\
		&\sum_{k = 0}^n {\binom{2k}k\binom{2(n - k)}{n - k}q^{2s(n - k)} V_{4ks + r} }  = V_{2ns + r} \sum_{k = 0}^n \binom nk^2 \Delta ^{2k} V_s^{2(n - k)} U_s^{2k};
		\end{align*}
with the special cases
		\begin{align}\label{eq.ki6av2j}
		\sum_{k = 0}^n \binom {2k}{k} \binom {2(n - k)}{n-k} F_{4sk+r} 
		= F_{2ns+r} \sum_{k = 0}^n \binom {n}{k}^2(5F_s^2)^{n-k} L_s^{2k},\\
		\label{eq.hkxzb8j}
		\sum_{k = 0}^n \binom {2k}{k} \binom {2(n - k)}{n-k} L_{4sk+r} 
		= L_{2ns+r} \sum_{k = 0}^n \binom {n}{k}^2 (5F_s^2)^{n-k} L_s^{2k}.
		\end{align}
		\begin{remark}
		Note that Theorems \ref{cbc_prop2} and \ref{cbc_prop6} are particular cases of \eqref{eq.ki6av2j} and \eqref{eq.hkxzb8j} at $s=1$ and $s=2$, respectively.
	\end{remark}
	\begin{theorem}\label{cbc_prop8}
		For integers $r$ and $s\geq 1$, and each non-negative integer $n$ we have the  relations
		
			\begin{align*}
			&F_r \sum_{k = 0}^n \binom {2k}{k} \binom {2(n - k)}{n-k} \big (5 F_s^2\big )^k L_s^{2(n-k)} 
			= 4^{n}\sum_{k = 0}^n \binom {n}{k}^2 F_{2s(2k-n)+r},\\
			&L_r \sum_{k = 0}^n \binom {2k}{k} \binom {2(n - k)}{n-k} \big (5 F_s^2\big)^k L_s^{2(n-k)} 
			= 4^{n} \sum_{k = 0}^n \binom {n}{k}^2 L_{2s(2k-n)+r}.
			\end{align*}
	\end{theorem}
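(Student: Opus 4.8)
The plan is to feed the single free parameter in Lemma~\ref{central_bin} the value $x=\sqrt5\,F_s/L_s$, in direct analogy with the proof of Theorem~\ref{cbc_prop4}. Since $s\ge 1$ forces $L_s\neq 0$, this substitution is legitimate, and the Binet formulas $\sqrt5\,F_s=\alpha^s-\beta^s$, $L_s=\alpha^s+\beta^s$ give the clean factorizations $1+x=2\alpha^s/L_s$ and $1-x=2\beta^s/L_s$. Substituting into \eqref{eq.central_bin} and multiplying through by $L_s^{2n}$ turns the left-hand side into $\sum_{k=0}^n\binom{2k}{k}\binom{2(n-k)}{n-k}(5F_s^2)^kL_s^{2(n-k)}$, exactly the sum appearing in the statement, while the right-hand side becomes $4^n\sum_{k=0}^n\binom nk^2\alpha^{2sk}\beta^{2s(n-k)}$.

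The next step is to collapse the mixed power $\alpha^{2sk}\beta^{2s(n-k)}$. Because the exponent $2s(n-k)$ is even and $\alpha\beta=-1$, we have $\beta^{2s(n-k)}=\alpha^{-2s(n-k)}$, so $\alpha^{2sk}\beta^{2s(n-k)}=\alpha^{2s(2k-n)}$; this yields $\sum_{k=0}^n\binom{2k}{k}\binom{2(n-k)}{n-k}(5F_s^2)^kL_s^{2(n-k)}=4^n\sum_{k=0}^n\binom nk^2\alpha^{2s(2k-n)}$. Repeating the whole computation with $x=-\sqrt5\,F_s/L_s$ merely interchanges the roles of $1+x$ and $1-x$ (the left-hand side is unchanged, since $x$ enters only through $x^{2k}$), and the same reduction produces the companion identity with $\beta^{2s(2k-n)}$ in place of $\alpha^{2s(2k-n)}$.

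Finally I would multiply the $\alpha$-identity by $\alpha^r$ and the $\beta$-identity by $\beta^r$ and combine. Adding them and invoking $\alpha^{m}+\beta^{m}=L_m$ produces the Lucas relation, while subtracting and using $\alpha^m-\beta^m=\sqrt5\,F_m$ (with the overall factor of $\sqrt5$ cancelling against the $F_r$ on the left) produces the Fibonacci relation. The only delicate point is the bookkeeping in the second paragraph---verifying that $1\pm x$ really reduce to $2\alpha^s/L_s$ and $2\beta^s/L_s$ and that every surviving exponent is even, so that the $\alpha\beta=-1$ reduction applies; once these are checked, the rest is the routine add/subtract step used throughout this section.
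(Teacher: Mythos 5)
Your proposal is correct and follows exactly the paper's proof: the authors likewise set $x=\pm\sqrt{5}F_s/L_s$ in Lemma~\ref{central_bin}, multiply through by $\alpha^r$ and $\beta^r$ respectively, and combine via the Binet formulas. Your additional bookkeeping (clearing $L_s^{2n}$, the factorizations $1\pm x = 2\alpha^s/L_s,\,2\beta^s/L_s$, and the reduction $\alpha^{2sk}\beta^{2s(n-k)}=\alpha^{2s(2k-n)}$ using $\alpha\beta=-1$) simply makes explicit the steps the paper leaves to the reader.
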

	\begin{proof}
		Set $x=\sqrt{5}F_s/L_s$ and $x=-\sqrt{5}F_s/L_s$ in Lemma \ref{central_bin}, multiply through by $\alpha^r$ and $\beta^r$, respectively, 
		and combine according to the Binet formulas.
	\end{proof}
\begin{theorem}\label{cbc_prop9}
		For integers $r$ and $s\geq 1$, and each non-negative integer $n$ we have the relations
	\begin{align*}
			&F_r \sum_{k = 0}^n \binom {2k}{k} \binom {2(n - k)}{n-k}  (5F_s^2)^{n-k} L_s^{2k} 
			= 4^n\sum_{k = 0}^n \binom {n}{k}^2 F_{2s(2k-n)+r},\\
			&L_r \sum_{k = 0}^n \binom {2k}{k} \binom {2(n - k)}{n-k}  (5F_s^2)^{n-k} L_s^{2k} 
			= 4^{n} \sum_{k = 0}^n \binom {n}{k}^2 L_{2s(2k-n)+r}.
			\end{align*}
	\end{theorem}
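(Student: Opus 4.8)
The plan is to deduce this theorem immediately from Theorem \ref{cbc_prop8} by exploiting the symmetry of the summand on the left-hand side. First I would observe that the right-hand sides of Theorem \ref{cbc_prop8} and of the present statement are literally identical; only the left-hand sides differ, namely $(5F_s^2)^k L_s^{2(n-k)}$ in Theorem \ref{cbc_prop8} versus $(5F_s^2)^{n-k}L_s^{2k}$ here. Since the factor $\binom{2k}{k}\binom{2(n-k)}{n-k}$ is invariant under the reflection $k\mapsto n-k$, replacing $k$ by $n-k$ in the summation of Theorem \ref{cbc_prop8} interchanges the exponents of $5F_s^2$ and of $L_s^2$ while leaving both the binomial product and the summation range unchanged. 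Thus the two left-hand sides are equal term-for-term after reindexing, and the present relations follow at once.

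Alternatively, I would give a direct proof that parallels the argument for Theorem \ref{cbc_prop8}. The key step is to set $x=L_s/(\sqrt5\,F_s)$ and $x=-L_s/(\sqrt5\,F_s)$ in Lemma \ref{central_bin}; this is where the hypothesis $s\ge 1$ is used, to guarantee $F_s\neq 0$. With this choice one computes $1+x=2\alpha^s/(\sqrt5\,F_s)$ and $1-x=-2\beta^s/(\sqrt5\,F_s)$ (and the two swap under $x\mapsto -x$), so that $x^{2k}=L_s^{2k}/(5F_s^2)^k$ and $(1+x)^{2k}(1-x)^{2(n-k)}=4^n\alpha^{2sk}\beta^{2s(n-k)}/(5F_s^2)^n$, the stray minus sign disappearing under the even power. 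Multiplying the first instance by $\alpha^r$ and the second by $\beta^r$, using $\alpha\beta=-1$ to simplify $\alpha^{2sk+r}\beta^{2s(n-k)}=\alpha^{2s(2k-n)+r}$ and the companion relation for $\beta$, and then adding (for the Lucas relation) or subtracting and dividing by $\sqrt5$ (for the Fibonacci relation) via the Binet formulas \eqref{bine.fl}, produces the stated identities after clearing the common factor $(5F_s^2)^n$.

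I expect essentially no genuine obstacle here: the content is entirely bookkeeping. The only point requiring a little care in the direct approach is verifying the closed forms $1\pm x=\pm 2\alpha^s/(\sqrt5\,F_s)$, which rest on the Binet identities $\sqrt5\,F_s=\alpha^s-\beta^s$ and $L_s=\alpha^s+\beta^s$, together with keeping track of the sign in $1-x=-2\beta^s/(\sqrt5\,F_s)$ (harmless, since it enters only through an even power). Because the reflection argument sidesteps even this computation, I would present it as the primary proof and relegate the direct substitution to a remark.
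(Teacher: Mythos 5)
Your proposal is correct, and your primary argument takes a genuinely different route from the paper's. The paper proves Theorem \ref{cbc_prop9} from scratch: it sets $x=L_s/(\sqrt5\,F_s)$ and $x=-L_s/(\sqrt5\,F_s)$ in Lemma \ref{central_bin}, multiplies by $\alpha^r$ and $\beta^r$, and combines via Binet --- which is exactly your ``alternative'' proof, and you have executed its details correctly ($\sqrt5\,F_s+L_s=2\alpha^s$ and $\sqrt5\,F_s-L_s=-2\beta^s$, the even powers absorbing the stray sign, $\alpha^{2sk}\beta^{2s(n-k)}=\alpha^{2s(2k-n)}$ from $\alpha\beta=-1$, and $s\ge1$ used precisely to guarantee $F_s\ne0$). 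Your reflection argument, by contrast, observes that the factor $\binom{2k}{k}\binom{2(n-k)}{n-k}$ is invariant under $k\mapsto n-k$, so the left-hand side of Theorem \ref{cbc_prop8} reindexes term-for-term into the left-hand side of Theorem \ref{cbc_prop9}, while the right-hand sides of the two theorems are literally identical; hence Theorem \ref{cbc_prop9} follows in one line. This is a nice catch: it shows the two theorems are trivially equivalent, a redundancy the paper does not remark upon (it proves Theorem \ref{cbc_prop8} separately via the substitution $x=\pm\sqrt5\,F_s/L_s$). What each approach buys: your reindexing exposes that Theorem \ref{cbc_prop9} carries no information beyond Theorem \ref{cbc_prop8}, at the cost of depending on that earlier result; the paper's substitution is self-contained and fits its uniform scheme of reading every theorem in the section directly off Lemma \ref{central_bin}. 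One cosmetic slip in your closing remark: the shorthand $1\pm x=\pm2\alpha^s/(\sqrt5\,F_s)$ is wrong in the minus case --- it should be $1-x=-2\beta^s/(\sqrt5\,F_s)$, as you in fact state correctly in your second paragraph --- but nothing in the argument depends on it.
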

	\begin{proof}
		Set $x=L_s/(\sqrt{5}F_s)$ and $x=-L_s/(\sqrt{5}F_s)$ in Lemma \ref{central_bin}, multiply through by $\alpha^r$ and $\beta^r$, respectively, 
		and combine according to the Binet formulas.
	\end{proof}
	\begin{theorem}\label{cbc_prop10}
		For each integer $r$ and each non-negative integer $n$ we have the relations
			\begin{align*}
			&\sum_{k = 0}^n \binom {2k}k \binom {2(n - k)}{n-k}5^{n - k} F_{6k + r}  = 4^n \sum_{k = 0}^n \binom nk^24^k F_{2k + r},\\
			&\sum_{k = 0}^n \binom {2k}k \binom {2(n - k)}{n-k}5^{n - k} L_{6k + r}  = 4^n \sum_{k = 0}^n {\binom nk^24^k L_{2k + r} }.
			\end{align*}
	\end{theorem}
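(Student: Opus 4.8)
The plan is to specialize the polynomial identity of Lemma~\ref{central_bin} at a value of $x$ built from the golden ratio and then to combine the resulting $\alpha$- and $\beta$-identities through the Binet formulas~\eqref{bine.fl}, precisely as in the proofs of Theorems~\ref{cbc_prop1}--\ref{cbc_prop9}. The appropriate choice here is $x=\alpha^3/\sqrt5$. Since $\alpha^3=2+\sqrt5$, the left-hand side of \eqref{eq.central_bin} becomes $\sum_{k=0}^n\binom{2k}{k}\binom{2(n-k)}{n-k}\alpha^{6k}/5^{k}$, so that after multiplying through by $5^n\alpha^r$ it reproduces exactly the desired left-hand summand $\sum_{k=0}^n\binom{2k}{k}\binom{2(n-k)}{n-k}5^{n-k}\alpha^{6k+r}$.

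First I would simplify the factors $1\pm x$ appearing on the right of \eqref{eq.central_bin}. A direct computation with $\alpha^3=2+\sqrt5$ gives $1+\alpha^3/\sqrt5=4\alpha/\sqrt5$ and $1-\alpha^3/\sqrt5=-2/\sqrt5$, whence
\[
(1+x)^{2k}(1-x)^{2(n-k)}=\frac{16^{k}4^{n-k}}{5^{n}}\,\alpha^{2k}.
\]
Multiplying the whole identity by $5^n\alpha^r$ and using $16^k4^{n-k}=4^{n+k}$ then yields
\[
\sum_{k=0}^n\binom{2k}{k}\binom{2(n-k)}{n-k}5^{n-k}\alpha^{6k+r}=4^n\sum_{k=0}^n\binom{n}{k}^2 4^k\alpha^{2k+r}.
\]
I would then repeat the substitution with the $\sqrt5$-conjugate value $x=-\beta^3/\sqrt5$ (so that $\beta^3=2-\sqrt5$, $1+x=-4\beta/\sqrt5$, and $1-x=2/\sqrt5$); multiplying by $5^n\beta^r$ produces the same identity with $\alpha$ replaced by $\beta$ everywhere. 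Subtracting the $\beta$-identity from the $\alpha$-identity and dividing by $\sqrt5$ gives the Fibonacci relation, while adding them gives the Lucas relation, by \eqref{bine.fl}.

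The only delicate point is the sign bookkeeping in the conjugate case. Choosing $-\beta^3/\sqrt5$ rather than $+\beta^3/\sqrt5$ is exactly what forces the power of $\beta$ on the right to be $\beta^{2k+r}$, so that the $\alpha$- and $\beta$-identities align term by term and combine without further manipulation; with the sign $+\beta^3/\sqrt5$ one would instead obtain $\beta^{2(n-k)+r}$ and would first have to reindex $k\mapsto n-k$, using the symmetry of $\binom{2k}{k}\binom{2(n-k)}{n-k}$ and of $\binom{n}{k}^2$, before the two sides could be matched. Apart from this, every step is a routine application of the Binet formulas.
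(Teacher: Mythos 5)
Your proof is correct and takes essentially the same route as the paper: the paper also sets $x=\alpha^3/\sqrt5$ (and a conjugate value) in Lemma~\ref{central_bin}, multiplies through by $\alpha^r$ and $\beta^r$, and combines via the Binet formulas, using $\sqrt5-\alpha^3=-2$ and $\sqrt5+\alpha^3=4\alpha$. The only difference is cosmetic: the paper uses $x=\beta^3/\sqrt5$ in the conjugate case, which entails exactly the reindexing $k\mapsto n-k$ you point out, whereas your choice $x=-\beta^3/\sqrt5$ sidesteps it.
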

	\begin{proof}
		Set $x=\alpha^3/\sqrt 5$ and $x=\beta^3/\sqrt 5$, in turn, in Lemma \ref{central_bin}, multiply through by $\alpha^r$ and $\beta^r$, respectively, and combine according to the Binet formulas, using also the fact that
		$\sqrt 5 - \alpha^3=-2$ and $\sqrt 5 + \alpha^3 =4\alpha$.
	\end{proof}
	\begin{theorem}\label{cbc_prop11}
		For each integer $r$, and each non-negative integer $n$ we have the relations
			\begin{align*}
			&\sum_{k = 0}^n \binom {2k}k \binom {2(n - k)}{n-k}9^{n - k} F_{6k + r}  = 4^n \sum_{k = 0}^n \binom nk^2 5^k F_{4k -2n + r},\\
			&\sum_{k = 0}^n \binom {2k}k \binom {2(n - k)}{n-k}9^{n - k} L_{6k + r}  = 4^n \sum_{k = 0}^n {\binom nk^2 5^k L_{4k -2n + r} }.
			\end{align*}
		\end{theorem}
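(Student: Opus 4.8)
The plan is to specialize the master identity of Lemma~\ref{central_bin} in the same manner used throughout this section, choosing the substitution so that the prefactor $9^{n-k}$ and the index $6k+r$ on the left are reproduced automatically. Concretely, I would set $x=\alpha^3/3$, which gives $x^{2k}=\alpha^{6k}/9^k$, then multiply the whole identity through by $\alpha^r$ and by $9^n$; the left-hand side then becomes $\sum_{k=0}^n\binom{2k}k\binom{2(n-k)}{n-k}9^{n-k}\alpha^{6k+r}$, exactly matching the claimed summand. The parallel substitution $x=\beta^3/3$, multiplied through by $\beta^r$ and by $9^n$, produces the same expression with $\alpha$ replaced by $\beta$.

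The crux is simplifying the right-hand side $\sum_{k=0}^n\binom nk^2(1+x)^{2k}(1-x)^{2(n-k)}$ at these values of $x$. Using $\alpha^3=2+\sqrt5$ one checks the golden-ratio identities $3+\alpha^3=2\sqrt5\,\alpha$ and $3-\alpha^3=2\beta$ (and, by conjugation, $3+\beta^3=-2\sqrt5\,\beta$ and $3-\beta^3=2\alpha$), exactly analogous to the facts about $\sqrt5\pm\alpha^3$ employed in Theorem~\ref{cbc_prop10}. Hence $1+x=2\sqrt5\,\alpha/3$ and $1-x=2\beta/3$, so that $(1+x)^{2k}(1-x)^{2(n-k)}=4^n5^k\,\alpha^{2k}\beta^{2(n-k)}/9^n$; the essential collapse is then $\alpha^{2k}\beta^{2(n-k)}=\alpha^{4k-2n}$, which follows from $\alpha\beta=-1$. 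After multiplying by $\alpha^r$ and by $9^n$, the right-hand side becomes $4^n\sum_{k=0}^n\binom nk^2 5^k\,\alpha^{4k-2n+r}$, and the conjugate substitution yields the same expression with $\beta$ in place of $\alpha$.

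Finally I would combine the two resulting scalar identities through the Binet formulas~\eqref{bine.fl}: adding them gives the Lucas relation, since $\alpha^{j}+\beta^{j}=L_j$ on both sides, while subtracting and dividing by $\sqrt5$ gives the Fibonacci relation via $(\alpha^{j}-\beta^{j})/\sqrt5=F_j$. I expect the only real obstacle to be bookkeeping: verifying the four golden-ratio identities for $3\pm\alpha^3$ and $3\pm\beta^3$ and tracking the powers of $3$, $4$, and $5$ so that the factor $9^{n-k}$ on the left and $4^n5^k$ on the right emerge cleanly. No idea beyond Lemma~\ref{central_bin} and the elementary relation $\alpha\beta=-1$ is needed.
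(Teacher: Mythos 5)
Your proposal is correct and follows essentially the same route as the paper: the authors likewise set $x=\alpha^3/3$ and $x=\beta^3/3$ in Lemma~\ref{central_bin}, multiply through by $\alpha^r$ and $\beta^r$, invoke the identities $3-\alpha^3=2\beta$ and $3+\alpha^3=2\sqrt5\,\alpha$, and combine via the Binet formulas. Your verification of the conjugate identities and the collapse $\alpha^{2k}\beta^{2(n-k)}=\alpha^{4k-2n}$ fills in exactly the bookkeeping the paper leaves implicit.
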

	\begin{proof}
		Set $x=\alpha^3/3$ and $x=\beta^3/3$, in turn, in Lemma \ref{central_bin}, multiply through by $\alpha^r$ and $\beta^r$, 
		respectively, and combine according to the Binet formulas, using also the fact that
		$3 - \alpha^3 = 2\beta$ and $3 + \alpha^3 = 2\sqrt 5\alpha$.
	\end{proof}
	\begin{theorem}
		If $n$ is a non-negative integer and $r, s$ are any integers, then
			\begin{equation*}
			\sum_{k = 0}^n \binom nk^2 q^{2s(n-k)} W_{4sk + r} = \frac{W_{2sn + r}}{4^n}\sum_{k = 0}^n \binom{2k}{k}\binom{2(n - k)}{n - k} \Delta ^{2k} V_s^{2(n - k)} U_s^{2k} .
			\end{equation*}
			\end{theorem}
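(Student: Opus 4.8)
The plan is to run Lemma~\ref{central_bin} ``in reverse'' relative to the preceding theorem, so that the central binomial coefficients end up on the side carrying the $\Delta,V_s,U_s$ factors while the squared binomials $\binom{n}{k}^2$ carry the Horadam values. Accordingly, I would set $x=\Delta U_s/V_s$ in \eqref{eq.central_bin} and use the relations $V_s+\Delta U_s=2\tau^s$ and $V_s-\Delta U_s=2\sigma^s$ (immediate from \eqref{eq.u9uuagc} and $V_s=\tau^s+\sigma^s$). Then $1+x=2\tau^s/V_s$ and $1-x=2\sigma^s/V_s$, so the right-hand factor becomes $(1+x)^{2k}(1-x)^{2(n-k)}=4^n\tau^{2sk}\sigma^{2(n-k)s}/V_s^{2n}$, while the left-hand term is $x^{2k}=\Delta^{2k}U_s^{2k}/V_s^{2k}$. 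Multiplying the whole identity by $V_s^{2n}$ clears all denominators and produces the sum $\sum_{k}\binom{2k}{k}\binom{2(n-k)}{n-k}\Delta^{2k}U_s^{2k}V_s^{2(n-k)}$ verbatim on the central-binomial side.

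The decisive simplification is the elementary identity $\tau^{2sk}\sigma^{2(n-k)s}=q^{2s(n-k)}\tau^{4sk-2sn}$, which follows from $q=\tau\sigma$ by writing $q^{2s(n-k)}=\tau^{2s(n-k)}\sigma^{2s(n-k)}$. Multiplying through by $\tau^{2sn+r}/4^n$ therefore turns the $\binom{n}{k}^2$ side into $\sum_{k}\binom{n}{k}^2 q^{2s(n-k)}\tau^{4sk+r}$, the $\tau$-component of $W_{4sk+r}$, while the prefactor on the central-binomial side is $\tau^{2sn+r}/4^n$. Repeating the computation with $x=-\Delta U_s/V_s$ merely interchanges the roles of $\tau$ and $\sigma$ (the even power $x^{2k}$ is unchanged); after multiplying by $\sigma^{2sn+r}/4^n$ this yields the companion identity with $\sigma^{4sk+r}$ and prefactor $\sigma^{2sn+r}/4^n$.

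Finally, I would combine the two identities through the Binet formula $W_j=A\tau^j+B\sigma^j$ of \eqref{bine.uvw}: multiply the first by $A$, the second by $B$, and add. On the central-binomial side the prefactor collapses to $(A\tau^{2sn+r}+B\sigma^{2sn+r})/4^n=W_{2sn+r}/4^n$, and on the squared-binomial side each summand becomes $A\tau^{4sk+r}+B\sigma^{4sk+r}=W_{4sk+r}$, giving the asserted relation. The only delicate point is the bookkeeping of the powers of $2$, $\tau$, $\sigma$ and $V_s$: one must verify that the factor $V_s^{2n}$ cancels cleanly and that precisely $4^n$ survives, which it does because $1\pm x$ each contribute a factor $2/V_s$. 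Everything else is routine manipulation with $q=\tau\sigma$.
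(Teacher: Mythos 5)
Your proof is correct and takes essentially the same route as the paper's: the authors likewise substitute $x=\pm\Delta U_s/V_s$ into Lemma~\ref{central_bin} and combine the two resulting identities via the Binet formula, exactly as you do. The only cosmetic difference is bookkeeping --- the paper multiplies the two equations by $\sigma^r$ and $\tau^r$ (so the theorem emerges after a shift $r\mapsto 2sn+r$ and the reindexing $k\mapsto n-k$), whereas you multiply directly by $\tau^{2sn+r}/4^n$ and $\sigma^{2sn+r}/4^n$; and your reading $\Delta U_s=\tau^s-\sigma^s$, giving $V_s\pm\Delta U_s=2\tau^s,\,2\sigma^s$, is indeed the convention the paper uses in practice (as in \eqref{eq.u9uuagc} and \eqref{eq.zqa5sbx}), consistent with the factor $\Delta^{2k}$ in the statement.
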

	\begin{proof}
		Set $x=\Delta U_s/V_s$ in Lemma \ref{central_bin} and multiply through by $\sigma^r$. Repeat for $x=-\Delta U_s/V_s$ and multiply through by $\tau^r$. Now combine the resulting equations using the Binet formula.
	\end{proof}
	
	In particular,
		\begin{align*}
		&\sum_{k = 0}^n \binom nk^2 q^{2s(n-k)} U_{4sk + r}  = \frac{{U_{2sn + r} }}{4^n}\sum_{k = 0}^n \binom{2k}k\binom{2(n - k)}{n - k} \Delta ^{2k} V_s^{2(n - k)} U_s^{2k},\\
		&\sum_{k = 0}^n \binom nk^2 q^{2s(n-k)} V_{4sk + r}  = \frac{V_{2sn + r}}{4^n}\sum_{k = 0}^n \binom{2k}{k}\binom{2(n - k)}{n - k}\Delta ^{2k} V_s^{2(n - k)} U_s^{2k};
		\end{align*}
with the special cases
		\begin{align*}
		&\sum_{k = 0}^n \binom nk^2 F_{4sk + r}  = \frac{F_{2sn + r}}{4^n}\sum_{k = 0}^n \binom{2k}k\binom{2(n - k)}{n - k} 5^k L_s^{2(n - k)} F_s^{2k}  ,\\
		&\sum_{k = 0}^n \binom nk^2L_{4sk + r} = \frac{L_{2sn + r} }{{4^n}}\sum_{k = 0}^n \binom{2k}k\binom{2(n - k)}{n - k} 5^k L_s^{2(n - k)} F_s^{2k}.
		\end{align*}

	\section{Another class of identities with squared binomial \\coefficients}
	
	\begin{lemma}[\cite{Elementary}]\label{sq_bin}
		If $n$ is a non-negative integer and $x$ is any complex variable, then
			\begin{equation}\label{eq.sq_bin}
			\sum_{k=0}^n \binom {n}{k}^2 x^k = \sum_{k=0}^n \binom {n}{k} \binom {n+k}{k} (x-1)^{n-k}.
			\end{equation}
			\end{lemma}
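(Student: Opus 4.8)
The plan is to prove \eqref{eq.sq_bin} by comparing coefficients of powers of $x$ on the two sides. The left-hand side already has the coefficient of $x^j$ equal to $\binom{n}{j}^2$. On the right-hand side I would expand each factor $(x-1)^{n-k}$ by the binomial theorem as $\sum_{j=0}^{n-k}\binom{n-k}{j}(-1)^{n-k-j}x^j$ and interchange the order of summation, rewriting the right-hand side as $\sum_{j=0}^n c_j x^j$ with
\[
c_j = \sum_{k=0}^{n-j}(-1)^{n-k-j}\binom{n}{k}\binom{n+k}{k}\binom{n-k}{j}.
\]
The lemma then reduces to the purely combinatorial claim $c_j=\binom{n}{j}^2$ for every $j$.

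Next I would simplify $c_j$ via the subset-of-a-subset identity $\binom{n}{k}\binom{n-k}{j}=\binom{n}{j}\binom{n-j}{k}$, which factors $\binom{n}{j}$ out of the sum. Writing $m=n-j$ and using $\binom{n+k}{k}=\binom{n+k}{n}$, the claim $c_j=\binom{n}{j}^2$ becomes equivalent to
\[
\sum_{k=0}^{m}(-1)^{m-k}\binom{m}{k}\binom{n+k}{n}=\binom{n}{m}.
\]

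Finally I would establish this last identity by the finite-difference method. Its left-hand side is the $m$-th forward difference at $0$ of the polynomial $f(k)=\binom{n+k}{n}$, which has degree $n$ in $k$. Since Pascal's rule gives $f(k+1)-f(k)=\binom{n+k}{n-1}$, each application of the forward-difference operator lowers the lower index by one; hence $m$ applications transform $f$ into $\binom{n+k}{n-m}$, and evaluating at $k=0$ yields $\binom{n}{n-m}=\binom{n}{m}$, as required.

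I expect the main obstacle to be this last alternating sum, since the coefficient extraction and the subset-of-a-subset reduction are routine binomial bookkeeping, whereas the finite-difference evaluation carries the actual content (although it too is classical). An alternative to the finite-difference argument would be to read the identity off from the generating function $\sum_{k\ge 0}\binom{n+k}{n}z^k=(1-z)^{-(n+1)}$, the alternating binomial transform amounting to a substitution followed by a coefficient extraction; either route closes the proof.
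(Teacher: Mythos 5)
Your proof is correct, and there is in fact no internal argument to compare it against: the paper does not prove Lemma~\ref{sq_bin} at all, but quotes it from the cited source (Mozer's Problem E799 in the \emph{American Mathematical Monthly}), so your write-up supplies a proof where the paper only gives a reference. Each step of your chain checks out: the coefficient extraction is routine; the identity $\binom{n}{k}\binom{n-k}{j}=\binom{n}{j}\binom{n-j}{k}$ correctly factors out $\binom{n}{j}$; and the remaining alternating sum $\sum_{k=0}^{m}(-1)^{m-k}\binom{m}{k}\binom{n+k}{n}$ is indeed the $m$-th forward difference of $f(k)=\binom{n+k}{n}$ at $k=0$, with $\Delta f(k)=\binom{n+k}{n-1}$ by Pascal's rule, so $\Delta^m f(0)=\binom{n}{n-m}=\binom{n}{m}$; since $m=n-j\le n$ the lower index never goes negative, so the induction on $m$ is clean. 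A marginally shorter variant of the same bookkeeping, which avoids the alternating sum entirely: substitute $x=1+y$, so the claim becomes $\sum_{k}\binom{n}{k}^2(1+y)^k=\sum_{k}\binom{n}{k}\binom{n+k}{k}y^{n-k}$; comparing coefficients of $y^m$, the left side gives $\sum_{k}\binom{n}{k}^2\binom{k}{m}=\binom{n}{m}\sum_{k}\binom{n}{n-k}\binom{n-m}{k-m}=\binom{n}{m}\binom{2n-m}{n-m}$ by the subset-of-a-subset identity and Vandermonde, which matches the single term $\binom{n}{n-m}\binom{2n-m}{n-m}$ on the right. Either route is fine; yours is complete as written and makes the lemma self-contained, which the paper's bare citation does not.
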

\begin{theorem}\label{thm_sq_bin}
		Let $r$, $s$ and $m$ be arbitrary integers with $r\neq 0$. Then for each non-negative integer $n$ we have the relations
				\begin{equation}\label{sqbin_Fib}
			\begin{split}
			\sum_{k = 0}^n (-1)^{k(s+1)}& \binom {n}{k}^2  \Big (\frac{F_s}{F_r}\Big )^k  F_{(r+s)k+m}\\
			&= \sum_{k = 0}^n (-1)^{(s+1)(n-k)} \binom {n}{k} \binom {n+k}{k}  \Big (\frac{F_{r+s}}{F_r}\Big )^{n-k}  F_{s(n-k)+m},
			\end{split}
			\end{equation}
			\begin{equation}
			\label{sqbin_Luc}
			\begin{split}
			\sum_{k = 0}^n (-1)^{k(s+1)}& \binom {n}{k}^2 \Big (\frac{F_s}{F_r}\Big )^k  L_{(r+s)k+m}\\
			&= \sum_{k = 0}^n (-1)^{(s+1)(n-k)} \binom {n}{k} \binom {n+k}{k} \Big (\frac{F_{r+s}}{F_r}\Big )^{n-k}  L_{s(n-k)+m}.
			\end{split}
			\end{equation}
			\end{theorem}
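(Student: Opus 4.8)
The plan is to follow the same strategy used throughout this section: evaluate the polynomial identity \eqref{eq.sq_bin} at a value of $x$ built from $\alpha$, repeat the evaluation with $\beta$, and then combine the two resulting equations through the Binet formulas \eqref{bine.fl}. The correct choice is $x=(-1)^{s+1}(F_s/F_r)\alpha^{r+s}$.

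First I would set $x=(-1)^{s+1}(F_s/F_r)\alpha^{r+s}$ in \eqref{eq.sq_bin} and multiply through by $\alpha^m$. On the left this yields $\sum_k \binom{n}{k}^2 (-1)^{k(s+1)}(F_s/F_r)^k \alpha^{(r+s)k+m}$, which is exactly the $\alpha$-part of the left-hand sums in \eqref{sqbin_Fib} and \eqref{sqbin_Luc}. The right-hand side of \eqref{eq.sq_bin} becomes $\alpha^m\sum_k \binom{n}{k}\binom{n+k}{k}(x-1)^{n-k}$, so the whole argument hinges on simplifying $x-1$.

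The crux is the identity $x-1=(-1)^{s+1}(F_{r+s}/F_r)\alpha^s$. To prove it I would clear the denominator $F_r$ and reduce to the equivalent claim $(-1)^{s+1}\alpha^s(F_s\alpha^r-F_{r+s})=F_r$. Interchanging $r$ and $s$ in the first identity of \eqref{eq.pvdw5ja} gives $F_{r+s}-F_s\alpha^r=\beta^s F_r$, hence $F_s\alpha^r-F_{r+s}=-\beta^s F_r$; since $\alpha\beta=-1$ we have $\alpha^s\beta^s=(-1)^s$, and substituting collapses the left-hand side to $(-1)^{s+1}\cdot(-1)\cdot(-1)^s F_r=F_r$, as required. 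With $x-1$ in this form the right-hand side above becomes the $\alpha$-part of both right-hand sums of the theorem. The parallel computation with $x=(-1)^{s+1}(F_s/F_r)\beta^{r+s}$, multiplied through by $\beta^m$ and using the companion identity $F_{r+s}-F_s\beta^r=\alpha^s F_r$ from \eqref{eq.pvdw5ja}, gives $x-1=(-1)^{s+1}(F_{r+s}/F_r)\beta^s$ and produces the $\beta$-parts.

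Finally I would combine the two equations. Subtracting the $\beta$-equation from the $\alpha$-equation and dividing by $\sqrt5$ turns each difference $\alpha^j-\beta^j$ into $\sqrt5\,F_j$, which delivers \eqref{sqbin_Fib}; adding the two equations turns each sum $\alpha^j+\beta^j$ into $L_j$, which delivers \eqref{sqbin_Luc}. The hypothesis $r\neq0$ guarantees $F_r\neq0$, so the divisions by $F_r$ are legitimate. The only genuinely delicate point is the sign bookkeeping in the simplification of $x-1$; once that is settled, the remainder is a routine application of the Binet formulas.
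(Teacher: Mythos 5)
Your proof is correct and follows essentially the same route as the paper: since $\alpha\beta=-1$, your substitution $x=(-1)^{s+1}(F_s/F_r)\alpha^{r+s}$ is exactly the paper's $x=-F_s\alpha^r/(F_r\beta^s)$ in Lemma \ref{sq_bin} rewritten, and your simplification of $x-1$ via \eqref{eq.pvdw5ja} with $r$ and $s$ interchanged is precisely the paper's appeal to Lemma \ref{lem.ydalnfx}, followed by the same multiplication by $\alpha^m$ and $\beta^m$ and combination through the Binet formulas. Your explicit verification of the sign bookkeeping in $x-1=(-1)^{s+1}(F_{r+s}/F_r)\alpha^s$ is a useful expansion of a step the paper leaves implicit.
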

	\begin{proof} 
		Set $x=-F_{s}\beta^r/(F_r \alpha^{s})$ and $x=-F_{s}\alpha^r/(F_r \beta^{s})$, respectively, in Lemma \ref{sq_bin}, 
		and use Lemma \ref{lem.ydalnfx}. Multiply through by $\alpha^m$ and $\beta^m$, respectively, and combine according to the Binet formulas.
	\end{proof}
		\begin{corollary}\label{cor1_sqbin}
		For each integer $m$ and each non-negative integer $n$ we have 
			\begin{align*}
			&\sum_{k = 0}^n \binom {n}{k}^2 F_{k+m} = (-1)^{m+1}\sum_{k = 0}^n (-1)^{n-k} \binom {n}{k} \binom {n+k}{k}  F_{n-k-m},\\
			&\sum_{k = 0}^n \binom {n}{k}^2 L_{k+m} = (-1)^{m}\sum_{k = 0}^n (-1)^{n-k}\binom {n}{k} \binom {n+k}{k} L_{n-k-m}.
			\end{align*}
			\end{corollary}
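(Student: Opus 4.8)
The plan is to obtain both identities by specializing Theorem~\ref{thm_sq_bin} at the single choice $r=2$, $s=-1$, and then converting the negative-index Fibonacci and Lucas numbers that appear on the right-hand side into positive-index ones via the reflection formulas $F_{-j}=(-1)^{j+1}F_j$ and $L_{-j}=(-1)^{j}L_j$.

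First I would record the arithmetic that makes this substitution collapse cleanly. The hypothesis $r\neq 0$ holds since $r=2$. With $s=-1$ we have $s+1=0$, so every sign factor $(-1)^{k(s+1)}$ and $(-1)^{(s+1)(n-k)}$ in \eqref{sqbin_Fib} and \eqref{sqbin_Luc} equals $1$. Moreover $r+s=1$, so $F_{r+s}=F_1=1$ and $F_r=F_2=1$, while $F_s=F_{-1}=1$; hence all the ratio factors $(F_s/F_r)^k$ and $(F_{r+s}/F_r)^{n-k}$ reduce to $1$. Thus in \eqref{sqbin_Fib} the left-hand side becomes $\sum_{k=0}^n\binom nk^2 F_{k+m}$ because $(r+s)k+m=k+m$, and the right-hand side becomes $\sum_{k=0}^n\binom nk\binom{n+k}k F_{s(n-k)+m}$ with $s(n-k)+m=m-n+k$.

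The final step is the reflection. Writing $F_{m-n+k}=F_{-(n-k-m)}=(-1)^{(n-k-m)+1}F_{n-k-m}$ and noting $(-1)^{(n-k-m)+1}=(-1)^{m+1}(-1)^{n-k}$, I can pull the constant factor $(-1)^{m+1}$ out of the sum to recover exactly the first claimed identity. The Lucas identity follows in the same way from \eqref{sqbin_Luc}, the only difference being that $L_{m-n+k}=L_{-(n-k-m)}=(-1)^{n-k-m}L_{n-k-m}$ contributes the factor $(-1)^{n-k-m}=(-1)^{m}(-1)^{n-k}$, giving the overall constant $(-1)^m$. I expect no genuine obstacle here: once $s=-1$ is fixed, all binomial and ratio factors degenerate to $1$, and the entire content of the corollary is the parity bookkeeping that distinguishes $(-1)^{(n-k-m)+1}$ from $(-1)^{n-k-m}$ for the two sequences.
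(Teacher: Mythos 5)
Your proposal is correct and coincides with the paper's own proof, which likewise specializes Theorem~\ref{thm_sq_bin} at $r=2$, $s=-1$ (so that $F_{-1}=F_1=F_2=1$ and all sign and ratio factors collapse) and then implicitly uses the reflections $F_{-j}=(-1)^{j+1}F_j$, $L_{-j}=(-1)^jL_j$ to rewrite $F_{m-n+k}$ and $L_{m-n+k}$. Your write-up simply makes the parity bookkeeping $(-1)^{n-k-m+1}=(-1)^{m+1}(-1)^{n-k}$ explicit, which the paper leaves to the reader.
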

	\begin{proof}
		Set $r=2$ and $s=-1$ in Theorem \ref{thm_sq_bin}.
	\end{proof}
	
	The case $m=0$ in  Corollary \ref{cor1_sqbin} was proposed by Carlitz as a problem in the Fibonacci Quarterly \cite{Carlitz_AP1} (with a typo).
	\begin{corollary}\label{cor2_sqbin}
		For each integer $m$ and each non-negative integer $n$ we have 
			\begin{align*}
			\sum_{k = 0}^n \binom {n}{k}^2 F_{2k+m} &= \sum_{k = 0}^n \binom {n}{k} \binom {n+k}{k} F_{n-k+m},\\
			\sum_{k = 0}^n \binom {n}{k}^2 L_{2k+m} &= \sum_{k = 0}^n \binom {n}{k} \binom {n+k}{k} L_{n-k+m}.
			\end{align*}
		\end{corollary}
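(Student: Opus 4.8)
The plan is to obtain this directly as a specialization of Theorem \ref{thm_sq_bin}, since both summation identities already have exactly the shape of \eqref{sqbin_Fib} and \eqref{sqbin_Luc} but with all weight and sign factors equal to $1$. Concretely, I would set $r=1$ and $s=1$. This choice is admissible because the theorem only requires $r\neq 0$, and it is engineered so that every auxiliary factor collapses: the exponent in $(-1)^{k(s+1)}$ becomes $2k$, so the sign is $1$; the ratio $(F_s/F_r)^k=(F_1/F_1)^k=1$; and the index $(r+s)k+m$ becomes $2k+m$. On the right-hand side, $(-1)^{(s+1)(n-k)}=(-1)^{2(n-k)}=1$, the ratio $(F_{r+s}/F_r)^{n-k}=(F_2/F_1)^{n-k}=1$, and $s(n-k)+m$ becomes $(n-k)+m$. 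Thus \eqref{sqbin_Fib} reduces verbatim to the first identity and \eqref{sqbin_Luc} to the second.

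Alternatively, and as an independent check, I would derive both relations directly from Lemma \ref{sq_bin}. Putting $x=\alpha^2$ in \eqref{eq.sq_bin} and multiplying through by $\alpha^m$ turns the left-hand side into $\sum_{k=0}^n\binom nk^2\alpha^{2k+m}$. The crucial simplification is that the golden ratio satisfies $\alpha^2-1=\alpha$, so $(x-1)^{n-k}=\alpha^{n-k}$ and the right-hand side becomes $\sum_{k=0}^n\binom nk\binom{n+k}k\alpha^{(n-k)+m}$. Repeating with $x=\beta^2$ and using $\beta^2-1=\beta$ gives the companion identity with $\beta$ in place of $\alpha$.

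Combining the two evaluations according to the Binet formulas \eqref{bine.fl} then finishes the argument: subtracting the $\beta$-version from the $\alpha$-version and dividing by $\sqrt5$ produces the Fibonacci identity, while adding them produces the Lucas identity. The only genuinely non-routine ingredient is the observation $\alpha^2-1=\alpha$ (equivalently $\beta^2-1=\beta$), i.e.\ that $\alpha$ and $\beta$ are the roots of $x^2=x+1$; everything else is bookkeeping. I expect no real obstacle here, since the specialization route requires nothing beyond checking that the four auxiliary factors degenerate to $1$, which they do by the choice $r=s=1$.
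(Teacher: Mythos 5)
Your proposal is correct and takes the same route as the paper, whose entire proof is the specialization $r=s=1$ in Theorem \ref{thm_sq_bin}; your verification that the sign factors $(-1)^{k(s+1)}$, $(-1)^{(s+1)(n-k)}$ and the ratios $F_s/F_r$, $F_{r+s}/F_r$ all collapse to $1$ is exactly what that one-line proof tacitly relies on. Your alternative direct derivation from Lemma \ref{sq_bin} with $x=\alpha^2,\beta^2$ and $\alpha^2-1=\alpha$ is also sound, and is in fact the $m=1$ case of the paper's own argument for Theorem \ref{thm3_sq_bin}.
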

	\begin{proof}
		Set $r=s=1$ in Theorem \ref{thm_sq_bin}.
	\end{proof}
	
	The case $m=0$ in  Corollary \ref{cor2_sqbin} was proposed by Carlitz as another problem in the Fibonacci Quarterly \cite{Carlitz_AP2}.
	\begin{corollary}
		For each integer $m$ and each non-negative integer $n$ we have 
			\begin{align*}
			&\sum_{k = 0}^n \binom {n}{k}^2 F_{3k+m} = \sum_{k = 0}^n \binom {n}{k} \binom {n+k}{k} 2^{n-k} F_{n-k+m},\\
			&\sum_{k = 0}^n \binom {n}{k}^2 L_{3k+m} = \sum_{k = 0}^n \binom {n}{k} \binom {n+k}{k} 2^{n-k} L_{n-k+m}.
			\end{align*}
		\end{corollary}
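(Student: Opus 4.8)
The plan is to recognize that this corollary, exactly like the two that precede it (Corollaries~\ref{cor1_sqbin} and~\ref{cor2_sqbin}), is a direct specialization of Theorem~\ref{thm_sq_bin}. The task is simply to select integer values of $r$ and $s$ that collapse the general identities \eqref{sqbin_Fib} and \eqref{sqbin_Luc} into the target form, so the entire proof reduces to matching indices and coefficients.

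First I would match the arguments of the Fibonacci (and Lucas) numbers. On the right-hand side of \eqref{sqbin_Fib} the index is $s(n-k)+m$, and the desired form requires $n-k+m$; this forces $s=1$. On the left-hand side the index is $(r+s)k+m$, and the target requires $3k+m$; this forces $r+s=3$, hence $r=2$. So the candidate specialization is $r=2$, $s=1$, and I would adopt it.

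Next I would verify that all the remaining prefactors collapse correctly under this choice. With $s=1$ the two sign factors $(-1)^{k(s+1)}$ and $(-1)^{(s+1)(n-k)}$ both reduce to $1$, since $s+1=2$ is even. The left-hand coefficient becomes $(F_s/F_r)^k=(F_1/F_2)^k=1^k=1$ and thus vanishes, while the right-hand coefficient becomes $(F_{r+s}/F_r)^{n-k}=(F_3/F_2)^{n-k}=2^{n-k}$, which is precisely the power of $2$ appearing in the statement. The identical substitution applied to the Lucas identity \eqref{sqbin_Luc} reproduces the second claimed relation verbatim.

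Since each step is only a routine check of index and coefficient matching, there is no genuine obstacle here; the only care needed is confirming that $F_3/F_2=2$ and $F_1/F_2=1$, after which the result follows immediately as the $r=2$, $s=1$ case of Theorem~\ref{thm_sq_bin}.
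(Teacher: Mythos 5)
Your proof is correct and coincides with the paper's own proof, which likewise obtains the corollary by setting $r=2$ and $s=1$ in Theorem~\ref{thm_sq_bin}; your index and coefficient checks (including $F_1/F_2=1$ and $F_3/F_2=2$, and noting $r=2\neq 0$) are all accurate.
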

	\begin{proof}
		Set $r=2$ and $s=1$ in Theorem \ref{thm_sq_bin}.
	\end{proof}
	\begin{corollary}
		For each integer $m$ and each non-negative integer $n$ we have 
			\begin{align*}
			&\sum_{k = 0}^n (-1)^k \binom {n}{k}^2  F_{k+m} = \sum_{k = 0}^n (-1)^{n-k} 
			\binom {n}{k} \binom {n+k}{k} F_{2(n-k)+m},\\
			&\sum_{k = 0}^n (-1)^k \binom {n}{k}^2  L_{k+m} = \sum_{k = 0}^n (-1)^{n-k} \binom {n}{k} \binom {n+k}{k} L_{2(n-k)+m}.
			\end{align*}
	\end{corollary}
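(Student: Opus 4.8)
The plan is to derive this corollary as a direct specialization of Theorem~\ref{thm_sq_bin}, exactly in the spirit of the four preceding corollaries. Reading off the right-hand side of the claim, the Fibonacci (resp.\ Lucas) index $F_{2(n-k)+m}$ must coincide with the theorem's index $F_{s(n-k)+m}$, which forces $s=2$. The left-hand argument $F_{k+m}$ must coincide with $F_{(r+s)k+m}$, giving $r+s=1$ and hence $r=-1$; this choice is admissible since the theorem only excludes $r=0$.

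With $(r,s)=(-1,2)$ I would then verify that every auxiliary factor in \eqref{sqbin_Fib} and \eqref{sqbin_Luc} collapses. Using the negative-subscript convention $F_{-1}=1$ together with $F_1=F_2=1$, one has $F_r=F_s=F_{r+s}=1$, so both ratio factors $(F_s/F_r)^k$ and $(F_{r+s}/F_r)^{n-k}$ equal $1$. For the signs, $s+1=3$ is odd, whence $(-1)^{k(s+1)}=(-1)^k$ and $(-1)^{(s+1)(n-k)}=(-1)^{n-k}$, matching the exponents in the claim. Substituting these simplifications into \eqref{sqbin_Fib} and \eqref{sqbin_Luc} reproduces the two asserted identities verbatim.

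There is no genuine obstacle here; the entire argument reduces to the bookkeeping of checking that $(r,s)=(-1,2)$ regenerates each factor of the target. The only point requiring a moment's care is the appeal to the negative-index value $F_{-1}=1$, which guarantees $F_r\neq0$ so that the denominators in Theorem~\ref{thm_sq_bin} are meaningful and the two ratio factors are trivial. A quick numerical check at $n=1,\,m=0$ (both sides equal $-1$ in the Fibonacci case) confirms the substitution is correct before writing out the one-line proof.
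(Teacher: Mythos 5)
Your proposal is correct and is exactly the paper's proof: the authors also obtain this corollary by setting $r=-1$ and $s=2$ in Theorem \ref{thm_sq_bin}, with your verification of the sign pattern $(-1)^{k(s+1)}=(-1)^k$ and the collapse of the ratio factors via $F_{-1}=F_1=F_2=1$ being the same routine bookkeeping the paper leaves implicit.
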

	\begin{proof} Set $r=-1$ and $s=2$ in Theorem \ref{thm_sq_bin}.
	\end{proof}
	\begin{corollary}
		For each integer $r$ and each non-negative integer $n$ we have 
	\begin{align*}
			\sum_{k = 0}^n (-1)^k\binom {n}{k}^2  F_{3k+m} &= \sum_{k = 0}^n (-2)^{n-k}\binom {n}{k} \binom {n+k}{k} F_{2(n-k)+m},\\
			\sum_{k = 0}^n (-1)^k \binom {n}{k}^2  L_{3k+m} &= \sum_{k = 0}^n (-2)^{n-k} \binom {n}{k} \binom {n+k}{k}  L_{2(n-k)+m}.
			\end{align*}
	\end{corollary}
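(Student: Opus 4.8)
The plan is to obtain both identities as a single specialization of Theorem~\ref{thm_sq_bin}, exactly as the preceding corollaries in this section were derived. So the entire task reduces to selecting the integers $r$ and $s$ (with $r\neq 0$) that turn the general relations \eqref{sqbin_Fib} and \eqref{sqbin_Luc} into the stated ones, after which the verification is mechanical.

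First I would read off the constraints forced by the left-hand side. The summand weight on the left of \eqref{sqbin_Fib} is $(-1)^{k(s+1)}(F_s/F_r)^k$, and I want it to equal $(-1)^k$; this demands that $s+1$ be odd, i.e.\ that $s$ be even, and that $F_s=F_r$ so that the ratio $(F_s/F_r)^k$ collapses to $1$. The Fibonacci index on the left is $(r+s)k+m$, which must equal $3k+m$, forcing $r+s=3$. The convenient solution of $r+s=3$ with $s$ even and $F_s=F_r$ is $s=2$, $r=1$, since then $F_2=F_1=1$.

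It then remains to confirm that this choice reproduces the right-hand side. With $r=1$, $s=2$ one has $(s+1)(n-k)=3(n-k)$, so the sign becomes $(-1)^{n-k}$; the ratio is $F_{r+s}/F_r=F_3/F_1=2$, so $(F_{r+s}/F_r)^{n-k}=2^{n-k}$; and the Fibonacci index is $s(n-k)+m=2(n-k)+m$. Combining the sign $(-1)^{n-k}$ with the power $2^{n-k}$ produces the factor $(-2)^{n-k}$, matching the claim. The Lucas identity follows identically from \eqref{sqbin_Luc}, since the substitution acts on the coefficients and indices in exactly the same fashion. The only point requiring a moment's care is the parameter bookkeeping---arranging simultaneously that $s$ is even, that $F_s=F_r$, and that $r+s=3$---but once the pair $s=2$, $r=1$ is identified, there is no genuine obstacle and the rest is immediate.
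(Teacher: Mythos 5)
Your proposal is correct and coincides with the paper's own proof, which likewise obtains the corollary by setting $r=1$ and $s=2$ in Theorem~\ref{thm_sq_bin}; your verification of the signs $(-1)^{k(s+1)}=(-1)^k$, $(-1)^{(s+1)(n-k)}=(-1)^{n-k}$, the ratios $F_2/F_1=1$ and $F_3/F_1=2$, and the indices $3k+m$ and $2(n-k)+m$ is exactly the mechanical check the paper leaves implicit.
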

	\begin{proof}
		Set $r=1$ and $s=2$ in Theorem \ref{thm_sq_bin}.
	\end{proof}
	\begin{corollary}
		For each integer $r$ and each non-negative integer $n$ we have 
	\begin{align*}
			\sum_{k = 0}^n (-1)^{(r+1)k}\binom {n}{k}^2  F_{2rk+m} 
			&= \sum_{k = 0}^n (-1)^{(r+1)(n-k)} \binom {n}{k} \binom {n+k}{k}  L_{r}^{n-k} F_{r(n-k)+m},\\
			\sum_{k = 0}^n (-1)^{(r+1)k}  \binom {n}{k}^2 L_{2rk+m} 
			&= \sum_{k = 0}^n (-1)^{(r+1)(n-k)}  \binom {n}{k} \binom {n+k}{k} L_{r}^{n-k} L_{r(n-k)+m}.
			\end{align*}
	\end{corollary}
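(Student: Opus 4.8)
The plan is to obtain both identities as the single specialization $s=r$ of Theorem~\ref{thm_sq_bin}. Since the corollary carries no free parameter $s$ and its sign factors read $(-1)^{(r+1)k}$ and $(-1)^{(r+1)(n-k)}$---precisely what $(-1)^{(s+1)k}$ and $(-1)^{(s+1)(n-k)}$ become when $s=r$---this is the natural choice. Setting $s=r$ in \eqref{sqbin_Fib} and \eqref{sqbin_Luc}, the left-hand ratio collapses as $(F_s/F_r)^k=(F_r/F_r)^k=1$, the argument $(r+s)k+m$ becomes $2rk+m$, and the sign becomes $(-1)^{(r+1)k}$, so the left-hand sides already agree with the claimed expressions.

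On the right-hand sides the argument $s(n-k)+m$ becomes $r(n-k)+m$ and the sign becomes $(-1)^{(r+1)(n-k)}$ as required, while the ratio becomes $(F_{r+s}/F_r)^{n-k}=(F_{2r}/F_r)^{n-k}$. The one remaining step is to recognize $F_{2r}/F_r=L_r$, i.e.\ the duplication formula $F_{2r}=F_rL_r$ (the case $U_{2r}=U_rV_r$ of the underlying Lucas-sequence identities at $(p,q)=(1,-1)$); then $(F_{2r}/F_r)^{n-k}=L_r^{n-k}$ and the right-hand sides take exactly the stated form. I expect no genuine obstacle: the only delicate point is that Theorem~\ref{thm_sq_bin} requires $r\neq 0$, so the degenerate case $r=0$ must be handled separately---but there both sides factor out $F_m$ (respectively $L_m$), and the residual identity $\sum_{k}(-1)^k\binom{n}{k}^2=\sum_{k}(-1)^{n-k}2^{n-k}\binom{n}{k}\binom{n+k}{k}$ is exactly Lemma~\ref{sq_bin} evaluated at $x=-1$, which closes the argument.
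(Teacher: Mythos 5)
Your proposal is correct and matches the paper's own proof, which is exactly the specialization $s=r$ in Theorem~\ref{thm_sq_bin} together with $F_{2r}=F_rL_r$. Your additional treatment of the degenerate case $r=0$ via Lemma~\ref{sq_bin} at $x=-1$ is a valid refinement that the paper silently omits (its hypothesis $r\neq 0$ formally excludes that case), but it does not change the argument.
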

	\begin{proof}
		Set $s=r$ in Theorem \ref{thm_sq_bin}.
	\end{proof}
	\begin{corollary}
		For each integer $r$ and each non-negative integer $n$ we have 
	\begin{align*}
			\sum_{k = 0}^n (-1)^k \binom {n}{k}^2  L_r^k F_{3rk+m} 
			&= \sum_{k = 0}^n (-1)^{n-k} \binom {n}{k} \binom {n+k}{k}  \big(L_{2r}+(-1)^r\big)^{n-k} F_{2r(n-k)+m},\\
			\sum_{k = 0}^n (-1)^k \binom {n}{k}^2  L_r^k L_{3rk+m} 
			&= \sum_{k = 0}^n (-1)^{n-k} \binom {n}{k} \binom {n+k}{k}  \big(L_{2r}+(-1)^r\big)^{n-k} L_{2r(n-k)+m}.
			\end{align*}
	\end{corollary}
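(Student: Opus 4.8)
The plan is to derive both identities as the single specialization $s = 2r$ of Theorem \ref{thm_sq_bin}, in exactly the manner of the preceding corollaries. First I would substitute $s = 2r$ into \eqref{sqbin_Fib} and \eqref{sqbin_Luc} and then simplify the three $r,s$-dependent ingredients appearing on each side: the sign factors, the two ratios of Fibonacci numbers, and the subscripts.

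The sign factors collapse at once. Since $s + 1 = 2r + 1$ is odd, we have $(-1)^{k(s+1)} = (-1)^k$ on the left-hand side and $(-1)^{(s+1)(n-k)} = (-1)^{n-k}$ on the right-hand side, which already matches the signs in the claimed relations. The subscripts also match directly: $(r+s)k + m = 3rk + m$ and $s(n-k) + m = 2r(n-k) + m$.

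Next I would reduce the two Fibonacci ratios. On the left, the coefficient $(F_s/F_r)^k = (F_{2r}/F_r)^k$ becomes $L_r^k$ by the doubling formula $F_{2r} = F_r L_r$. On the right, the coefficient $(F_{r+s}/F_r)^{n-k} = (F_{3r}/F_r)^{n-k}$ must be shown to equal $\big(L_{2r} + (-1)^r\big)^{n-k}$, and this is the only step requiring an argument. To establish $F_{3r}/F_r = L_{2r} + (-1)^r$, I would factor $\alpha^{3r} - \beta^{3r} = (\alpha^r - \beta^r)(\alpha^{2r} + \alpha^r\beta^r + \beta^{2r})$, divide by $\alpha^r - \beta^r$, and then recognize $\alpha^{2r} + \beta^{2r} = L_{2r}$ together with $\alpha^r\beta^r = (\alpha\beta)^r = (-1)^r$ via the Binet formulas \eqref{bine.fl}.

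Substituting all of these simplifications into \eqref{sqbin_Fib} and \eqref{sqbin_Luc} yields precisely the two stated relations. I expect no genuine obstacle here; the verification of the cubic ratio $F_{3r}/F_r = L_{2r} + (-1)^r$ is the only nonroutine point, and it is immediate once the difference of cubes is factored.
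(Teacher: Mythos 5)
Your proposal is correct and follows exactly the paper's own route: the paper proves this corollary by setting $s=2r$ in Theorem \ref{thm_sq_bin} and invoking $F_{3r}/F_r = L_{2r} + (-1)^r$. Your only additions are the explicit verifications of $F_{2r} = F_r L_r$ and of the cubic ratio via the factorization $\alpha^{3r}-\beta^{3r} = (\alpha^r-\beta^r)(\alpha^{2r}+\alpha^r\beta^r+\beta^{2r})$, which the paper leaves to the reader.
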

	\begin{proof}
		Set $s=2r$ in Theorem \ref{thm_sq_bin} and make use of $F_{3r}/F_{r} = L_{2r} + (-1)^r$.
	\end{proof}
\begin{theorem}\label{thm2_sq_bin}
		For each non-negative integers $r$, $m$ and $n$ we have the relations
		\begin{align*}
			\sum_{k = 0}^n \binom {n}{k}^2 L_m^{n-k} F_{mk+r} 
			&= (-1)^{r-1}\sum_{k = 0}^n (-1)^{n-k} \binom {n}{k} \binom {n+k}{k}  L_m^{k}F_{m(n-k)-r},\\
			\sum_{k = 0}^n \binom {n}{k}^2 L_m^{n-k} L_{mk+r} 
			&= (-1)^{r}\sum_{k = 0}^n (-1)^{n-k} \binom {n}{k} \binom {n+k}{k} L_m^{k} L_{m(n-k)-r}.
			\end{align*}
		\end{theorem}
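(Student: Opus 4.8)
The plan is to specialize Lemma \ref{sq_bin} at the two reciprocal-flavored values $x=\alpha^m/L_m$ and $x=\beta^m/L_m$, and then recombine the two resulting scalar identities through the Binet formulas \eqref{bine.fl}. The choice of these particular arguments is dictated by the left-hand sides: writing $x^k=\alpha^{mk}/L_m^k$ and clearing a factor $L_m^n$ converts $\binom{n}{k}^2 x^k$ into $\binom{n}{k}^2 L_m^{n-k}\alpha^{mk}$, which is exactly the shape appearing in the theorem after one multiplies by $\alpha^r$ and invokes the Binet formula for $F_{mk+r}$ or $L_{mk+r}$.

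First I would set $x=\alpha^m/L_m$ in \eqref{eq.sq_bin} and multiply through by $L_m^n$. Since $L_m=\alpha^m+\beta^m$, one has $x-1=\alpha^m/L_m-1=-\beta^m/L_m$, so the right-hand side carries a factor $(-\beta^m/L_m)^{n-k}$; after clearing $L_m^n$ this reads
\begin{equation*}
\sum_{k=0}^n \binom{n}{k}^2 L_m^{n-k}\alpha^{mk} = \sum_{k=0}^n (-1)^{n-k}\binom{n}{k}\binom{n+k}{k}L_m^{k}\beta^{m(n-k)}.
\end{equation*}
Repeating the computation with $x=\beta^m/L_m$, for which $x-1=-\alpha^m/L_m$, yields the companion identity
\begin{equation*}
\sum_{k=0}^n \binom{n}{k}^2 L_m^{n-k}\beta^{mk} = \sum_{k=0}^n (-1)^{n-k}\binom{n}{k}\binom{n+k}{k}L_m^{k}\alpha^{m(n-k)}.
\end{equation*}
(The hypothesis $m\ge 0$ guarantees $L_m\neq 0$, so both substitutions are legitimate.)

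Next I would multiply the first identity by $\alpha^r$ and the second by $\beta^r$, then form their difference divided by $\sqrt5$ to recover the Fibonacci sum on the left via $F_{mk+r}=(\alpha^{mk+r}-\beta^{mk+r})/\sqrt5$, and separately form their sum to recover the Lucas sum via $L_{mk+r}=\alpha^{mk+r}+\beta^{mk+r}$. The step I expect to be the crux is the simplification of the two right-hand sides. Setting $j=m(n-k)$ and using $\alpha\beta=-1$, hence $\alpha^r\beta^r=(-1)^r$, I would factor the common sign out:
\begin{equation*}
\alpha^r\beta^{j}-\beta^r\alpha^{j}=(-1)^r\big(\beta^{j-r}-\alpha^{j-r}\big)=(-1)^{r-1}\sqrt5\,F_{j-r},\qquad \alpha^r\beta^{j}+\beta^r\alpha^{j}=(-1)^r L_{j-r}.
\end{equation*}
Substituting $j=m(n-k)$ and these two evaluations back into the combined sums produces precisely the factors $(-1)^{r-1}F_{m(n-k)-r}$ and $(-1)^{r}L_{m(n-k)-r}$ demanded by the theorem, completing both relations at once. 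The main obstacle is exactly this factorization: everything hinges on recognizing that $\alpha^r\beta^r=(-1)^r$ lets one pull a uniform sign out of $\alpha^r\beta^{j}\mp\beta^r\alpha^{j}$, after which the reduction to $F_{j-r}$ and $L_{j-r}$ is immediate from the Binet formulas.
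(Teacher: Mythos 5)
Your proposal is correct and coincides with the paper's own proof, which likewise sets $x=\alpha^m/L_m$ and $x=\beta^m/L_m$ in Lemma~\ref{sq_bin}, multiplies through by $\alpha^r$ and $\beta^r$, and combines via the Binet formulas; you have simply written out the intermediate computations (the observation $x-1=-\beta^m/L_m$, clearing $L_m^n$, and the sign extraction $\alpha^r\beta^r=(-1)^r$) that the paper leaves implicit. All of these details check out.
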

	\begin{proof}
		Set $x=\alpha^m/L_m$ and $x=\beta^m/L_m$ in Lemma \ref{sq_bin}, multiply through by $\alpha^r$ and $\beta^r$, respectively, 
		and combine according to the Binet formulas.
	\end{proof}
	
	When $m=1$ then Theorem \ref{thm2_sq_bin} reduces to Corollary \ref{cor1_sqbin}. When $m=2$ then
		\begin{align*}
		&\sum_{k = 0}^n \binom {n}{k}^2 3^{n-k} F_{2k+r} 
		= (-1)^{n-r-1}\sum_{k = 0}^n(-3)^{k} \binom {n}{k} \binom {n+k}{k}   F_{2(n-k)-r},\\
		&\sum_{k = 0}^n \binom {n}{k}^2  3^{n-k} L_{2k+r} 
		= (-1)^{n-r}\sum_{k = 0}^n (-3)^{k} \binom {n}{k} \binom {n+k}{k}   L_{2(n-k)-r}.
		\end{align*}
	\begin{theorem}\label{thm3_sq_bin}
		For each non-negative integer $n$, any odd integer $m$ and any integer $r$ we have the relations
	\begin{align*}
			\sum_{k = 0}^n \binom {n}{k}^2 F_{2mk+r} &= \sum_{k = 0}^n \binom {n}{k} \binom {n+k}{k} L_m^{n-k}F_{m(n-k)+r},\\
			\sum_{k = 0}^n \binom {n}{k}^2 L_{2mk+r} &= \sum_{k = 0}^n \binom {n}{k} \binom {n+k}{k} L_m^{n-k}L_{m(n-k)+r}.
			\end{align*}
		\end{theorem}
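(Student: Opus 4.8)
The plan is to follow exactly the substitution-and-combine template used in the preceding squared-binomial theorems, but with a substitution tuned to produce the doubled index $2mk$ on the left-hand side. Concretely, I would set $x=\alpha^{2m}$ in the master identity \eqref{eq.sq_bin} and multiply through by $\alpha^r$, obtaining
\begin{equation*}
\sum_{k=0}^n \binom{n}{k}^2 \alpha^{2mk+r} = \sum_{k=0}^n \binom{n}{k}\binom{n+k}{k}(\alpha^{2m}-1)^{n-k}\,\alpha^r.
\end{equation*}
The left-hand side is already in the right shape for the Binet formula. The entire content of the theorem hinges on simplifying the factor $(\alpha^{2m}-1)^{n-k}$ on the right.

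The key step—and the one place where the hypothesis that $m$ is \emph{odd} is used—is to invoke \eqref{eq.ozz3zp6} in the form $\alpha^{2m} = \alpha^m L_m - (-1)^m$. For odd $m$ this collapses to
\begin{equation*}
\alpha^{2m}-1 = \alpha^m L_m + 1 - 1 = \alpha^m L_m,
\end{equation*}
so that $(\alpha^{2m}-1)^{n-k} = L_m^{n-k}\,\alpha^{m(n-k)}$. (For even $m$ one would instead get $\alpha^{2m}-1 = \alpha^m L_m - 2$, which does not factor cleanly; this is precisely why the parity restriction appears.) Substituting this back yields
\begin{equation*}
\sum_{k=0}^n \binom{n}{k}^2 \alpha^{2mk+r} = \sum_{k=0}^n \binom{n}{k}\binom{n+k}{k} L_m^{n-k}\,\alpha^{m(n-k)+r}.
\end{equation*}

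I would then repeat the identical computation with $x=\beta^{2m}$, multiplying through by $\beta^r$ and using the companion identity \eqref{eq.syrjcay}, which for odd $m$ gives $\beta^{2m}-1=\beta^m L_m$, to produce the analogous relation with $\beta$ in place of $\alpha$. Finally I would combine the two resulting equations according to the Binet formulas \eqref{bine.fl}: subtracting and dividing by $\sqrt5$ eliminates the $L_m^{n-k}$-free structure and delivers the Fibonacci identity, while adding the two equations delivers the Lucas identity. I expect no genuine obstacle here—the proof is entirely routine once the factorization $\alpha^{2m}-1=\alpha^m L_m$ is in hand—so the only point requiring care is flagging that oddness of $m$ is exactly what makes that factorization hold, and confirming that the arbitrary integer shift $r$ passes through the Binet combination untouched.
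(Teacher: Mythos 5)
Your proof is correct and follows exactly the paper's own route: substituting $x=\alpha^{2m}$ and $x=\beta^{2m}$ into Lemma \ref{sq_bin}, using the factorizations $\alpha^{2m}-1=\alpha^m L_m$ and $\beta^{2m}-1=\beta^m L_m$ (valid precisely because $m$ is odd), multiplying by $\alpha^r$ and $\beta^r$, and combining via the Binet formulas \eqref{bine.fl}. Your added remark explaining why even $m$ fails (the extra $-2$ term) is a nice touch but does not change the argument.
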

	\begin{proof}
		Set $x=\alpha^{2m}$ and $x=\beta^{2m}$, $m$ odd, in Lemma \ref{sq_bin}, and use the facts that
		$\alpha^{2m}-1 = \alpha^m L_m$  and $\beta^{2m}-1 = \beta^m L_m$.
		Multiply through by $\alpha^r$ and $\beta^r$, respectively, and combine according to the Binet formulas.
	\end{proof}
	
	When $m=1$ then Theorem \ref{thm3_sq_bin} reduces to Corollary \ref{cor2_sqbin}.
	
	We conclude this section with a sort of inverse relations compared to those from Theorem \ref{thm_sq_bin}.
	\begin{theorem}\label{thm4_sq_bin}
		For each non-negative integer $n$ and any integers $m$, $r$ and $s$, we have the relations
		\begin{align*}
			\sum_{k = 0}^n (-1)^{sk} \binom {n}{k}^2 \Big  (\frac{F_{r+s}}{F_{r}}\Big )^k  F_{sk+m} 
			&= \sum_{k = 0}^n (-1)^{s(n-k)} \binom {n}{k} \binom {n+k}{k} \Big (\frac{F_{s}}{F_{r}}\Big )^{n-k}  F_{(r+s)(n-k)+m},\\ 
			\sum_{k = 0}^n (-1)^{sk} \binom {n}{k}^2 \Big (\frac{F_{r+s}}{F_{r}}\Big )^k  L_{sk+m} 
			&= \sum_{k = 0}^n (-1)^{s(n-k)} \binom {n}{k} \binom {n+k}{k} \Big (\frac{F_{s}}{F_{r}}\Big )^{n-k}  L_{(r+s)(n-k)+m}.
			\end{align*}
	\end{theorem}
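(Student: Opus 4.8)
The plan is to reproduce the mechanism behind Theorem~\ref{thm_sq_bin}, once more feeding suitable values of $x$ into the squared-binomial identity \eqref{eq.sq_bin} of Lemma~\ref{sq_bin}, but now with the roles of the two exponent patterns interchanged. In \eqref{sqbin_Fib} the factor $x^k$ carried the index $(r+s)k$ while $(x-1)^{n-k}$ carried $s(n-k)$; since the present statement has $F_{sk+m}$ on the left and $F_{(r+s)(n-k)+m}$ on the right, I will instead choose $x$ so that $x^k$ produces $\alpha^{sk}$ (resp.\ $\beta^{sk}$) and $(x-1)^{n-k}$ produces $\alpha^{(r+s)(n-k)}$ (resp.\ $\beta^{(r+s)(n-k)}$). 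Throughout I assume $r\neq 0$, so that $F_r\neq 0$ and the divisions are legitimate.

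Concretely, I would set $x=F_{r+s}/(F_r\beta^s)$ in \eqref{eq.sq_bin} and multiply the whole identity by $\alpha^m$, and separately set $x=F_{r+s}/(F_r\alpha^s)$ and multiply by $\beta^m$. Using $\alpha^{-1}=-\beta$ and $\beta^{-1}=-\alpha$ (both consequences of $\alpha\beta=-1$) one has $\beta^{-s}=(-1)^s\alpha^s$ and $\alpha^{-s}=(-1)^s\beta^s$, so the left-hand sum of the first equation becomes $\sum_k \binom nk^2(-1)^{sk}(F_{r+s}/F_r)^k\alpha^{sk+m}$, which is exactly the $\alpha$-part of the desired left-hand summand $(-1)^{sk}(F_{r+s}/F_r)^kF_{sk+m}$; the second equation supplies the matching $\beta$-part.

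The crux is to check that $(x-1)^{n-k}$ reproduces the right-hand summand, and this is where Lemma~\ref{lem.ydalnfx} enters. For $x=F_{r+s}/(F_r\alpha^s)$ I would write $x-1=(F_{r+s}-F_r\alpha^s)/(F_r\alpha^s)$ and invoke the first identity in \eqref{eq.pvdw5ja}, namely $F_{r+s}-F_r\alpha^s=\beta^rF_s$, to obtain $x-1=(F_s/F_r)\beta^r\alpha^{-s}=(-1)^s(F_s/F_r)\beta^{r+s}$; raising to the power $n-k$ and multiplying by $\beta^m$ then yields $(-1)^{s(n-k)}(F_s/F_r)^{n-k}\beta^{(r+s)(n-k)+m}$, the $\beta$-part of the right-hand summand. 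The companion computation for $x=F_{r+s}/(F_r\beta^s)$ uses the second identity $F_{r+s}-F_r\beta^s=\alpha^rF_s$ and gives the corresponding $\alpha$-part. This verification is the only non-mechanical step; everything else is bookkeeping of the common factors $(-1)^{sk}(F_{r+s}/F_r)^k$ and $(-1)^{s(n-k)}(F_s/F_r)^{n-k}$, which are independent of $\alpha,\beta$ and hence factor cleanly out of both equations.

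Finally, I would combine the two equations by the Binet formulas \eqref{bine.fl}: subtracting the $\beta$-equation from the $\alpha$-equation and dividing by $\sqrt5$ collapses the $\alpha$- and $\beta$-powers into the Fibonacci numbers $F_{sk+m}$ and $F_{(r+s)(n-k)+m}$, producing the first asserted relation, while adding the two equations produces the Lucas numbers $L_{sk+m}$ and $L_{(r+s)(n-k)+m}$ and hence the second. I expect no genuine obstacle beyond correctly pinning down the two substitutions and the accompanying sign factors; once $x-1$ has been identified via \eqref{eq.pvdw5ja}, the argument runs exactly parallel to that of Theorem~\ref{thm_sq_bin}.
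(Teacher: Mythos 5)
Your proof is correct and is essentially the paper's own argument: the same pair of substitutions $x=F_{r+s}/(F_r\alpha^s)$ and $x=F_{r+s}/(F_r\beta^s)$ into Lemma~\ref{sq_bin}, with $x-1$ evaluated via \eqref{eq.pvdw5ja} of Lemma~\ref{lem.ydalnfx}, multiplication by $\alpha^m$ and $\beta^m$, and combination according to the Binet formulas \eqref{bine.fl}. If anything, your explicit pairing (the $\beta^s$-substitution with the multiplier $\alpha^m$, and vice versa) is the internally consistent reading of the paper's terse ``respectively,'' and your bookkeeping via $\beta^{-s}=(-1)^s\alpha^s$ and $\alpha^{-s}=(-1)^s\beta^s$ supplies precisely the sign computation the paper leaves to the reader.
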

	\begin{proof}
		Set $x=F_{r+s}/(\alpha^{s}F_r)$ and $x=F_{r+s}/(\beta^{s}F_r)$, respectively, in Lemma \ref{sq_bin}, and use Lemma~\ref{lem.ydalnfx}. 
		Multiply through by $\alpha^m$ and $\beta^m$, respectively, and combine according to the Binet formulas.
	\end{proof}

	\section{More identities with two binomial coefficients}
	
	\begin{lemma}[{\cite[Identity (3.17)]{Gould2}}]
		If $n$ is a non-negative integer, $m$ is any real number and $x$ is any complex variable, then
	\begin{equation}\label{eq.iz4pi7f}
			\sum_{k=0}^n \binom {n}{k} \binom {m + n - k}{n} (x - 1)^k = \sum_{k=0}^n \binom {n}{k} \binom {m}{k} x^k.
			\end{equation}
	In particular, 
	\begin{equation}\label{eq.sq_bin2}
			\sum_{k=0}^n \binom {n}{k} \binom {2n - k}{n} (x - 1)^k = \sum_{k=0}^n \binom {n}{k}^2 x^k
			\end{equation}
			\begin{equation}
			\sum_{k = 0}^n \frac{ \binom {n}{k} \binom {2(2n - k)}{2n - k} \binom {2n - k}{n}}{\binom {2(n - k)}{n - k}} (x-1)^k 
			= \binom {2n}{n} \sum_{k = 0}^n \frac{\binom {n}{k}^2}{\binom {2(n - k)}{n - k}} 2^{2(n-k)} x^k.
			\end{equation}
		\end{lemma}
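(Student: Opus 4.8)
The plan is to prove the master identity \eqref{eq.iz4pi7f} and then obtain the two displayed consequences by specializing $m$. Even though \eqref{eq.iz4pi7f} is quoted from Gould, a short self-contained argument is available. First I would expand $(x-1)^{k}=\sum_{j=0}^{k}\binom{k}{j}(-1)^{k-j}x^{j}$ in the left-hand sum, interchange the two summations, and extract the coefficient of $x^{j}$. After applying the subset-of-a-subset identity $\binom{n}{k}\binom{k}{j}=\binom{n}{j}\binom{n-j}{k-j}$ and setting $i=k-j$, the whole statement reduces to the single-sum claim
\begin{equation*}
\sum_{i=0}^{n-j}\binom{n-j}{i}(-1)^{i}\binom{m+n-j-i}{n}=\binom{m}{j},\qquad 0\le j\le n.
\end{equation*}

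The key step is to read the inner sum as a repeated finite difference. With $N=n-j$ and $h(\ell)=\binom{m+\ell}{n}$, reversing the summation index shows that the sum equals $(\Delta^{N}h)(0)$, where $\Delta f(\ell)=f(\ell+1)-f(\ell)$. Pascal's rule gives $\Delta\binom{m+\ell}{n}=\binom{m+\ell+1}{n}-\binom{m+\ell}{n}=\binom{m+\ell}{n-1}$, so by induction $\Delta^{N}\binom{m+\ell}{n}=\binom{m+\ell}{n-N}$; evaluating at $\ell=0$ yields $\binom{m}{n-N}=\binom{m}{j}$, which is exactly the required value. This settles \eqref{eq.iz4pi7f}.

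The first consequence \eqref{eq.sq_bin2} is now immediate: taking $m=n$ turns $\binom{m+n-k}{n}$ into $\binom{2n-k}{n}$ and $\binom{m}{k}$ into $\binom{n}{k}$. For the remaining identity I would instead put $m=n-\tfrac12$ and multiply \eqref{eq.iz4pi7f} through by $4^{n}$. The passage to central binomial coefficients rests on the single relation
\begin{equation*}
\binom{N-\tfrac12}{N}=\frac{1}{4^{N}}\binom{2N}{N},\qquad N\ge 0,
\end{equation*}
applied to $\binom{2(2n-k)}{2n-k}$, $\binom{2(n-k)}{n-k}$ and $\binom{2n}{n}$ appearing in the target statement.

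The main obstacle I anticipate is bookkeeping rather than any conceptual difficulty. After cancelling the common factor $\binom{n}{k}$, one must verify that $4^{n}\binom{2n-k-\tfrac12}{n}$ equals $\binom{2(2n-k)}{2n-k}\binom{2n-k}{n}/\binom{2(n-k)}{n-k}$ and that $4^{n}\binom{n-\tfrac12}{k}$ equals $\binom{2n}{n}\binom{n}{k}2^{2(n-k)}/\binom{2(n-k)}{n-k}$. This is cleanest if one writes each half-integer binomial as a falling-factorial product of half-integers; after the substitution above the powers of $4$ and the integer factorials reorganize so that, in each of the two identities, the two sides reduce to the same product of half-integers, confirming the claimed central-binomial form.
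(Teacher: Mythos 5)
Your proposal is correct, and it is more self-contained than the paper's treatment, which proves only the two particular cases and takes the master identity \eqref{eq.iz4pi7f} on faith as Gould's (3.17). Your finite-difference argument for \eqref{eq.iz4pi7f} is sound: after expanding $(x-1)^k$, using $\binom{n}{k}\binom{k}{j}=\binom{n}{j}\binom{n-j}{k-j}$ and reversing the index, the coefficient claim becomes $(\Delta^{N}h)(0)=\binom{m}{n-N}$ with $h(\ell)=\binom{m+\ell}{n}$, and Pascal's rule $\Delta\binom{m+\ell}{n}=\binom{m+\ell}{n-1}$ iterates correctly (as a polynomial identity in $m$, so the real-$m$ case is covered). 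For the corollaries your specializations coincide with the paper's: $m=n$ for \eqref{eq.sq_bin2}, and $m=n-\tfrac12$ with a factor $4^n$ for the third identity. Where you diverge is in how the half-integer binomials are converted: the paper invokes Gould's (Z.45), $\binom{n-1/2}{k}=\binom{2n}{n}\binom{n}{k}\big/\bigl(2^{2k}\binom{2(n-k)}{n-k}\bigr)$, applied on both sides (on the left with $n\mapsto 2n-k$, $k\mapsto n$), whereas you re-derive the two needed facts, $4^{n}\binom{2n-k-1/2}{n}=\binom{2(2n-k)}{2n-k}\binom{2n-k}{n}\big/\binom{2(n-k)}{n-k}$ and $4^{n}\binom{n-1/2}{k}=\binom{2n}{n}\binom{n}{k}\,2^{2(n-k)}\big/\binom{2(n-k)}{n-k}$, from the duplication formula $\binom{N-1/2}{N}=4^{-N}\binom{2N}{N}$; both check out (the quotient trick $\binom{a-1/2}{b}=\binom{a}{b}\binom{a-1/2}{a}\big/\binom{a-b-1/2}{a-b}$ makes your ``single relation'' claim literally true, since the falling factorial $(a-\tfrac12)^{\underline{b}}$ is a ratio of two full half-integer products). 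What your route buys is independence from Gould's tables — both the Laplace-type identity and the (Z.45)-type conversions are proved rather than cited — at the cost of the factorial bookkeeping you anticipated; the paper's route is shorter but rests entirely on two external references.
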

	\begin{proof}
		The first particular case is obvious. The second follows upon setting $m = n - 1/2$ in \eqref{eq.iz4pi7f} 
		and using 
		$$\binom {n - 1/2}{k} = \frac{\binom{2n}{n} \binom {n}{k}}{2^{2k} \binom {2(n - k)}{n - k}}$$ with $0\le k\le n$ (see \cite[Identity (Z.45)]{Gould2}).
	\end{proof}
\begin{remark}
		Comparing \eqref{eq.sq_bin} with \eqref{eq.sq_bin2} we immediately get an ``identity'' of the form
		\begin{equation*}
			\sum_{k=0}^n \binom {n}{k} \binom {2n - k}{n} (x - 1)^k = \sum_{k=0}^n \binom {n}{k} \binom {n + k}{k} (x - 1)^{n-k}.
			\end{equation*}
	
		Such an identity does not contain any new information as the identities can be trivially transformed into each other by reindexing
		$\sum_{k=0}^n a_k = \sum_{k=0}^n a_{n-k}$. This shows that the binomial sum relations from the previous section are actually special instances of those derived now.
	\end{remark}
\begin{theorem}\label{thm_two_bin_gen}
		Let $r,s$ and $p$ be arbitrary integers with $r\neq 0$, and let $m$ is any real number. Then for all non-negative integer $n$ we have the relations
		\begin{equation}\label{twobin_Fib}
			\begin{split}
			\sum_{k = 0}^n (-1)^{(s+1)k} \binom {n}{k} \binom {m + n - k}{n} & \Big (\frac{F_{r+s}}{F_r}\Big )^k  F_{sk+p}\\
			&= \sum_{k = 0}^n (-1)^{(s+1)k} \binom {n}{k} \binom {m}{k} \Big (\frac{F_{s}}{F_r}\Big )^k  F_{(r+s)k+p},
			\end{split}
			\end{equation}
			\begin{equation}\label{twobin_Luc}
			\begin{split}
			\sum_{k = 0}^n (-1)^{(s+1)k} \binom {n}{k} \binom {m + n - k}{n} &\Big (\frac{F_{r+s}}{F_r}\Big )^k  L_{sk+p}\\
			&= \sum_{k = 0}^n (-1)^{(s+1)k}  \binom {n}{k} \binom {m}{k} \Big (\frac{F_{s}}{F_r}\Big )^k  L_{(r+s)k+p}.
			\end{split}
			\end{equation}
		
		In particular, 
		\begin{align*}
			&\sum_{k = 0}^n (-1)^{(s+1)k} \frac{ \binom {n}{k} \binom {2(2n - k)}{2n - k} \binom {2n - k}{n}}{\binom {2(n - k)}{n - k}} 
			\Big (\frac{F_{r+s}}{F_r}\Big )^k  F_{sk+p} \\
			&\qquad\qquad\qquad\qquad =  \binom {2n}{n} \sum_{k = 0}^n (-1)^{(s+1)k} \frac{\binom {n}{k}^2}{\binom {2(n - k)}{n - k}} 2^{2(n-k)}\Big (\frac{F_{s}}{F_r}\Big )^k 
			F_{(r+s)k+p},\\[4pt]
			&\sum_{k = 0}^n (-1)^{(s+1)k} \frac{ \binom {n}{k} \binom {2(2n - k)}{2n - k} \binom {2n - k}{n}}{\binom {2(n - k)}{n - k}} 
			\Big (\frac{F_{r+s}}{F_r}\Big )^k  L_{sk+p} \\
			&\qquad\qquad\qquad\qquad =  \binom {2n}{n} \sum_{k = 0}^n (-1)^{(s+1)k}\frac{\binom {n}{k}^2}{\binom {2(n - k)}{n - k}}2^{2(n-k)}  \Big (\frac{F_{s}}{F_r}\Big )^k 
			L_{(r+s)k+p}.
			\end{align*}
	\end{theorem}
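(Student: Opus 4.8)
The plan is to specialize Gould's polynomial identity \eqref{eq.iz4pi7f} at a golden-ratio-dependent value of $x$ and then recombine the two specializations through the Binet formulas \eqref{bine.fl}, exactly as in the proofs of Theorems \ref{thm_sq_bin} and \ref{thm4_sq_bin}. Concretely, first I would set
\[
x = (-1)^{s+1}\frac{F_s\,\alpha^{r+s}}{F_r}
\]
in \eqref{eq.iz4pi7f} (this is legitimate since $r\neq0$ forces $F_r\neq0$), and afterwards repeat the whole computation with $\alpha$ replaced by $\beta$. The right-hand side of \eqref{eq.iz4pi7f} then immediately becomes $\sum_k\binom nk\binom mk(-1)^{(s+1)k}(F_s/F_r)^k\alpha^{(r+s)k}$, which already carries the exponent $\alpha^{(r+s)k}$ needed to assemble $F_{(r+s)k+p}$.

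The crux is the evaluation of the factor $(x-1)^k$ on the left-hand side, and this is the step I expect to be the only real obstacle. I claim that with the above choice of $x$ one has
\[
x-1 = (-1)^{s+1}\frac{F_{r+s}\,\alpha^{s}}{F_r}.
\]
To see this I would use the identity $F_{r+s}-F_r\beta^{s}=\alpha^{r}F_s$ from \eqref{eq.pvdw5ja}, rewritten as $F_s\alpha^{r}=F_{r+s}-\beta^{s}F_r$; multiplying by $\alpha^{s}$ and using $(\alpha\beta)^{s}=(-1)^{s}$ gives $F_s\alpha^{r+s}=F_{r+s}\alpha^{s}-(-1)^{s}F_r$, whence $(-1)^{s+1}F_s\alpha^{r+s}-F_r=(-1)^{s+1}F_{r+s}\alpha^{s}$, which is the claim after dividing by $F_r$. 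The analogous computation with $\beta$ in place of $\alpha$ (using the companion relation $F_{r+s}-F_r\alpha^{s}=\beta^{r}F_s$) yields $x-1=(-1)^{s+1}F_{r+s}\beta^{s}/F_r$. Thus the left-hand side of \eqref{eq.iz4pi7f} becomes $\sum_k\binom nk\binom{m+n-k}{n}(-1)^{(s+1)k}(F_{r+s}/F_r)^k\alpha^{sk}$, carrying the exponent $\alpha^{sk}$ needed for $F_{sk+p}$.

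With both substitutions in hand, I would multiply the $\alpha$-equation through by $\alpha^{p}$ and the $\beta$-equation through by $\beta^{p}$. Subtracting and dividing by $\sqrt5$ collapses every $\alpha^{N}-\beta^{N}$ into $\sqrt5\,F_{N}$ and produces \eqref{twobin_Fib}; adding the two equations collapses every $\alpha^{N}+\beta^{N}$ into $L_{N}$ and produces \eqref{twobin_Luc}. Finally, the three ``In particular'' identities are obtained by running exactly the same two substitutions and the same $\alpha^{p}$/$\beta^{p}$ weighting in the central-binomial form of the preceding lemma (the one carrying the overall factor $\binom{2n}{n}$) rather than in \eqref{eq.iz4pi7f}; since that identity has precisely the same $(x-1)^k$ versus $x^k$ structure, no new computation beyond the evaluation of $x-1$ above is required.
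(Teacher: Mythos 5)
Your proof is correct and follows essentially the same route as the paper: specialize Gould's identity \eqref{eq.iz4pi7f} (and its central-binomial particular case) at Binet-type values of $x$, evaluate $x-1$ via \eqref{eq.pvdw5ja}, multiply by $\alpha^p$ and $\beta^p$, and combine via \eqref{bine.fl}. Your substitution $x=(-1)^{s+1}F_s\alpha^{r+s}/F_r$ is exactly the paper's choice $x=-F_s\beta^{r}/(F_r\alpha^{s})$ under the relabeling $(r,s)\mapsto(-r,-s)$, so the difference is purely cosmetic (yours lands on the stated form directly).
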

	\begin{proof}
		Set $x=-F_{s}\beta^r/(F_r \alpha^{s})$ and $x=-F_{s}\alpha^r/(F_r \beta^{s})$, respectively, in \eqref{eq.iz4pi7f}, and use Lemma \ref{lem.ydalnfx}. Multiply through by $\alpha^p$ and $\beta^p$, respectively, and combine according to the Binet formulas.
	\end{proof}
	\begin{corollary}\label{cor1_twobin}
		If $n$ is a non-negative integer, $m$ is any real number and $p$ is any integer, then
		\begin{align*}
			&\sum_{k=0}^n \binom {n}{k} \binom {m + n - k}{n}  F_{p-k} = \sum_{k=0}^n \binom {n}{k} \binom {m}{k} F_{p+k},\\
			&\sum_{k=0}^n \binom {n}{k} \binom {m + n - k}{n} L_{p-k} = \sum_{k=0}^n \binom {n}{k} \binom {m}{k} L_{p+k}.
			\end{align*}
			In particular, 
		\begin{align*}
			\sum_{k = 0}^n \frac{ \binom {n}{k} \binom {2(2n - k)}{2n - k} \binom {2n - k}{n}}{\binom {2(n - k)}{n - k}}  F_{p-k} 
			&= \binom {2n}{n} \sum_{k = 0}^n \frac{\binom {n}{k}^2}{\binom {2(n - k)}{n - k}} 2^{2(n-k)} F_{p+k},
		\end{align*}
		\begin{align*}
			\sum_{k = 0}^n \frac{ \binom {n}{k} \binom {2(2n - k)}{2n - k} \binom {2n - k}{n}}{\binom {2(n - k)}{n - k}}  L_{p-k} 
			&= \binom {2n}{n} \sum_{k = 0}^n \frac{\binom {n}{k}^2}{\binom {2(n - k)}{n - k}} 2^{2(n-k)} L_{p+k}.
			\end{align*}
	\end{corollary}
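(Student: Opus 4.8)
The plan is to obtain this corollary as the single special case $r=2$, $s=-1$ of Theorem~\ref{thm_two_bin_gen}. First I would substitute these two values into the Fibonacci identity \eqref{twobin_Fib} and the Lucas identity \eqref{twobin_Luc}, and then simplify each of the three ingredients that depend on $r$ and $s$: the parity sign $(-1)^{(s+1)k}$, the two Fibonacci ratios $F_{r+s}/F_r$ and $F_s/F_r$, and the index shifts $sk+p$ and $(r+s)k+p$. Since $r=2\neq 0$, the hypothesis $r\neq0$ of the theorem is met, so the substitution is legitimate.

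The three simplifications are all immediate. With $s=-1$ the exponent $(s+1)k$ vanishes identically, so $(-1)^{(s+1)k}=1$ throughout and both sides lose their alternating factor. For the ratios I would invoke the elementary values $F_{-1}=F_1=F_2=1$, where $F_{-1}=1$ comes from the standard extension $F_{-j}=(-1)^{j+1}F_j$; these give $F_{r+s}/F_r=F_1/F_2=1$ and $F_s/F_r=F_{-1}/F_2=1$, so the geometric factors $(F_{r+s}/F_r)^k$ and $(F_s/F_r)^k$ each collapse to $1$. For the index shifts, $r=2$ and $s=-1$ yield $sk+p=p-k$ on the left and $(r+s)k+p=(2-1)k+p=p+k$ on the right. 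Assembling these observations turns \eqref{twobin_Fib} into the first displayed Fibonacci relation and \eqref{twobin_Luc} into the Lucas relation, since the binomial factors $\binom{m+n-k}{n}$ and $\binom{m}{k}$ are carried over unchanged.

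Finally, running the identical substitution $r=2$, $s=-1$ through the two central-binomial specializations recorded in the ``in particular'' part of Theorem~\ref{thm_two_bin_gen} produces the remaining two displays verbatim, because exactly the same sign, ratio, and index simplifications apply there. I do not anticipate any genuine obstacle in this argument; the only points requiring care are the correct evaluation $F_{-1}=1$ (so that the ratio $F_s/F_r$ really reduces to $1$ rather than introducing an extra factor) and the bookkeeping confirming that the two index shifts land on $p-k$ and $p+k$ as claimed.
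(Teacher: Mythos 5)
Your proposal is correct and is exactly the paper's intended argument: Corollary~\ref{cor1_twobin} is the specialization $r=2$, $s=-1$ of Theorem~\ref{thm_two_bin_gen} (mirroring the proof of Corollary~\ref{cor1_sqbin} from Theorem~\ref{thm_sq_bin}), and your simplifications --- $(-1)^{(s+1)k}=1$, $F_{r+s}/F_r=F_1/F_2=1$, $F_s/F_r=F_{-1}/F_2=1$ using $F_{-j}=(-1)^{j+1}F_j$, and the shifts $sk+p=p-k$, $(r+s)k+p=p+k$ --- are all accurate, including for the two central-binomial displays. Nothing further is needed.
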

	\begin{corollary}\label{cor2_twobin}
		If $n$ is a non-negative integer, $m$ is any real number and $p$ is any integer, then
			\begin{align*}
			&\sum_{k=0}^n \binom {n}{k} \binom {m + n - k}{n} F_{k+p} = \sum_{k=0}^n \binom {n}{k} \binom {m}{k} F_{2k+p},\\
			&\sum_{k=0}^n \binom {n}{k} \binom {m + n - k}{n} L_{k+p} = \sum_{k=0}^n \binom {n}{k} \binom {m}{k} L_{2k+p}.
			\end{align*}
		In particular, 
		\begin{align*}
			\sum_{k = 0}^n \frac{ \binom {n}{k} \binom {2(2n - k)}{2n - k} \binom {2n - k}{n}}{\binom {2(n - k)}{n - k}} F_{k+p} 
			&= \binom {2n}{n} \sum_{k = 0}^n \frac{\binom {n}{k}^2}{\binom {2(n - k)}{n - k}} 2^{2(n-k)} F_{2k+p},\\
			\sum_{k = 0}^n \frac{ \binom {n}{k} \binom {2(2n - k)}{2n - k} \binom {2n - k}{n}}{\binom {2(n - k)}{n - k}} L_{k+p} 
			&= \binom {2n}{n} \sum_{k = 0}^n \frac{\binom {n}{k}^2}{\binom {2(n - k)}{n - k}} 2^{2(n-k)} L_{2k+p}.
			\end{align*}
	\end{corollary}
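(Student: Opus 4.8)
The plan is to obtain Corollary \ref{cor2_twobin} as a direct specialization of Theorem \ref{thm_two_bin_gen}. Inspecting the index structure of the sought identities, the left-hand sides carry $F_{k+p}$ and $L_{k+p}$, which forces $s=1$ in the index $sk+p$ appearing in \eqref{twobin_Fib} and \eqref{twobin_Luc}; the right-hand sides carry $F_{2k+p}$ and $L_{2k+p}$, so the index $(r+s)k+p$ must equal $2k+p$, giving $r+s=2$ and hence $r=1$. The constraint $r\neq 0$ required by the theorem is satisfied.

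With $r=s=1$ I would then simplify the three parametric factors occurring in \eqref{twobin_Fib} and \eqref{twobin_Luc}. First, the sign $(-1)^{(s+1)k}=(-1)^{2k}=1$ drops out. Second, the ratio $F_{r+s}/F_r=F_2/F_1=1$ on the left and $F_s/F_r=F_1/F_1=1$ on the right both collapse to unity, so all their $k$-th powers equal $1$. What remains is precisely
\[
\sum_{k=0}^n \binom{n}{k}\binom{m+n-k}{n} F_{k+p}=\sum_{k=0}^n \binom{n}{k}\binom{m}{k} F_{2k+p},
\]
together with the identical Lucas statement, which are the two main claims.

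For the ``in particular'' central-binomial forms, I would apply the same substitution $r=s=1$ to the two particular relations already recorded inside Theorem \ref{thm_two_bin_gen} (those obtained there by taking $m=n-1/2$ in \eqref{eq.iz4pi7f} and invoking the Gould identity (Z.45) from \cite{Gould2}). Once more every sign and every power of a Fibonacci ratio reduces to $1$, and the indices shift from $sk+p$ to $k+p$ and from $(r+s)k+p$ to $2k+p$, yielding the two displayed central-binomial identities without further work.

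I do not anticipate a genuine obstacle: the entire content is a clean parameter specialization, and the only point requiring care is the bookkeeping verification that the three parametric factors $(-1)^{(s+1)k}$, $(F_{r+s}/F_r)^k$ and $(F_s/F_r)^k$ all trivialize simultaneously at $r=s=1$, which they do because $F_1=F_2=1$.
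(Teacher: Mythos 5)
Your proposal is correct and matches the paper's intended derivation: the paper states Corollary~\ref{cor2_twobin} without explicit proof, but exactly as with Corollary~\ref{cor2_sqbin} (obtained from Theorem~\ref{thm_sq_bin} via $r=s=1$), the corollary is the specialization $r=s=1$ of Theorem~\ref{thm_two_bin_gen}, where $(-1)^{(s+1)k}=1$ and $F_2/F_1=F_1/F_1=1$ make all parametric factors trivial, with the same substitution applied to the theorem's ``in particular'' central-binomial forms. Your bookkeeping, including the check $r\neq 0$, is exactly what is needed.
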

	\begin{corollary}\label{cor3_twobin}
		If $n$ is a non-negative integer, $m$ is any real number and $p$ is any integer, then
		\begin{align}\label{eq.evj0xno}
			\sum_{k = 0}^n (\pm 2)^k  \binom {n}{k} \binom {m + n - k}{n}  F_{\left( {\frac{{3 \mp 1}}2} \right)k + p} 
			&= \sum_{k = 0}^n (\pm 1)^k \binom {n}{k} \binom {m}{k}  F_{3k + p}, \\
			\label{eq.my7undh}
			\sum_{k = 0}^n (\pm 2)^k \binom {n}{k} \binom {m + n - k}{n}  L_{\left( {\frac{{3 \mp 1}}2} \right)k + p} 
			&= \sum_{k = 0}^n (\pm 1)^k \binom {n}{k} \binom {m}{k}  L_{3k + p}.
			\end{align}
		
		In particular,
	\begin{align*}
			\sum_{k = 0}^n ( \pm 2)^k \frac{\binom {n}{k} \binom{2(2n - k)}{2n - k} \binom {2n - k}{n}}{\binom{2(n - k)}{n - k}}  
			F_{\left( {\frac{{3 \mp 1}}2} \right)k + p} &= \binom {2n}{n} \sum_{k = 0}^n (\pm 1)^k \frac{\binom nk^2}{\binom{2(n - k)}{n - k}}
			2^{2(n-k)} F_{3k + p},\\
			\sum_{k = 0}^n ( \pm 2)^k \frac{\binom {n}{k} \binom{2(2n - k)}{2n - k} \binom {2n - k}{n}}{\binom{2(n - k)}{n - k}}  
			L_{\left( {\frac{{3 \mp 1}}2} \right)k + p} & = \binom {2n}{n} \sum_{k = 0}^n (\pm 1)^k \frac{\binom nk^2}{\binom{2(n - k)}{n - k}}
			2^{2(n-k)} L_{3k + p}.
			\end{align*}
	\end{corollary}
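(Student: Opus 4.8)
The plan is to obtain both the Fibonacci and Lucas versions as direct specializations of Theorem~\ref{thm_two_bin_gen}, reading the compressed $\pm$/$\mp$ notation as two separate assertions. The upper sign, for which $(3\mp1)/2=1$, will come from one choice of the free parameters $r,s$ in \eqref{twobin_Fib}--\eqref{twobin_Luc}, and the lower sign, for which $(3\mp1)/2=2$, from another. The only auxiliary facts needed are the elementary Fibonacci values $F_1=F_2=1$ and $F_3=2$.

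For the upper-sign identities I would set $r=2$ and $s=1$. Then $s+1=2$, so the factor $(-1)^{(s+1)k}$ equals $1$ on both sides; the left-hand weight $(F_{r+s}/F_r)^k=(F_3/F_2)^k=2^k$ produces the claimed $2^k$, while the right-hand weight $(F_s/F_r)^k=(F_1/F_2)^k=1$ is trivial. Since $s=1$ and $r+s=3$, the indices $F_{sk+p}$ and $F_{(r+s)k+p}$ become $F_{k+p}$ and $F_{3k+p}$, matching \eqref{eq.evj0xno}--\eqref{eq.my7undh} with the top sign; the Lucas statement is identical.

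For the lower-sign identities I would instead set $r=1$ and $s=2$. Now $s+1=3$, so $(-1)^{(s+1)k}=(-1)^k$; multiplying this by the left-hand weight $(F_{r+s}/F_r)^k=(F_3/F_1)^k=2^k$ gives $(-2)^k$, whereas on the right $(F_s/F_r)^k=(F_2/F_1)^k=1$ leaves only the surviving sign $(-1)^k$. With $s=2$ and $r+s=3$ the subscripts become $F_{2k+p}$ and $F_{3k+p}$, which is exactly the bottom-sign case. Finally, the two ``in particular'' identities with the central binomial coefficients follow from the corresponding central-binomial particular cases of Theorem~\ref{thm_two_bin_gen} under the very same two substitutions, since these act only on the weights and indices and leave the binomial structure untouched. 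There is no genuine obstacle here: the entire content is the sign/index bookkeeping just described, and the only point demanding care is correctly pairing the combined sign $(-1)^{(s+1)k}$ with the ratio $(F_{r+s}/F_r)^k$ so as to reproduce $(\pm2)^k$ on the left and $(\pm1)^k$ on the right.
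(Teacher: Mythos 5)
Your proof is correct and matches the paper's intended derivation: the corollary is stated there without proof as a specialization of Theorem~\ref{thm_two_bin_gen}, and the substitutions $r=2,\,s=1$ (upper sign) and $r=1,\,s=2$ (lower sign) are exactly the right ones, with the sign and weight bookkeeping $(-1)^{(s+1)k}(F_{r+s}/F_r)^k=(\pm 2)^k$ handled correctly. The ``in particular'' central-binomial cases do indeed follow from the corresponding particular cases of the theorem under the same substitutions.
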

	\begin{corollary}\label{cor4_twobin}
		If $n$ is a non-negative integer, $m$ is any real number and $p$ is any integer, then
	\begin{align*}
			\sum_{k=0}^n (-3)^k \binom {n}{k} \binom {m + n - k}{n}   F_{2k+p} &= \sum_{k=0}^n (-1)^k \binom {n}{k} \binom {m}{k}  F_{4k+p},\\
			\sum_{k=0}^n (-3)^k \binom {n}{k} \binom {m + n - k}{n}    L_{2k+p} &= 
			\sum_{k=0}^n (-1)^k \binom {n}{k} \binom {m}{k}  L_{4k+p}.
			\end{align*}
	In particular, 
	\begin{align*}
			\sum_{k = 0}^n (-3)^k  \frac{ \binom {n}{k} \binom {2(2n - k)}{2n - k} \binom {2n - k}{n}}{\binom {2(n - k)}{n - k}}  F_{2k+p} 
			= \binom {2n}{n} \sum_{k = 0}^n (-1)^k\frac{\binom {n}{k}^2}{\binom {2(n - k)}{n - k}} 2^{2(n-k)} F_{4k+p},\\
		\sum_{k = 0}^n (-3)^k \frac{ \binom {n}{k} \binom {2(2n - k)}{2n - k} \binom {2n - k}{n}}{\binom {2(n - k)}{n - k}}  L_{2k+p} 
			= \binom {2n}{n} \sum_{k = 0}^n 
			(-1)^k \frac{\binom {n}{k}^2}{\binom {2(n - k)}{n - k}} 2^{2(n-k)}  L_{4k+p}.
			\end{align*}
\end{corollary}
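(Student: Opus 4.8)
The plan is to obtain this corollary as a direct specialization of Theorem \ref{thm_two_bin_gen}, exactly in the spirit of Corollaries \ref{cor1_twobin}--\ref{cor3_twobin}. Looking at the target identities, the Fibonacci (and Lucas) index on the left is $2k+p$ and on the right is $4k+p$. In the general relations \eqref{twobin_Fib} and \eqref{twobin_Luc} these indices are $sk+p$ and $(r+s)k+p$, so I would match them by setting $s=2$ and $r+s=4$, that is, $r=s=2$. This choice is admissible since it only requires $r\neq 0$ with $m$ real, both of which are granted by the hypotheses.

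Next I would check that the three $k$-dependent prefactors in \eqref{twobin_Fib} collapse to the ones appearing in the corollary. With $s=2$ the sign becomes $(-1)^{(s+1)k}=(-1)^{3k}=(-1)^k$. With $r=s=2$ the left-hand ratio is $\big(F_{r+s}/F_r\big)^k=\big(F_4/F_2\big)^k=3^k$, so that the combined factor $(-1)^k\,3^k=(-3)^k$ multiplies the left-hand summand, matching the factor $(-3)^k$ in the statement. The right-hand ratio is $\big(F_s/F_r\big)^k=\big(F_2/F_2\big)^k=1$, leaving only the sign $(-1)^k$ on the right. The Lucas relation \eqref{twobin_Luc} specializes identically, since its prefactors are the same and only the sequence symbol changes.

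Finally, for the ``in particular'' statements involving the central binomial coefficients, I would make the same substitution $r=s=2$ in the two central-binomial identities displayed at the end of Theorem \ref{thm_two_bin_gen}; their prefactors simplify in exactly the same way, yielding $(-3)^k$ on the left and $(-1)^k$ on the right. There is essentially no analytic obstacle here: the entire content is the bookkeeping of the specialization. The only point requiring care is verifying the two Fibonacci ratios and the parity of the exponent $(s+1)k$, since a single sign slip would propagate throughout; once $F_2=1$ and $F_4=3$ are inserted, every factor lands in place and all four displayed identities follow at once.
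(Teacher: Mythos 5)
Your specialization $r=s=2$ in Theorem \ref{thm_two_bin_gen} is exactly the paper's (implicit) route for this corollary, mirroring the explicit proofs of the neighbouring corollaries, and your bookkeeping is correct: $(-1)^{(s+1)k}=(-1)^k$, $(F_4/F_2)^k=3^k$ gives $(-3)^k$ on the left, and $(F_2/F_2)^k=1$ leaves $(-1)^k$ on the right, with the central-binomial versions following identically from the theorem's ``in particular'' displays. No gaps; the proof is correct and essentially the same as the paper's.
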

	\begin{theorem}
		If $n$ is a non-negative integer, $m$ is any real number and $s$, $r$ are any integers, then
	\begin{align*}
			\sum_{k=0}^n (-1)^{k(s-1)} \binom {n}{k} \binom {m + n - k}{n}  \frac{F_{r-sk}}{L_s^k} 
			&=\sum_{k=0}^n \binom {n}{k} \binom {m}{k} \frac{F_{sk+r}}{L_s^k},\\
			\sum_{k=0}^n (-1)^{k(s-1)}\binom {n}{k} \binom {m + n - k}{n} \frac{L_{r-sk}}{L_s^k} 
			&= \sum_{k=0}^n \binom {n}{k} \binom {m}{k} \frac{L_{sk+r}}{L_s^k}.
			\end{align*}
	In particular, 
	\begin{align*}
			\sum_{k = 0}^n  (-1)^{k(s-1)}\frac{\binom {n}{k} \binom {2(2n - k)}{2n - k} \binom {2n - k}{n}}{\binom {2(n - k)}{n - k}}  \frac{F_{r-sk}}{L_s^k} 
			&= \binom {2n}{n} \sum_{k = 0}^n \frac{\binom {n}{k}^2}{\binom {2(n - k)}{n - k}} 2^{2(n-k)} \frac{F_{sk+r}}{L_s^k},\\
			\sum_{k = 0}^n (-1)^{k(s-1)} \frac{ \binom {n}{k} \binom {2(2n - k)}{2n - k} \binom {2n - k}{n}}{\binom {2(n - k)}{n - k}} \frac{L_{r-sk}}{L_s^k} 
			&= \binom {2n}{n} \sum_{k = 0}^n \frac{\binom {n}{k}^2}{\binom {2(n - k)}{n - k}} 2^{2(n-k)} \frac{L_{sk+r}}{L_s^k}.
			\end{align*}
	\end{theorem}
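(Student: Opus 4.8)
The plan is to specialize Gould's identity \eqref{eq.iz4pi7f} at the two conjugate arguments $x=\alpha^s/L_s$ and $x=\beta^s/L_s$, and then recombine the two resulting scalar identities through the Binet formulas \eqref{bine.fl}. This is the same mechanism used throughout the section; the only real work is the algebraic simplification of the factors $x^k$ and $(x-1)^k$ into Fibonacci/Lucas quotients carrying the correct sign.

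First I would record the simplification of the right-hand side of \eqref{eq.iz4pi7f}. With $x=\alpha^s/L_s$ we have $x^k=\alpha^{sk}/L_s^k$, and multiplying through by $\alpha^r$ gives $\alpha^r x^k=\alpha^{sk+r}/L_s^k$; the conjugate substitution $x=\beta^s/L_s$, multiplied by $\beta^r$, gives $\beta^r x^k=\beta^{sk+r}/L_s^k$. Combining via \eqref{bine.fl} then produces $F_{sk+r}/L_s^k$ (difference over $\sqrt5$) and $L_{sk+r}/L_s^k$ (sum), which are exactly the summands on the right of the asserted relations.

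The left-hand side requires the factor $x-1$. Since $L_s=\alpha^s+\beta^s$, one has $\alpha^s-L_s=-\beta^s$ and $\beta^s-L_s=-\alpha^s$, so that $x-1=-\beta^s/L_s$ under the first substitution and $x-1=-\alpha^s/L_s$ under the second. Using $\beta=-1/\alpha$ (equivalently $\beta^{sk}=(-1)^{sk}\alpha^{-sk}$ and $\alpha^{sk}=(-1)^{sk}\beta^{-sk}$), one finds $\alpha^r(x-1)^k=(-1)^{k(s+1)}\alpha^{r-sk}/L_s^k$ and, symmetrically, $\beta^r(x-1)^k=(-1)^{k(s+1)}\beta^{r-sk}/L_s^k$. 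Taking the difference over $\sqrt5$ and the sum, and invoking Binet once more, yields the summands $(-1)^{k(s+1)}F_{r-sk}/L_s^k$ and $(-1)^{k(s+1)}L_{r-sk}/L_s^k$ on the left. Finally, since $(s+1)$ and $(s-1)$ differ by an even integer, $(-1)^{k(s+1)}=(-1)^{k(s-1)}$, which recovers the stated sign and completes the two main identities.

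For the ``In particular'' relations I would run the identical substitutions, but starting from the central-binomial-coefficient form displayed in the lemma containing \eqref{eq.iz4pi7f} (obtained there by taking $m=n-1/2$) rather than from \eqref{eq.iz4pi7f} itself; the left- and right-hand summands transform in exactly the same way, so the recombination through \eqref{bine.fl} is verbatim. The step most likely to cause error is the sign bookkeeping: one must track simultaneously the $(-1)^k$ coming from $x-1=-\beta^s/L_s$ and the $(-1)^{sk}$ coming from $\beta^{sk}=(-1)^{sk}\alpha^{-sk}$, and then reconcile the resulting exponent $k(s+1)$ with the stated $k(s-1)$. Everything else is a routine application of the Binet formulas.
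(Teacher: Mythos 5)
Your proposal is correct and is essentially the paper's own proof: the paper likewise sets $x=\alpha^s/L_s$ and $x=\beta^s/L_s$ in \eqref{eq.iz4pi7f}, multiplies by $\alpha^r$ and $\beta^r$, and combines via the Binet formulas, with the particular cases coming from the central-binomial form of the same lemma. Your sign bookkeeping, using $x-1=-\beta^s/L_s$, $\beta^{sk}=(-1)^{sk}\alpha^{-sk}$, and $(-1)^{k(s+1)}=(-1)^{k(s-1)}$, correctly fills in the algebra the paper leaves implicit.
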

	\begin{proof}
		Set $x=\alpha^s/L_s$ and $x=\beta^s/L_s$ in \eqref{eq.iz4pi7f}, multiply through by $\alpha^r$ and $\beta^r$, respectively, 
		and combine according to the Binet formulas. 
	\end{proof}
	\begin{theorem}
		If $n$ is a non-negative integer, $m$ is any real number, $r$ is any integer and $s$ is an odd integer, then
	\begin{align*}
			\sum_{k = 0}^n \binom {n}{k} \binom {m + n - k}{n} L_s^k F_{sk + r} &= \sum_{k = 0}^n \binom {n}{k} \binom {m}{k} F_{2sk + r},\\
			\sum_{k = 0}^n \binom {n}{k} \binom {m + n - k}{n} L_s^k L_{sk + r} &= \sum_{k = 0}^n \binom {n}{k} \binom {m}{k} L_{2sk + r}.
			\end{align*}
	In particular,
		\begin{align*}
			\sum_{k = 0}^n \frac{\binom nk\binom{2(2n - k)}{2n - k}\binom{2n - k}n}{\binom{2(n - k)}{n - k}}L_s^kF_{sk + r} 
			&= \binom{2n}n\sum_{k = 0}^n \frac{\binom nk^2}{\binom{2(n - k)}{n - k}} 2^{2(n-k)}F_{2sk + r},\\ 
			\sum_{k = 0}^n \frac{\binom nk\binom{2(2n - k)}{2n - k}\binom{2n - k}n}{\binom{2(n - k)}{n - k}}L_s^kL_{sk + r} 
			&= \binom{2n}n\sum_{k = 0}^n \frac{\binom nk^2}{\binom{2(n - k)}{n - k}}2^{2(n-k)}L_{2sk + r}. 
			\end{align*}
	\end{theorem}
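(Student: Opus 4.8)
The plan is to follow the template used throughout the paper and, in particular, the proof of Theorem~\ref{thm3_sq_bin}, specializing the master identity~\eqref{eq.iz4pi7f} at the two characteristic roots $x=\alpha^{2s}$ and $x=\beta^{2s}$. First I would substitute $x=\alpha^{2s}$ into~\eqref{eq.iz4pi7f}. The factor $(x-1)^k$ then becomes $(\alpha^{2s}-1)^k$, and here the hypothesis that $s$ is odd is essential: since $(-1)^s=-1$, relation~\eqref{eq.ozz3zp6} gives $\alpha^{2s}-1=\alpha^s L_s$, so $(\alpha^{2s}-1)^k=\alpha^{sk}L_s^k$. After multiplying through by $\alpha^r$, this yields
\[
\sum_{k=0}^n \binom{n}{k}\binom{m+n-k}{n}L_s^k\,\alpha^{sk+r}
=\sum_{k=0}^n \binom{n}{k}\binom{m}{k}\,\alpha^{2sk+r}.
\]
Repeating the substitution with $x=\beta^{2s}$ and using~\eqref{eq.syrjcay} (again $(-1)^s=-1$ gives $\beta^{2s}-1=\beta^s L_s$), then multiplying by $\beta^r$, gives the companion equation with $\alpha$ replaced by $\beta$ throughout.

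The two displayed relations then follow by combining these two equations through the Binet formulas~\eqref{bine.fl}: adding them and using $L_j=\alpha^j+\beta^j$ produces the Lucas statement, while subtracting them and dividing by $\sqrt5$ (so that $\alpha^{j}-\beta^{j}=\sqrt5\,F_j$) produces the Fibonacci statement. No appeal to Lemma~\ref{lem.jv8c0fd} is needed, since the two auxiliary equations already separate the $\alpha$- and $\beta$-contributions before they are recombined.

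For the ``in particular'' pair, I would carry out exactly the same two substitutions, but starting from the third specialization of the lemma, namely
\[
\sum_{k = 0}^n \frac{ \binom {n}{k} \binom {2(2n - k)}{2n - k} \binom {2n - k}{n}}{\binom {2(n - k)}{n - k}} (x-1)^k
= \binom {2n}{n} \sum_{k = 0}^n \frac{\binom {n}{k}^2}{\binom {2(n - k)}{n - k}} 2^{2(n-k)} x^k,
\]
rather than from~\eqref{eq.iz4pi7f}; the identical factorization $(\alpha^{2s}-1)^k=\alpha^{sk}L_s^k$ and the same add/subtract step then reproduce the claimed central-binomial forms verbatim. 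The only genuinely delicate point is the parity restriction: for even $s$ one would instead get $\alpha^{2s}-1=\alpha^s L_s-2$ (since then $(-1)^s=+1$), which does not collapse to a single power of $\alpha$ times $L_s^k$, so the clean closed forms above are special to odd $s$. Everything else is routine bookkeeping of exponents.
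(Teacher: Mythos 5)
Your proposal is correct and follows essentially the same route as the paper: the paper's proof likewise sets $x=\alpha^{2s}$ (and implicitly $x=\beta^{2s}$) in \eqref{eq.iz4pi7f}, uses $\alpha^{2s}-1=\alpha^{s}L_s$ for odd $s$, multiplies by $\alpha^r$ and $\beta^r$, and combines via the Binet formulas, with the central-binomial pair coming from the $m=n-1/2$ specialization of the lemma exactly as you describe. Your added remark on why even $s$ fails ($\alpha^{2s}-1=\alpha^sL_s-2$ in that case) is a correct and worthwhile clarification, but it does not change the argument.
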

	\begin{proof}
		Set $x=\alpha^{2s}$ in \eqref{eq.iz4pi7f} and use the fact that $\alpha^{2s} - 1=\alpha^sL_s$ if $s$ is an odd integer.
	\end{proof}
	\begin{theorem}
		Let $n$ be a non-negative integer and let $m$ be a real number. Let $r$, $s$, $p$ and $t$ be any integers. Then
	\begin{equation}\label{eq.v6kzdd7}
			\begin{split}
			&\sum_{k = 0}^n {( - 1)^k \binom nk\binom{m + n - k}{n} \frac{L_{r + s}^k}{L_r^k} L_{s(p - k) + t} } = \sum_{k = 0}^{\left\lfloor {n/2} \right\rfloor } {\binom n{2k}\binom m{2k} \frac{5^k F_s^{2k}}{ L_r^{2k}} L_{sp - 2k(r + s) + t} } \\
			&\qquad\qquad + ( - 1)^r \sum_{k = 1}^{\left\lceil {n/2} \right\rceil } {\binom n{2k - 1}\binom m{2k - 1} \frac{5^k F_s^{2k - 1}}{L_r^{2k - 1}} F_{sp - (2k - 1)(r + s) + t} } ,
			\end{split}
			\end{equation}
		\begin{equation}\label{eq.l1tpqld}
			\begin{split}
			&\sum_{k = 0}^n {( - 1)^k \binom nk\binom{m + n - k}n \frac{L_{r + s}^k}{ L_r^{k}} F_{s(p - k) + t} } = \sum_{k = 0}^{\left\lfloor {n/2} \right\rfloor } {\binom n{2k}\binom m{2k}  \frac{5^k F_s^{2k}} {L_r^{2k}} F_{sp - 2k(r + s) + t} } \\
			&\qquad\qquad + ( - 1)^r \sum_{k = 1}^{\left\lceil {n/2} \right\rceil } {\binom n{2k - 1}\binom m{2k - 1} \frac{5^{k - 1} F_s^{2k - 1}} {L_r^{2k - 1}} L_{sp - (2k - 1)(r + s) + t} } .
			\end{split}
			\end{equation}
	\end{theorem}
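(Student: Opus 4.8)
The plan is to specialize the Gould identity \eqref{eq.iz4pi7f} at two conjugate values of $x$ built from $\alpha$ and $\beta$, and then to assemble the two resulting scalar equations through the Binet formulas \eqref{bine.fl}. Concretely, I would first set $x = \sqrt 5\,F_s\beta^r/(L_r\alpha^s)$ in \eqref{eq.iz4pi7f}. The reason for this choice is that, by the Lucas identity \eqref{es.benssj4} of Lemma \ref{lem.ydalnfx}, one has $L_r\alpha^s = L_{r+s} + \sqrt 5\,\beta^r F_s$, so that $x - 1 = -L_{r+s}/(L_r\alpha^s)$ and hence $(x-1)^k = (-1)^k (L_{r+s}^k/L_r^k)\,\alpha^{-sk}$. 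Multiplying the whole identity through by $\alpha^{sp+t}$ then turns the left-hand side of \eqref{eq.iz4pi7f} into $\sum_k (-1)^k\binom nk\binom{m+n-k}{n}(L_{r+s}^k/L_r^k)\,\alpha^{s(p-k)+t}$, which is precisely the $\alpha$-part of the target left-hand side.

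Next I would repeat the computation with the conjugate substitution $x = -\sqrt 5\,F_s\alpha^r/(L_r\beta^s)$, multiplying through by $\beta^{sp+t}$; the second relation in \eqref{es.benssj4} gives $x - 1 = -L_{r+s}/(L_r\beta^s)$ in exactly the same way, so the left-hand side becomes the matching $\beta$-part. Adding the two equations reconstitutes $L_{s(p-k)+t} = \alpha^{s(p-k)+t}+\beta^{s(p-k)+t}$ in every summand, giving the left-hand side of \eqref{eq.v6kzdd7}; subtracting them and dividing by $\sqrt 5$ instead produces $F_{s(p-k)+t}$ and gives the left-hand side of \eqref{eq.l1tpqld}.

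It then remains to transform the right-hand sides, where $\sum_k\binom nk\binom mk x^k$ is multiplied by $\alpha^{sp+t}$, respectively $\beta^{sp+t}$. Using $\beta = -1/\alpha$ (equivalently $\alpha\beta=-1$) I would collapse the mixed powers: for the first substitution $x^k\alpha^{sp+t} = (-1)^{rk}(5^{k/2}F_s^k/L_r^k)\,\alpha^{sp-(r+s)k+t}$, and for the second $x^k\beta^{sp+t} = (-1)^{(r+1)k}(5^{k/2}F_s^k/L_r^k)\,\beta^{sp-(r+s)k+t}$. Now I would split each sum by the parity of $k$. For even $k = 2j$ the two sign factors agree and $5^{k/2}=5^{j}$ is an integer power, so the $\alpha$-part $\pm$ $\beta$-part recombines into $L_{sp-2j(r+s)+t}$ (upon adding) or $F_{sp-2j(r+s)+t}$ (upon subtracting and dividing by $\sqrt5$), matching the $k=2j$ terms of \eqref{eq.v6kzdd7} and \eqref{eq.l1tpqld}. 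For odd $k=2j-1$ the two sign factors differ by $(-1)^k=-1$ and $5^{k/2}=5^{j-1/2}$ carries a stray $\sqrt5$; reconciling these produces the global prefactor $(-1)^r$ and swaps the roles of $L$ and $F$, while absorbing the half-integer power of $5$ into $5^{j}$ for \eqref{eq.v6kzdd7} and into $5^{j-1}$ for \eqref{eq.l1tpqld}.

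The main obstacle is exactly this parity-split bookkeeping in the last step: one must track simultaneously the interplay between the half-integer powers $5^{k/2}$ and the $\sqrt5$ appearing or cancelling in the Binet combinations $\alpha^j\pm\beta^j$, together with the discrepancy between the sign factors $(-1)^{rk}$ and $(-1)^{(r+1)k}$, which on odd $k$ is precisely what converts a Lucas contribution into a Fibonacci one and conversely. Once these are matched against \eqref{bine.fl}, identities \eqref{eq.v6kzdd7} and \eqref{eq.l1tpqld} follow by reading off the even- and odd-indexed contributions separately, the even index $j$ running from $0$ to $\lfloor n/2\rfloor$ and the odd index $j$ from $1$ to $\lceil n/2\rceil$.
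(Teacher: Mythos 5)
Your proposal is correct and is essentially the paper's proof: the paper makes the same substitution $x=\sqrt 5\,\beta^r F_s/(L_r\alpha^s)$ in \eqref{eq.iz4pi7f}, uses \eqref{es.benssj4} in the same way, and then extracts both \eqref{eq.v6kzdd7} and \eqref{eq.l1tpqld} from the resulting single equation by separating rational parts from coefficients of $\sqrt 5$ (Lemma \ref{lem.jv8c0fd}), whereas your explicit conjugate substitution $x=-\sqrt 5\,\alpha^r F_s/(L_r\beta^s)$ merely reproduces the algebraic conjugate of that same equation, so your add/subtract step is the identical bookkeeping in disguise. Your parity split, the sign factors $(-1)^{rk}$ versus $(-1)^{(r+1)k}$, the handling of $5^{k/2}$, the prefactor $(-1)^r$, and the ranges $0\le k\le\lfloor n/2\rfloor$ and $1\le k\le\lceil n/2\rceil$ all check out against \eqref{eq.v6kzdd7} and \eqref{eq.l1tpqld}.
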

	\begin{proof}
		Set $x = \sqrt 5 \beta ^r F_s  /(L_r \alpha ^s) $ in \eqref{eq.iz4pi7f} and use \eqref{es.benssj4} to obtain
		\begin{align*}
			\sum_{k = 0}^n ( - 1)^k &\binom nk\binom{m + n - k}nL_r^{p - k} L_{r + s}^k \alpha^{s(p - k) + t}  \\
			&\qquad\qquad= \sum_{k = 0}^n {( - 1)^{kr} \binom nk\binom mk (\sqrt 5)^k L_r^{p - k} F_s^k \alpha^{sp - (r + s)k + t}},
			\end{align*}
	from which the results follow.
	\end{proof}
	\begin{corollary}
		Let $n$ be a non-negative integer; let $m$ be a real number and let $s$ and $p$ be any integers. Then
		\begin{align*}
			&\sum_{k = 0}^n ( - 1)^k \binom nk\binom{m + n - k}{n}  \frac{ 2^{k - 1}L_{sk}}{L_s^k}   = \sum_{k = 0}^{\left\lfloor {n/2} \right\rfloor } \binom n{2k}\binom m{2k} \frac{5^kF_s^{2k}}{L_s^{2k}},\\
			&\sum_{k = 0}^n ( - 1)^{k} \binom nk\binom{m + n - k}n  \frac{2^{k - 1}F_{sk}}{L_s^{ k}}  = - \sum_{k = 1}^{\left\lceil {n/2} \right\rceil } \binom n{2k - 1}\binom m{2k - 1} \frac{ 5^{k - 1}F_s^{2k - 1}}{L_s^{ 2k - 1}}.
			\end{align*}
	\end{corollary}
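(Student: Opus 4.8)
The plan is to derive the corollary by specializing the two identities \eqref{eq.v6kzdd7} and \eqref{eq.l1tpqld} of the preceding theorem at $r=-s$ together with $t=-sp$ (equivalently, choosing $t$ so that the constant $sp+t$ vanishes), and then dividing through by $2$. The decisive observation is that setting $r=-s$ makes $r+s=0$, so on the right-hand sides of both identities every Lucas/Fibonacci index collapses to the $k$-independent value $sp-2k(r+s)+t=sp+t$ and $sp-(2k-1)(r+s)+t=sp+t$. Imposing $sp+t=0$ then sends one family of these indices to $L_0=2$ and the companion family to $F_0=0$, which is precisely what causes one of the two right-hand sums in each identity to disappear. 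I note in passing that the parameter $p$ is vestigial: it is completely absorbed by the choice $t=-sp$ and does not survive into the final relations.

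Applying this to \eqref{eq.v6kzdd7}, the even-indexed sum on the right acquires the factor $L_{sp+t}=L_0=2$, while the odd-indexed sum acquires $F_{sp+t}=F_0=0$ and therefore vanishes; since $L_{-s}=(-1)^sL_s$ gives $L_r^{2k}=L_s^{2k}$, the surviving term is $2\sum_{k}\binom{n}{2k}\binom{m}{2k}5^kF_s^{2k}/L_s^{2k}$. On the left-hand side one has $L_{r+s}^k/L_r^k=2^k/\big((-1)^sL_s\big)^k$, and because $s(p-k)+t=-sk$ the reflection formula $L_{-sk}=(-1)^{sk}L_{sk}$ turns $L_{s(p-k)+t}$ into $(-1)^{sk}L_{sk}$. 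Collecting the three sign contributions $(-1)^k$, $(-1)^{-sk}$ and $(-1)^{sk}$ gives a net $(-1)^k$ (using $(-1)^{2sk}=1$), so the left side becomes $\sum_k(-1)^k\binom nk\binom{m+n-k}n 2^kL_{sk}/L_s^k$. Dividing by $2$ yields the first stated relation.

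The second relation follows identically from \eqref{eq.l1tpqld}. Here the even-indexed right-hand sum carries $F_{sp+t}=F_0=0$ and drops out, while the odd-indexed sum carries $L_{sp+t}=L_0=2$; simplifying $L_r^{2k-1}=(-1)^sL_s^{2k-1}$ against the prefactor $(-1)^r=(-1)^s$ leaves $2\sum_k\binom{n}{2k-1}\binom{m}{2k-1}5^{k-1}F_s^{2k-1}/L_s^{2k-1}$. On the left, the reflection $F_{-sk}=(-1)^{sk+1}F_{sk}$ introduces one extra minus sign relative to the Lucas case, so the sign bookkeeping now produces $(-1)^{k+1}$, i.e.\ an overall factor $-1$; this is exactly the minus sign appearing on the right of the claimed identity, and dividing by $2$ completes the argument.

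The only genuine work is the sign accounting, and that is where care is needed: one must consistently track the factors $(-1)^{sk}$ arising from $L_{-s}^k=(-1)^{sk}L_s^k$ and from the reflection formulas for $L_{-sk}$ and $F_{-sk}$, and verify that they combine through $(-1)^{2sk}=1$ into the clean signs $(-1)^k$ and $(-1)^{k+1}$ in the two cases. Everything else is a direct substitution followed by the collapse $F_0=0$, $L_0=2$ and a division by $2$.
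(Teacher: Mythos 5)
Your proposal is correct and is exactly the paper's proof: the authors also obtain the corollary by setting $t=-sp$ and $r=-s$ in \eqref{eq.v6kzdd7} and \eqref{eq.l1tpqld}, with your careful sign bookkeeping via $L_{-s}=(-1)^sL_s$, $L_{-sk}=(-1)^{sk}L_{sk}$, $F_{-sk}=(-1)^{sk+1}F_{sk}$ and the collapse $F_0=0$, $L_0=2$ being precisely the (unstated) details behind the paper's one-line proof. Your side remark that $p$ is vestigial after the substitution is also accurate.
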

	\begin{proof}
		Set $t=-sp$ and $r=-s$ in \eqref{eq.v6kzdd7} and \eqref{eq.l1tpqld}.
	\end{proof}
\begin{corollary}
		Let $n$ be a non-negative integer and let $s$ and $p$ be any integers. Then
	\begin{align*}
			&\sum_{k = 0}^n ( - 2)^k \frac{\binom nk\binom{2(2n - k)}{2n - k}\binom{2n - k}n}{\binom{2(n - k)}{n - k}}\frac{L_{sk}}{L_s^{ k}}   = \binom{2n}n\sum_{k = 0}^{\left\lfloor {n/2} \right\rfloor } {\frac{\binom n{2k}^2}{\binom{2(n - 2k)}{n - 2k}}  \frac{5^k 2^{2(n-2k)+1} F_s^{2k}}{L_s^{2k}} },\\
			&\sum_{k = 0}^n ( - 2)^{k} \frac{\binom nk\binom{2(2n - k)}{2n - k}\binom{2n - k}n}{\binom{2(n - k)}{n - k}}  \frac{F_{sk}}{L_s^{ k}} = - \binom{2n}n\sum_{k = 1}^{\left\lceil {n/2} \right\rceil } \frac{\binom n{2k - 1}^2}{\binom{2(n - 2k + 1)}{n - 2k + 1}} \frac{2^{2(n-2k+1)+1}5^{k - 1} F_s^{2k - 1}}{L_s^{2k - 1}}.
			\end{align*}
	\end{corollary}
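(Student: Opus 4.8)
The plan is to obtain this corollary as the central-binomial-coefficient specialization of the preceding corollary (the one with a general real parameter $m$), in exactly the way the other ``In particular'' statements of this section are derived from their general-$m$ versions. Concretely, I would set $m = n - 1/2$ in both identities of that corollary and then rewrite every binomial coefficient that thereby acquires a half-integer upper argument in terms of central binomial coefficients, using the single identity
$$\binom{n-1/2}{k} = \frac{\binom{2n}{n}\binom{n}{k}}{2^{2k}\binom{2(n-k)}{n-k}}$$
from \cite[Identity (Z.45)]{Gould2}, applied with various choices of its two parameters.

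On the right-hand sides, the substitution turns the factors $\binom{m}{2k}$ and $\binom{m}{2k-1}$ into $\binom{n-1/2}{2k}$ and $\binom{n-1/2}{2k-1}$; applying the displayed identity at $(n,2k)$ and at $(n,2k-1)$ converts $\binom{n}{2k}\binom{m}{2k}$ into $\binom{2n}{n}\binom{n}{2k}^2/\big(2^{4k}\binom{2(n-2k)}{n-2k}\big)$ and likewise for the odd-indexed sum, producing the squared central-binomial quotients of the claim. On the left-hand sides the substitution gives $\binom{m+n-k}{n} = \binom{2n-1/2-k}{n}$; applying the same identity with its parameters taken to be $(2n-k,\,n)$ yields
$$2^{2n}\binom{2n-1/2-k}{n} = \frac{\binom{2(2n-k)}{2n-k}\binom{2n-k}{n}}{\binom{2(n-k)}{n-k}},$$
which is exactly the triple-product-over-central-binomial factor appearing on the left of the claim.

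Finally I would multiply both identities through by $2^{2n+1}$ to clear the remaining powers of $2$. On the right one checks $2^{2n+1}/2^{4k} = 2^{2(n-2k)+1}$ and $2^{2n+1}/2^{2(2k-1)} = 2^{2(n-2k+1)+1}$, matching the stated exponents; on the left the factor $2^{2n}$ is absorbed by the conversion above and the surviving factor of $2$ promotes the weight $2^{k-1}$ to $2^{k}$, turning $(-1)^k2^{k}$ into $(-2)^k$. The combinatorial content is entirely carried by the one cited identity, so the only genuine obstacle is the bookkeeping: keeping the ceiling/floor summation ranges and, above all, the several competing powers of $2$ straight, and verifying that after scaling every exponent matches the form asserted in the corollary.
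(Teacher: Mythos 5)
Your derivation is correct and is precisely the argument the paper leaves implicit: the corollary is the $m = n - 1/2$ specialization of the preceding corollary, converted via Gould's identity (Z.45) exactly as the paper does for every other central-binomial ``in particular'' statement in this section. Your bookkeeping checks out in full, including the key conversions $2^{2n}\binom{2n - k - 1/2}{n} = \binom{2(2n-k)}{2n-k}\binom{2n-k}{n}\big/\binom{2(n-k)}{n-k}$ and the exponents $2^{2n+1}/2^{4k} = 2^{2(n-2k)+1}$, $2^{2n+1}/2^{2(2k-1)} = 2^{2(n-2k+1)+1}$.
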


	\section{Still other classes of identities with two binomial coefficients}
	
	\begin{lemma}[{\cite[Identity (3.18)]{Gould2}}]
		If $n$ is a non-negative integer, $m$ is any real number and $x$, $y$ are any complex variables, then
	\begin{equation}\label{eq.jp8xzwd}
			\sum_{k=0}^n{\binom nk\binom{m + k}k{(x - y)}^{n - k}y^k}=\sum_{k=0}^n{\binom nk\binom mk x^{n - k}y^k}.
			\end{equation}
\end{lemma}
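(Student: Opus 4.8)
The plan is to establish \eqref{eq.jp8xzwd} by expanding one side, interchanging summations, and comparing the coefficients of the monomials $(x-y)^{a}y^{n-a}$; the whole identity then collapses to Vandermonde's convolution. Since both sides are polynomials in $x$, $y$ and $m$ (the symbols $\binom{m+k}{k}$ and $\binom{m}{k}$ being the usual generalized binomial coefficients), it suffices to verify equality of the coefficient of each monomial, and the resulting scalar identity will automatically hold for all real $m$ because it is itself a polynomial identity in $m$.

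First I would write $x=(x-y)+y$ on the right-hand side of \eqref{eq.jp8xzwd} and expand $x^{n-k}$ by the binomial theorem as $x^{n-k}=\sum_{j=0}^{n-k}\binom{n-k}{j}(x-y)^{j}y^{n-k-j}$. After substituting this and interchanging the order of summation, the right-hand side becomes $\sum_{k}\sum_{j}\binom{n}{k}\binom{m}{k}\binom{n-k}{j}(x-y)^{j}y^{n-j}$, whereas on the left the term indexed by $k$ already carries the monomial $(x-y)^{n-k}y^{k}$. Matching the coefficient of $(x-y)^{a}y^{n-a}$ (take $k=n-a$ on the left and $j=a$ on the right) reduces \eqref{eq.jp8xzwd} to the scalar identity $\binom{n}{a}\binom{m+n-a}{n-a}=\sum_{k=0}^{n-a}\binom{n}{k}\binom{m}{k}\binom{n-k}{a}$ for each $a\in\{0,1,\dots,n\}$.

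Next I would simplify the right-hand sum using the subset-of-a-subset relation $\binom{n}{k}\binom{n-k}{a}=\binom{n}{a}\binom{n-a}{k}$, which factors $\binom{n}{a}$ out of the sum. Cancelling $\binom{n}{a}$ and writing $N=n-a$, the target reduces to $\binom{m+N}{N}=\sum_{k=0}^{N}\binom{N}{k}\binom{m}{k}$. This is exactly Vandermonde's convolution: replacing $\binom{N}{k}$ by $\binom{N}{N-k}$ gives $\sum_{k}\binom{m}{k}\binom{N}{N-k}=\binom{m+N}{N}$, which is valid for all real $m$.

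The only step requiring genuine insight is recognizing that the double sum on the right reorganizes into a Vandermonde convolution; the remainder is bookkeeping. I expect the points needing care to be the summation ranges when the order of summation is swapped (keeping $0\le k\le n-a$) and the observation that, since Vandermonde's convolution is a polynomial identity in $m$, no separate argument is needed to pass from integer $m$ to arbitrary real $m$. Alternatively, one might try to transform \eqref{eq.iz4pi7f} into \eqref{eq.jp8xzwd}, but the differing binomial factors make the direct coefficient comparison cleaner and fully self-contained.
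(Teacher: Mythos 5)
Your proof is correct. Note that the paper itself offers no argument for this lemma: it is quoted directly from Gould's tables as Identity (3.18), so there is no internal proof to compare against, and your write-up supplies a self-contained verification that the paper delegates to the literature. Your route is sound at every step: since $u=x-y$, $v=y$ is an invertible linear change of variables, the monomials $u^a v^{n-a}$ are linearly independent and comparing their coefficients is legitimate; after expanding $x^{n-k}=\sum_{j=0}^{n-k}\binom{n-k}{j}(x-y)^j y^{n-k-j}$ and swapping sums you correctly keep the inner range $0\le k\le n-a$; the subset-of-a-subset identity $\binom{n}{k}\binom{n-k}{a}=\binom{n}{a}\binom{n-a}{k}$ does factor $\binom{n}{a}$ out of the sum; and the residual statement $\sum_{k=0}^{N}\binom{N}{k}\binom{m}{k}=\binom{m+N}{N}$ with $N=n-a$ is exactly Chu--Vandermonde, which holds for all real $m$ because both sides are polynomials in $m$ of degree $N$ that agree at infinitely many integer values. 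Your closing remark about extending from integer to real $m$ by polynomiality is the right justification and matches how such identities (stated here with ``$m$ any real number'') are meant to be read. One cosmetic caution: it would be cleaner to say the two sides are polynomials in the independent pair $(x-y,\,y)$ rather than ``in $x$, $y$ and $m$'', since the coefficient extraction is performed in the former coordinates; but this is phrasing, not a gap. Compared with simply citing Gould, your argument has the advantage of making the lemma's validity for arbitrary real $m$ explicit, at the modest cost of invoking Vandermonde's convolution as the one external ingredient.
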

	\begin{theorem}
		If $n$ is a non-negative integer, $m$ is any real number and $r$, $s$ and $t$ are any integers, then
	\begin{equation*}
			\sum_{k = 0}^n {\binom nk\binom{m + k}kq^{rk} U_r^{n - k} U_s^kW_{t - (r + s)k + sn} }  = \sum_{k = 0}^n {\binom nk\binom mkq^{rk} U_{r + s}^{n - k} U_s^kW_{t - rk} }.
			\end{equation*}
\end{theorem}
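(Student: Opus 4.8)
The plan is to specialize the two free variables $x$ and $y$ in the Gould identity \eqref{eq.jp8xzwd} to products of a power of a characteristic root with the Lucas-sequence values $U_{r+s}$ and $U_s$, exactly as in the proofs of the earlier theorems of this type, and then to assemble two such specializations through the Binet formula $W_j = A\tau^j + B\sigma^j$ from \eqref{bine.uvw}.

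First I would set $x = U_{r+s}$ and $y = \sigma^r U_s$ in \eqref{eq.jp8xzwd}. With this choice the difference $x - y = U_{r+s} - \sigma^r U_s$ collapses, by \eqref{eq.zy0gfyn}, to $\tau^s U_r$, so the left-hand summand becomes $\binom nk\binom{m+k}k\tau^{s(n-k)} U_r^{n-k}\sigma^{rk} U_s^k$ and the right-hand summand becomes $\binom nk\binom mk U_{r+s}^{n-k}\sigma^{rk} U_s^k$. Multiplying the resulting identity through by $A\tau^t$ then produces the $A$-weighted ($\tau$) contributions to the claim. In exactly the same way, setting $x = U_{r+s}$ and $y = \tau^r U_s$ and invoking \eqref{eq.iamiky1}, which gives $x - y = \sigma^s U_r$, and multiplying through by $B\sigma^t$, yields the $B$-weighted ($\sigma$) contributions.

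Adding the two resulting equations and reading each pair $A\tau^{\bullet} + B\sigma^{\bullet}$ as the corresponding Horadam value via \eqref{bine.uvw} gives the claimed identity. The one place demanding care is the exponent bookkeeping: on the left one must check that $A\tau^{t+s(n-k)}\sigma^{rk}$ and its $\sigma$-counterpart really are the $\tau$- and $\sigma$-parts of $q^{rk} W_{t-(r+s)k+sn}$, and on the right that $A\tau^{t}\sigma^{rk}$ and $B\sigma^{t}\tau^{rk}$ are the two parts of $q^{rk} W_{t-rk}$. Both checks reduce to the single relation $q = \tau\sigma$, whence $q^{rk}\tau^{-rk} = \sigma^{rk}$ and $q^{rk}\sigma^{-rk} = \tau^{rk}$ (and, on the left, $q^{r}\tau^{-(r+s)} = \sigma^{r}\tau^{-s}$); once these conversions are made, every exponent lines up with the stated subscript and the $A,B$ coefficients reassemble into $W$. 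I expect this exponent accounting, rather than any structural difficulty, to be the only real obstacle.
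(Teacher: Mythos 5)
Your proposal is correct and follows the paper's proof essentially verbatim: the paper likewise substitutes $x=U_{r+s}$ with $y=\tau^r U_s$ and $y=\sigma^r U_s$ into \eqref{eq.jp8xzwd}, simplifies via \eqref{eq.iamiky1} and \eqref{eq.zy0gfyn}, multiplies through by $\sigma^t$ and $\tau^t$, and combines the two resulting identities according to the Binet formula (which is exactly your $A\tau^t$, $B\sigma^t$ weighting). Your exponent checks using $q=\tau\sigma$ are the right and only bookkeeping needed, so nothing is missing.
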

	\begin{proof}
		Set $x=U_{r + s}$ and $y=\tau^rU_s$ in \eqref{eq.jp8xzwd} and multiply by $\sigma^t$ to obtain
		\begin{equation}\label{eq.buukm6j}
			\sum_{k = 0}^n {\binom nk\binom{m + k}k\sigma ^{s(n - k) + t} \tau ^{rk} U_r^{n - k} U_s^k }  = \sum_{k = 0}^n {\binom nk\binom mk \sigma ^t \tau ^{rk} U_{r + s}^{n - k} U_s^k }.
			\end{equation}
	Similarly obtain
		\begin{equation}\label{eq.clwld8j}
			\sum_{k = 0}^n {\binom nk\binom{m + k}k\sigma ^{rk} \tau ^{s(n - k) + t}  U_r^{n - k} U_s^k }  = \sum_{k = 0}^n {\binom nk\binom mk \sigma ^{rk} \tau ^t U_{r + s}^{n - k} U_s^k }.
			\end{equation}
		
		Combine \eqref{eq.buukm6j} and \eqref{eq.clwld8j} using the Binet formula.
	\end{proof}
	In particular,
		\begin{equation*}
		\sum_{k = 0}^n {\binom nk\binom{m + k}kq^{rk} U_r^{n - k} U_s^kV_{t - (r + s)k + sn} }  = \sum_{k = 0}^n {\binom nk\binom mkq^{rk} U_{r + s}^{n - k} U_s^kV_{t - rk} }
		\end{equation*}
	and
		\begin{equation*}
		\sum_{k = 0}^n {\binom nk\binom{m + k}kq^{rk} U_r^{n - k} U_s^kU_{t - (r + s)k + sn} }  = \sum_{k = 0}^n {\binom nk\binom mkq^{rk} U_{r + s}^{n - k} U_s^kU_{t - rk} };
		\end{equation*}
	with the special cases
		\begin{align*}
		&\sum_{k = 0}^n (-1)^{rk} \binom nk\binom{m + k}k F_r^{n - k} F_s^kL_{t - (r + s)k + sn} = \sum_{k = 0}^n (-1)^{rk} \binom nk\binom mk F_{r + s}^{n - k} F_s^kL_{t - rk} ,\\
		&\sum_{k = 0}^n (-1)^{rk} \binom nk\binom{m + k}k F_r^{n - k} F_s^kF_{t - (r + s)k + sn}  = \sum_{k = 0}^n (-1)^{rk} \binom nk\binom mk F_{r + s}^{n - k} F_s^kF_{t - rk} .
		\end{align*}
		\begin{lemma}[{\cite[Identity (3.84)]{Gould2}}]
		If $n$ is a non-negative integer and $x$ is any complex variable, then
		\begin{equation}\label{eq.vft8g6w}
			\sum_{k = 0}^n {( - 1)^k \binom nk\binom{2k}k2^{2n - 2k} x^k }  = \sum_{k = 0}^n {( - 1)^k \binom{2(n - k)}{n - k}\binom{2k}k(x - 1)^k }.
			\end{equation}
	\end{lemma}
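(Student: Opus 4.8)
The plan is to prove \eqref{eq.vft8g6w} by exhibiting both sides as the coefficient of $t^n$ in one and the same generating function, namely
\[
G(t) = \frac{1}{\sqrt{(1-4t)\,(1-4(1-x)t)}},
\]
expanded in two different ways. The only external ingredients I would invoke are the classical central-binomial generating function $\sum_{k\ge 0}\binom{2k}{k}z^k = (1-4t)^{-1/2}$ (read with $z$ in place of $t$), the equivalent half-integer binomial series $(1+w)^{-1/2} = \sum_{k\ge 0}\binom{2k}{k}(-w/4)^k$, and the negative-binomial expansion $(1-4t)^{-(k+1)} = \sum_{j\ge 0}\binom{k+j}{k}4^j t^j$. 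All manipulations are to be understood as identities of formal power series in $t$, with $x$ a formal parameter.

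First I would treat the right-hand side. Since $(-1)^k(x-1)^k=(1-x)^k$, the summand there is $\binom{2(n-k)}{n-k}\binom{2k}{k}(1-x)^k$, and the whole sum is a Cauchy product. Hence
\[
\sum_{n\ge 0} t^n \sum_{k=0}^n (-1)^k\binom{2(n-k)}{n-k}\binom{2k}{k}(x-1)^k
= \Big(\sum_{j\ge 0}\binom{2j}{j}t^j\Big)\Big(\sum_{k\ge 0}\binom{2k}{k}(1-x)^k t^k\Big)
= G(t).
\]

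Next I would expand the same $G(t)$ so as to recover the left-hand side. Writing $1-4(1-x)t = (1-4t)+4xt$ and factoring out $1-4t$ gives
\[
G(t) = \frac{1}{1-4t}\Big(1+\frac{4xt}{1-4t}\Big)^{-1/2}
= \sum_{k\ge 0}\binom{2k}{k}(-x)^k\,\frac{t^k}{(1-4t)^{k+1}},
\]
where the half-integer series is applied with $w=4xt/(1-4t)$. Substituting $(1-4t)^{-(k+1)}=\sum_{j\ge 0}\binom{k+j}{k}4^j t^j$ and reading off the coefficient of $t^n$ (so $j=n-k$ and $\binom{k+j}{k}=\binom{n}{k}$) yields $\sum_{k=0}^n (-1)^k\binom{n}{k}\binom{2k}{k}x^k\,4^{n-k}$, which is precisely the left-hand side of \eqref{eq.vft8g6w} because $4^{n-k}=2^{2n-2k}$. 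Equating the two coefficient extractions of $[t^n]G(t)$ then gives the claimed identity for every non-negative integer $n$ and every $x$.

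I do not expect a genuine obstacle in any single step; the care lies almost entirely in bookkeeping. The points to watch are the sign reconciliation $(-1)^k(x-1)^k=(1-x)^k$ that turns the right-hand summand into a clean convolution, the correct normalization of the half-integer binomial series $(1+w)^{-1/2}=\sum_k\binom{2k}{k}(-w/4)^k$, and the fact that the two expansions of $G(t)$ must be regarded as formal power series in $t$ (with $x$ a parameter), so that no convergence hypothesis is needed and the resulting polynomial identity in $x$ holds for all complex values.
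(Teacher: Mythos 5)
Your proof is correct, but it cannot be compared to a proof in the paper because the paper gives none: the lemma is stated with a bare citation to Identity (3.84) in Gould's tables, so your generating-function argument supplies something the paper outsources. The argument itself checks out at every step. The right-hand side, after the sign absorption $(-1)^k(x-1)^k=(1-x)^k$, is indeed the Cauchy product of $(1-4t)^{-1/2}=\sum_j\binom{2j}{j}t^j$ with $(1-4(1-x)t)^{-1/2}=\sum_k\binom{2k}{k}(1-x)^kt^k$, giving $G(t)=\bigl((1-4t)(1-4(1-x)t)\bigr)^{-1/2}$. Your rewriting $1-4(1-x)t=(1-4t)+4xt$, the normalization $(1+w)^{-1/2}=\sum_k\binom{2k}{k}(-w/4)^k$, and the expansion $(1-4t)^{-(k+1)}=\sum_j\binom{k+j}{k}4^jt^j$ are all correct, and extracting $[t^n]$ with $j=n-k$ gives exactly $\sum_{k=0}^n(-1)^k\binom{n}{k}\binom{2k}{k}4^{n-k}x^k$, which is the left-hand side since $4^{n-k}=2^{2n-2k}$; I verified the resulting identity independently at $n=0,1,2$. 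Two technical points you implicitly rely on are fine: the substitution $w=4xt/(1-4t)$ is a legitimate formal composition because $w$ has zero constant term in $t$, and since both sides of the target identity are polynomials in $x$ of degree at most $n$, equality as formal power series in $t$ with coefficients in $\mathbb{C}[x]$ does yield the identity for every complex $x$, so no convergence discussion is needed. What your route buys over the paper's citation is a short, self-contained derivation that also exposes the structural reason the identity holds — both sides are two factorizations of the same square-root generating function — which is more informative than a table lookup.
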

	
\begin{theorem}
If $n$ is a non-negative integer and $r$, $s$, $t$ and $m$ are any integers, then
\begin{equation*}
\begin{split}
\sum_{k = 0}^n & {( - 1)^k \binom nk\binom{2k}k2^{2(n - k)} U_s^{m - k} U_{r + s}^k W_{t + r(m - k)} } \\
&\qquad = \sum_{k = 0}^n {( - 1)^k \binom{2(n - k)}{n - k}\binom{2k}kq^{sk} U_s^{m - k} U_r^k W_{t + r(m - k) - sk} } .
\end{split}
\end{equation*}
\end{theorem}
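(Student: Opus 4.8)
The plan is to specialize the polynomial identity \eqref{eq.vft8g6w} at two conjugate values of $x$ built from the roots $\tau$ and $\sigma$, and then to recombine the two resulting scalar identities through the Binet formula $W_j = A\tau^j + B\sigma^j$, exactly as in the preceding theorems of this paper. The factor $U_s^{m-k}$ common to both sides of the claim signals that the correct specialization should carry a denominator $U_s^k$, while the binomial structure $\binom nk\binom{2k}k 2^{2(n-k)}$ on the left and $\binom{2(n-k)}{n-k}\binom{2k}k$ on the right is furnished verbatim by \eqref{eq.vft8g6w}, so no reindexing of the summation is needed.

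First I would put $x = U_{r+s}/(U_s\tau^r)$ in \eqref{eq.vft8g6w}. On the right-hand side the difference $x-1 = (U_{r+s}-\tau^r U_s)/(U_s\tau^r)$ collapses, via \eqref{eq.iamiky1}, to $\sigma^s U_r/(U_s\tau^r)$. Multiplying the whole identity through by $U_s^m\tau^{t+rm}$ then gives
\begin{equation*}
\sum_{k=0}^n (-1)^k \binom nk\binom{2k}k 2^{2(n-k)} U_s^{m-k} U_{r+s}^k \tau^{t+r(m-k)} = \sum_{k=0}^n (-1)^k \binom{2(n-k)}{n-k}\binom{2k}k U_s^{m-k} U_r^k \sigma^{sk}\tau^{t+r(m-k)}.
\end{equation*}
The decisive step is the rewriting $\sigma^{sk}\tau^{t+r(m-k)} = q^{sk}\,\tau^{t+r(m-k)-sk}$, immediate from $\tau\sigma = q$, which is precisely what manufactures both the weight $q^{sk}$ and the index shift $-sk$ appearing on the right of the theorem. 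I would then repeat the argument with $x = U_{r+s}/(U_s\sigma^r)$, now using \eqref{eq.zy0gfyn} to obtain $x-1 = \tau^s U_r/(U_s\sigma^r)$ and multiplying through by $U_s^m\sigma^{t+rm}$; the symmetric relation $\tau^{sk}\sigma^{t+r(m-k)} = q^{sk}\sigma^{t+r(m-k)-sk}$ serves the same purpose, yielding the companion identity with $\tau$ and $\sigma$ interchanged.

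Finally I would scale the $\tau$-identity by $A$ and the $\sigma$-identity by $B$ and add them. By the Binet formula \eqref{bine.uvw} the combinations $A\tau^{t+r(m-k)}+B\sigma^{t+r(m-k)}$ and $A\tau^{t+r(m-k)-sk}+B\sigma^{t+r(m-k)-sk}$ contract to $W_{t+r(m-k)}$ and $W_{t+r(m-k)-sk}$ respectively, which is the asserted relation. I do not anticipate a genuine obstacle: the substitutions require only $U_s\neq 0$ (the statement being a rational identity in the parameters, it then persists in general by continuity), and the sole non-mechanical point is the $\tau\sigma=q$ rewriting that produces the $q^{sk}$ factor and the shifted index on the right-hand side.
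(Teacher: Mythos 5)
Your proof is correct and follows essentially the same route as the paper: the same substitutions $x=U_{r+s}/(U_s\tau^r)$ and $x=U_{r+s}/(U_s\sigma^r)$ in \eqref{eq.vft8g6w}, simplified via \eqref{eq.iamiky1} and \eqref{eq.zy0gfyn}, with the two resulting identities combined through the Binet formula $W_j=A\tau^j+B\sigma^j$. Your explicit note that $\sigma^{sk}\tau^{t+r(m-k)}=q^{sk}\tau^{t+r(m-k)-sk}$ (from $\tau\sigma=q$) merely spells out a step the paper leaves implicit.
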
	
\begin{proof}
Set $x=U_{r + s}/(\tau^rU_s)$ in \eqref{eq.vft8g6w}; use~\eqref{eq.iamiky1} and multiply through the resulting equation by $\tau^t$ to obtain
\begin{equation}\label{eq.bt9i0tt}
\begin{split}
\sum_{k = 0}^n & ( - 1)^k  \binom nk\binom{2k}k2^{2(n - k)} U_s^{m - k} U_{r + s}^k \tau^{t + r(m - k)}  \\
&\qquad = \sum_{k = 0}^n {( - 1)^k \binom{2(n - k)}{n - k}\binom{2k}k U_s^{m - k} U_r^k \tau^{t + r(m - k)}\sigma^{sk} } .
\end{split}
\end{equation}
Similarly obtain
\begin{equation}\label{eq.hjg15mn}
\begin{split}
\sum_{k = 0}^n & {( - 1)^k \binom nk\binom{2k}k2^{2(n - k)} U_s^{m - k} U_{r + s}^k \sigma^{t + r(m - k)} } \\
&\qquad = \sum_{k = 0}^n {( - 1)^k \binom{2(n - k)}{n - k}\binom{2k}k U_s^{m - k} U_r^k \sigma^{t + r(m - k)}\tau^{sk} } .
\end{split}
\end{equation}
Combine \eqref{eq.bt9i0tt} and \eqref{eq.hjg15mn} according to the Binet formula.
\end{proof}
In particular,
\begin{align*}
\sum_{k = 0}^n &{( - 1)^k \binom nk\binom{2k}k2^{2(n - k)} U_s^{m - k} U_{r + s}^k U_{t + r(m - k)} } \\
&\qquad = \sum_{k = 0}^n {( - 1)^k \binom{2(n - k)}{n - k}\binom{2k}kq^{sk} U_s^{m - k} U_r^k U_{t + r(m - k) - sk} }
\end{align*}
and
\begin{align*}
\sum_{k = 0}^n &{( - 1)^k \binom nk\binom{2k}k2^{2(n - k)} U_s^{m - k} U_{r + s}^k V_{t + r(m - k)} } \\
&\qquad = \sum_{k = 0}^n {( - 1)^k \binom{2(n - k)}{n - k}\binom{2k}kq^{sk} U_s^{m - k} U_r^k V_{t + r(m - k) - sk} } ;
\end{align*}
with the special cases
\begin{align*}
\sum_{k = 0}^n ( - 1)^k \binom nk&\binom{2k}k 2^{2(n - k)} F_s^{m - k} F_{r + s}^k F_{t + r(m - k)} \\
&\qquad = \sum_{k = 0}^n {( - 1)^k \binom{2(n - k)}{n - k}\binom{2k}kq^{sk} F_s^{m - k} F_r^k F_{t + r(m - k) - sk} },\\
\sum_{k = 0}^n ( - 1)^k \binom nk&\binom{2k}k 2^{2(n - k)} F_s^{m - k} F_{r + s}^k L_{t + r(m - k)} \\
&\qquad = \sum_{k = 0}^n {( - 1)^k \binom{2(n - k)}{n - k}\binom{2k}kq^{sk} F_s^{m - k} F_r^k L_{t + r(m - k) - sk} }.
\end{align*}
\begin{lemma}[{\cite{Gould3,Simon01}}]
		If $n$ is a non-negative integer and $x$ is any complex variable, then
		\begin{equation}\label{eq.bgk0pz5}
			\sum_{k = 0}^n ( - 1)^{n - k} \binom nk\binom{n + k}k(1 + x)^k   = \sum_{k = 0}^n \binom nk\binom{n + k}kx^k .
			\end{equation}
	\end{lemma}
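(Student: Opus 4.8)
The plan is to prove the identity by comparing coefficients of like powers of $x$ on the two sides, since each side is a polynomial in $x$ of degree $n$ (the top term on the right is $\binom{2n}{n}x^n$, and on the left each $(1+x)^k$ contributes degree at most $n$). On the right the coefficient of $x^j$ is simply $\binom{n}{j}\binom{n+j}{j}$. On the left I would expand $(1+x)^k=\sum_{j=0}^k\binom{k}{j}x^j$, so that the coefficient of $x^j$ becomes $\sum_{k=j}^n(-1)^{n-k}\binom{n}{k}\binom{n+k}{k}\binom{k}{j}$. Hence the whole lemma is equivalent to the single binomial identity
\begin{equation*}
\sum_{k=j}^n(-1)^{n-k}\binom{n}{k}\binom{n+k}{k}\binom{k}{j}=\binom{n}{j}\binom{n+j}{j},\qquad 0\le j\le n.
\end{equation*}

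Next I would simplify the summand using the subset-of-a-subset rule $\binom{n}{k}\binom{k}{j}=\binom{n}{j}\binom{n-j}{k-j}$ together with $\binom{n+k}{k}=\binom{n+k}{n}$. Pulling out the factor $\binom{n}{j}$ and substituting $i=k-j$ turns the claim into
\begin{equation*}
\sum_{i=0}^{n-j}(-1)^{n-j-i}\binom{n-j}{i}\binom{n+j+i}{n}=\binom{n+j}{n}.
\end{equation*}
Writing $M=n-j$, the left-hand side is exactly the $M$-th forward difference $\Delta^M f(0)$ of the function $f(i)=\binom{n+j+i}{n}=\binom{i+(n+j)}{n}$, a polynomial of degree $n$ in $i$.

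The key computational step is then the finite-difference rule $\Delta\binom{i+c}{d}=\binom{i+c}{d-1}$, which follows directly from Pascal's recurrence; iterating it gives $\Delta^M\binom{i+c}{d}=\binom{i+c}{d-M}$. Applying this with $c=n+j$, $d=n$ and $M=n-j$ yields $\Delta^{n-j}f(i)=\binom{i+n+j}{j}$, and evaluating at $i=0$ produces $\binom{n+j}{j}=\binom{n+j}{n}$, which is exactly the required value. This closes the argument.

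I expect the main obstacle to be purely the bookkeeping in reducing the double-indexed coefficient sum to a clean forward difference; once the summand is massaged into the shape $\binom{i+c}{d}$, the finite-difference evaluation is immediate and no nontrivial estimate or convergence issue arises. As a conceptual sanity check I note that $f(x)=\sum_{k}\binom{n}{k}\binom{n+k}{k}x^k$ equals the shifted Legendre polynomial $P_n(2x+1)$, so the lemma is precisely the reflection symmetry $P_n(-(2x+1))=(-1)^nP_n(2x+1)$; this furnishes an alternative one-line proof via the parity relation $P_n(-z)=(-1)^nP_n(z)$, and I would mention it as independent confirmation of the elementary computation above.
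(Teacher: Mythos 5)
Your proof is correct. Note first that the paper itself does not prove this lemma at all: it is imported verbatim from the literature (Gould, \emph{Math.\ Gaz.} \textbf{88} (2004); Simon, \emph{Math.\ Gaz.} \textbf{85} (2001)), so any complete argument here is by definition a different route from the paper's. Your coefficient-comparison reduction is sound: both sides are polynomials of degree $n$, the identity $\binom{n}{k}\binom{k}{j}=\binom{n}{j}\binom{n-j}{k-j}$ correctly strips out the common factor, and after the shift $i=k-j$ the remaining sum
\begin{equation*}
\sum_{i=0}^{n-j}(-1)^{n-j-i}\binom{n-j}{i}\binom{n+j+i}{n}
\end{equation*}
is indeed the $(n-j)$-th forward difference at $0$ of $f(i)=\binom{i+n+j}{n}$; the rule $\Delta\binom{i+c}{d}=\binom{i+c}{d-1}$ is immediate from Pascal's recurrence, and iterating gives $\binom{n+j}{j}=\binom{n+j}{n}$ as required. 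Your closing remark is also accurate and worth keeping: since $\sum_k\binom{n}{k}\binom{n+k}{k}x^k=P_n(2x+1)$, the lemma is exactly the parity relation $P_n(-z)=(-1)^nP_n(z)$ applied at $z=2x+1$, which is essentially the one-line proof circulating in the cited Gazette notes. What your elementary argument buys over the citation (and over the Legendre shortcut) is self-containedness: it uses nothing beyond Pascal's rule and the finite-difference expansion, at the cost of some index bookkeeping, whereas the Legendre observation explains conceptually \emph{why} the identity holds but requires the classical representation of $P_n$ as an input.
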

\begin{theorem}
If $n$ is a non-negative integer and $s$, $t$ and $m$ are any integers, then
\begin{align}
&\sum_{k = 0}^n {( - 1)^{n - k} \binom nk\binom{n + k}kW_{2s(m - k) + t} }  = \sum_{k = 0}^n {\binom nk\binom{n + k}kq^{sk} W_{2s(m - k) + t} },\label{eq.tbzd537}\\[4pt]
&\sum_{k = 0}^n ( - 1)^{n - k} \binom nk\binom{n + k}k \frac{V_s^k W_{sk + t} }{q^{sk}}  = \sum_{k = 0}^n {\binom nk\binom{n + k}k\frac{{W_{2sk + t} }}{{q^{sk} }}} \label{eq.gkblhan}.
\end{align}
\end{theorem}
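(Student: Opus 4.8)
The plan is to run both identities through the same machine used for the earlier theorems in the paper: substitute two conjugate values of $x$ into the polynomial identity \eqref{eq.bgk0pz5}, and then fuse the two resulting scalar equations with the Binet formula \eqref{bine.uvw}. The values I would pick are $x=\tau^{2s}/q^s$ and $x=\sigma^{2s}/q^s$. They are forced by the requirement that the awkward factor $(1+x)^k$ on the left of \eqref{eq.bgk0pz5} collapse to a Lucas value: \eqref{eq.u9uuagc} gives $1+\tau^{2s}/q^s=(q^s+\tau^{2s})/q^s=\tau^sV_s/q^s$, and \eqref{eq.dkvfyiz} gives $1+\sigma^{2s}/q^s=\sigma^sV_s/q^s$. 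All the stray powers of $q$ that appear are merely bookkeeping for $q=\tau\sigma$; in particular $\tau^{2s}/q^s=\tau^s/\sigma^s$ and $q^s\tau^{-2s}=\sigma^s/\tau^s$, which is exactly how the weights $q^{sk}$ and $1/q^{sk}$ will materialize.

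For \eqref{eq.gkblhan} I would substitute $x=\tau^{2s}/q^s$ and multiply \eqref{eq.bgk0pz5} through by $\tau^t$, then repeat with $x=\sigma^{2s}/q^s$ multiplied through by $\sigma^t$. On the right the two summands are $\tau^{2sk+t}/q^{sk}$ and $\sigma^{2sk+t}/q^{sk}$, so the Binet combination \eqref{bine.uvw} assembles $W_{2sk+t}/q^{sk}$; on the left, using $1+x=\tau^sV_s/q^s$ (resp.\ $\sigma^sV_s/q^s$), the summands are $V_s^k\tau^{sk+t}/q^{sk}$ and $V_s^k\sigma^{sk+t}/q^{sk}$, assembling $V_s^kW_{sk+t}/q^{sk}$. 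The sign $(-1)^{n-k}$ and the weights $\binom nk\binom{n+k}k$ ride along untouched from \eqref{eq.bgk0pz5}, so adding the two equations yields \eqref{eq.gkblhan} at once.

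Identity \eqref{eq.tbzd537} comes out of the identical procedure, but now multiplying the two equations through by $\tau^{2sm+t}$ and $\sigma^{2sm+t}$, and with the pairing of substitution to multiplier interchanged so that the surviving $q$-powers come out as $q^{sk}$ rather than $q^{-sk}$. The step I expect to be the real obstacle, and the one I would pin down first on the toy case $n=1$, is the index bookkeeping on the two sides: the term $x^k$ contributes an exponent that moves the $W$-index in steps of $2s$, producing $W_{2s(m-k)+t}$, whereas $(1+x)^k=(\tau^sV_s/q^s)^k$ surrenders one factor $\tau^{sk}$ and so moves the index only in steps of $s$, producing instead a $V_s^k$-weighted $W_{s(2m-k)+t}$. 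Reconciling these two different $W$-arguments, and confirming exactly where the $V_s^k$ weight lands relative to the printed left-hand side, is the crux of the matter; once the exponents are lined up, collecting $A\tau^{\bullet}+B\sigma^{\bullet}$ into $W_{\bullet}$ by \eqref{bine.uvw} finishes the proof.
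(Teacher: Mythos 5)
Your treatment of \eqref{eq.gkblhan} is complete and coincides with the paper's own proof: you substitute $x=\tau^{2s}/q^s$ and $x=\sigma^{2s}/q^s$ into \eqref{eq.bgk0pz5}, collapse $1+\tau^{2s}/q^s=\tau^sV_s/q^s$ via \eqref{eq.u9uuagc} (and its $\sigma$-companion \eqref{eq.dkvfyiz}), multiply by $\tau^t$ and $\sigma^t$, and recombine by \eqref{bine.uvw}; the paper does exactly this, its ``$x=\sigma^s/q^s$'' being a typo for $\sigma^{2s}/q^s$. Note also that, since $q=\tau\sigma$, your substitution values are the very pair the paper prescribes for \eqref{eq.tbzd537}: $\tau^{2s}/q^s=q^s/\sigma^{2s}$ and $\sigma^{2s}/q^s=q^s/\tau^{2s}$, so the two halves of the theorem differ only in the multipliers and the pairing, as you say.

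For \eqref{eq.tbzd537}, however, the step you deferred --- ``reconciling the two different $W$-arguments'' --- is not bookkeeping that can be lined up; it is a genuine obstruction, and your own diagnosis of it is the correct endpoint. With the interchanged pairing and multipliers $\tau^{2sm+t}$, $\sigma^{2sm+t}$, the machine outputs, unavoidably,
\begin{equation*}
\sum_{k=0}^n (-1)^{n-k}\binom{n}{k}\binom{n+k}{k} V_s^k\, W_{s(2m-k)+t} \;=\; \sum_{k=0}^n \binom{n}{k}\binom{n+k}{k} q^{sk}\, W_{2s(m-k)+t},
\end{equation*}
with the alternating side carrying the weight $V_s^k$ and an index moving in steps of $s$, exactly as you observed. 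No choice of substitution can remove this: matching the printed left-hand side would require simultaneously $1+x=\tau^{-2s}$ and $x=q^s\tau^{-2s}$, whereas \eqref{eq.u9uuagc} gives $1+q^s\tau^{-2s}=V_s\tau^{-s}\neq\tau^{-2s}$ in general. In fact \eqref{eq.tbzd537} as printed is false: at $n=1$, $(W_j)=(F_j)$, $s=1$, $m=1$, $t=0$, the left side is $-F_2+2F_0=-1$ while the right side is $F_2-2F_0=1$. The displayed corrected identity, by contrast, checks out (e.g.\ $n=2$, $s=1$, $m=1$, $t=0$ gives $-5=-5$), and it is what the paper's own proof recipe establishes as well. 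So the defect lies in the paper's statement of \eqref{eq.tbzd537}, not in your derivation; your proposal should simply record the corrected identity above as the conclusion, and your toy-case instinct ($n=1$) is precisely the test that exposes the misprint.
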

\begin{proof}
To prove~\eqref{eq.tbzd537}, set $x=q^s/\tau^{2s}$ and $x=q^s/\sigma^s$, in turn, in \eqref{eq.bgk0pz5} and use~\eqref{eq.u9uuagc}. Combine the resulting equations according to the Binet formula. For \eqref{eq.gkblhan}, use $x=\tau^{2s}/q^s$ and $x=\sigma^s/q^s$, in turn, in \eqref{eq.bgk0pz5}.
\end{proof}
In particular,
\begin{align*}
&\sum_{k = 0}^n {( - 1)^{n - k} \binom nk\binom{n + k}kU_{2s(m - k) + t} }  = \sum_{k = 0}^n {\binom nk\binom{n + k}kq^{sk} U_{2s(m - k) + t} },\\[4pt]
&\sum_{k = 0}^n {( - 1)^{n - k} \binom nk\binom{n + k}k\frac{{V_s^k U_{sk + t} }}{{q^{sk} }}}  = \sum_{k = 0}^n {\binom nk\binom{n + k}k\frac{{U_{2sk + t} }}{{q^{sk} }}} ,
\end{align*}
and
\begin{align*}
&\sum_{k = 0}^n {( - 1)^{n - k} \binom nk\binom{n + k}kV_{2s(m - k) + t} }  = \sum_{k = 0}^n {\binom nk\binom{n + k}kq^{sk} V_{2s(m - k) + t} },\\[4pt] 
&\sum_{k = 0}^n {( - 1)^{n - k} \binom nk\binom{n + k}k \frac{{V_s^k V_{sk + t} }}{q^{sk} }}  = \sum_{k = 0}^n {\binom nk\binom{n + k}k\frac{{V_{2sk + t} }}{{q^{sk} }}} ;
\end{align*}
and the special cases
\begin{align*}
&\sum_{k = 0}^n {( - 1)^{n - k} \binom nk\binom{n + k}kF_{2s(m - k) + t} }  = \sum_{k = 0}^n (-1)^{sk} {\binom nk\binom{n + k}k F_{2s(m - k) + t} },\\ 
&\sum_{k = 0}^n {( - 1)^{n + (s - 1)k} \binom nk\binom{n + k}kL_s^k F_{sk + t} }  = \sum_{k = 0}^n (-1)^{sk} \binom nk\binom{n + k}k F_{2sk + t} 
\end{align*}
and
\begin{align*}
&\sum_{k = 0}^n {( - 1)^{n - k} \binom nk\binom{n + k}kL_{2s(m - k) + t} }  = \sum_{k = 0}^n (-1)^{sk} {\binom nk\binom{n + k}k  L_{2s(m - k) + t} },\\ 
&\sum_{k = 0}^n {( - 1)^{n + (s - 1)k} \binom nk\binom{n + k}k L_s^k L_{sk + t} }  = \sum_{k = 0}^n (-1)^{sk}  {\binom nk\binom{n + k}kL_{2sk + t} } .
\end{align*}

	Next we present an obvious extension of \eqref{eq.g85la9i} and some associated Fibonacci--Lucas sums.
	\begin{lemma}
		Let $x$ and $y$ be complex variables. Let $m$ and $n$ be non-negative integers and let $r$ be any integer. Then
	\begin{equation}\label{eq.fm7cy8m}
			\sum_{k = 0}^n (-1)^k{\binom{m - n + k}k\binom{n - k}rx^{n - k - r} y^k }  = \sum_{k = 0}^n (-1)^k \binom{m + 1}k\binom{n - k}r(x - y)^{n - k - r} y^k.
			\end{equation}
\end{lemma}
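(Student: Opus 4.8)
The plan is to eliminate the extra parameter $r$ by recognizing the factor $\binom{n-k}{r}t^{\,n-k-r}$ as a single Taylor coefficient in an auxiliary variable, thereby reducing \eqref{eq.fm7cy8m} to the identity \eqref{eq.g85la9i} that we already have in hand. Indeed, for any quantity $t$ and any non-negative integer $n-k$ we have $(t+z)^{n-k}=\sum_{j\ge 0}\binom{n-k}{j}t^{\,n-k-j}z^{j}$, so $\binom{n-k}{r}t^{\,n-k-r}$ is precisely the coefficient of $z^{r}$ in $(t+z)^{n-k}$ (equivalently $\tfrac1{r!}\partial_t^{\,r}t^{\,n-k}$). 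Applying this with $t=x$ on the left and $t=x-y$ on the right, it suffices to establish the single polynomial identity in the auxiliary variable $z$,
\[
\sum_{k=0}^{n}(-1)^{k}\binom{m-n+k}{k}(x+z)^{n-k}y^{k}
=\sum_{k=0}^{n}(-1)^{k}\binom{m+1}{k}(x-y+z)^{n-k}y^{k},
\]
and then read off the coefficient of $z^{r}$ on both sides. A quick consistency check is that every term of \eqref{eq.fm7cy8m} is homogeneous of degree $n-r$ in $(x,y)$, matching across the proposed reduction.

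The displayed $z$-identity is nothing more than the two-variable, $r=0$ form of \eqref{eq.g85la9i}. Absorbing $z$ by setting $X=x+z$, both sides depend only on $X$ and $y$; dividing through by $X^{n}$ and putting $u=y/X$ turns the left side into $\sum_{k}\binom{m-n+k}{k}(\pm u)^{k}$ and the right into $\sum_{k}\binom{m+1}{k}(\pm u)^{k}(1\mp u)^{n-k}$, which is exactly \eqref{eq.g85la9i} evaluated at the appropriate sign of $u$. Thus the whole statement rests on one clean substitution into an identity already proved, which is why the lemma is indeed an \emph{obvious extension} of \eqref{eq.g85la9i}.

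The step demanding the most care is the sign and argument bookkeeping: one must track how the factor $(-1)^{k}$ interacts with the replacements $X=x+z$ and $u=y/X$, so that the binomial argument on the right comes out correctly, i.e.\ whether the $z$-reduction should be tested against \eqref{eq.g85la9i} at $u$ or at $-u$, and correspondingly whether the alternating sign and the argument $x-y$ are mutually consistent. The safest way to lock this down is to carry out the reduction to \eqref{eq.g85la9i} explicitly and then fix the orientation by checking the base case $n=1$ (or $r=0$); once the sign normalization is pinned, the coefficient extraction in $z$ immediately delivers \eqref{eq.fm7cy8m} for all $r$.
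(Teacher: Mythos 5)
Your reduction scheme---reading $\binom{n-k}{r}t^{\,n-k-r}$ as the coefficient of $z^r$ in $(t+z)^{n-k}$, so that \eqref{eq.fm7cy8m} would follow from a single $r$-free identity in the auxiliary variable $z$---is sound, and it is the natural route: the paper offers no proof at all, presenting the lemma as an ``obvious extension'' of \eqref{eq.g85la9i}, and your coefficient extraction is the honest version of that remark (it is also in effect how Lemma \ref{lem.rhz91sw} arises from \eqref{eq.wl5p3v8}). The genuine gap sits exactly at the step you postpone, the ``sign and argument bookkeeping'': your displayed $z$-identity is false, and no choice of sign of $u$ repairs it. Setting $X=x+z$ and $u=y/X$, the left side becomes $\sum_k \binom{m-n+k}{k}(-u)^k$, so \eqref{eq.g85la9i} must be invoked at argument $-u$, which produces the factor $\big(1-(-u)\big)^{n-k}=(1+u)^{n-k}$ on the right, i.e.\ $(X+y)^{n-k}$---not the $(X-y)^{n-k}$ your identity asserts. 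The base case you recommend but never carry out detects this at once: for $n=1$, $r=0$ the left side of \eqref{eq.fm7cy8m} equals $x-my$ while the right side equals $(x-y)-(m+1)y=x-(m+2)y$.

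The resolution is that the lemma as printed contains a sign misprint rather than a provable statement: with the alternating factor $(-1)^k$ present, the right-hand side must read $(x+y)^{n-k-r}$ (equivalently, one may keep $(x-y)^{n-k-r}$ and delete the $(-1)^k$ from both sides, which is Lemma \ref{lem.rhz91sw} after $x\mapsto x-y$). The corrected version is precisely what your method proves: homogenize \eqref{eq.g85la9i}, replace $y$ by $-y$, then extract the coefficient of $z^r$. It is also what the paper actually uses downstream: in \eqref{eq.kxm5i5k} the substitution $x=q^s$, $y=\tau^{2s}$ yields the factor $V_s^{n-k-r}\tau^{s(n+k-r)+t}$ via $q^s+\tau^{2s}=\tau^s V_s$ from \eqref{eq.u9uuagc}, i.e.\ via $(x+y)^{n-k-r}$, not $(x-y)^{n-k-r}$. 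So complete your proposal by carrying the reduction through explicitly, recording the corrected right-hand side, and flagging the misprint; as written, deferring the orientation leaves the argument resting on an identity that already fails at $n=1$.
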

\begin{theorem}
		If $m$ and $n$ are non-negative integers and $s$, $r$ and $t$ are any integers, then
	\begin{align*}
			\sum_{k = 0}^n &( - 1)^k \binom{m - n + k}k\binom{n - k}rq^{s(n - k - r)} W_{2sk + t} \\
			&\qquad\qquad\qquad  = \sum_{k = 0}^n {( - 1)^k \binom{m + 1}k\binom{n - k}rV_s^{n - k - r} W_{s(n + k - r) + t} }.
			\end{align*}
\end{theorem}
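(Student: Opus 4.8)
The plan is to follow the template recurring throughout the paper: specialise the two free variables in \eqref{eq.fm7cy8m} to powers of the characteristic roots $\tau$ and $\sigma$, collapse the resulting power of a sum by means of the identities \eqref{eq.u9uuagc}--\eqref{eq.dkvfyiz}, and reassemble the two exponential identities into a single Horadam relation through the Binet formula \eqref{bine.uvw}.

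First I would set $x=q^s$ and $y=-\tau^{2s}$ in \eqref{eq.fm7cy8m} and multiply through by $\tau^t$. Here $x^{n-k-r}=q^{s(n-k-r)}$, while $y^k=(-1)^k\tau^{2sk}$ supplies the sign $(-1)^k$, so the left-hand summand reduces to $\binom{m-n+k}{k}\binom{n-k}{r}q^{s(n-k-r)}\tau^{2sk+t}$, which is the $\tau$-component of $q^{s(n-k-r)}W_{2sk+t}$. On the right the base of the power is $x-y=q^s+\tau^{2s}$, which by the first identity in \eqref{eq.u9uuagc} equals $\tau^sV_s$; thus $(x-y)^{n-k-r}=V_s^{\,n-k-r}\tau^{s(n-k-r)}$, and after collecting powers of $\tau$ the right-hand summand becomes $\binom{m+1}{k}\binom{n-k}{r}V_s^{\,n-k-r}\tau^{s(n+k-r)+t}$, the $\tau$-part of $V_s^{\,n-k-r}W_{s(n+k-r)+t}$.

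Then I would repeat this verbatim with $x=q^s$, $y=-\sigma^{2s}$, multiplying through by $\sigma^t$ and using instead the first identity in \eqref{eq.dkvfyiz}, namely $q^s+\sigma^{2s}=\sigma^sV_s$. This yields the companion relation in which $\tau$ is replaced by $\sigma$ throughout. Adding the $\tau$-relation and the $\sigma$-relation term by term, the paired exponentials recombine through $W_j=A\tau^j+B\sigma^j$: on the left $A\tau^{2sk+t}+B\sigma^{2sk+t}=W_{2sk+t}$ and on the right $A\tau^{s(n+k-r)+t}+B\sigma^{s(n+k-r)+t}=W_{s(n+k-r)+t}$, which is exactly the claimed identity.

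The delicate step is the sign bookkeeping forced by the choice $y=-\tau^{2s}$ (resp. $-\sigma^{2s}$): the minus sign is mandatory so that $x-y$ becomes the \emph{sum} $q^s+\tau^{2s}=\tau^sV_s$, rather than the difference $q^s-\tau^{2s}=-\Delta\tau^sU_s$ of \eqref{eq.u9uuagc} (which would instead produce the $U_s$-analogue), and one must keep careful track of how the $(-1)^k$ supplied by $y^k$ meshes with the $(-1)^k$ already present in \eqref{eq.fm7cy8m} on each side. Once these signs are reconciled the two exponential identities combine directly---no appeal to Lemma \ref{lem.w65xm59} is needed---and the particular cases for $(U_j)$ and $(V_j)$, followed by the Fibonacci--Lucas specialisations, are then routine via \eqref{bine.uvw} and \eqref{bine.fl}.
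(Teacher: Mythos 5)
Your overall template is indeed the paper's: specialise \eqref{eq.fm7cy8m} at the characteristic roots, use \eqref{eq.u9uuagc} and \eqref{eq.dkvfyiz} to produce the factor $V_s^{n-k-r}$, and recombine via the Binet formula (and you are right that Lemma \ref{lem.w65xm59} is not needed; multiplying the two root-relations by $A$ and $B$ and adding suffices, which is what the paper does). But the sign bookkeeping --- the very step you flag as delicate --- goes wrong, and it changes the statement being proved. With $y=-\tau^{2s}$ you have $y^k=(-1)^k\tau^{2sk}$, and this $(-1)^k$ cancels against the $(-1)^k$ already present in \emph{both} sums of \eqref{eq.fm7cy8m}. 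What your substitution actually yields, after the $\sigma$-companion and Binet recombination, is
\begin{equation*}
\sum_{k=0}^n \binom{m-n+k}{k}\binom{n-k}{r}q^{s(n-k-r)}W_{2sk+t}
=\sum_{k=0}^n \binom{m+1}{k}\binom{n-k}{r}V_s^{n-k-r}W_{s(n+k-r)+t},
\end{equation*}
i.e.\ the theorem with all alternating signs stripped --- not ``exactly the claimed identity''. This sign-free version is false: for $W=F$, $n=m=1$, $r=t=0$, $s=1$, the left side is $F_2=1$ while the right side is $\binom{2}{0}L_1F_1+\binom{2}{1}F_2=1+2=3$.

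The root of the trouble is that \eqref{eq.fm7cy8m} is misstated in the paper: at $m=n=1$, $r=0$ its two sides are $x-y$ and $x-3y$, so it is false as printed. The correct identity --- obtained from Lemma \ref{lem.rhz91sw} by replacing $y$ with $-y$ and then renaming $x-y$ as $x$ --- carries $(x+y)^{n-k-r}$, not $(x-y)^{n-k-r}$, on the side with $\binom{m+1}{k}$. With that emendation the paper's substitution $x=q^s$, $y=+\tau^{2s}$ (no minus sign) does everything you wanted: $x+y=q^s+\tau^{2s}=\tau^sV_s$ by \eqref{eq.u9uuagc} supplies the $V_s$ power, the factor $(-1)^k$ survives intact on both sides, and multiplying by $\tau^t$ gives exactly \eqref{eq.kxm5i5k}; the companion \eqref{eq.jcb7kdd} comes from \eqref{eq.dkvfyiz} via $\sigma^t$, and combining through $W_j=A\tau^j+B\sigma^j$ finishes the proof. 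In short, your minus-sign ``fix'' was an attempt to salvage a lemma that is wrong as printed, and it converts the theorem into a false statement; the correct move is to repair the lemma and substitute without the minus sign. A quick numerical sanity check of \eqref{eq.fm7cy8m}, or of your final identity, at small parameters would have caught this.
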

	\begin{proof}
		Choose $x=q^s$ and $y=\tau^{2s}$ in \eqref{eq.fm7cy8m}, use \eqref{eq.u9uuagc} and multiply through by $\tau^t$ to obtain
\begin{equation}\label{eq.kxm5i5k}
			\begin{split}
			\sum_{k = 0}^n ( - 1)^k& \binom{m - n + k}k\binom{n - k}rq^{s(n - k - r)}\tau^{2sk + t} \\
			&\qquad \qquad= \sum_{k = 0}^n ( - 1)^k \binom{m + 1}k\binom{n - k}r \tau^{s(n + k - r) + t} V_s^{n - k - r}  .
			\end{split}
			\end{equation}
		Similarly obtain
	\begin{equation}\label{eq.jcb7kdd}
			\begin{split}
			\sum_{k = 0}^n ( - 1)^k &\binom{m - n + k}k\binom{n - k}rq^{s(n - k - r)}\sigma^{2sk + t}  \\
			&\qquad\qquad = \sum_{k = 0}^n ( - 1)^k \binom{m + 1}k\binom{n - k}r \sigma^{s(n + k - r) + t} 
			V_s^{n - k - r},
			\end{split}
			\end{equation}
		
		The result follows from \eqref{eq.kxm5i5k}, \eqref{eq.jcb7kdd} and the Binet formula.
	\end{proof}
	
	In particular,
\begin{align*}
		&\sum_{k = 0}^n {( - 1)^k \binom{m - n + k}k\binom{n - k}rq^{s(n - k - r)} U_{2sk + t} }\\
		&\qquad\qquad\qquad  = \sum_{k = 0}^n {( - 1)^k \binom{m + 1}k\binom{n - k}rV_s^{n - k - r} U_{s(n + k - r) + t} },\\
		&\sum_{k = 0}^n {( - 1)^k \binom{m - n + k}k\binom{n - k}rq^{s(n - k - r)} V_{2sk + t} }\\
		&\qquad\qquad\qquad  = \sum_{k = 0}^n {( - 1)^k \binom{m + 1}k\binom{n - k}rV_s^{n - k - r} V_{s(n + k - r) + t} };
		\end{align*}
with the special cases
	\begin{align*}
		&\sum_{k = 0}^n ( - 1)^{k(s + 1)} \binom{m - n + k}k\binom{n - k}rL_{2sk + t} \\
		&\qquad\qquad = \frac{( - 1)^{s(n - r)}}{L_s^r} \sum_{k = 0}^n {( - 1)^k \binom{m + 1}k\binom{n - k}r L_s^{n - k} L_{s(n + k - r) + t} },\\
		&\sum_{k = 0}^n ( - 1)^{k(s + 1)} \binom{m - n + k}k\binom{n - k}r F_{2sk + t} \\
		&\qquad\qquad = \frac{( - 1)^{s(n - r)}}{L_s^r} \sum_{k = 0}^n {( - 1)^k \binom{m + 1}k \binom{n - k}rL_s^{n - k} F_{s(n + k - r) + t} }.
	\end{align*}
\begin{theorem}
		Let $m$, $n$, $r$, $s$ and $t$ be integers with $n\ge0$. If $n$ and $r$ have the same parity then
			\begin{equation*}
			\begin{split}
			&W_t \sum_{k = 0}^{\left\lfloor {n/2} \right\rfloor } {\binom{m - n + 2k}{2k}\binom{n - 2k}r \Delta^{n - 2k - r} U_s^{n - 2k - r} V_s^{2k}  }\\
			&\qquad  - \big(W_{t + 1} - qW_{t - 1}\big) \sum_{k = 1}^{\left\lceil {n/2} \right\rceil } {\binom{m - n + 2k - 1}{2k - 1}\binom{n - 2k + 1}r \Delta^{n - 2k - r} U_s^{n - 2k  - r+ 1} V_s^{2k - 1}  }\\ 
			&\qquad\qquad = \sum_{k = 0}^n {( - 1)^k \binom{m + 1}k\binom{n - k}r2^{n - k - r} V_s^k W_{s(n - k - r) + t} } ,
			\end{split}
			\end{equation*}
		while if $n$ and $r$ have different parities, then
	\begin{equation*}
			\begin{split}
			&\big(W_{t + 1} - qW_{t - 1}\big) \sum_{k = 0}^{\left\lfloor {n/2} \right\rfloor } \binom{m - n + 2k}{2k}\binom{n - 2k}r \Delta^{n - 2k - r - 1} U_s^{n - 2k - r} V_s^{2k}  \\
			&\qquad  - W_t \sum_{k = 1}^{\left\lceil {n/2} \right\rceil } {\binom{m - n + 2k - 1}{2k - 1}\binom{n - 2k + 1}r \Delta^{n - 2k  + 1- r} U_s^{n - 2k  - r+ 1 } V_s^{2k - 1}  }\\ 
			&\qquad\qquad = \sum_{k = 0}^n {( - 1)^k \binom{m + 1}k\binom{n - k}r2^{n - k - r} V_s^k W_{s(n - k - r) + t} }.
			\end{split}
			\end{equation*}
\end{theorem}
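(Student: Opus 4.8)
The plan is to feed the two-variable identity \eqref{eq.fm7cy8m} with the pair $x=\Delta\,U_s=\tau^s-\sigma^s$ and $y=V_s=\tau^s+\sigma^s$, and then to run the identical computation a second time with $x$ replaced by $-\Delta\,U_s=\sigma^s-\tau^s$ (the interchange $\tau\leftrightarrow\sigma$), leaving $y=V_s$ untouched because it is symmetric. The virtue of this choice is that it collapses the right-hand side of \eqref{eq.fm7cy8m}: in the first instance $x-y=-2\sigma^s$ and in the second $x-y=-2\tau^s$, so that the summand $(x-y)^{n-k-r}y^k$ becomes $(-1)^{n-k-r}2^{n-k-r}V_s^k\sigma^{s(n-k-r)}$ and $(-1)^{n-k-r}2^{n-k-r}V_s^k\tau^{s(n-k-r)}$ respectively. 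Multiplying the first instance through by $\sigma^t$ and the second by $\tau^t$, the two right-hand sides carry the pure powers $\sigma^{s(n-k-r)+t}$ and $\tau^{s(n-k-r)+t}$ against the common symmetric coefficient $2^{n-k-r}V_s^k$; forming the Binet combination that assigns $A$ to the $\tau$-flavored equation and $B$ to the $\sigma$-flavored one then assembles $\sum_k(-1)^k\binom{m+1}{k}\binom{n-k}{r}2^{n-k-r}V_s^kW_{s(n-k-r)+t}$ (up to the global sign $(-1)^{n-r}$), which is the shared right-hand side of both asserted identities.

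Next I would analyze the left-hand sides. Under the same substitutions the summand of \eqref{eq.fm7cy8m} is $(-1)^j\binom{m-n+j}{j}\binom{n-j}{r}(\Delta U_s)^{n-j-r}V_s^j$ in the first instance and the same expression times $(-1)^{n-j-r}$ in the second, since $(\sigma^s-\tau^s)^{n-j-r}=(-1)^{n-j-r}(\tau^s-\sigma^s)^{n-j-r}$. After multiplying by $\sigma^t$ and $\tau^t$ and taking the same Binet combination, the $j$-th term acquires the factor $A\tau^t(-1)^{n-j-r}+B\sigma^t$. When $n-j-r$ is even this factor is exactly $W_t$ by the Binet formula \eqref{bine.uvw}, and $(\Delta U_s)^{n-j-r}=\Delta^{n-j-r}U_s^{n-j-r}$ supplies an integral power of $\Delta$; when $n-j-r$ is odd the factor is $-(A\tau^t-B\sigma^t)$, which by Lemma \ref{lem.w65xm59}, equation \eqref{eq.p951mmh}, equals $-(W_{t+1}-qW_{t-1})/\Delta$, and the one extra power of $\Delta$ left over from $(\Delta U_s)^{n-j-r}$ is precisely what cancels this $1/\Delta$. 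This is why the odd-indexed sums in the statement carry $U_s^{n-2k-r+1}$ but one fewer factor of $\Delta$ than a naive count would suggest.

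To finish, I would split the single sum over $j$ by parity, writing $j=2k$ and $j=2k-1$; this produces the even-indexed and odd-indexed binomials $\binom{m-n+2k}{2k}$ and $\binom{m-n+2k-1}{2k-1}$ with the floor/ceiling summation ranges. Since the factor deciding between $W_t$ and $W_{t+1}-qW_{t-1}$ is governed by the parity of $n-j-r$, the final form depends on whether $n$ and $r$ have the same or opposite parity: when $n\equiv r$, the condition "$n-j-r$ even'' coincides with "$j$ even,'' so the even-indexed sum multiplies $W_t$ and the odd-indexed sum multiplies $W_{t+1}-qW_{t-1}$, giving the first display; when $n\not\equiv r$ the two roles are exchanged and the accompanying global sign $(-1)^{n-r}=-1$ appears, giving the second display. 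I expect the main obstacle to be exactly this bookkeeping — keeping the powers of $\Delta$, the signs $(-1)^{n-j-r}$, and the shift between $U_s^{n-2k-r}$ and $U_s^{n-2k-r+1}$ mutually consistent across the two parity cases, so that the stray $\Delta$ from the odd terms lands cleanly on the $1/\Delta$ arising from \eqref{eq.p951mmh}.
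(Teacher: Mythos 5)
Your proposal is correct and is essentially the paper's own proof: the paper likewise sets $y=V_s$ and $x=\pm\Delta U_s$ in \eqref{eq.fm7cy8m}, combines the two resulting instances according to the Binet form $W_j=A\tau^j+B\sigma^j$ together with \eqref{eq.p951mmh} of Lemma \ref{lem.w65xm59} (so that the even/odd values of $n-j-r$ produce the factors $W_t$ and $(W_{t+1}-qW_{t-1})/\Delta$, the latter absorbing one power of $\Delta U_s$), and then splits the sum by parity of the index exactly as you do. One caution worth recording: as printed, \eqref{eq.fm7cy8m} carries both the $(-1)^k$ factors and $(x-y)^{n-k-r}$, which is inconsistent (test $n=1$, $r=0$: the left side is $x-my$ while the right side is $x-(m+2)y$); the correct forms are $(x+y)^{n-k-r}$ with the $(-1)^k$'s kept, or $(x-y)^{n-k-r}$ with them dropped, and your computation — despite once quoting the left-hand summand with its printed $(-1)^j$ — in fact tracks the latter, $(-1)^k$-free reading consistently, which is precisely why your global factor $(-1)^{n-r}$ appears and then correctly resolves into the two parity cases of the theorem.
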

	\begin{proof}
		Set $y=V_s$ and $x=\Delta U_s$ in \eqref{eq.fm7cy8m}, use Lemma \ref{lem.w65xm59} and the summation identity
	\begin{equation*}
			\sum_{j = 0}^n {f_j }  = \sum_{j = 0}^{\left\lfloor {n/2} \right\rfloor } {f_{2j} }  + \sum_{j = 1}^{\left\lceil {n/2} \right\rceil } {f_{2j - 1} }.
			\end{equation*}
	\end{proof}
	In particular, if $n$ and $r$ have the same parity then
	\begin{align}
		&U_t \sum_{k = 0}^{\left\lfloor {n/2} \right\rfloor } \binom{m - n + 2k}{2k}\binom{n - 2k}rU_s^{n - 2k - r} V_s^{2k} \Delta^{n - 2k - r} \notag\\
		&\qquad\qquad  - V_t \sum_{k = 1}^{\left\lceil {n/2} \right\rceil } \binom{m - n + 2k - 1}{2k - 1}\binom{n - 2k + 1}r \Delta^{n - 2k - r}  U_s^{n - 2k  - r+ 1} V_s^{2k - 1} \label{eq.i2se1iz}\\ 
		&\qquad\qquad\qquad\qquad = \sum_{k = 0}^n {( - 1)^k \binom{m + 1}k\binom{n - k}r2^{n - k - r} V_s^k U_{s(n - k - r) + t} }\notag
		\end{align}
and
	\begin{align*}
		&V_t \sum_{k = 0}^{\left\lfloor {n/2} \right\rfloor } \binom{m - n + 2k}{2k} \binom{n - 2k}r \Delta^{n - 2k - r}  U_s^{n - 2k - r} V_s^{2k}\notag \\
		&\qquad\qquad  - U_t \sum_{k = 1}^{\left\lceil {n/2} \right\rceil } \binom{m - n + 2k - 1}{2k - 1}\binom{n - 2k + 1}r \Delta^{n - 2k - r+2} U_s^{n - 2k  - r+ 1} V_s^{2k - 1}\\ 
		&\qquad\qquad\qquad\qquad = \sum_{k = 0}^n {( - 1)^k \binom{m + 1}k\binom{n - k}r2^{n - k - r} V_s^k V_{s(n - k - r) + t} } ; \notag
		\end{align*}
while if $n$ and $r$ have different parities, then
	\begin{align*}
		&V_t \sum_{k = 0}^{\left\lfloor {n/2} \right\rfloor } \binom{m - n + 2k}{2k}\binom{n - 2k}r \Delta^{n - 2k - r - 1} U_s^{n - 2k - r}  V_s^{2k} \notag\\
		&\qquad\qquad  - U_t \sum_{k = 1}^{\left\lceil {n/2} \right\rceil } \binom{m - n + 2k - 1}{2k - 1}\binom{n - 2k + 1}r \Delta^{n - 2k - r + 1} U_s^{n - 2k  - r + 1} V_s^{2k - 1}  \\ 
		&\qquad\qquad\qquad\qquad = \sum_{k = 0}^n {( - 1)^k \binom{m + 1}k\binom{n - k}r2^{n - k - r} V_s^k U_{s(n - k - r) + t} }\notag,
		\end{align*}
and
	\begin{equation}\label{eq.q375501}
		\begin{split}
		&U_t \sum_{k = 0}^{\left\lfloor {n/2} \right\rfloor } \binom{m - n + 2k}{2k}\binom{n - 2k}r \Delta^{n - 2k - r + 1} U_s^{n - 2k - r} V_s^{2k} \\
		&\qquad\qquad  - V_t \sum_{k = 1}^{\left\lceil {n/2} \right\rceil } \binom{m - n + 2k - 1}{2k - 1}\binom{n - 2k + 1}r \Delta^{n - 2k - r + 1} U_s^{n - 2k  - r+ 1} V_s^{2k - 1} \\ 
		&\qquad\qquad\qquad\qquad = \sum_{k = 0}^n {( - 1)^k \binom{m + 1}k\binom{n - k}r2^{n - k - r} V_s^k V_{s(n - k - r) + t} } ;
		\end{split}
		\end{equation}
with the special cases:
	\begin{align*}
		&L_t \sum_{k = 0}^{\left\lfloor {n/2} \right\rfloor } \binom{m - n + 2k}{2k}\binom{n - 2k}r 5^{(n - r)/2 - k} F_s^{n - 2k - r} L_s^{2k}  \\
		&\qquad\qquad  - F_t \sum_{k = 1}^{\left\lceil {n/2} \right\rceil } \binom{m - n + 2k - 1}{2k - 1}\binom{n - 2k + 1}r 5^{(n - r)/2 - k + 1} F_s^{n - 2k  - r+ 1} L_s^{2k - 1} \\ 
		&\qquad\qquad\qquad\qquad = \sum_{k = 0}^n {( - 1)^k \binom{m + 1}k\binom{n - k}r2^{n - k - r} L_s^k L_{s(n - k - r) + t} },\\[5pt]
		&F_t \sum_{k = 0}^{\left\lfloor {n/2} \right\rfloor } \binom{m - n + 2k}{2k}\binom{n - 2k}r 5^{(n - r)/2 - k} F_s^{n - 2k - r} L_s^{2k}  \\
		&\qquad\qquad  - L_t \sum_{k = 1}^{\left\lceil {n/2} \right\rceil } \binom{m - n + 2k - 1}{2k - 1}\binom{n - 2k + 1}r 5^{(n - r)/2 - k} F_s^{n - 2k  - r+ 1} L_s^{2k - 1} \\ 
		&\qquad\qquad\qquad\qquad = \sum_{k = 0}^n {( - 1)^k \binom{m + 1}k\binom{n - k}r2^{n - k - r} L_s^k F_{s(n - k - r) + t} }; 
		\end{align*}
while if $n$ and $r$ have different parities then
		\begin{align*}
		&F_t \sum_{k = 0}^{\left\lfloor {n/2} \right\rfloor } \binom{m - n + 2k}{2k}\binom{n - 2k}r 5^{(n - r + 1)/2 - k} F_s^{n - 2k - r} L_s^{2k}\\
		&\qquad\qquad  - L_t \sum_{k = 1}^{\left\lceil {n/2} \right\rceil } \binom{m - n + 2k - 1}{2k - 1}\binom{n - 2k + 1}r 5^{(n - r + 1)/2 - k} F_s^{n - 2k  - r+ 1} L_s^{2k - 1} \\ 
		&\qquad\qquad\qquad\qquad = \sum_{k = 0}^n {( - 1)^k \binom{m + 1}k\binom{n - k}r 2^{n - k - r} L_s^k L_{s(n - k - r) + t} },\\[5pt]
		&L_t \sum_{k = 0}^{\left\lfloor {n/2} \right\rfloor } \binom{m - n + 2k}{2k}\binom{n - 2k}r 5^{(n - r + 1)/2 - k} F_s^{n - 2k - r} L_s^{2k} \\
		&\qquad\qquad  - F_t \sum_{k = 1}^{\left\lceil {n/2} \right\rceil } \binom{m - n + 2k - 1}{2k - 1}\binom{n - 2k + 1}r 5^{(n - r - 1)/2 - k} F_s^{n - 2k - r+ 1 } L_s^{2k - 1}  \\ 
		&\qquad\qquad\qquad\qquad = \sum_{k = 0}^n ( - 1)^k \binom{m + 1}k\binom{n - k}r2^{n - k - r} L_s^k F_{s(n - k - r) + t}.
		\end{align*}
	
Note that in simplifying \eqref{eq.i2se1iz}--\eqref{eq.q375501}, we used \eqref{eq.zqa5sbx}.
	\begin{corollary}
		If $m$, $n$, $r$ and $s$ are integers, then
			\begin{align*}
			&\sum\limits_{k = 0}^n {( - 1)^k \binom{m + 1}k\binom{n - k}r2^{n - k - r - 1} L_s^k L_{s(n - k - r)} }\\
			&\qquad=\begin{cases}
			\sum\limits_{k = 0}^{\left\lfloor {n/2} \right\rfloor } \binom{m - n + 2k}{2k}\binom{n - 2k}r 5^{(n - r)/2 - k} F_s^{n - 2k - r} L_s^{2k} , & \text{\rm if $n - r$ is even;}\\
			\\
			-\sum\limits_{k = 1}^{\left\lceil {n/2} \right\rceil } \binom{m - n + 2k - 1}{2k - 1}\binom{n - 2k + 1}r 5^{(n - r + 1)/2 - k} F_s^{n - 2k  - r+ 1} L_s^{2k - 1}, & \text{\rm otherwise;}
			\end{cases}\\[5pt]
			&\sum\limits_{k = 0}^n {( - 1)^k \binom{m + 1}k\binom{n - k}r2^{n - k - r - 1} L_s^k F_{s(n - k - r)} }\\
			&\qquad=\begin{cases}
			-\sum\limits_{k = 1}^{\left\lceil {n/2} \right\rceil } \binom{m - n + 2k - 1}{2k - 1}\binom{n - 2k + 1}r 5^{(n - r)/2 - k}  F_s^{n - 2k - r+ 1 } L_s^{2k - 1} , & \text{\rm if $n - r$ is even;}\\
			\\
			\sum\limits_{k = 0}^{\left\lfloor {n/2} \right\rfloor }\binom{m - n + 2k}{2k}\binom{n - 2k}r 5^{(n - r + 1)/2 - k} F_s^{n - 2k - r} L_s^{2k} , & \text{\rm otherwise.}
			\end{cases}
			\end{align*}
	\end{corollary}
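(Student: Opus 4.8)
The plan is to obtain the corollary as the specialization $t=0$ of the four Fibonacci--Lucas identities displayed immediately before it, using only the initial values $F_0=0$ and $L_0=2$. No new combinatorics is needed: everything has already been proved in the preceding theorem and its stated special cases, so the task is one of careful bookkeeping.

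First I would observe that in each of those four identities the right-hand side carries a factor $L_{s(n-k-r)+t}$ or $F_{s(n-k-r)+t}$, while the left-hand side is a linear combination of a floor-sum and a ceiling-sum whose coefficients are $L_t$ and $F_t$ (in one order or the other, and with a sign). Setting $t=0$ annihilates every term weighted by $F_t=F_0=0$, so exactly one of the two sums on the left survives, multiplied by $L_t=L_0=2$. Dividing both sides by $2$ then converts the factor $2^{n-k-r}$ appearing on the right into $2^{n-k-r-1}$, which is precisely the power occurring in the corollary.

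Next I would sort the four resulting statements by parity, using that ``$n$ and $r$ have the same parity'' is the same condition as ``$n-r$ is even.'' For the sum $\sum_k(-1)^k\binom{m+1}{k}\binom{n-k}{r}2^{n-k-r-1}L_s^k L_{s(n-k-r)}$, the same-parity Lucas identity leaves the floor-sum with exponent $5^{(n-r)/2-k}$ when $n-r$ is even, while the different-parity Lucas identity leaves the ceiling-sum, with its leading minus sign and exponent $5^{(n-r+1)/2-k}$, when $n-r$ is odd. For the companion sum $\sum_k(-1)^k\binom{m+1}{k}\binom{n-k}{r}2^{n-k-r-1}L_s^k F_{s(n-k-r)}$ the roles of the two sums are interchanged, because there the factor $F_t$ multiplies the complementary sum; this yields the ceiling-sum in the even case and the floor-sum in the odd case, exactly the two branches recorded in the corollary.

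I do not anticipate any genuine obstacle, since the statement is pure specialization. The only point demanding attention is the bookkeeping of which of the two sums on each left-hand side is killed by $F_0=0$ and which is rescaled by $L_0=2$, together with tracking the exponent of $5$ and the overall sign across the four parity--sequence combinations; verifying that these four matches are consistent with the corollary's case split is the whole content of the argument.
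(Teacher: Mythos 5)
Your proof is correct and coincides with the paper's intended argument: the corollary is stated immediately after the four parity-split Fibonacci--Lucas identities precisely as their specialization at $t=0$, where $F_0=0$ kills one sum, $L_0=2$ rescales the other, and dividing by $2$ produces the exponent $2^{n-k-r-1}$. Your parity bookkeeping (floor-sum with $5^{(n-r)/2-k}$ in the even Lucas case, ceiling-sum with the minus sign and $5^{(n-r+1)/2-k}$ in the odd case, and the roles interchanged for the Fibonacci companion) matches all four branches exactly.
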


	\section{Identities with three binomial coefficients}

	Concerning identities with three binomial coefficients some classical Fibonacci (Lucas) examples exist. 
	For instance, Carlitz \cite{Carlitz_AP3} presented the identities
	\begin{equation*}
		\sum_{k = 0}^n \binom {n}{k}^3 F_k = \sum_{2k\leq n} \frac{(n+k)!}{(k!)^3 (n-2k)!} F_{2n-3k}
		\end{equation*}
and
	\begin{equation*}
		\sum_{k = 0}^n \binom {n}{k}^3 L_k = \sum_{2k\leq n} \frac{(n+k)!}{(k!)^3 (n-2k)!} L_{2n-3k}.
		\end{equation*}
	
	In addition, Zeitlin \cite{Zeitlin_AP1,Zeitlin_AP3} derived
	\begin{equation*}
		\sum_{k = 0}^{2n} \binom {2n}{k}^3 F_{2k} = F_{2n} \sum_{k=0}^n \frac{(2n+k)!}{(k!)^3 (2n-2k)!} 5^{n-k},
		\end{equation*}
		\begin{equation*}
		\sum_{k = 0}^{2n} \binom {2n}{k}^3 L_{2k} = L_{2n} \sum_{k=0}^n \frac{(2n+k)!}{(k!)^3 (2n-2k)!} 5^{n-k}.
		\end{equation*}
	
In his solution to Carlitz' proposal from above Zeitlin \cite{Zeitlin_AP2} proved {\it mutatis mutandis} the identity
	\begin{equation*}
		\sum_{k = 0}^n \binom {n}{k}^3 (-q)^{n-k} p^k W_k = \sum_{2k\leq n} \frac{(n+k)!}{(k!)^3 (n-2k)!} p^k (-q)^k W_{2n-3k}.
		\end{equation*}

	His results are based on the polynomial identity	
		\begin{equation*}
		\sum_{k = 0}^n \binom {n}{k}^3 x^k = \sum_{2k\leq n} \frac{(n+k)!}{(k!)^3 (n-2k)!} x^k (x+1)^{n-2k}.
		\end{equation*}
	
In this section we provide more examples of this kind using ``Zeitlin's identity'' in its equivalent form given in the next lemma.
	\begin{lemma}[{\cite[Identity (6.7)]{Gould2}},\cite{Sun12}]
		If $n$ is a non-negative integer and $x$ is any complex variable, then
	\begin{equation}\label{eq.tacrojp}
			\sum_{k = 0}^n {\binom nk^3 x^k }  = \sum_{k = 0}^n {\binom{n + k}{2k}\binom{2k}k\binom{n - k}kx^k(1 + x)^{n - 2k} }.
			\end{equation}
	\end{lemma}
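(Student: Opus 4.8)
The plan is to reduce the right-hand side of \eqref{eq.tacrojp} to ``Zeitlin's identity'' already quoted above, since the two are the same statement written in different binomial notation. The key observation is the factorial simplification
\begin{equation*}
\binom{n+k}{2k}\binom{2k}{k}\binom{n-k}{k}
= \frac{(n+k)!}{(2k)!\,(n-k)!}\cdot\frac{(2k)!}{k!\,k!}\cdot\frac{(n-k)!}{k!\,(n-2k)!}
= \frac{(n+k)!}{(k!)^3\,(n-2k)!},
\end{equation*}
valid whenever $2k\le n$; moreover $\binom{n-k}{k}=0$ once $2k>n$, so the summand on the right of \eqref{eq.tacrojp} vanishes there. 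Hence the right-hand side equals $\sum_{2k\le n}\frac{(n+k)!}{(k!)^3(n-2k)!}x^k(1+x)^{n-2k}$, which is exactly the expansion of $\sum_{k=0}^n\binom{n}{k}^3x^k$ recorded earlier. This settles the lemma at once, and is presumably why it is cited rather than proved.

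For a self-contained argument I would instead compare coefficients of $x^j$ on both sides. On the left the coefficient is $\binom{n}{j}^3$. On the right, extracting $x^j$ from $x^k(1+x)^{n-2k}$ contributes $\binom{n-2k}{j-k}$, so the coefficient becomes
\begin{equation*}
\sum_{k} \frac{(n+k)!}{(k!)^3\,(j-k)!\,(n-k-j)!}.
\end{equation*}
Writing $t_k$ for the $k$-th term, one computes the ratio $t_{k+1}/t_k=\dfrac{(k+n+1)(k-j)(k-(n-j))}{(k+1)^3}$, which exhibits the sum as $\binom{n}{j}$ times a terminating hypergeometric series of type ${}_3F_2$ with upper parameters $n+1,\,-j,\,j-n$, lower parameters $1,\,1$, and argument $1$.

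The main step is then to recognize that this ${}_3F_2$ is \emph{balanced} (Saalschützian): the sum of the lower parameters exceeds that of the upper parameters by exactly $1$, since $(1+1)-\bigl((n+1)+(-j)+(j-n)\bigr)=2-1=1$. Applying the Pfaff--Saalschütz theorem with $c=1$, $a=n+1$, $b=j-n$ and termination index $j$, and simplifying the resulting Pochhammer symbols $(-n)_j,\,(n-j+1)_j,\,(1)_j,\,(-j)_j$, evaluates the series to $\binom{n}{j}^2$. Therefore the coefficient of $x^j$ on the right is $\binom{n}{j}\cdot\binom{n}{j}^2=\binom{n}{j}^3$, matching the left-hand side. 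The only delicate point is the Pochhammer bookkeeping in the Saalschütz evaluation; everything else is routine.
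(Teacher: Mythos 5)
Your proposal is correct, and it both recovers the paper's route and goes beyond it. The paper offers no proof at all: the lemma is cited to Gould and Sun, and the surrounding text merely asserts that it is Zeitlin's polynomial identity ``in its equivalent form.'' Your first paragraph supplies exactly the missing bookkeeping for that assertion --- the simplification
\begin{equation*}
\binom{n+k}{2k}\binom{2k}{k}\binom{n-k}{k}=\frac{(n+k)!}{(k!)^3\,(n-2k)!}\qquad(2k\le n),
\end{equation*}
together with the observation that $\binom{n-k}{k}=0$ for $2k>n$, so the upper limit $n$ may be replaced by $2k\le n$ --- which is precisely how the paper intends the lemma to be read. Your second, self-contained argument is a genuinely different contribution that the paper does not attempt. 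I checked it: the coefficient of $x^j$ on the right is $\sum_k \frac{(n+k)!}{(k!)^3(j-k)!(n-k-j)!}$, the term ratio is $\frac{(k+n+1)(k-j)(k-(n-j))}{(k+1)^3}$ as you state, the first term is $t_0=\binom{n}{j}$, and the resulting ${}_3F_2\!\left(n+1,\,-j,\,j-n;\,1,\,1;\,1\right)$ is indeed Saalsch\"utzian. Applying Pfaff--Saalsch\"utz with $a=n+1$, $b=j-n$, $c=1$, $m=j$ gives
\begin{equation*}
\frac{(-n)_j\,(n-j+1)_j}{(1)_j\,(-j)_j}
=\frac{(-1)^j\frac{n!}{(n-j)!}\cdot\frac{n!}{(n-j)!}}{j!\cdot(-1)^j\,j!}
=\binom{n}{j}^2,
\end{equation*}
so the coefficient is $\binom{n}{j}^3$, as required; note also that $(-j)_j=(-1)^j j!\ne 0$, so no denominator degenerates, and the vanishing of $(j-n)_k$ for $k>n-j$ makes the termination at $\min(j,n-j)$ harmless. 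The trade-off between the two approaches is clear: the paper's citation keeps the exposition short and defers to the literature, while your hypergeometric proof makes the result self-contained at the cost of invoking Pfaff--Saalsch\"utz, whose hypotheses you correctly verified at the one delicate point (the balanced condition and the Pochhammer evaluation).
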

	\begin{theorem}
		If $n$ is a non-negative integer and $r$ and $t$ are any integers, then
	\begin{equation*}
			\sum_{k = 0}^n {\binom nk^3 q^{rk} W_{r(n - 2k) + t} }  = W_t \sum_{k = 0}^n {\binom{n + k}{2k}\binom{2k}k\binom{n - k}kq^{rk} V_r^{n - 2k} }.
			\end{equation*}
\end{theorem}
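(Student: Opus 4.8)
The plan is to follow the specialization strategy used repeatedly above: evaluate the polynomial identity \eqref{eq.tacrojp} at two reciprocal choices of $x$ built from the characteristic roots $\tau$ and $\sigma$, and recombine through the Binet formula \eqref{bine.uvw}. The guiding observation is that the factor $(1+x)^{n-2k}$ on the right of \eqref{eq.tacrojp} is exactly what will generate the $V_r^{n-2k}$ appearing in the claim, provided $x$ is chosen so that $1+x$ collapses via \eqref{eq.u9uuagc} or \eqref{eq.dkvfyiz}.

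Concretely, I would first put $x=q^r/\tau^{2r}$ in \eqref{eq.tacrojp}. Then \eqref{eq.u9uuagc} gives $1+x=(\tau^{2r}+q^r)/\tau^{2r}=V_r/\tau^r$, so that $x^k(1+x)^{n-2k}=q^{rk}V_r^{n-2k}/\tau^{rn}$, the total $\tau$-power in the denominator being $2rk+r(n-2k)=rn$, which is independent of $k$. Multiplying the resulting identity through by $A\tau^{rn+t}$ yields
\begin{equation*}
A\sum_{k=0}^n \binom nk^3 q^{rk}\tau^{r(n-2k)+t} = A\tau^t \sum_{k=0}^n \binom{n+k}{2k}\binom{2k}k\binom{n-k}k q^{rk} V_r^{n-2k}.
\end{equation*}

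Next I would repeat the computation with $x=q^r/\sigma^{2r}$, using \eqref{eq.dkvfyiz} to obtain $1+x=V_r/\sigma^r$, and multiply through by $B\sigma^{rn+t}$ to get the companion identity with $\tau,A$ replaced by $\sigma,B$. Adding the two relations, the left-hand sides combine by $W_j=A\tau^j+B\sigma^j$ into $\sum_k\binom nk^3 q^{rk}W_{r(n-2k)+t}$, while the right-hand sides share the same central-binomial sum and combine into $(A\tau^t+B\sigma^t)\sum_k\binom{n+k}{2k}\binom{2k}k\binom{n-k}k q^{rk}V_r^{n-2k}=W_t\sum_k(\cdots)$, which is exactly the asserted identity.

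The only genuinely creative step is the choice of $x$; everything afterward is exponent bookkeeping. The key cancellation to verify is that $x^k$ contributes $\tau^{-2rk}$ and $(1+x)^{n-2k}$ contributes $\tau^{-r(n-2k)}$, together yielding a constant denominator $\tau^{rn}$ that can be cleared before the Binet recombination. I do not anticipate a real obstacle here, and since the final statement is a polynomial identity in $\tau$ and $\sigma$, it carries over to the general Horadam sequence $W$ without further hypotheses.
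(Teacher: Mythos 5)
Your proof is correct and is essentially the paper's own argument: since $q=\tau\sigma$, your substitutions $x=q^r/\tau^{2r}$ and $x=q^r/\sigma^{2r}$ are exactly the paper's choices $x=\sigma^r/\tau^r$ and $x=\tau^r/\sigma^r$ in \eqref{eq.tacrojp}, merely written so that \eqref{eq.u9uuagc} and \eqref{eq.dkvfyiz} perform the simplification of $1+x$. The exponent bookkeeping and the Binet recombination with weights $A\tau^{rn+t}$ and $B\sigma^{rn+t}$ check out, so nothing further is needed.
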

	\begin{proof}
		Set $x=\tau^r/\sigma^r$ and $x=\sigma^r/\tau^r$, in turn, in \eqref{eq.tacrojp}. Combine according to the Binet formula.
	\end{proof}
	\begin{corollary}
		If $n$ is a non-negative integer and $r$ is any integer, then
	\begin{equation*}
			\sum_{k = 0}^n {\binom nk^3 q^{rk} U_{r(n - 2k)} }  = 0.
			\end{equation*}
	In particular,
	\begin{equation*}
			\sum_{k = 0}^n {\binom nk^3 (-1)^{rk} F_{r(n - 2k)} }  = 0.
			\end{equation*}
\end{corollary}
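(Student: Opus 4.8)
The plan is to obtain this corollary as a direct specialization of the preceding Theorem. Recall from the introduction that the Lucas sequence of the first kind is the Horadam sequence with initial values $a=0$, $b=1$, that is, $(U_j)=(W_j(0,1;p,q))$, so in particular $U_0=0$. Accordingly, I would take $(W_j)=(U_j)$ and set $t=0$ in the Theorem. The left-hand side then becomes exactly $\sum_{k=0}^n \binom{n}{k}^3 q^{rk} U_{r(n-2k)}$, while the right-hand side acquires the prefactor $W_t=U_0=0$ and hence vanishes identically. This immediately yields $\sum_{k=0}^n \binom{n}{k}^3 q^{rk} U_{r(n-2k)}=0$.

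For the ``in particular'' Fibonacci statement, I would further specialize to $p=1$, $q=-1$, for which $(U_j)=(F_j)$ by the definitions recalled in the introduction. Then $q^{rk}=(-1)^{rk}$ and $U_{r(n-2k)}=F_{r(n-2k)}$, giving $\sum_{k=0}^n \binom{n}{k}^3 (-1)^{rk} F_{r(n-2k)}=0$, as claimed.

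As an independent sanity check (not strictly needed once the Theorem is in hand), I note that the vanishing can also be read off from the involution $k\mapsto n-k$. Writing $T_k=\binom{n}{k}^3 q^{rk} U_{r(n-2k)}$ for the summand and using $\binom{n}{k}=\binom{n}{n-k}$ together with the Binet-formula relation $U_{-j}=-q^{-j}U_j$ (which follows from $\tau\sigma=q$), one finds $T_{n-k}=\binom{n}{k}^3 q^{r(n-k)} U_{r(2k-n)}=-\binom{n}{k}^3 q^{rk} U_{r(n-2k)}=-T_k$, since the exponent $r(n-k)-r(n-2k)$ collapses to $rk$. Hence the sum $S$ satisfies $S=-S$, forcing $S=0$.

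The main obstacle here is essentially nonexistent: the zero $U_0=0$ does all the work, and the only point demanding a moment's care is confirming that the $t=0$ specialization places the vanishing factor $W_t$ precisely on the right-hand side while reducing the left-hand side to the target sum.
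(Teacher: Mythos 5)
Your proposal is correct and matches the paper's intended argument: the corollary is stated without proof precisely because it follows from the preceding theorem by taking $(W_j)=(U_j)$ and $t=0$, so that the factor $W_t=U_0=0$ annihilates the right-hand side, with the Fibonacci case obtained at $p=1$, $q=-1$. Your independent involution check via $k\mapsto n-k$ and $U_{-j}=-q^{-j}U_j$ is sound as well, though it goes beyond what the paper does.
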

	\begin{theorem}
		If $n$ is a non-negative integer and $r$, $s$ and $t$ are any integers, then
	\begin{align*}
			&\sum_{k = 0}^n ( - 1)^k \binom nk^3  U_{r + s}^{n - k} U_s^k W_{t + rk}  \\
			&\qquad = \sum_{k = 0}^n ( - 1)^k \binom{n + k}{2k}\binom{2k}k\binom{n - k}k q^{s(n - 2k)} U_{r + s}^k U_s^k U_r^{n - 2k}  W_{t + rk - s(n - 2k)}.
			\end{align*}
\end{theorem}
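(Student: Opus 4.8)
The plan is to specialize Zeitlin's identity \eqref{eq.tacrojp} at two reciprocal arguments built from $\tau$ and $\sigma$, in exact parallel with the preceding theorems of this section, and then recombine the two resulting scalar identities through the Binet formula \eqref{bine.uvw}.

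First I would set $x=-U_s\tau^r/U_{r+s}$ in \eqref{eq.tacrojp}. The left-hand side of \eqref{eq.tacrojp} then equals $\sum_{k=0}^n \binom nk^3(-1)^k U_s^k\tau^{rk}U_{r+s}^{-k}$, so after multiplying through by $A\tau^t U_{r+s}^n$ it becomes $\sum_{k=0}^n(-1)^k\binom nk^3 U_{r+s}^{n-k}U_s^k\,A\tau^{t+rk}$, which is precisely the $\tau$-part of the left-hand side of the claim. On the right of \eqref{eq.tacrojp} the factor $1+x$ simplifies by \eqref{eq.iamiky1} (namely $U_{r+s}-\tau^rU_s=\sigma^sU_r$) to $1+x=\sigma^sU_r/U_{r+s}$, so that $(1+x)^{n-2k}=\sigma^{s(n-2k)}U_r^{n-2k}U_{r+s}^{-(n-2k)}$.

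The decisive step is to use $q=\tau\sigma$ to repair the mismatched exponent: writing $\sigma^{s(n-2k)}=q^{s(n-2k)}\tau^{-s(n-2k)}$ turns $A\tau^{t+rk}\sigma^{s(n-2k)}$ into $q^{s(n-2k)}\,A\tau^{t+rk-s(n-2k)}$, which furnishes exactly the power $q^{s(n-2k)}$ and the subscript $t+rk-s(n-2k)$ demanded on the right of the statement. The powers of $U_{r+s}$ collapse as $U_{r+s}^{-k}\cdot U_{r+s}^{-(n-2k)}\cdot U_{r+s}^{n}=U_{r+s}^{k}$, so the right-hand side now matches the $\tau$-part of the claimed identity term by term.

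I would then repeat the computation with $x=-U_s\sigma^r/U_{r+s}$, invoking \eqref{eq.zy0gfyn} ($U_{r+s}-\sigma^rU_s=\tau^sU_r$) and multiplying through by $B\sigma^t U_{r+s}^n$; this symmetric pass produces the $\sigma$-parts of both sides. Adding the two equations and reading $A\tau^j+B\sigma^j=W_j$ off the Binet formula completes the proof. I do not expect a genuine obstacle, since the argument mirrors the earlier theorems of the section; the only point needing care is the exponent arithmetic with $q=\tau\sigma$, which must be tracked precisely so that the Lucas-index lands on $t+rk-s(n-2k)$ rather than on a genuinely mixed power of $\tau$ and $\sigma$.
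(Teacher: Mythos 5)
Your proof is correct and is essentially the paper's own argument: the same substitutions $x=-\tau^r U_s/U_{r+s}$ and $x=-\sigma^r U_s/U_{r+s}$ in \eqref{eq.tacrojp}, simplified via \eqref{eq.iamiky1} and \eqref{eq.zy0gfyn}, then recombined through the Binet formula. Your explicit exponent repair $A\tau^{t+rk}\sigma^{s(n-2k)}+B\sigma^{t+rk}\tau^{s(n-2k)}=q^{s(n-2k)}W_{t+rk-s(n-2k)}$ using $q=\tau\sigma$ is exactly the step the paper delegates to Lemma \ref{lem.w65xm59}, so the two write-ups differ only in bookkeeping.
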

	\begin{proof}
		Set $x=-\tau^rU_s/U_{r + s}$ and $x=-\sigma^rU_s/U_{r + s}$ in \eqref{eq.tacrojp}, in turn, bearing in mind \eqref{eq.iamiky1} and \eqref{eq.zy0gfyn}. Combine the resulting equations using the Binet formula and Lemma \ref{lem.w65xm59}.
	\end{proof}
\begin{corollary}
		If $n$ is a non-negative integer and $r$ and $t$ are any integers, then
	\begin{equation*}
			\sum_{k = 0}^n ( - 1)^k \binom nk^3  V_r^{n - k} W_{t + rk}  = \sum_{k = 0}^n ( - 1)^k \binom{n + k}{2k}\binom{2k}k\binom{n - k}k q^{r(n - 2k)} V_r^k  W_{t + r(3k - n)}.
			\end{equation*}
\end{corollary}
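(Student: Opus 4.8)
The plan is to obtain this corollary as the special case $s=r$ of the theorem immediately preceding it, together with the Binet-derived duplication formula $U_{2r}=U_rV_r$ (which follows at once from \eqref{bine.uvw} upon factoring $\tau^{2r}-\sigma^{2r}=(\tau^r-\sigma^r)(\tau^r+\sigma^r)$). Setting $s=r$ turns $U_{r+s}=U_{2r}$ into $U_rV_r$ and leaves $U_s=U_r$, so every occurrence of the ``mixed'' sequences collapses to powers of $U_r$ and $V_r$.

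First I would rewrite the left-hand side of the theorem at $s=r$. Its summand carries the factor $U_{r+s}^{n-k}U_s^k=(U_rV_r)^{n-k}U_r^k=U_r^nV_r^{n-k}$, so a common factor $U_r^n$ pulls out in front of $\sum_{k=0}^n(-1)^k\binom nk^3 V_r^{n-k}W_{t+rk}$, which is precisely $U_r^n$ times the left-hand side of the corollary. Next I would treat the right-hand side: here the summand contains $U_{r+s}^kU_s^kU_r^{n-2k}=(U_rV_r)^kU_r^kU_r^{n-2k}=U_r^nV_r^k$, while the power of $q$ is already $q^{s(n-2k)}=q^{r(n-2k)}$ and the index simplifies as $t+rk-s(n-2k)=t+rk-r(n-2k)=t+r(3k-n)$. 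Thus the right-hand side equals $U_r^n$ times $\sum_{k=0}^n(-1)^k\binom{n+k}{2k}\binom{2k}k\binom{n-k}k q^{r(n-2k)}V_r^kW_{t+r(3k-n)}$, the right-hand side of the corollary.

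Finally, since $r\neq0$ forces $U_r\neq0$ in the non-degenerate case (the roots $\tau,\sigma$ are distinct, so $\tau^r=\sigma^r$ only when $r=0$), I would cancel the common factor $U_r^n$ from both sides to reach the asserted identity. The computation is otherwise routine; the only points needing care are the exponent bookkeeping — verifying that the three factors $U_{r+s}^k$, $U_s^k$, $U_r^{n-2k}$ recombine to exactly $U_r^nV_r^k$ (the exponents $k+k+(n-2k)$ summing to $n$), and the index collapse $t+rk-r(n-2k)=t+r(3k-n)$ — and justifying the cancellation of $U_r^n$, which is where the restriction away from $r=0$ enters.
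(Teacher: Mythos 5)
Your proposal is correct and is exactly the intended derivation: the paper states this corollary without proof because it is the specialization $s=r$ of the preceding theorem, and your bookkeeping ($U_{2r}^{n-k}U_r^k=U_r^nV_r^{n-k}$, $U_{2r}^kU_r^kU_r^{n-2k}=U_r^nV_r^k$, $t+rk-r(n-2k)=t+r(3k-n)$, then cancellation of $U_r^n\neq0$) checks out. One residual point: the corollary permits $r=0$, which your cancellation argument excludes; that case is immediate, since with $V_0=2$ and $q^0=1$ both sides reduce to $W_t$ times the two sides of \eqref{eq.tacrojp} at $x=-1/2$ multiplied by $2^n$ (alternatively, one can avoid dividing by $U_r^n$ altogether by setting $x=-\tau^r/V_r$ and $x=-\sigma^r/V_r$ directly in \eqref{eq.tacrojp}, where $1+x$ becomes $\sigma^r/V_r$ and $\tau^r/V_r$ respectively, multiplying through by $V_r^n\tau^t$ and $V_r^n\sigma^t$ and combining via the Binet formula).
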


	\section{Concluding comments}

		Further identities with two binomial coefficients can be derived from Lemma \ref{lem.rhz91sw} 
	which is a generalization of \eqref{eq.wl5p3v8}.
	\begin{lemma}\label{lem.rhz91sw}
		Let $x$ and $y$ be complex variables. Let $m$ and $n$ be non-negative integers and let $r$ be any integer. Then
				\begin{equation*}\label{eq.glaqel1}
			\sum_{k = 0}^n \binom{m - n + k}{k} \binom{n - k}{r} (x + y)^{n-k-r} y^k = \sum_{k = 0}^n \binom{m + 1}{k}\binom{n - k}{r} x^{n - k - r} y^k.
			\end{equation*}
		\end{lemma}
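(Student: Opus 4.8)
The plan is to derive the identity directly from the Laplace--Gould identity \eqref{eq.wl5p3v8} by a homogenization step followed by an $r$-fold differentiation, the factor $\binom{n-k}{r}$ being produced precisely by that derivative.

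First I would homogenize \eqref{eq.wl5p3v8}. Since both sides of \eqref{eq.wl5p3v8} are polynomials in $x$ of degree $n$, the substitution $x\mapsto v/u$ followed by multiplication through by $u^n$ turns it into
\begin{equation*}
\sum_{k=0}^n \binom{m-n+k}{k}(u+v)^{n-k} v^k = \sum_{k=0}^n \binom{m+1}{k} u^{n-k} v^k,
\end{equation*}
an identity valid whenever $u\neq 0$ and $u+v\neq 0$. Because these exceptional loci are nowhere dense and both sides are polynomials, the identity holds identically in $u$ and $v$; this promotion from an identity of functions to a genuine polynomial identity is what legitimizes differentiating in the next step.

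Next I would apply the operator $\tfrac{1}{r!}\,\partial_u^{\,r}$ to both sides, differentiating $r$ times with respect to $u$ while holding $v$ fixed. The key observation is that for every non-negative integer $N$ one has $\tfrac{1}{r!}\,\partial_u^{\,r}(u+v)^{N}=\binom{N}{r}(u+v)^{N-r}$ and $\tfrac{1}{r!}\,\partial_u^{\,r}u^{N}=\binom{N}{r}u^{N-r}$, where both right-hand sides vanish automatically when $N<r$, in accordance with $\binom{N}{r}=0$, since the $r$-th derivative annihilates a polynomial of degree below $r$. Applying this with $N=n-k$ to each summand, the factor $v^k$ being inert under $\partial_u$, converts the homogenized identity into
\begin{equation*}
\sum_{k=0}^n \binom{m-n+k}{k}\binom{n-k}{r}(u+v)^{n-k-r} v^k = \sum_{k=0}^n \binom{m+1}{k}\binom{n-k}{r} u^{n-k-r} v^k.
\end{equation*}
Renaming $u\mapsto x$ and $v\mapsto y$ gives exactly the assertion of Lemma \ref{lem.rhz91sw}.

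I expect the only delicate point to be bookkeeping rather than genuine difficulty. One must note that whenever $\binom{n-k}{r}\neq 0$ the exponent $n-k-r$ is non-negative, so no negative powers of $x+y$ or $x$ arise, and that the summands with $n-k<r$ drop out consistently on both sides. Thus the hardest part is simply stating carefully the passage to a polynomial identity before invoking $\partial_u^{\,r}$; the computation itself is then forced. As a remark, the companion identity \eqref{eq.fm7cy8m} arises in the same manner from an equivalent form of \eqref{eq.wl5p3v8}, so the two lemmas are mirror consequences of a single differentiation.
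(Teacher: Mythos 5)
Your proof is correct, and it fills a genuine gap: the paper states Lemma \ref{lem.rhz91sw} in the concluding section without any proof, merely calling it a generalization of \eqref{eq.wl5p3v8}, so there is no argument of the authors' to compare against. Your route --- homogenize \eqref{eq.wl5p3v8} via $x\mapsto v/u$ and multiplication by $u^n$, promote to a polynomial identity in $(u,v)$ by density (which disposes of the excluded loci $u=0$ and $u+v=0$), then apply $\tfrac{1}{r!}\partial_u^{\,r}$ --- is sound at every step: the operator produces exactly the factor $\binom{n-k}{r}$ on both sides, the terms with $n-k<r$ vanish coherently on both sides since the $r$-th derivative kills polynomials of degree below $r$, and the finite sums permit termwise differentiation without further justification. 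Two small observations: first, the lemma allows $r$ to be \emph{any} integer, and your differentiation argument covers only $r\ge 0$; for $r<0$ the identity is trivial under the standard convention $\binom{N}{r}=0$ for negative integer $r$, but this case deserves the one-line mention. Second, your closing remark is accurate and worth keeping: applying the same homogenize-and-differentiate procedure to \eqref{eq.g85la9i} (itself equivalent to \eqref{eq.wl5p3v8} under $\frac{x}{1+x}\mapsto x$) yields \eqref{eq.fm7cy8m}, and indeed \eqref{eq.fm7cy8m} and Lemma \ref{lem.rhz91sw} transform into one another under $y\mapsto -y$, so the two unproved lemmas of the paper are recovered by a single mechanism. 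This is very likely the argument the authors had in mind when they called the extension ``obvious,'' and your write-up makes it rigorous.
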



\begin{thebibliography}{99}
			
			
		\bibitem{Adegoke0}
		K. Adegoke, A master identity for Horadam numbers. Preprint arXiv:~1903.11057, 2019, 14~p.
		\vspace{-2pt}
		
		\bibitem{Adegoke1}
		K. Adegoke, Weighted sums of some second-order sequences. {\em Fibonacci Quart.} {\bf 56}, (2018), 252--262. 
		\vspace{-2pt}
		
		\bibitem{Adegoke4}
		K. Adegoke, R. Frontczak, T. Goy, Binomial Fibonacci sums from Chebyshev polynomials. 
		Preprint arXiv:~2308.04567, 2023, 25~p.
		\vspace{-2pt}
		
		\bibitem{Adegoke3}
		K. Adegoke, R. Frontczak, T. Goy, New binomial Fibonacci sums. 
			Preprint arXiv:~2210.12159v1, 2022, 20~p. In press in {\em Palestine J. Math.} {\bf7} ( 2023).
		\vspace{-2pt}
		
		\bibitem{adegoke21}
		K. Adegoke, R. Frontczak, T. Goy, Special formulas involving polygonal numbers and Horadam numbers. {\em Carpathian Math. Publ.} {\bf 13} (2021), 207--216.
		\vspace{-2pt}
		
		\bibitem{Adegoke2}
		K. Adegoke, A. Olatinwo, S. Ghosh, Cubic binomial Fibonacci sums. {\em Electron. J. Math.} {\bf 2} (2021), 44--51.
		\vspace{-2pt}
		
		\bibitem{Aha}
		D. Aharonov, U. Elias, A binomial identity via differential equations. {\em Amer. Math. Monthly} {\bf 120} (2013), 462--466.
		\vspace{-2pt}
		
		\bibitem{Alzer}
		H. Alzer, On a combinatorial sum. {\em  Indag. Math.} {\bf26} (2015), 519--525.
		\vspace{-2pt}
		
		\bibitem{Bai}
		M. Bai, W. Chu, D. Guo, Reciprocal formulae among Pell and Lucas polynomials. {\em Mathematics} {\bf 10} (2022), 2691.
		\vspace{-2pt}
		
		\bibitem{Carlitz_AP1}
		 L. Carlitz, Problem H-97. {\em Fibonacci Quart.} {\bf 4} (1966), 332.
		\vspace{-2pt}
		
		\bibitem{Carlitz_AP2}
		 L. Carlitz, Problem H-106. {\em Fibonacci Quart.} {\bf 5} (1967), 70. 
		\vspace{-2pt}
		
		\bibitem{Carlitz}
		 L. Carlitz, Some classes of Fibonacci sums. {\em Fibonacci Quart.} {\bf 16} (1978), 411--425.
		\vspace{-2pt}
		
		\bibitem{Carlitz_AP3}
		L. Carlitz, Problem H-180. {\em Fibonacci Quart.} {\bf9} (1971), 62.
	\vspace{-2pt}	
	
		\bibitem{CarFer}
		L. Carlitz, H.~H. Ferns, Some Fibonacci and Lucas identities. {\em Fibonacci Quart.} {\bf 1} (1970), 61--73.
		\vspace{-2pt}
	
		\bibitem{Fil}
		 P. Filipponi, Some binomial Fibonacci identities. {\em Fibonacci Quart.} {\bf33} (1995), 251--257.
	
		\bibitem{Gould3} 
		H.~W. Gould, A curious identity which is not so curious. {\em Math. Gaz.} {\bf 88} (2004), 87.
		\vspace{-2pt}
		
		\bibitem{Gould1}
		H.~W. Gould,  A Fibonacci formula of Lucas and its subsequent manifestations and rediscoveries. {\em Fibonacci Quart.} {\bf 15} (1977), 25--29.
		\vspace{-2pt}
		
		\bibitem{Gould2} 
		H.~W. Gould, {\em Combinatorial Identities: A Standardized Set of Tables Listing 500 Binomial Coefficient Summations}, Morgantown, USA, 1972.
	\vspace{-2pt}	
		
		\bibitem{Hoggatt}
		V.~E. Hoggatt, Jr., M. Bicknell, Some new Fibonacci identities. {\em Fibonacci Quart.} {\bf2} (1964), 121--133. 
	\vspace{-2pt}	
		
		\bibitem{Hoggatt2}
		V.\ E. Hoggatt, Jr., J.~W. Phillips, H.\ T. Leonard, Jr., Twenty-four master identities. {\em Fibonacci Quart.} {\bf9} (1971), 1--17. 
	\vspace{-2pt}	
		
		\bibitem{horadam65} 
		A.~F. Horadam,  Basic properties of a certain generalized sequence of numbers. {\em Fibonacci Quart.} {\bf3} (1965), 161--176. 
	\vspace{-2pt}	
		
		\bibitem{Jen}
		D.~Jennings, Some polynomial identities for the Fibonacci and Lucas numbers. {\em Fibonacci Quart.} {\bf31} (1993), 134--137.
	\vspace{-2pt}	
		
		\bibitem{Kilic1}
		E. Kilic, E.~J. Ionascu, Certain binomial sums with recursive coefficients. {\em Fibonacci Quart.}  {\bf 48} (2010),  161--167.
	\vspace{-2pt}	
		
		\bibitem{Kilic2}
		E. Kilic, N.~{\"O}m{\"u}r, Y.~T. Ulutas, Binomial sums whose coefficients are products of terms of binary sequences. {\em Util. Math.} {\bf84} (2011), 45--52.
	\vspace{-2pt}	
		
		\bibitem{Koshy} 
		T. Koshy, \emph{Fibonacci and Lucas Numbers with Applications}, John Wiley \& Sons, 2001.
\vspace{-2pt}		
		
		\bibitem{Layman}
		J.~W. Layman, Certain general binomial-Fibonacci sums. {\em Fibonacci Quart.} {\bf15} (1977), 362--366. 
	\vspace{-2pt}	
		
		\bibitem{Long} 
		C.\ T.  Long, Some binomial Fibonacci identities, In: {\em Applications of Fibonacci Numbers};  Bergum G.E., Philippou A.N., Horadam A.F. (Eds.), Springer, 1990, pp. 241--254.
\vspace{-2pt}		
		
		\bibitem{Elementary}
		L. Mozer, Problem E799. {\em Amer. Math. Monthly}  {\bf55} (1948), 30.
\vspace{-2pt}		
		
		\bibitem{quaintance}
		J. Quaintance (Ed.), \textit{Combinatorial Identities: Table I: Intermediate Techniques for Summing Finite Series}, From the seven unpublished manuscripts of H.W. Gould, 2010.

		
		\bibitem{Riordan} 
		J. Riordan,  {\em Combinatorial Identities},  John Wiley \& Sons, 1968. 
\vspace{-2pt}		
		
		\bibitem{Simon01}
		S. Simon, A curious identity. {\em Math. Gaz.} {\bf85} (2001), 296--298.
\vspace{-2pt}		
		
		\bibitem{OEIS}
		N.\ J.\ A. Sloane (Ed.), {\em The On-Line Encyclopedia of Integer Sequences}. Available online at https://oeis.org.
\vspace{-2pt}		
		
		\bibitem{Sun12} 
		Z. Sun, On sums involving products of three binomial coefficients. {\em Acta Arith.} {\bf 156} (2012), 123--141.
\vspace{-2pt}		
		
		\bibitem{Vajda} 
		S. Vajda, {\em Fibonacci and Lucas Numbers, and the Golden Section: Theory and Applications}, Dover Press, 2008.
\vspace{-2pt}		
		
		\bibitem{Zeitlin_AP1}
		D. Zeitlin, Problem H-191. {\em Fibonacci Quart.} {\bf10} (1972), 185--186.
\vspace{-2pt}		
		
		\bibitem{Zeitlin_AP2}
		 D. Zeitlin, Solution to Problem H-180. {\em Fibonacci Quart.} {\bf 10} (1972), 284--287.
\vspace{-2pt}		
		
		\bibitem{Zeitlin_AP3}
		D. Zeitlin, Solution to Problem H-191. {\em Fibonacci Quart.} {\bf 10} (1972), 631--633.
			
		\end{thebibliography}
\end{document}